\numberwithin{equation}{section}
\newtheorem{proposition}{Proposition}[section]
\newtheorem{definition}{Definition}[section]
\newtheorem{theorem}{Theorem}[section]
\newtheorem{lemma}{Lemma}[section]
\newtheorem{remark}{Remark}[section]
\title{\bf The Global Maximum Principle for Progressive Optimal Control of Partially Observed Forward- Backward Stochastic Systems with Random Jumps
\thanks{This work is supported by National Natural Science Foundations of China (Grant Nos. 11971266, 11831010, 11571205), and Shandong Provincial Natural Science Foundations (Grant Nos. ZR2020ZD24, ZR2019ZD42).}}
\author{\normalsize Yueyang Zheng\thanks{\it School of Mathematics, Shandong University, Jinan 250100, P.R.China, E-mail: zhengyueyang0106@163.com} , Jingtao Shi\thanks{Corresponding author. \it School of Mathematics, Shandong University, Jinan 250100, P.R.China, E-mail: shijingtao@sdu.edu.cn}}
\begin{document}
\maketitle

\noindent{\bf Abstract:}
In this paper, we study a partially observed progressive optimal control problem of forward-backward stochastic differential equations with random jumps, where the control domain is not necessarily convex, and the control variable enter into all the coefficients. In our model, the observation equation is not only driven by a Brownian motion but also a Poisson random measure, which also have correlated noises with the state equation. For preparation, we first derive the existence and uniqueness of the solutions to the fully coupled forward-backward stochastic system with random jumps and its estimation in $L^\beta(\beta\geq2)$-space under some assumptions, and the non-linear filtering equation of partially observed stochastic system with random jumps. Then we derive the partially observed global maximum principle with random jumps. To show its applications, a partially observed linear quadratic progressive optimal control problem with random jumps is investigated, by the maximum principle and stochastic filtering. State estimate feedback representation of the optimal control is given in a more explicit form by introducing some ordinary differential equations.

\vspace{1mm}

\noindent{\bf Keywords:} Global maximum principle, forward-backward stochastic differential equation, partial observation, random jumps, stochastic filtering, $L^\beta$-estimate

\vspace{1mm}

\noindent{\bf Mathematics Subject Classification:}\quad 93E20, 49K45, 49N10, 49N70, 60H10

\section{Introduction}

In modern stochastic control theory, it is acknowledged that the main way to obtain the {\it global} maximum principle is the spike variational technique when the control domain is not necessarily convex. But different from the classical stochastic control problem, when the diffusion term contains the control variable, first-order expansion is not enough to formulate the variational equation for the state process $x^u$ which generally is the adapted solution to the following controlled {\it stochastic differential equation} (SDE for short):
\begin{equation*}\label{sde}
\left\{
\begin{aligned}
dx_t^u&=b(t,x_t^u,u_t)dt+\sigma(t,x_t^u,u_t)dW_t,\quad t\in[0,T],\\
x_0^u&=x,
\end{aligned}
\right.
\end{equation*}
where $W$ is a Brownian motion. The main reason is the It\^o's integral $\int_t^{t+\epsilon}\sigma(s,x_s^u,u_s)dW_s$ is only of order $O(\epsilon^{\frac{1}{2}})$. In 1990, Peng \cite{Peng90} first introduced the second-order term in the Taylor's expansion of the variation and completely obtained the global maximum principle for the stochastic control problem.

Consider the following controlled {\it backward stochastic differential equation} (BSDE for short), which is coupled with the above controlled SDE:
\begin{equation}\label{bsde}
\left\{
\begin{aligned}
-dy_t^u&=g(t,x_t^u,y_t^u,z_t^u,u_t)dt-z_t^udW_t,\quad t\in[0,T],\\
y_T^u&=\phi(x_T^u).
\end{aligned}
\right.
\end{equation}
Pardoux and Peng \cite{PP90} first obtained its existence and uniqueness result under the assumption that the generator $g$ is linear growth and Lipschitz continuous in $(y,z)$. Independently, Duffie and Epstein \cite{DE92} introduced the notion of recursive utilities in continuous time which is a type of BSDE where $g$ is independent of $z$. In El Karoui et al. \cite{KPQ97}, a general stochastic control problem with recursive utilities by the solutions to BSDEs, where the generator $g$ contains $z$, is studied. When the control domain is convex, the global maximum principle reduces to the so-called {\it local} maximum principle which is derived by the classical convex variation instead of the spike variation. Peng \cite{Peng93} first established a local maximum principle for the stochastic recursive optimal control problem, where the state equation of the problem is a controlled {\it forward-backward stochastic differential equation} (FBSDE for short). Wu \cite{Wu98} proved a local maximum principle for the optimal control problem of fully coupled FBSDEs. Wu \cite{Wu13} and Yong \cite{Yong10} obtained global maximum principles for optimal control problems of decoupled and fully coupled FBSDEs, respectively.

Next, consider the following {\it backward stochastic differential equation with random jumps} (BSDEP for short):
\begin{equation*}
\left\{
\begin{aligned}
-dy_t^u&=g(t,x_t^u,y_t^u,z_t^u,\tilde{z}^u_{(t,\cdot)},u_t)dt-z_t^udW_t-\int_\mathcal{E}\tilde{z}_{(t,e)}\tilde{N}(de,dt),\quad t\in[0,T],\\
y_T^u&=\phi(x_T^u),
\end{aligned}
\right.
\end{equation*}
where $\tilde{N}(\cdot,\cdot)$ is a Poisson random martingale measure. Under standard assumptions, Tang and Li \cite{TL94} first proved their existence and uniqueness results for general BSDEPs, which are derived as adjoint equations when they studied the global maximum principle for the controlled {\it stochastic differential equation with random jumps} (SDEP for short). Wu \cite{Wu99} studied the existence and uniqueness result of the solutions to fully coupled {\it forward-backward SDEP} (FBSDEP for short). Situ \cite{Situ91} first obtained the global maximum principle for the controlled SDEP where the control variable is not included in the jump coefficient. Tang and Li \cite{TL94} completely proved the global maximum principle where the control variable enters into both diffusion and jump coefficients. Shi and Wu \cite{SW10} extended the result of Peng \cite{Peng93} to discontinuous case, and proved a local maximum principle for a kind of controlled FBSDEPs. Recently, Song et al. \cite{STW20} found that, in the work of \cite{TL94}, the third estimate in (2.10) is not correct, which was also put forward by Situ \cite{Situ05} in the early, and the subsequent variational calculus then underestimated the influence of jumps. In \cite{STW20}, they overcome the deficiencies with a new spike variation technique and obtained a new global maximum principle for controlled SDEPs.

An inspiring thought appearing in the nonlinear expectation theory motivates us to study the optimal control problem of FBSDEP. We consider an SDEP as follows:
\begin{equation}\label{sdep}
\left\{
\begin{aligned}
 dx_t^u&=b(t,x_t^u,u_t)dt+\sigma(t,x_t^u,u_t)dW_t+\int_{\mathcal{E}}f(t,x_{t-}^u,u_t,e)\tilde{N}(de,dt),\quad t\in[0,T],\\
  x_0^u&=x_0,
\end{aligned}
\right.
\end{equation}
and the cost functional can be formulated as:
\begin{equation}\label{g_p-expectation}
J_{g^p}(u)=\mathcal{E}_{g^p}\bigg[\int_0^Tl(t,x_t^u,u_t)dt+\Phi(x_T^u)\bigg],
\end{equation}
where $\mathcal{E}_{g^p}[\cdot]$ is a nonlinear expectation called {\it $g^p$-expectation}, which is related to the BSDEP:
\begin{equation}
\left\{
\begin{aligned}
-d\eta_t&=g(t,\zeta_t,\lambda_{(t,\cdot)})dt-\zeta_tdW_t-\int_{\mathcal{E}}\lambda_{(t,e)}\tilde{N}(de,dt),\quad t\in[0,T],\\
\eta_T&=\xi.
\end{aligned}
\right.
\end{equation}
If we set $g(t,0,0)\equiv0$, then we can define a $g^p$-expectation in the following.
\begin{definition}\label{definition11}
For any $\xi\in L^2_{\mathcal{F}_T}$, we set $\mathcal{E}_{g^p}[\xi]=\eta_0$.
\end{definition}
It is known that in 1997, Peng \cite{Peng1997} introduced the notion of $g$-expectation, for a random variable $\xi$, it is defined as the initial value of a classical BSDE driven by a generator $g$ and with a terminal value $\xi$. More precisely, when we consider the following BSDE without jumps:
\begin{equation}
\left\{
\begin{aligned}
-dy_t&=g(t,y_t,z_t)dt-z_tdW_t,\quad t\in[0,T],\\
y_T&=\xi,
\end{aligned}
\right.
\end{equation}
then we denote $g$-expectation by the operator $\mathcal{E}_g[\cdot]$ and define $\mathcal{E}_g[\xi]=y_0$. More properties about $g$-expectation can be seen in \cite{Peng04}. In 2006, Royer \cite{Royer06} generalized the notion of $g$-expectation and introduced $f$-expectation related to some BSDEP driven by a generator $f$ and with terminal value $\xi$, under which the filtration is generated by both Brownian motion and Poisson random measure. Under some conditions, $f$-expectation, like $g$-expectation, preserve most properties of the classical expectation except the linearity. Here, in (\ref{g_p-expectation}) of this paper, we put a superscript $p$ on $g$, i.e. $g^p$, to represent some $f$-expectation with jumps which is distinct form the $g$-expectation without jumps. Moreover, $\mathcal{E}_{g^p}[\cdot]$ and $\mathcal{E}_{g}[\cdot]$ can be reduced to classical expectation $E[\cdot]$ provided that $g=0$.

Then the aim is to find an optimal control $\bar{u}\in\mathcal{U}_{ad}$ such that
\begin{equation}
J_{g^p}(\bar{u})=\inf_{u\in\,\mathcal{U}_{ad}}J_{g^p}(u),
\end{equation}
where $\mathcal{U}_{ad}$ is a certain admissible control set. Next, we consider that $\xi=\int_0^Tl(t,x_t^u,u_t)dt+\Phi(x_T^u)$, then we get
\begin{equation}
\eta_T=\int_0^Tl(t,x_t^u,u_t)dt+\Phi(x_T^u),
\end{equation}
and define
\begin{equation}
y^u_t=\eta_t-\int_0^tl(s,x_s^u,u_s)ds,\quad z^u_t=\zeta^u_t,\quad \tilde{z}^u_{(t,\cdot)}=\lambda^u_{(t,\cdot)},
\end{equation}
where $(y^u,z^u,\tilde{z}^u)$ satisfies the following BSDEP coupled with \eqref{sdep}:
\begin{equation}
\left\{
\begin{aligned}
-dy^u_t&=\big[g(t,z_t^u,\tilde{z}_{(t,\cdot)}^u)+l(t,x_t^u,u_t)\big]dt-z^u_tdW_t-\int_{\mathcal{E}}\tilde{z}^u_{(t,e)}\tilde{N}(de,dt),\quad t\in[0,T],\\
  y^u_T&=\Phi(x_T^u).
\end{aligned}
\right.
\end{equation}
According to the discussion above, we can transform the original optimal control problem with $g^p$-expectation into a problem of FBSDEP with classical expectation. In this case, the state equation becomes
\begin{equation}\label{sfbsdep}
\left\{
\begin{aligned}
 dx_t^u&=b(t,x_t^u,u_t)dt+\sigma(t,x_t^u,u_t)dW_t+\int_{\mathcal{E}}f(t,x_{t-}^u,u_t,e)\tilde{N}(de,dt),\\
-dy^u_t&=\big[g(t,z_t^u,\tilde{z}_{(t,\cdot)}^u)+l(t,x_t^u,u_t)\big]dt-z^u_tdW_t-\int_{\mathcal{E}}\tilde{z}^u_{(t,e)}\tilde{N}(de,dt),\quad t\in[0,T],\\
  x_0^u&=x_0,\ y^u_T=\Phi(x_T^u),
\end{aligned}
\right.
\end{equation}
with the cost functional
\begin{equation}\label{cfg}
J_{g^p}(u)=y_0^u,
\end{equation}
and the aim is transformed to minimize the cost functional \eqref{cfg} subject to \eqref{sfbsdep}, which can be regarded as a special recursive optimal control problem.

In the literature, when the control domain in nonconvex, it is difficult to derive the first-order and second-order Taylor's expansions for BSDE \eqref{bsde}, which is proposed as an open problem in Peng \cite{Peng98}. Recently, Hu \cite{Hu17} solved this open problem and obtained a novel global maximum principle. In \cite{Hu17}, the relationship among the terms of the first-order Taylor expansions, i.e.,
\begin{equation*}
\begin{aligned}
y^1_t&=p_tx^1_t,\ z^1_t&=p_t\delta\sigma(t)\mathbb{I}_{E_{\epsilon}}(t)+[\sigma_x(t)p_t+q_t]x^1_t,
\end{aligned}
\end{equation*}
is built, where $(p,q)$ is the solution to the first-order adjoint equation which possesses a linear generator. More importantly, a novel feature is that the variation of $z_t$ contains an additional term $p_t\delta\sigma(t)\mathbb{I}_{E_{\epsilon}}(t)$. Another novel idea is to apply Taylor's expansion to $\bar{z}_t+p_t\delta\sigma(t)\mathbb{I}_{E_{\epsilon}}(t)$ and then to derive the global maximum principle. Motivated by these ideas, Hu et al. \cite{HJX18} further extended the global maximum principle to the fully coupled controlled FBSDE. In \cite{HJX18}, they decoupled the fully coupled linear first-order and second-order forward-backward variation equations by establishing the following relationships:
\begin{equation*}
\begin{aligned}
y^1_t&=p_tx^1_t,\ z^1_t&=\Delta(t)\mathbb{I}_{E_{\epsilon}}(t)+k_1(t)x^1_t,
\end{aligned}
\end{equation*}
where $k_1(t)=(1-p_t\sigma_z(t))^{-1}[\sigma_x(t)p_t+\sigma_y(t)p_t^2+q_t]$ and $\Delta(t)$ satisfies an algebra equation
\begin{equation*}
\begin{aligned}
\Delta(t)=p_t\big[\sigma(t,\bar{x}_t,\bar{y}_t,\bar{z}_t+\Delta(t),u_t)-\sigma(t,\bar{x}_t,\bar{y}_t,\bar{z}_t,\bar{u}_t)\big],
\end{aligned}
\end{equation*}
with $(p,q)$ is the solution to the adjoint equation with a quadratic generator. They also applied Taylor's expansion to $\bar{z}_t+\Delta(t)\mathbb{I}_{E_{\epsilon}}(t)$ and derived the global maximum principle by assuming $q$ is bounded and unbounded, respectively.

However, in the above, the controlled systems are all considered under complete information. In practice, the state equation usually could not be observed directly. There are two kinds of incomplete information forms. The first one is that one could only observe a related stochastic process $Y$ which usually satisfies an SDE:
\begin{equation*}
\left\{
\begin{aligned}
dY_t&=h(t,x_t,u_t)dt+\sigma(t)dW_t,\quad t\in[0,T],\\
 Y_0&=0.
\end{aligned}
\right.
\end{equation*}
The second one is that the known information is generated by a standard Brownian noise, which, in fact, can be regarded as a special case of the first situation.

For stochastic control problems with partial information, there has been considerable literature about forward-backward stochastic controlled systems. Wang and Wu \cite{WW09} obtained a global maximum principle for partially observed stochastic recursive optimal control problem, under the assumption that control domain is not necessarily convex and the forward diffusion coefficient does not contain control variable. Wu \cite{Wu10} studied a partially observed stochastic control problem of FBSDEs and a local maximum principle was proved. Shi and Wu \cite{SW101} obtained a partially observed maximum principle of controlled fully coupled FBSDEs on the assumption that the forward diffusion coefficient does not contain the control variable and the control domain is not necessarily convex. Wang et al. \cite{WWX13} studied a partial observed stochastic control problem of FBSDEs with correlated noises between the system and the observation. Three versions of maximum principle were established by using a direct method, an approximation method and a Malliavin derivative method, respectively. By introducing a state decomposition and backward separation approach, Wang et al. \cite{WWX15} studied an {\it linear-quadratic} (LQ for short) stochastic control problem of FBSDEs, where the drift coefficient of the observation equation is linear with respect to state $x$, and the observation noise is correlated with the state noise. Some recent progress for partially observed problems, has been made in mean-field type controls (see \cite{ML16}, \cite{WXX17}, \cite{HM20}), differential games (see \cite{WY12}, \cite{HWW16}, \cite{WZ18}, \cite{XZZ19}, \cite{ZS20}), random jumps (see \cite{OS09}, \cite{TH02}, \cite{Xiao11}, \cite{Xiao13}, \cite{XW10}, \cite{ZXL18}), time delay (see \cite{LWW20}) and applications in finance (see \cite{XZZ20}, \cite{XZ07}). The partially observed stochastic control problem is usually associated with the state filtering and estimation technique, and we only list some main references such as \cite{LS77}, \cite{Ben92}, \cite{Xiong08}, \cite{WW08}, \cite{XW10}, \cite{WWX18}, \cite{Situ05}.

The motivation of our work in this paper comes from two aspects. The first one is Hu \cite{Hu17}, in which a global maximum principle for optimal control problem with recursive utilities has been obtained completely. The second one is Song et al. \cite{STW20}, in which a new global maximum principle for stochastic control problem of SDEPs, where the control variable enters into both the coefficients of diffusion and jump, has been derived by using a novel spike variation technique, which modifies a critical incorrectness in Tang and Li \cite{TL94}, which has been pointed out in the book of Situ \cite{Situ05} for over sixteen years. Combining the two aspects above with the partially observed feature, in this paper, we aim to study a global maximum principle for partially observed controlled forward-backward stochastic system with random jumps, where all the coefficients contain the control variable. We note that, very recently, Hao and Meng \cite{HM20} obtained a global maximum principle for optimal control of the general mean-field forward-backward stochastic system with jumps. However, the jump term in the SDEP of their paper does not contain the control variable and the cost functional is a special recursive case.

To our best knowledge, there are rare literature on the partially observed forward-backward stochastic optimal control problem with random jumps. In this paper, we initiate to prove the partially observed global maximum principle with random jumps, where the state noise is correlated with the observation noise. Our work distinguishes itself from the existing literatures in the following aspects:

(1) We extend the work in \cite{Hu17} into the partially observed case with random jumps. There exist correlated noises between the state equation and the observation equation, which make the original decoupled forward-backward stochastic system become a kind of fully-coupled one. We first prove the existence and uniqueness of the solution to fully coupled FBSDEPs in $L^2$-space on small interval. However, due to some technical difficulties, we only obtain the existence and uniqueness of the solution to the fully coupled FBSDEPs and give its estimate under {\bf (A4), (A6)} in $L^\beta(\beta>2)$-space, which, motivated by the recent work of Meng and Yang \cite{MY21arxiv} in which they proved that an adapted $L^2$-solution is an $L^\beta$-solution for some $\beta>2$, extends to a general case for any given terminal time $T$. For further resolving the difficulties, we relax the Assumption {\bf (A6)}, and obtain the $L^\beta(\beta>2)$-solution to fully coupled FBSDEPs and give its estimate under {\bf (A4), (A5)} by defined a new space $\tilde{F}^{2,\beta}[0,T]$ for any given $T$.

(2) Then we derive the partially observed global maximum principle. Compared to \cite{TL94} and \cite{TH02}, our result is both a correction and an extension. To our best knowledge, in the optimal control problems, we initiate to consider that the observation equation is not only driven by the Brownian motion, but also by the Poisson random measure. Moreover, the correlated noises between state and the observation equation not only come from the Brownian motion, but from the Poisson random measure. It is an extension of the case in \cite{XW10}, in which an LQ problem with special Poisson process and convex control domain is considered.

(3) Similar to \cite{Hu17}, we also guess that $z_t^{1,1},z_t^{2,1},\tilde{z}_{(t,e)}^{1,1},\tilde{z}_{(t,e)}^{2,1}$ have the following form, respectively:
\begin{equation}\label{Delta}
\begin{aligned}
z_t^{1,1}&=\Delta^1(t)\mathbbm{1}_{[\bar{t},\bar{t}+\epsilon]}+z_t^{1,1'},\ z_t^{2,1}=\Delta^2(t)\mathbbm{1}_{[\bar{t},\bar{t}+\epsilon]}+z_t^{2,1'},\\
\tilde{z}_{(t,e)}^{1,1}&=\tilde{\Delta}^1(t)\mathbbm{1}_{\mathcal{O}}+\tilde{z}_{(t,e)}^{1,1'},\ \tilde{z}_{(t,e)}^{2,1}=\tilde{\Delta}^2(t)\mathbbm{1}_{\mathcal{O}}+\tilde{z}_{(t,e)}^{2,1'},
\end{aligned}
\end{equation}
where $\Delta^1(\cdot),\Delta^2(\cdot),\tilde{\Delta}^1(\cdot),\tilde{\Delta}^2(\cdot)$ are four $\mathcal{F}_t$-adapted processes, and $z^{1,1'},z^{2,1'},\tilde{z}^{1,1'},\tilde{z}^{2,1'}$ have good estimates similarly as $x^1$. However, different from the results in \cite{Hu17} or \cite{HJX18}, we find $\tilde{\Delta}^1(\cdot)=\tilde{\Delta}^2(\cdot)=0$. The main reason is that we subtract the jump part in the random interval $\mathcal{O}$. There is no need for such a term $\tilde{\Delta}^1(t)\mathbbm{1}_{\mathcal{O}},\tilde{\Delta}^2(t)\mathbbm{1}_{\mathcal{O}}$ in the third and fourth equality in \eqref{Delta}, which is in accordance with the variational equation of $x$ in Section 3.1. Therefore, we mainly do Taylor expansions at $\bar{x}_t,\bar{y}_t,\bar{z}^1_t+p_t\delta\sigma_1(t)\mathbb{I}_{E_{\epsilon}}(t),\bar{z}^2_t+p_t\delta\sigma_2(t)\mathbb{I}_{E_{\epsilon}}(t)$ and $\bar{\tilde{z}}_t^1,\bar{\tilde{z}}_t^2$ for $\tilde{b}_1,\sigma_1,\sigma_2, f_1, f_2, l$, respectively.

(4) Compared to the work in \cite{Hu17}, we extend the cost functional into a general case, which is similar to the LQ part in \cite{HJX18}. However, two new adjoint equations are introduced. The variational inequality is firstly derived. Compared to the work in \cite{Hu17}, besides the vital first-order adjoint equation with Poisson measure, we can obtain four new adjoint equations, however, the second-order adjoint equation is a new BSDEP. Then we derive a new partially observed global maximum principle. More importantly, the maximum principle can reduce to the work \cite{LT95} when the forward-backward system with random jumps degenerate into the forward one without random jumps. Different from the framework in Li and Tang \cite{LT95} (or Tang \cite{Tang98}) where the observed process $Y$ is viewed as a Brownian motion at the beginning,  we not only consider a more natural framework(Hu et al. \cite{HNZ14}, Zhang et al. \cite{ZXL18}), but also prove the equivalence between the two frameworks. More details can be seen in Remark \ref{rem22}, Remark \ref{rem43}, Remark \ref{rem44}.

(5) We consider a special partially observed LQ forward-backward stochastic optimal control problem with random jumps. By the method of \cite{Situ05}, we first deduce a non-linear filtering equation for partially observed stochastic system with random jumps, which extends the case with Poisson process into the case with Poisson martingale measure. And we obtain the state estimate feedback of the optimal control in a closed-loop form. Moreover, an explicit representation for the optimal control is given by some {\it ordinary differential equations} (ODEs for short).

The rest of this paper is organized as follows. In Section 2, we formulate a general partially observed progressive optimal control problem of forward-backward stochastic system with random jumps, and prove the existence and uniqueness of the solution to the fully coupled FBSDEPs and its $L^\beta$-estimate under some assumptions and different spaces, for any $\beta\geq2$. In Section 3, we derive the partially observed global maximum principle of FBSDEPs with general cost functional. In Section 4, as an application, our theoretical results are applied to study a partially observed LQ optimal control problem of FBSDEPs. In a special case, we first give the maximum condition for the optimal control and verify its sufficiency. We then represent the optimal control as a state estimate feedback form. Moreover, we obtain a more explicit optimal control with some ODEs. Section 5 gives some concluding remarks. In the Appendix, we give some results about non-linear filtering equation of signal-observation system with Poisson martingale measure that will be used in the LQ problem in Section 4.

\section{Problem Formulation and Preliminaries}

Let $T>0$ be fixed. Consider a complete filtered probability space $(\Omega,\mathcal{F},(\mathcal{F}_t)_{0\leq t \leq T},\bar{P})$ and two one-dimensional independent standard Brownian motions $W^1$ and $\tilde{W}^2$ defined in $\mathbb{R}^2$ with $W^1_0=\tilde{W}^2_0=0$. Let $(\mathcal{E},\mathcal{B}(\mathcal{E}))$ be a Polish space with the $\sigma$-finite measure $\nu_1$ on $\mathcal{E}_1$, $\nu_2$ on $\mathcal{E}_2$ and $\mathcal{E}_1\subset\mathcal{E},\mathcal{E}_2\subset\mathcal{E}$. Suppose that $N_1(de,dt)$ is a Poisson random measure on $(\mathbb{R}^{+}\times\mathcal{E}_1,\mathcal{B}(\mathbb{R}^{+})\times\mathcal{B}(\mathcal{E}_1))$ under $\bar{P}$ and for any $E_1\in\mathcal{B}(\mathcal{E}_1)$, $\nu_1(E_1)<\infty$, then the compensated Poisson random measure is given by $\tilde{N}_1(de,dt)=N_1(de,dt)-\nu_1(de)dt$. Let $N_2(de,dt)$ be an integer-valued random measure and its predictable compensator is given by $\lambda(t,x_{t-},e)\nu_2(de)dt$, where the function $\lambda(t,x,e)\in[l,1),0< l<1$, and for any $E_2\in\mathcal{B}(\mathcal{E}_2)$, $\nu_2(E_2)<\infty$, then its compensated random measure is given by $\tilde{N}^{\prime}_2(de,dt)=N_2(de,dt)-\lambda(t,x_{t-},e)\nu_2(de)dt$. Moreover, $W^1,\tilde{W}^2,N_1,N_2$ are mutually independent under $\bar{P}$, and let $\mathcal{F}_t^{W^1},\mathcal{F}_t^{\tilde{W}^2},\mathcal{F}_t^{N_1},\mathcal{F}_t^{N_2}$ be the $\bar{P}$-completed natural filtrations generated by $W^1,\tilde{W}^2,N_1,N_2$, respectively. Set $\mathcal{F}_t:=\mathcal{F}_t^{W^1}\vee\mathcal{F}_t^{\tilde{W}^2}\vee\mathcal{F}_t^{N_1}\vee\mathcal{F}_t^{N_2}\vee\mathcal{N}$ and $\mathcal{F}=\mathcal{F}_T$, where $\mathcal{N}$ denotes the totality of $\bar{P}$-null sets. $\bar{E}$ denotes the expectation under the probability measure $\bar{P}$.

Different from \cite{TL94} and \cite{TH02}, the integrand of the stochastic integral in our paper is $\mathcal{E}$-progressive measurable rather than $\mathcal{E}$-predictable, which is referred to \cite{STW20}. We first introduce some preliminaries from \cite{STW20}.
\begin{definition}\label{definition21}
Suppose that $\mathcal{H}$ is an Euclidean space, and $\mathcal{B}(\mathcal{H})$ is the Borel $\sigma$-field on $\mathcal{H}$. Given $T>0$, a process $x:[0,T]\times\Omega\rightarrow\mathcal{H}$ is called \emph{progressive measurable (predictable)} if $x$ is $\mathcal{G}/\mathcal{B}(\mathcal{H})(\mathcal{P}/\mathcal{B}(\mathcal{H}))$ measurable, where $\mathcal{G}(\mathcal{P})$ is the corresponding progressive measurable (predictable) $\sigma$-field on $[0,T]\times\Omega$, and a process $x:[0,T]\times\Omega\times\mathcal{E}\rightarrow\mathcal{H}$ is called \emph{$\mathcal{E}$-progressive measurable ($\mathcal{E}$-predictable)} if $x$ is $\mathcal{G}\times\mathcal{B}(\mathcal{E})/\mathcal{B}(\mathcal{H})(\mathcal{P}\times\mathcal{B}(\mathcal{E})/\mathcal{B}(\mathcal{H}))$ measurable.
\end{definition}

Now, given a process $x$ which has RCLL paths, $x_{0-}:=0$ and $\Delta x_t:=x_t-x_{t-},t\geq0$, for $i=1,2$, let $m_i$ denote the measure on $\mathcal{F}\otimes\mathcal{B}([0,T])\otimes\mathcal{B}(\mathcal{E}_i)$ generated by $N_i$ that $m_i(A)=\bar{E}\int_0^T\int_{\mathcal{E}_i}\mathbb{I}_{A}N_i(de,dt)$. For any $\mathcal{F}\otimes\mathcal{B}([0,T])\otimes\mathcal{B}(\mathcal{E}_i)/\mathcal{B}(\mathbb{R})$ measurable integrable process $x$, we set $\mathbb{E}_i[x]:=\int xdm_i$ and denote by $\mathbb{E}_i[x|\mathcal{P}\otimes\mathcal{B}(\mathcal{E}_i)]$ the Radon-Nikodym derivatives w.r.t. $\mathcal{P}\otimes\mathcal{B}(\mathcal{E}_i)$. In fact, $\mathbb{E}_i$ is not an expectation (for $m_i$ is not a probability measure), but it owns similar properties to expectation. A more general definition of stochastic integral of random measure has been introduced by \cite{STW20} where the theory of dual predictable projection is utilized. Therefore, we omit the details here and give the following lemma directly.

\begin{lemma}\label{lemma21}
If $g$ is a positive $\mathcal{E}_i$-progressive measurable process that
\begin{equation*}\label{mathbbE}
\bar{E}\int_0^T\int_{\mathcal{E}_i}gN_i(de,dt)<\infty,\ i=1,2,
\end{equation*}
then we have the following results:
\begin{equation*}\begin{aligned}
&(i)\qquad \bigg(\int_0^{\cdot}\int_{\mathcal{E}_i}gN_i(de,dt)\bigg)_t^p=\int_0^t\int_{\mathcal{E}_i}\mathbb{E}_i\big[g|\mathcal{P}\otimes\mathcal{B}(\mathcal{E}_i)\big]\nu_i(de)dt,\\
&(ii)\qquad \int_0^T\int_{\mathcal{E}_i}g\tilde{N}_i(de,dt)=\int_0^T\int_{\mathcal{E}_i}gN_i(de,dt)-\bigg(\int_0^{\cdot}\int_{\mathcal{E}_i}gN_i(de,dt)\bigg)_T^p,\\
&(iii)\qquad \int_0^T\int_{\mathcal{E}_i}g\tilde{N}_i(de,dt)=\int_0^T\int_{\mathcal{E}_i}gN_i(de,dt)-\int_0^T\int_{\mathcal{E}_i}\mathbb{E}_i\big[g|\mathcal{P}\otimes\mathcal{B}(\mathcal{E}_i)\big]\nu_i(de)dt,\\
&(iv)\qquad \bar{E}\int_0^T\int_{\mathcal{E}_i}gN_i(de,dt)=\bar{E}\int_0^T\int_{\mathcal{E}_i}\mathbb{E}_i\big[g|\mathcal{P}\otimes\mathcal{B}(\mathcal{E}_i)\big]\nu_i(de)dt,\\
&(v)\qquad \Delta(g_{\cdot}\tilde{N}_i)_t=\int_{\mathcal{E}_i}gN_i(de,\{t\}),\\
&(vi)\qquad \big[g_{\cdot}\tilde{N}_{i,t},g_{\cdot}\tilde{N}_{i,t}\big]=\int_0^t\int_{\mathcal{E}_i}g^2N_i(de,dt),
\end{aligned}\end{equation*}
where $x^p$ is the dual predictable projection of $x$.
\end{lemma}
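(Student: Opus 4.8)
The heart of the matter is (i); once the dual predictable projection of the integrable increasing process $A^i_{\cdot}:=\int_0^{\cdot}\int_{\mathcal{E}_i}gN_i(de,dt)$ is identified, (ii)--(vi) are essentially bookkeeping. My plan for (i) is to invoke the defining property of the dual predictable projection: $(A^i)^p$ is the unique predictable, RCLL, nondecreasing process with $\bar E\big[\int_0^T H_t\,dA^i_t\big]=\bar E\big[\int_0^T H_t\,d(A^i)^p_t\big]$ for every bounded predictable $H$, and it suffices to test against the generating class $H_t=\mathbbm{1}_{(s,r]}(t)\mathbbm{1}_F$ with $F\in\mathcal{F}_s$. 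The left-hand side is $\bar E\big[\int_0^T\int_{\mathcal{E}_i}H_t\,g\,N_i(de,dt)\big]=\int Hg\,dm_i$; since $H$ is $\mathcal{P}\otimes\mathcal{B}(\mathcal{E}_i)$-measurable, the Radon--Nikodym identity $\int\varphi g\,dm_i=\int\varphi\,\mathbb{E}_i[g|\mathcal{P}\otimes\mathcal{B}(\mathcal{E}_i)]\,dm_i$ (for $\mathcal{P}\otimes\mathcal{B}(\mathcal{E}_i)$-measurable $\varphi\ge0$) together with the fact that the restriction of $m_i$ to $\mathcal{P}\otimes\mathcal{B}(\mathcal{E}_i)$ is the measure induced by the predictable compensator of $N_i$ turns this into $\bar E\big[\int_0^T\int_{\mathcal{E}_i}H_t\,\mathbb{E}_i[g|\mathcal{P}\otimes\mathcal{B}(\mathcal{E}_i)]\,\nu_i(de)dt\big]$. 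As $t\mapsto\int_0^t\int_{\mathcal{E}_i}\mathbb{E}_i[g|\mathcal{P}\otimes\mathcal{B}(\mathcal{E}_i)]\,\nu_i(de)ds$ is predictable (indeed continuous) and nondecreasing, uniqueness of the dual predictable projection yields (i). (For $i=2$ one reads $\nu_2(de)dt$ as the actual compensator $\lambda(t,x_{t-},e)\nu_2(de)dt$ of $N_2$, equivalently the $\lambda$-factor is absorbed into $\mathbb{E}_2[\cdot|\mathcal{P}\otimes\mathcal{B}(\mathcal{E}_2)]$; the argument is unchanged.)

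Part (ii) is just the definition of the stochastic integral against the compensated measure for a merely $\mathcal{E}_i$-progressive integrand, $g\cdot\tilde N_i:=g\cdot N_i-(g\cdot N_i)^p$, which in the present notation is exactly $\int_0^T\int_{\mathcal{E}_i}gN_i(de,dt)-(A^i)^p_T$; substituting (i) gives (iii). For (iv) I would take $\bar E$ in (iii): under the integrability hypothesis, $g\cdot\tilde N_i=A^i-(A^i)^p$ is the difference of an integrable increasing process and its dual predictable projection, hence a uniformly integrable martingale with $\bar E[(g\cdot\tilde N_i)_T]=0$, which is precisely the claimed identity.

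For (v), the jump of $g\cdot\tilde N_i$ at a time $t$ is $\Delta A^i_t-\Delta(A^i)^p_t$; by (i) the compensator $(A^i)^p$ is absolutely continuous in $t$, hence continuous, so $\Delta(A^i)^p_t=0$ and $\Delta(g\cdot\tilde N_i)_t=\Delta A^i_t=\int_{\mathcal{E}_i}gN_i(de,\{t\})$. For (vi), $M:=g\cdot\tilde N_i$ is a purely discontinuous martingale, so $[M,M]_t=\sum_{0<s\le t}(\Delta M_s)^2=\sum_{0<s\le t}\big(\int_{\mathcal{E}_i}gN_i(de,\{s\})\big)^2$ using (v); since for each fixed $s$ the random measure $N_i(\,\cdot\,,\{s\})$ charges at most one point of $\mathcal{E}_i$ (by the construction of $N_i$ in Section~2), one has $\big(\int_{\mathcal{E}_i}gN_i(de,\{s\})\big)^2=\int_{\mathcal{E}_i}g^2N_i(de,\{s\})$, and summing over the countably many jump times gives $[M,M]_t=\int_0^t\int_{\mathcal{E}_i}g^2N_i(de,ds)$ (valid pathwise, even when this is not integrable).

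I expect the only genuinely delicate point to be (i): one must be careful that $g$ is \emph{progressive} rather than predictable, so that $\mathbb{E}_i[g|\mathcal{P}\otimes\mathcal{B}(\mathcal{E}_i)]$ is a true projection onto the predictable $\sigma$-field in the random-measure sense (this is exactly where the dual-predictable-projection machinery from \cite{STW20} is invoked), that the integrability assumption legitimizes the Fubini-type manipulations and the reduction to a $\sigma$-finite setting, and — for $i=2$ — that the $\lambda$-factor is handled consistently. The ``at most one atom at a fixed time'' property used in (vi) is standard for Poisson and integer-valued random measures of the stated type. Since all of these ingredients are available in \cite{STW20} and in the general theory of random measures, in the paper I would only spell out the computation for (i) and deduce (ii)--(vi) as above.
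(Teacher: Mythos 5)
Your proposal is correct, and it follows exactly the route the paper intends: the paper itself gives no proof of Lemma \ref{lemma21} at all, stating it ``directly'' and deferring the dual-predictable-projection machinery to \cite{STW20}, so your argument simply fills in the omitted details in the standard way (identify $(g\cdot N_i)^p$ by testing against bounded predictable integrands and using the Radon--Nikodym definition of $\mathbb{E}_i[\,\cdot\,|\mathcal{P}\otimes\mathcal{B}(\mathcal{E}_i)]$, then read off (ii)--(vi)). Your caveats — that the $\lambda$-factor for $i=2$ must be absorbed consistently into the compensator (a point the paper's statement glosses over by writing $\nu_2(de)dt$ under $\bar{E}$), and that the single-atom property of $N_i(\cdot,\{s\})$ is what converts $\bigl(\int_{\mathcal{E}_i}gN_i(de,\{s\})\bigr)^2$ into $\int_{\mathcal{E}_i}g^2N_i(de,\{s\})$ in (vi) — are exactly the right points to flag.
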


Consider the following state equation which is a controlled FBSDEP:
\begin{equation}\label{stateeq1}
\left\{
\begin{aligned}
 dx_t^u&=b_1(t,x_t^u,u_t)dt+\sigma_1(t,x_t^u,u_t)dW^1_t+\sigma_2(t,x_t^u,u_t)d\tilde{W}^2_t\\
       &\quad +\int_{\mathcal{E}_1}f_1(t,x_{t-}^u,u_t,e)\tilde{N}_1(de,dt)+\int_{\mathcal{E}_2}f_2(t,x_{t-}^u,u_t,e)\tilde{N}^{\prime}_2(de,dt),\\
-dy_t^u&=g\big(t,x_t^u,y_t^u,z^{1,u}_t,z^{2,u}_t,\int_{\mathcal{E}_1}\tilde{z}^{1,u}_{(t,e)}\nu_1(de),\int_{\mathcal{E}_2}\tilde{z}^{2,u}_{(t,e)}\nu_2(de),u_t\big)dt-z^{1,u}_tdW^1_t\\
       &\quad -z^{2,u}_tdW^2_t-\int_{\mathcal{E}_1}\tilde{z}^{1,u}_{(t,e)}\tilde{N}_1(de,dt)-\int_{\mathcal{E}_2}\tilde{z}^{2,u}_{(t,e)}\tilde{N}_2(de,dt),\quad t\in[0,T],\\
  x_0^u&=x_0,\ \ y_T^u=\phi(x_T^u),
\end{aligned}
\right.
\end{equation}
where, as we concluded in the following Remark \ref{rem22}, the first framework where probability measure $P$ is given originally is equivalent to the second framework (as presented in our paper) where probability measure $\bar{P}$ is given originally. Therefore, by the following Girsanov's theorem method, $W^1,W^2$ are mutually independent Brownian motions and $\tilde{N}_1, \tilde{N}_2$ are mutually independent Poisson martingale measures under $P$ and $W^1,\tilde{W}^2$ are mutually independent Brownian motions and $\tilde{N}_1, \tilde{N}^\prime_2$ are mutually independent Poisson martingale measures under $\bar{P}$. Then we give the fact that the stochastic process $W^2$ and Poisson random measure $\tilde{N}_2$ under $\bar{P}$ is standard Brownian motion and Poisson martingale measure under $P$, respectively. Equivalently, we also have the understanding that stochastic process $\tilde{W}^2$ and Poisson random measure $\tilde{N}^\prime_2$ under $P$ is standard Brownian motion and Poisson martingale measure under ${\bar{P}}$ as presented exactly in our paper, which is accordance with the non-jumps framework in \cite{WWX13}. Moreover, we have explicit representations of $W^2,\tilde{N}_2$ by $\tilde{W}^2,\tilde{N}^\prime_2$ as follows:
\begin{equation}\label{relation2}
\begin{aligned}
dW^2_t&=d\tilde{W}^2_t+\sigma_3^{-1}(t)b_2(t,\Theta^u(t),u_t)dt,\\
\tilde{N}_2(de,dt)&=\tilde{N}^\prime_2(de,dt)+(\lambda(t,x_{t-}^u,e)-1)\nu_2(de)dt.
\end{aligned}
\end{equation}
where $\sigma_3(t)$ and $b_2(t,\Theta^u(t),u_t)$ are defined in the following.

Suppose that the state process $(x^u,y^u,z^{1,u},z^{2,u},\tilde{z}^{1,u},\tilde{z}^{2,u})$ cannot be observed directly. Indeed, we can observe a related process $Y$ which is governed by the following SDEP:
\begin{equation}\label{observation}
\left\{
\begin{aligned}
dY_t^u&=b_2\big(t,x_t^u,y_t^u,z^{1,u}_t,z^{2,u}_t,\int_{\mathcal{E}_1}\tilde{z}^{1,u}_{(t,e)}\nu_1(de),\int_{\mathcal{E}_2}\tilde{z}^{2,u}_{(t,e)}\nu_2(de),u_t\big)dt\\
      &\quad +\sigma_3(t)d\tilde{W}^2_t+\int_{\mathcal{E}_2}f_3(t,e)\tilde{N}^{\prime}_2(de,dt),\quad t\in[0,T],\\
 Y_0^u&=0,
\end{aligned}
\right.
\end{equation}
and consider the cost functional as follows:
\begin{equation}\label{cf1}
J(u)=\bar{E}\bigg\{\int_0^Tl\big(t,x_t^u,y_t^u,z_t^{1,u},z_t^{2,u},\int_{\mathcal{E}_1}\tilde{z}_{(t,e)}^{1,u}\nu_1(de),
\int_{\mathcal{E}_2}\tilde{z}_{(t,e)}^{2,u}\nu_2(de),u_t\big)dt+\Phi(x_T^u)+\Gamma(y_0^u)\bigg\},
\end{equation}
where we suppose that $(b_1,\sigma_1,\sigma_2)(t,x,u):[0,T]\times \mathbb{R}\times \mathbb{R}\rightarrow \mathbb{R}$, $(f_1,f_2)(t,x,u,e):[0,T]\times \mathbb{R}\times \mathbb{R}\times\mathcal{E}_1(\mathcal{E}_2)\rightarrow \mathbb{R}$, and $(g,b_2,l)(t,x,y,z^1,z^2,\tilde{z}^1,\tilde{z}^2,u):[0,T]\times \mathbb{R}\times \mathbb{R}\times \mathbb{R}\times \mathbb{R}\times \mathbb{R}\times \mathbb{R}\times \mathbb{R}\rightarrow \mathbb{R}$, $\phi(x): \mathbb{R}\rightarrow \mathbb{R}$, $\sigma_3(t):[0,T]\rightarrow \mathbb{R}$, $f_3(t,e):[0,T]\times\mathcal{E}_2\rightarrow \mathbb{R}$, $\Phi(x):\mathbb{R}\rightarrow \mathbb{R}$, $\Gamma(y):\mathbb{R}\rightarrow \mathbb{R}$ are suitable maps.
The following assumptions are assumed.

{\bf (A1)}\quad
(1) $b_1,\sigma_1,\sigma_2$ are $\mathcal{G}\otimes\mathcal{B}(\mathbb{R})\otimes\mathcal{B}(\mathbb{R})/\mathcal{B}(\mathbb{R})$ measurable, $f_1,f_2$ are $\mathcal{G}\otimes\mathcal{B}(\mathbb{R})\otimes\mathcal{B}(\mathbb{R})\otimes\mathcal{B}(\mathcal{E}_1)(\mathcal{B}(\mathcal{E}_2))/\mathcal{B}(\mathbb{R})$ measurable, $\phi$ is $\mathcal{B}(\mathbb{R})/\mathcal{B}(\mathbb{R})$ measurable.

(2) $b_1,\sigma_1,\sigma_2,f_1$ and $f_2$ are twice continuously differentiable in $x$ with bounded first and second order derivatives, $\sigma_2$ is also bounded and there is a constant $C$ such that
\begin{equation*}
\big|(b_1,\sigma_1,\sigma_2,f_1,f_2)(t,x,u)\big|\leq C(1+|x|+|u|).
\end{equation*}

(3) $\phi$ are twice continuous differentiable in $x$ with bounded second order derivatives and there is a constant $C$ such that
\begin{equation*}
|\phi(x)|\leq C(1+|x|).
\end{equation*}

(4) For $\beta\geq2$, the following hold:
\begin{equation*}
\begin{aligned}
&\bigg(\int_0^T|b_1(t,0,0)|dt\bigg)^\beta<\infty,\quad \bigg(\int_0^T|\sigma_1(t,0,0)|^2dt\bigg)^\frac{\beta}{2}<\infty,\quad \bigg(\int_0^T|\sigma_2(t,0,0)|^2dt\bigg)^\frac{\beta}{2}<\infty,\\
&\bigg(\int_0^T\int_{\mathcal{E}_1}|f_1(t,0,0,e)|^2\nu_1(de)dt\bigg)^\frac{\beta}{2}<\infty,\quad \bigg(\int_0^T\int_{\mathcal{E}_2}|f_2(t,0,0,e)|^2\nu_2(de)dt\bigg)^\frac{\beta}{2}<\infty,\\ &\bigg(\int_0^T|b_2(t,0,0,0,0,0,0,0)|dt\bigg)^\beta<\infty,\quad \bigg(\int_0^T|g(t,0,0,0,0,0,0,0)|dt\bigg)^\beta<\infty.
\end{aligned}
\end{equation*}

(5) $b_2,g$ are $\mathcal{G}\otimes\mathcal{B}(\mathbb{R})\otimes\mathcal{B}(\mathbb{R})\otimes\mathcal{B}(\mathbb{R})\otimes\mathcal{B}(\mathbb{R})\otimes\mathcal{B}
(\mathbb{R})\otimes\mathcal{B}(\mathbb{R})\otimes\mathcal{B}(\mathbb{R})/
\mathcal{B}(\mathbb{R})$ measurable, and are twice continuously differentiable with respect to $(x,y,z^1,z^2,\tilde{z}^1,\tilde{z}^2)$; $b_2,g,Db_2,Dg,D^2b_2,D^2g$ are continuous in $(x,y,z^1,z^2,\tilde{z}^1,\tilde{z}^2,u)$; $b_2,Db_2,Dg,D^2b_2,D^2g$ are bounded, and $|\sigma_3^{-1}(t)|$ is bounded by some constant $C$, and
\begin{equation*}
\begin{aligned}
&\big|b_2(t,x,,y,z^1,z^2,\tilde{z}^1,\tilde{z}^2,u)\big|+\big|g(t,x,,y,z^1,z^2,\tilde{z}^1,\tilde{z}^2,u)\big|\\
&\ \leq C(1+|x|+|y|+|z^1|+|z^2|+||\tilde{z}^1||+||\tilde{z}^2||+|u|).
\end{aligned}
\end{equation*}

(6) For any $(t,x,e)\in[0,T]\times \mathbb{R}\times\mathcal{E}_2$, there exists a constant $C$ such that $|\lambda_x(t,x,e)|+|\lambda_{xx}(t,x,e)|\leq C$.

{\bf (A2)}\quad $l,\Phi,\Gamma$ are twice continuous differentiable in $(x,y,z^1,z^2,\tilde{z}^1,\tilde{z}^2)$ with bounded second order derivatives and there is a constant $C$ such that
\begin{equation*}
\begin{aligned}
&\big|l_{(x,y,z^1,z^2,\tilde{z}^1,\tilde{z}^2)}(t,x,y,z^1,z^2,\tilde{z}^1,\tilde{z}^2,u)\big|\leq C\big(1+|x|+|y|+|z^1|+|z^2|+||\tilde{z}^1||+||\tilde{z}^2||+|u|\big),\\
&\big|l(t,x,y,z^1,z^2,\tilde{z}^1,\tilde{z}^2,u)\big|\leq C\big(1+|x|^2+|y|^2+|z^1|^2+|z^2|^2+||\tilde{z}^1||^2+||\tilde{z}^2||^2+|u|^2\big),\\
&|\Phi(x)|\leq C(1+|x|^2),\quad |\Phi_x(x)|\leq C(1+|x|),\quad \Gamma(y)\leq C(1+|y|^2),\quad \Gamma_y(y)\leq C(1+|y|).
\end{aligned}
\end{equation*}
\begin{remark}\label{rem21}
We consider the deterministic coefficient in our paper, that is, fixing $(x,y,z^1,z^2,\\\tilde{z}^1,\tilde{z}^2,u)$, $b_1,b_2,\sigma_1,\sigma_2,g,f_1,f_2,l$ are deterministic function from $[0,T]\times\mathcal{E}_1(\mathcal{E}_2)$ to $\mathbb{R}$. In fact, we can regard $[0,T]$ as a kind of cylinder set $[0,T]\times\Omega$ in a binary form, and here, $\mathcal{G}(\mathcal{P})$ is the corresponding progressive measurable (predictable) $\sigma$-field on $[0,T]$ which can be viewed as a special case on $[0,T]\times\Omega$. The only difference is that the coefficients $b_1,b_2,\sigma_1,\sigma_2,g,f_1,f_2,l$ do not contain $\omega$ exactly.
\end{remark}

In our paper, for notational simplicity, we define $\Theta^{u}(t):=(x_t^u,y_t^u,z^{1,u}_t,z^{2,u}_t,\tilde{z}^{1,u}_{(t,e)},\tilde{z}^{2,u}_{(t,e)})$ and
\begin{equation*}
\begin{aligned}
&\tilde{g}(t,\Theta^{u}(t),u_t)
:=\tilde{g}\big(t,x_t^u,y_t^u,z^{1,u}_t,z^{2,u}_t,\int_{\mathcal{E}_1}\tilde{z}^{1,u}_{(t,e)}\nu_1(de),\int_{\mathcal{E}_2}\tilde{z}^{2,u}_{(t,e)}\nu_2(de),u_t\big),
\end{aligned}
\end{equation*}
for $\tilde{g}=g,b_2,l$ and $(t,x,y,z^1,z^2,\tilde{z}^1,\tilde{z}^2)\in[0,T]\times \mathbb{R}\times \mathbb{R}\times \mathbb{R}\times \mathbb{R}\times L^2(\mathcal{E}_1,\mathcal{B}(\mathcal{E}_1),\nu_1;\mathbb{R})\times L^2(\mathcal{E}_2,\mathcal{B}(\mathcal{E}_2),\nu_2;\mathbb{R})$. Here, the space $L^2(\mathcal{E}_i,\mathcal{B}(\mathcal{E}_i),\nu_i;\mathbb{R})$,\ $i=1,2$ are defined in the {\bf (A4)}, (3) in the following.

For $t\in[0,T]$, define ${\mathcal{F}}_t^Y:=\sigma\{Y_s;0\leq s\leq t\}$. For $U\subseteq\mathbb{R}$, the admissible control set is defined as follows:
\begin{equation}\label{ad_control_set}
\begin{aligned}
\mathcal{U}_{ad}[0,T]:=&\Big\{u\big|u_t\mbox{ is }\mathcal{F}_t^{Y}\mbox{-progressive }U\mbox{-valued process, such that }\sup_{0\leq t\leq T}\bar{E}\big[|u_t|^{\beta}\big]<\infty,\\
&\mbox{ for any $\beta>1$ }\mbox{and }\bar{E}\int_0^T|u_t|^2N_{i}(\mathcal{E}_{i},dt)<\infty,\mbox{\ for\ } i=1,2\Big\}.
\end{aligned}
\end{equation}

Our aim in this paper, is to select an optimal control $\bar{u}\in{\mathcal{U}_{ad}[0,T]}$ such that
\begin{equation*}
J(\bar{u})=\inf_{u\in\,\mathcal{U}_{ad}[0,T]}J(u).
\end{equation*}

\begin{remark}\label{rem22}
In our setting, for overcoming the circular dependence between the controlled filtration $\mathcal{F}_t^Y$ and the control $u$, we have adopted the Girsanov measure transformation method. We point out that our framework is different from that most used in (\cite{LT95}, \cite{WW09}, \cite{Wu10}, \cite{SW101}, \cite{WWX13}), but similar as (\cite{HNZ14}, \cite{ZXL18}). The obvious difference between these two kinds of frameworks depends on the original probability measure, that is, $P$ or $\bar{P}$, which is given first. In the first framework, for a given original probability measure $P$, the observation process $Y$ is given as a standard Brownian motion. However, in the second framework which is used in our paper, under a given original probability measure $\bar{P}$, the observation process $Y$ is a stochastic process which becomes a standard Brownian motion under the new probability measure $P$ by Girsanov's theorem.

In fact, by analyzing, we could obtain an important conclusion that these two kinds of frameworks are equivalent in some sense. First, no matter which kind of framework is considered in the existing literature, as a goal, the observation process $Y$ should be a control-independent standard Brownian motion under the probability measure $P$. Indeed, this goal is the key to deal with partially observed problems. More precisely, in the first framework, $W$ and observation process $Y$ are two uncontrolled Brownian motions, and the cost functional is usually defined under the new controlled probability measure $\bar{P}$. However, for obtaining necessary and sufficient maximum principles, a controlled $\bar{P}$ would bring the obstacle when using variation technique and It\^{o}'s formula. Therefore, the given uncontrolled $P$ and uncontrolled Brownian motion $Y$ by the measure transformation can be regarded as a ``bridge" to fill in this gap. In the second framework, $P$ is a new probability measure under which the original controlled stochastic process $Y$ becomes a uncontrolled Brownian motion, which is more natural than the given Brownian motion $Y$ at the beginning in the first framework. Moreover, the nature mainly shows that, it is usual that the Brownian motion can not be observed directly, but the role of Girsanov transformation shows that there exists another probability measure $P$ such that the observation process $Y$ can be turned into certain Brownian motion. So we think that considering $Y$ as a Brownian motion at first, although it is unnatural, is a technical demanding. Moreover, it means that the original $P$ (the new $\bar{P}$) in the first framework and the new $P$ (the original $\bar{P}$) in the second framework play the same roles and actually have the same effects. So these two kinds of frameworks are equivalent in some sense. We will show its equivalence in general mathematical sense in Remark \ref{rem44}.
\end{remark}

As analyzed above, in order to solve the problem, we first set
\begin{equation}\label{RN}
\begin{aligned}
\Gamma^u_t:=&\exp\bigg\{-\int_0^T\sigma_3^{-1}(t)b_2\big(t,\Theta^{u}(t),u_t\big)d\tilde{W}^2_t-\frac{1}{2}\int_0^T|\sigma_3^{-1}(t)b_2\big(t,\Theta^{u}(t),u_t\big)|^2dt\\
            &\qquad -\int_0^T\int_{\mathcal{E}_2}\log{\lambda(t,x_{t-}^u,e)}N_2(de,dt)-\int_0^T\int_{\mathcal{E}_2}(1-\lambda(t,x_{t-}^u,e))\nu_2(de)dt\bigg\}.
\end{aligned}
\end{equation}
The following assumption is necessary to guarantee the Girsanov measure transformation.
\begin{equation*}
{\bf (A3)}\qquad\qquad \bar{E}\bigg[\exp\bigg\{\int_0^T\int_{\mathcal{E}_2}\frac{(1-\lambda(t,x_{t-}^u,e))^2}{\lambda(t,x_{t-}^u,e)}\nu_2(de)dt\bigg\}\bigg]<\infty.
\end{equation*}
Under {\bf (A3)}, define
\begin{equation*}
\begin{aligned}
M(t):=&-\int_0^t\sigma_3^{-1}(s)b_2\big(s,\Theta^{u}(s),u_s\big)d\tilde{W}^2_s+\int_0^t\int_{\mathcal{E}_2}\frac{1-\lambda(s,x_{s-}^u,e)}{\lambda(s,x_{s-}^u,e)}\tilde{N}^{\prime}_2(de,ds),
\end{aligned}
\end{equation*}
which is a locally square integrable martingale. Moreover, $M_t-M_{t-}>-1,\ \bar{P}\mbox{-}a.s.$, and
\begin{equation*}
\begin{aligned}
&\bar{E}\bigg[\exp\bigg\{\frac{1}{2}\langle M^c,M^c\rangle_T+\langle M^d,M^d\rangle_T\bigg\}\bigg]\\
&=\bar{E}\bigg[\exp\bigg\{\frac{1}{2}\int_0^T\big|\sigma_3^{-1}(t)b_2\big(t,\Theta^{u}(t),u_t\big)\big|^2dt+\int_0^T\int_{\mathcal{E}_2}\frac{(1-\lambda(t,x_{t}^u,e))^2}{\lambda(t,x_{t}^u,e)}\nu_2(de)dt\bigg\}\bigg]<\infty,
\end{aligned}
\end{equation*}
where $M^c$ and $M^d$ are continuous and purely discontinuous martingale parts of $M$, respectively. Therefore, it follows that $\Gamma_t^u$, the Dol\'{e}ans-Dade exponential of $M$, is a martingale (Protter and Shimbo \cite{PS08}). Then we can define a probability measure $P$ via
\begin{equation*}
\frac{d\bar{P}}{dP}:=\tilde{\Gamma}_T^u\equiv(\Gamma_T^u)^{-1},
\end{equation*}
where $\tilde{\Gamma}^u$ satisfies the following equation:
\begin{equation}\label{RN2}
\left\{
\begin{aligned}
d\tilde{\Gamma}_t^u&=\tilde{\Gamma}_t^u\sigma_3^{-1}(t)b_2\big(t,\Theta^{u}(t),u_t\big)dW^2_t+\int_{\mathcal{E}_2}\tilde{\Gamma}_{t-}^u(\lambda(t,x_{t-}^u,e)-1)\tilde{N}_2(de,dt),\quad t\in[0,T],\\
 \tilde{\Gamma}_0^u&=1.
\end{aligned}
\right.
\end{equation}
Moreover, under the new probability measure $P$ and denoting the expectation with respect to $P$ by $E$, $W^1,W^2$ are mutually independent Brownian motions and $\tilde{N}_1, \tilde{N}_2$ are mutually independent Poisson martingale measures, where
\begin{equation}\label{relation1}
\begin{aligned}
&dW^2_t=d\tilde{W}^2_t+\sigma_3^{-1}(t)b_2(t,\Theta^{u}(t),u_t)dt,\quad \tilde{N}_2(de,dt)=N_2(de,dt)-\nu_2(de)dt.
\end{aligned}
\end{equation}
Observing $\tilde{N}_2$ and $\tilde{N}_2^{\prime}$, then their relationship can be built as \eqref{relation2}.

We introduce the following spaces:
\begin{equation*}
S^\beta[0,T]=\Big\{y\big|y\mbox{ has } RCLL\mbox{ paths, adapted and }E\Big[\sup_{0\leq t\leq T}|y_t|^\beta\Big]<\infty\Big\},
\end{equation*}
with norm $||y||_\beta^\beta:=E\big[\sup_{0\leq t\leq T}|y_t|^\beta\big]$;
\begin{equation*}
M^{2,\beta}_i[0,T]=\Big\{z^i\big|z^i\mbox{ is predictable and }E\bigg[\Big(\int_0^T|z^i_t|^2dt\Big)^{\frac{\beta}{2}}\bigg]<\infty\Big\},\ i=1,2,
\end{equation*}
with norm $||z^i||_{2,\beta}^\beta:=E\big[(\int_0^T|z^i_t|^2dt)^{\frac{\beta}{2}}\big]$;
\begin{equation*}
F^{2,\beta}_i[0,T]=\Big\{\tilde{z}^i\big|\tilde{z}^i\mbox{ is $\mathcal{E}_i$-predictable and }E\bigg[\Big(\int_0^T\int_{\mathcal{E}_i}|\tilde{z}^i_{(t,e)}|^2\nu_i(de)dt\Big)^{\frac{\beta}{2}}\bigg]<\infty\Big\},\ i=1,2,
\end{equation*}
with norm $||\tilde{z}^i||_{2,\beta}^\beta:=E\big[\big(\int_0^T\int_{\mathcal{E}_i}|\tilde{z}^i_{(t,e)}|^2\nu_i(de)dt\big)^{\frac{\beta}{2}}\big]$. Specially, for $\beta=2$, we set $S^2[0,T]$, $M^2_i[0,T]\equiv M^{2,2}_i[0,T]$ and $F^2_i[0,T]\equiv F^{2,2}_i[0,T]$ defined similarly, for $i=1,2$, as the above.

For simplicity, we set $\mathcal{M}^\beta[0,T]:=S^{\beta}[0,T]\times S^{\beta}[0,T]\times M_1^{2,\beta}[0,T]\times M_2^{2,\beta}[0,T]\times F_1^{2,\beta}[0,T]\times F_2^{2,\beta}[0,T]$ and $\mathcal{N}^\beta[0,T]:=S^{\beta}[0,T]\times M_1^{2,\beta}[0,T]\times M_2^{2,\beta}[0,T]\times F_1^{2,\beta}[0,T]\times F_2^{2,\beta}[0,T]$, for $\beta\geq2$.

Then substituting \eqref{relation2} into the state equation \eqref{stateeq1}, we get
\begin{equation}\label{stateeq2}
\left\{
\begin{aligned}
 dx_t^u&=\tilde{b}_1\big(t,\Theta^{u}(t),u_t\big)dt+\sum_{i=1}^2\sigma_i(t,x_t^u,u_t)dW^i_t+\sum_{i=1}^2\int_{\mathcal{E}_i}f_i(t,x_{t-}^u,u_t,e)\tilde{N}_i(de,dt),\\
-dy_t^u&=g\big(t,\Theta^{u}(t),u_t\big)dt-\sum_{i=1}^2z^{i,u}_tdW^i_t-\sum_{i=1}^2\int_{\mathcal{E}_i}\tilde{z}^{i,u}_{(t,e)}\tilde{N}_i(de,dt),\quad t\in[0,T],\\
  x_0^u&=x_0,\ \ y_T^u=\phi(x_T^u),
\end{aligned}
\right.
\end{equation}
where
\begin{equation*}
\begin{aligned}
\tilde{b}_1\big(t,\Theta^{u}(t),u\big)&:=b_1(t,x^u,u)-\int_{\mathcal{E}_2}f_2(t,x^u,u,e)(\lambda(t,x^u,e)-1)\nu_2(de)\\
                                      &\qquad-\sigma_3^{-1}(t)\sigma_2(t,x^u,u)b_2\big(t,\Theta^{u}(t),u\big).
\end{aligned}
\end{equation*}

\begin{remark}\label{rem23}
We note that, although we aim to study the decoupled forward-backward stochastic systems originally, the system becomes a fully coupled one due to the partially observed structure. Therefore, we need existence and uniqueness of the solutions and the corresponding $L^{\beta}(\beta\geq2)$-estimate for a series of variations needed in the deduction of the global maximum principle.
\end{remark}

For the $L^p$-estimate and existence and uniqueness of solutions to SDE(P)s, BSDE(P)s and FBSDE(P)s, let us mention a few results. Tang and Li \cite{TL94} studied the existence and uniqueness of solution to BSDEP in $L^2$ space (see also Barles et. al. \cite{BBP97}). Situ \cite{S97} derived an existence and uniqueness result for the $L^2$-adapted solution to BSDEP, where he considered that its terminal time is a bounded random stopping time and the coefficients are non-Lipschitzian. Wu \cite{Wu99} obtained the existence and uniqueness results of solution to fully coupled FBSDEP in $L^2$-space under the monotonicity condition. Then Wu \cite{Wu03} gave the existence and uniqueness result and a comparison theorem for such equation in stopping time (unbounded) duration. Yin and Mao \cite{YM08} dealt with a class of BSDEP with random terminal times and proved the existence and uniqueness result of adapted solution under the assumption of non-Lipschitzian coefficient. Quenez and Sulem \cite{QS13}  studied some properties of linear BSDEP and also gave the existence and uniqueness of solution to such a BSDEP with general driver in $L^p$-space $(p\geq2)$ and its $L^2$-estimate. Li and Wei \cite{LW14} studied some $L^p$ $(p\geq2)$ estimates for fully coupled FBSDEP which were proved under the monotonicity assumption for arbitrary time intervals and in a small time intervals by assuming that some Lipschitz constants are sufficiently small. Kruse and Popier \cite{KP16} gave the existence and uniqueness of solution to a BSDE with Brownian and Poisson noises in a general filtration in $L^p$-spaces $(p>1)$, especially for the case of $p\in(1,2)$ which has to be handled carefully, and some mistake was pointed out when $p<2$, which was corrected by themselves in \cite{KP17}. Yao \cite{Yao17} studied $L^p$-solution $(p\in(1,2))$ of a multi-dimensional BSDEP whose generator may not be Lipschitz continuous in $(y,z)$ and obtained its existence and uniqueness of solution with $p$-integrable terminal data. Geiss and Steinicke \cite{GS18}  proved existence and uniqueness of BSDEP allowing the coefficients in the linear growth- and monotonicity-condition for the generator to be random and time-dependent. In the $L^2$-case with linear growth, this also generalized the results in \cite{KP16}. Confortola \cite{Confortola19} obtained the existence and uniqueness in $L^p$ $(p>1)$ of solution to a BSDE driven by a marked point process on a bounded interval. Moreover, it is also necessary to mention the following recent works about $L^p$-theory of FBSDE. By firstly establishing a measurable global implicit function theorem, then, under the Lipschitz condition and monotonicity assumption, Xie and Yu \cite{XY20} obtained an $L^p(p>1)$-solution and its related $p$-th power estimates for coupled FBSDE with random coefficients on small durations. It is worth noting that Yong \cite{Yong20} investigated the problem of when an adapted $L^2$-solution of an FBSDE is an adapted $L^p(p>2)$-solution. For giving an affirmative answer, several important cases were explored, which is thought-provoking but is far from having a satisfactory answer. Therefore, some open questions worthy of studying are posed to construct a better theory of FBSDEs. For giving a positive answer to the problem proposed by Yong \cite{Yong20}, Meng and Yang \cite{MY21arxiv} proved that a unique $L^2$-solution of fully coupled FBSDEs is an $L^p$-solution under standard conditions on the coefficients for any given terminal time $T$, based on a key observation of the relation between $L^2$ and $L^p$ estimations of FBSDEs, by which we have developed a $L^p(p>2)$-theory, including $L^p$-solution and its $p$-th power estimation, of fully coupled FBSDEPs for any given terminal time $T$ as an extension to the case with Poisson jumps.

\subsection{The existence and uniqueness of the solution and $L^\beta$-estimate}

Consider the following fully coupled FBSDEP:
\begin{equation}\label{fbsdep1}
\left\{
\begin{aligned}
 dx_t&=b(t,\Theta(t))dt+\sum_{i=1}^2\sigma_i(t,\Theta(t))dW^i_t+\sum_{i=1}^2\int_{\mathcal{E}_i}f_i(t,\Theta(t-),e)\tilde{N}_i(de,dt),\\
-dy_t&=g(t,\Theta(t))dt-\sum_{i=1}^2z^i_tdW^i_t-\sum_{i=1}^2\int_{\mathcal{E}_i}\tilde{z}^i_{(t,e)}\tilde{N}_i(de,dt),\quad t\in[0,T],\\
  x_0&=x_0,\ \ y_T=\phi(x_T),
\end{aligned}
\right.
\end{equation}
where $\Theta(t):=(x_t,y_t,z^{1}_t,z^{2}_t,\tilde{z}^{1}_{(t,e)},\tilde{z}^{2}_{(t,e)})$, and the coefficients $b,\sigma_1,\sigma_2,f_1,f_2,g,\phi$ could be random.

{\bf (A4)}\quad (1) Let $(\Omega,\mathcal{F},\{\mathcal{F}_t\}_{t\geq0},P)$ be a complete probability space, on which standrad Brownian motions $\{W^1_t,W^2_t\}_{t\geq0}\in\mathbb{R}^2$ and Poisson random measures $N_i$ with the compensator $EN_i(de,dt)=\nu_i(de)dt$, for $i=1,2,$ are mutually independent.  Here $\nu_i$ is assumed to be a $\sigma$-finite L\'{e}vy measure on $(\mathcal{E}_i,\mathcal{B}(\mathcal{E}_i))$ with the property that $\int_{\mathcal{E}_i}(1\land|e|^2)\nu_i(de)<\infty$, for $i=1,2$.

(2) Define the space $L^2(\mathcal{E}_i,\mathcal{B}(\mathcal{E}_i),\nu_i;\mathbb{R}):=\big\{\tilde{z}^i_{(t,e)}\in \mathbb{R}\ \mbox{such that}\ \big[\int_{\mathcal{E}_i}|\tilde{z}^i_{(t,e)}|^2\nu_i(de)\big]^{\frac{1}{2}}<\infty\big\}$, for $i=1,2$.

(3) $b,\sigma_1,\sigma_2,g$ are $\mathcal{G}\otimes\mathcal{B}(\mathbb{R})\otimes\mathcal{B}(\mathbb{R})\otimes\mathcal{B}(\mathbb{R})\otimes\mathcal{B}(\mathbb{R})\otimes\mathcal{B}(L^2(\mathcal{E}_1,\mathcal{B}(\mathcal{E}_1),\nu_1;\mathbb{R}))\\
\otimes\mathcal{B}
(L^2(\mathcal{E}_2,\mathcal{B}(\mathcal{E}_2),\nu_2;\mathbb{R}))/\mathcal{B}(\mathbb{R})$ measurable, $f_1(f_2)$ is $\mathcal{G}\otimes\mathcal{B}(\mathbb{R})\otimes\mathcal{B}(\mathbb{R})\otimes\mathcal{B}(\mathbb{R})\otimes\mathcal{B}(\mathbb{R})\otimes\mathcal{B}(\mathbb{R})\\
\otimes\mathcal{B}(\mathbb{R})\otimes\mathcal{B}(\mathcal{E}_1)(\otimes\mathcal{B}(\mathcal{E}_2))/\mathcal{B}(\mathbb{R})$ measurable, and $\phi$ is $\mathcal{F}_T\otimes\mathcal{B}(\mathbb{R})/\mathcal{B}(\mathbb{R})$ measurable.

(4) $b,\sigma_1,\sigma_2,g$ are uniformly Lipschitz with respect to $(x,y,z^1,z^2,\tilde{z}^1,\tilde{z}^2)$, and $\phi(x)$ is uniformly Lipschitz with respect to $x\in \mathbb{R}$.

(5) For $\beta\geq2$, we have
\begin{equation*}
\begin{aligned}
&E\bigg(\int_0^T|b(t,\omega,0,0,0,0,0,0)|dt\bigg)^\beta+\sum_{i=1}^2\bigg(E\bigg(\int_0^T|\sigma_i(t,\omega,0,0,0,0,0,0)|^2dt\bigg)^{\frac{\beta}{2}}\\
&+E\bigg(\int_0^T\int_{\mathcal{E}_i}|f_i(t,\omega,0,0,0,0,0,0,e)|^2N_i(de,dt)\bigg)^{\frac{\beta}{2}}\bigg)+E\bigg(\int_0^T|g(t,\omega,0,0,0,0,0,0)|dt\bigg)^\beta<\infty,
\end{aligned}
\end{equation*}
and $E[|\phi(0)|^\beta]<\infty$.

(6) For any $t\in[0,T]$, $\Theta\in \mathbb{R}\times \mathbb{R}\times \mathbb{R}\times \mathbb{R}\times L^2(\mathcal{E}_1,\mathcal{B}(\mathcal{E}_1),\nu_1;\mathbb{R})\times L^2(\mathcal{E}_2,\mathcal{B}(\mathcal{E}_2),\nu_2;\mathbb{R})$, $P\mbox{-}a.s.$,
\begin{equation*}
|b(t,\Theta)|+\sum_{i=1}^2|\sigma_i(t,\Theta)|+|g(t,\Theta)|+|\phi(x)|\leq L(1+|x|+|y|+|z^1|+|z^2|+||\tilde{z}^1||+||\tilde{z}^2||),
\end{equation*}
and there exists a measurable function $\rho:\mathcal{E}_i(\subset\mathcal{E})\rightarrow\mathbb{R}^{+}$ with $\int_{\mathcal{E}_i}\rho^\beta(e)\nu_i(de)<\infty (\beta\geq2)$ such that, for any $t\in[0,T]$, $\Theta\in \mathbb{R}^6$ and $e\in\mathcal{E}_i$, $|f_i(t,\Theta,e)|\leq\rho(e)(1+|x|+|y|+|z^1|+|z^2|+|\tilde{z}^1|+|\tilde{z}^2|)$, i=1,2.

(7) For $i=1,2$, the Lipschitz constant $L_{\sigma_i}\geq0$ of diffusion $\sigma_i$ with respect to $(z^1,z^2,\tilde{z}^1,\tilde{z}^2)$ is sufficient small. That is, there exists some $L_{\sigma_i}\geq0$ small enough such that, for all $t\in[0,T]$, $\Theta_j=(x_j,y_j,z^1_j,z^2_j,\tilde{z}^1_j,\tilde{z}^2_j)\in \mathbb{R}^4\times L^2(\mathcal{E}_1,\mathcal{B}(\mathcal{E}_1),\nu_1;\mathbb{R})\times L^2(\mathcal{E}_2,\mathcal{B}(\mathcal{E}_2),\nu_2;\mathbb{R})$, $j=1,2$, $P\mbox{-}a.s.$,
\begin{equation*}
\big|\sigma_i(t,\Theta_1)-\sigma_i(t,\Theta_2)\big|\leq K(|x_1-x_2|+|y_1-y_2|)+L_{\sigma_i}(|z^1_1-z^1_2|+|z^2_1-z^2_2|+||\tilde{z}^1_1-\tilde{z}^1_2||+||\tilde{z}^2_1-\tilde{z}^2_2||).
\end{equation*}
Moreover, the Lipschitz coefficient $L_{f_i}$ of $f_i$ with respect to $(z^1,z^2,\tilde{z}^1,\tilde{z}^2)$ is sufficient small, i.e., there exists a function $L_{f_i}:\mathcal{E}_i\rightarrow\mathbb{R}^{+}$ with $\tilde{C}_{f_i}:=\max\big\{\sup_{e\in\mathcal{E}_i}L^{\beta}_{f_i}(e),\int_{\mathcal{E}_i}L^{\beta}_{f_i}(e)\nu_i(de)\big\}<\infty(\beta\geq2)$ sufficient small, and for all $t\in[0,T]$, $(\Theta_1,\Theta_2)\in \mathbb{R}^{12}$ and $e\in\mathcal{E}_i$, $P\mbox{-}a.s.$,
\begin{equation*}
\big|f_i(t,\Theta_1,e)-f_i(t,\Theta_2,e)\big|\leq\rho(e)(|x_1-x_2|+|y_1-y_2|)+L_{f_i}(e)(|z^1_1-z^1_2|+|z^2_1-z^2_2|+|\tilde{z}^1_1-\tilde{z}^1_2|+|\tilde{z}^2_1-\tilde{z}^2_2|).
\end{equation*}

Next, we first prove the existence and uniqueness of the solution to \eqref{fbsdep1}.

\begin{theorem}\label{the21}
Suppose that {\bf (A4)} holds, then there exists a constant $\tilde{T}>0$ only depending on $K,L_{\sigma_1},L_{\sigma_2},L_{f_1},L_{f_2},\rho$ such that, for every $0<T<\tilde{T}$, the fully coupled FBSDEP \eqref{fbsdep1} has a unique solution in $\mathcal{M}^2[0,T]$.
\end{theorem}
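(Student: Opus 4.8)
\quad
The plan is to solve \eqref{fbsdep1} by a contraction-mapping argument on the Banach space $\mathcal{M}^2[0,T]$, exploiting that in \eqref{fbsdep1} the forward and backward equations are coupled only through the common argument $\Theta$. For a generic $\theta=(X,Y,Z^1,Z^2,\tilde Z^1,\tilde Z^2)\in\mathcal{M}^2[0,T]$, freeze $\theta$ in all coefficients and define $\Phi(\theta):=(x,y,z^1,z^2,\tilde z^1,\tilde z^2)$ as follows: first set
\begin{equation*}
x_t=x_0+\int_0^t b(s,\theta(s))\,ds+\sum_{i=1}^2\int_0^t\sigma_i(s,\theta(s))\,dW^i_s+\sum_{i=1}^2\int_0^t\int_{\mathcal{E}_i}f_i(s,\theta(s-),e)\,\tilde N_i(de,ds),
\end{equation*}
which, by the growth conditions {\bf (A4)}(5)--(6), the Burkholder--Davis--Gundy (BDG) and Doob inequalities and $\theta\in\mathcal{M}^2[0,T]$, yields a well-defined $x\in S^2[0,T]$ together with a bound on $\|x\|_{S^2}$ in terms of $|x_0|$, $\|\theta\|_{\mathcal{M}^2}$ and the data; since $\phi$ is Lipschitz, $\phi(x_T)\in L^2(\mathcal{F}_T)$, and $s\mapsto g(s,\theta(s))$ is adapted with $\int_0^T|g(s,\theta(s))|ds\in L^2$, so by the classical $L^2$-theory of BSDEPs (e.g.\ \cite{TL94}) the linear BSDEP with terminal value $\phi(x_T)$ and driver $g(\cdot,\theta(\cdot))$ has a unique solution $(y,z^1,z^2,\tilde z^1,\tilde z^2)\in\mathcal{N}^2[0,T]$ with the corresponding a priori estimate. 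Hence $\Phi:\mathcal{M}^2[0,T]\to\mathcal{M}^2[0,T]$ is well defined, and a fixed point of $\Phi$ is precisely a solution of \eqref{fbsdep1} in $\mathcal{M}^2[0,T]$.

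The core step is to show $\Phi$ is a strict contraction once $L_{\sigma_1},L_{\sigma_2},L_{f_1},L_{f_2}$ (controlled by {\bf (A4)}(7)) and $T$ are small. For $\theta,\theta'\in\mathcal{M}^2[0,T]$ with outputs $\Phi(\theta),\Phi(\theta')$, put $\hat x=x-x'$, $\hat\theta=\theta-\theta'$, etc. Applying BDG to the equation for $\hat x$, splitting each increment $\sigma_i(s,\theta)-\sigma_i(s,\theta')$ and $f_i(s,\theta,e)-f_i(s,\theta',e)$ by {\bf (A4)}(4),(7), and using that the $\|\cdot\|_{S^2}$-contributions $\|\hat X\|_{S^2},\|\hat Y\|_{S^2}$ gain a factor of $T$ upon integration over $[0,T]$ while the $\|\cdot\|_{2,2}$-contributions do not, one obtains
\begin{equation*}
\|\hat x\|_{S^2}^2\ \le\ \Big(\omega(T)+C\big(\textstyle\sum_{i}L_{\sigma_i}^2+\sum_i\tilde C_{f_i}\big)\Big)\,\|\hat\theta\|_{\mathcal{M}^2}^2 ,
\end{equation*}
where $\omega(T)\downarrow 0$ as $T\downarrow 0$ and $C$ depends only on $K,\rho$ and the Lipschitz constant of $b$. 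The crucial feature — the same one responsible for the second-order expansions elsewhere in the paper — is that the It\^o-integral contributions of $\sigma_i,f_i$ to $\|\hat x\|_{S^2}^2$ carry no factor of $T$, only the small constants $L_{\sigma_i}^2,\tilde C_{f_i}$; this is why {\bf (A4)}(7) is needed on top of a short horizon. For the backward part, It\^o's formula applied to $|\hat y|^2$ — using the bracket and compensator identities of Lemma \ref{lemma21} for the jump terms — followed by BDG and Gronwall gives
\begin{equation*}
\|\hat y\|_{S^2}^2+\sum_{i=1}^2\|\hat z^i\|_{2,2}^2+\sum_{i=1}^2\|\hat{\tilde z}^i\|_{2,2}^2\ \le\ C\,E\big[|\phi(x_T)-\phi(x'_T)|^2\big]+C\,T\,\|\hat\theta\|_{\mathcal{M}^2}^2\ \le\ C\,\|\hat x\|_{S^2}^2+C\,T\,\|\hat\theta\|_{\mathcal{M}^2}^2 ,
\end{equation*}
where the last inequality uses the Lipschitz property of $\phi$ and that the driver is evaluated at the frozen $\theta$.

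Combining the two displays yields
\begin{equation*}
\|\Phi(\theta)-\Phi(\theta')\|_{\mathcal{M}^2}^2\ \le\ C_0\Big(\omega(T)+T+\textstyle\sum_{i}L_{\sigma_i}^2+\sum_i\tilde C_{f_i}\Big)\,\|\theta-\theta'\|_{\mathcal{M}^2}^2 ,
\end{equation*}
with $C_0$ depending only on $K,\rho$ and the Lipschitz constants of $b,g$. By the smallness in {\bf (A4)}(7) we may assume $C_0\big(\textstyle\sum_iL_{\sigma_i}^2+\sum_i\tilde C_{f_i}\big)\le\frac14$; then there is $\tilde T>0$, depending only on $C_0$ — hence only on $K,L_{\sigma_1},L_{\sigma_2},L_{f_1},L_{f_2},\rho$ (and the Lipschitz constants of $b,g$) — such that $C_0(\omega(T)+T)\le\frac14$ for all $0<T<\tilde T$, so $\Phi$ is a contraction with constant $\tfrac1{\sqrt2}$ on the complete space $\mathcal{M}^2[0,T]$, and Banach's fixed point theorem supplies the unique solution. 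I expect the main obstacle to be organizing the estimates so that the forward bound is itself of contraction type (small coefficient, not merely $O(1)$), since it re-enters the backward bound through the terminal datum $\phi(x_T)$, together with keeping the jump-term estimates — which rely on the bracket/compensator identities of Lemma \ref{lemma21} rather than the classical It\^o calculus — uniform in the small parameters $L_{\sigma_i},L_{f_i}$.
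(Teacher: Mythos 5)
Your proposal is correct and follows essentially the same route as the paper: a Picard/contraction argument on a small horizon, with the same crucial observation that the stochastic-integral contributions of $\sigma_i,f_i$ to the forward $S^2$-estimate carry the constants $L_{\sigma_i}^2,\tilde C_{f_i}$ with no factor of $T$, so that {\bf (A4)}(7) is needed in addition to shrinking $T$. The only (immaterial) difference is the choice of what to freeze: the paper freezes just the backward quintuple, so each iteration solves a decoupled FBSDEP and the map $\mathbb{I}_1$ acts on $\mathcal{N}^2[0,T]$ (requiring an extra absorption of $\|\hat x\|_{S^2}^2$ on the left), whereas you freeze all six components so that each iteration reduces to an explicit forward integral plus a martingale-representation BSDEP on $\mathcal{M}^2[0,T]$.
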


\begin{proof}
For any $(y^\prime,z^{1,\prime},z^{2,\prime},\tilde{z}^{1,\prime},\tilde{z}^{2,\prime})\in \mathcal{N}^2[0,T]$, it is classical that there exists a unique solution $(y,z^1,z^2,\tilde{z}^1,\tilde{z}^2)\in \mathcal{N}^2[0,T]$ to the following decoupled FBSDEP:
\begin{equation}\label{fbsdep2}
\left\{
\begin{aligned}
 dx_t&=b(t,x_t,y^\prime_t,z^{1,\prime}_t,z^{2,\prime}_t,\tilde{z}^{1,\prime}_{(t,e)},\tilde{z}^{2,\prime}_{(t,e)})dt+\sum_{i=1}^2\sigma_i(t,x_t,y^\prime_t,z^{1,\prime}_t,z^{2,\prime}_t,\tilde{z}^{1,\prime}_{(t,e)},\tilde{z}^{2,\prime}_{(t,e)})dW^i_t\\
     &\quad +\sum_{i=1}^2\int_{\mathcal{E}_i}f_i(t,x_t,y^\prime_t,z^{1,\prime}_t,z^{2,\prime}_t,\tilde{z}^{1,\prime}_{(t,e)},\tilde{z}^{2,\prime}_{(t,e)},e)\tilde{N}_i(de,dt),\\
-dy_t&=g(t,\Theta(t))dt-\sum_{i=1}^2z^i_tdW^i_t-\sum_{i=1}^2\int_{\mathcal{E}_i}\tilde{z}^i_{(t,e)}\tilde{N}_i(de,dt),\quad t\in[0,T],\\
  x_0&=x_0,\ \ y_T=\phi(x_T).
\end{aligned}
\right.
\end{equation}
Define a map $\mathbb{I}_1$ from $\mathcal{N}^2[0,T]$ to itself. Let $(y^\prime_i,z^{1,\prime}_i,z^{2,\prime}_i,\tilde{z}^{1,\prime}_i,\tilde{z}^{2,\prime}_i)\in \mathcal{N}^2[0,T]$ and $(y_i,z^1_i,z^2_i,\tilde{z}^1_i,\tilde{z}^2_i)\\=\mathbb{I}_1(y^\prime_i,z^{1,\prime}_i,z^{2,\prime}_i,\tilde{z}^{1,\prime}_i,\tilde{z}^{2,\prime}_i),i=1,2$ and define
\begin{equation}
\begin{aligned}
&\hat{y}^\prime=y^\prime_1-y^\prime_2,\ \hat{z}^{1,\prime}=z^{1,\prime}_1-z^{1,\prime}_2,\ \hat{z}^{2,\prime}=z^{2,\prime}_1-z^{2,\prime}_2,\ \hat{\tilde{z}}^{1,\prime}=\tilde{z}^{1,\prime}_1-\tilde{z}^{1,\prime}_2,\ \hat{\tilde{z}}^{2,\prime}=\tilde{z}^{2,\prime}_1-\tilde{z}^{2,\prime}_2\\
&\hat{y}=y_1-y_2,\ \hat{z}^1=z^1_1-z^1_2,\ \hat{z}^2=z^2_1-z^2_2,\ \hat{\tilde{z}}^1=\tilde{z}^1_1-\tilde{z}^1_2,\ \hat{\tilde{z}}^2=\tilde{z}^2_1-\tilde{z}^2_2,\ \hat{x}=x_1-x_2.
\end{aligned}
\end{equation}
By {\bf (A4)}, we get (We omit some time variables if there is no ambiguity.)
\begin{equation}
\begin{aligned}
&|\hat{x}_t|^2\leq C\bigg[\Big|\int_0^tL_b\big(|\hat{x}|+|\hat{y}^\prime|+|\hat{z}^{1,\prime}|+|\hat{z}^{2,\prime}|+||\hat{\tilde{z}}^{1,\prime}||+||\hat{\tilde{z}}^{2,\prime}||\big)dt\Big|^2\\
&\quad +\sum_{i=1}^2\Big|\int_0^t\big[K(|\hat{x}|+|\hat{y}^\prime|)+L_{\sigma_i}(|\hat{z}^{1,\prime}|+|\hat{z}^{2,\prime}|+||\hat{\tilde{z}}^{1,\prime}||+||\hat{\tilde{z}}^{2,\prime}||)\big]dW^i_t\Big|^2\\
&\quad +\sum_{i=1}^2\Big|\int_0^t\int_{\mathcal{E}_i}\big[\rho(e)(|\hat{x}|+|\hat{y}^\prime|)+L_{f_i}(e)(|\hat{z}^{1,\prime}|+|\hat{z}^{2,\prime}|+|\hat{\tilde{z}}^{1,\prime}|
 +|\hat{\tilde{z}}^{2,\prime}|)\big]\tilde{N}_i(de,dt)\Big|^2\bigg].
\end{aligned}
\end{equation}
Applying B-D-G's inequality and H\"{o}lder's inequality, we obtain
\begin{equation}
\begin{aligned}
&E\Big[\sup_{0\leq t\leq T}|\hat{x}_t|^2\Big]\leq CT E\bigg[\int_0^T\big(|\hat{x}|^2+|\hat{y}^\prime|^2+|\hat{z}^{1,\prime}|^2+|\hat{z}^{2,\prime}|^2+||\hat{\tilde{z}}^{1,\prime}||^2+||\hat{\tilde{z}}^{2,\prime}||^2\big)dt\bigg]\\
&\quad +C E\bigg[\int_0^T\big[K^2(|\hat{x}|^2+|\hat{y}^\prime|^2)+(L_{\sigma_1}^2+L_{\sigma_2}^2)(|\hat{z}^{1,\prime}|^2+|\hat{z}^{2,\prime}|^2+||\hat{\tilde{z}}^{1,\prime}||^2+||\hat{\tilde{z}}^{2,\prime}||^2)\big]dt\bigg]\\
&\quad +C\sum_{i=1}^2E\bigg[\int_0^T\int_{\mathcal{E}_i}\big[\rho^2(e)(|\hat{x}|^2+|\hat{y}^\prime|^2)+L_{f_i}^2(e)(|\hat{z}^{1,\prime}|^2
 +|\hat{z}^{2,\prime}|^2+|\hat{\tilde{z}}^{1,\prime}|^2+|\hat{\tilde{z}}^{2,\prime}|^2)\big]N_i(de,dt)\bigg]\\
&\leq CT E\bigg[T\sup_{0\leq t\leq T}(|\hat{x}|^2+|\hat{y}^\prime|^2)+\sum_{i=1}^2\bigg(\int_0^T|\hat{z}^{i,\prime}|^2dt+\int_0^T\int_{\mathcal{E}_i}|\hat{\tilde{z}}^{i,\prime}|^2\nu_i(de)dt\bigg)\bigg]\\
&\quad +C E\bigg[TK^2\sup_{0\leq t\leq T}(|\hat{x}|^2+|\hat{y}^\prime|^2)+(L_{\sigma_1}^2+L_{\sigma_2}^2)\sum_{i=1}^2\bigg(\int_0^T|\hat{z}^{i,\prime}|^2dt +\int_0^T\int_{\mathcal{E}_i}|\hat{\tilde{z}}^{i,\prime}|^2\nu_i(de)dt\bigg)\bigg]\\
&\quad +C E\bigg[T\sum_{i=1}^2\int_{\mathcal{E}_i}\rho^2(e)\nu_i(de)\sup_{0\leq t\leq T}(|\hat{x}|^2+|\hat{y}^\prime|^2)+\bigg(\sum_{i=1}^2\int_{\mathcal{E}_i}L_{f_i}^2(e)\nu_i(de)\bigg)\sum_{i=1}^2\int_0^T|\hat{z}^{i,\prime}|^2dt\\
&\qquad\qquad +\bigg(\sum_{i=1}^2\sup_{e\in\mathcal{E}_i}L_{f_i}^2(e)\bigg)\sum_{i=1}^2\int_0^T\int_{\mathcal{E}_i}|\hat{\tilde{z}}^{i,\prime}|^2\nu_i(de)dt\bigg]\\
&\leq \bigg[CT^2+CTK^2+CT\sum_{i=1}^2\int_{\mathcal{E}_i}\rho^2(e)\nu_i(de)\bigg]E\Big[\sup_{0\leq t\leq T}(|\hat{x}|^2+|\hat{y}^\prime|^2)\Big]\\
&\quad +\bigg[CT+C(L_{\sigma_1}^2+L_{\sigma_2}^2)+C\sum_{i=1}^2\int_{\mathcal{E}_i}L_{f_i}^2(e)\nu_i(de)\bigg]\sum_{i=1}^2E\int_0^T|\hat{z}^{i,\prime}|^2dt\\
&\quad +\bigg[CT+C(L_{\sigma_1}^2+L_{\sigma_2}^2)+C\sum_{i=1}^2\sup_{e\in\mathcal{E}_i}L_{f_i}^2(e)\bigg]\sum_{i=1}^2E\int_0^T\int_{\mathcal{E}_i}|\hat{\tilde{z}}^{i,\prime}|^2\nu_i(de)dt.
\end{aligned}
\end{equation}
Then we have
\begin{equation}
\begin{aligned}
&\Big\{1-\Big[CT^2+CTK^2+CT\sum_{i=1}^2\int_{\mathcal{E}_i}\rho^2(e)\nu_i(de)\Big]\Big\}E\Big[\sup_{0\leq t\leq T}|\hat{x}_t|^2\Big]\\
&\leq \Big[CT^2+CTK^2+CT\sum_{i=1}^2\int_{\mathcal{E}_i}\rho^2(e)\nu_i(de)\Big]E\Big[\sup_{0\leq t\leq T}|\hat{y}^\prime|^2\Big]+\Big[CT+C(L_{\sigma_1}^2+L_{\sigma_2}^2)\\
&\quad +C\sum_{i=1}^2\int_{\mathcal{E}_i}L_{f_i}^2(e)\nu_i(de)\Big]\sum_{i=1}^2E\int_0^T|\hat{z}^{i,\prime}|^2dt\\
&\quad+\Big[CT+C(L_{\sigma_1}^2+L_{\sigma_2}^2)+C\sum_{i=1}^2\sup_{e\in\mathcal{E}_i}L_{f_i}^2(e)\Big]\sum_{i=1}^2E\int_0^T\int_{\mathcal{E}_i}|\hat{\tilde{z}}^{i,\prime}|^2\nu_i(de)dt,
\end{aligned}
\end{equation}
and there exists $T_0$ such that $CT_0^2+CT_0K^2+CT_0\sum_{i=1}^2\int_{\mathcal{E}_i}\rho^2(e)\nu_i(de)<1$, we have, for $0<T<T_0$
\begin{equation}\label{hatx}
\begin{aligned}
&E\Big[\sup_{0\leq t\leq T}|\hat{x}_t|^2\Big]
\leq \frac{CT^2+CTK^2+CT\sum_{i=1}^2\int_{\mathcal{E}_i}\rho^2(e)\nu_i(de)}{1-\big[CT^2+CTK^2+CT\sum_{i=1}^2\int_{\mathcal{E}_i}\rho^2(e)\nu_i(de)\big]}E\Big[\sup_{0\leq t\leq T}|\hat{y}^\prime|^2\Big]\\
&\quad +\frac{\Big[CT+C(L_{\sigma_1}^2+L_{\sigma_2}^2)+C(\tilde{C}_{f_1}+\tilde{C}_{f_2})\Big]}{1-\big[CT^2+CTK^2+CT\sum_{i=1}^2\int_{\mathcal{E}_i}\rho^2(e)\nu_i(de)\big]}\sum_{i=1}^2E\int_0^T|\hat{z}^{i,\prime}|^2dt\\
&\quad +\frac{\Big[CT+C(L_{\sigma_1}^2+L_{\sigma_2}^2)+C(\tilde{C}_{f_1}+\tilde{C}_{f_2})\Big]}{1-\big[CT^2+CTK^2+CT\sum_{i=1}^2\int_{\mathcal{E}_i}\rho^2(e)\nu_i(de)\big]}\sum_{i=1}^2E\int_0^T\int_{\mathcal{E}_i}|\hat{\tilde{z}}^{i,\prime}|^2\nu_i(de)dt.
\end{aligned}
\end{equation}
Noting that
\begin{equation}
\begin{aligned}
-d\hat{y}_t&=\big[g(t,\Theta_1)-g(t,\Theta_2)\big]dt-\sum_{i=1}^2(z^i_1-z^i_2)dW^i_t-\sum_{i=1}^2\int_{\mathcal{E}_i}(\tilde{z}^i_1-\tilde{z}^i_2)\tilde{N}_i(de,dt)\\
           &=\big[g(t,\Theta_1)-g(t,x_2,y_1-(y_1-y_2),z^1_1-(z^1_1-z^1_2),z^2_1-(z^2_1-z^2_2),\tilde{z}^1_1-(\tilde{z}^1_1-\tilde{z}^1_2),\\
           &\quad\tilde{z}^2_1-(\tilde{z}^2_1-\tilde{z}^2_2))\big]dt -\sum_{i=1}^2(z^i_1-z^i_2)dW^i_t-\sum_{i=1}^2\int_{\mathcal{E}_i}(\tilde{z}^i_1-\tilde{z}^i_2)\tilde{N}_i(de,dt)\\
           &=\big[g(t,\Theta_1)-g(t,x_2,y_1-\hat{y},z^1_1-\hat{z}^1,z^2_1-\hat{z}^2,\tilde{z}^1_1-\hat{\tilde{z}}^1,\tilde{z}^2_1-\hat{\tilde{z}}^2)\big]dt\\
           &\quad-\sum_{i=1}^2\hat{z}^idW^i_t-\sum_{i=1}^2\int_{\mathcal{E}_i}\hat{\tilde{z}}^i\tilde{N}_i(de,dt),
\end{aligned}
\end{equation}
by standard estimates for solutions to BSDEPs (see \cite{BBP97}, \cite{KP16}, \cite{QS13} and its derivative under our assumptions, which can be regarded as a special case of Theorem 2.3 in the following), we have
\begin{equation}
\begin{aligned}
&E\bigg[\sup_{0\leq t\leq T}|\hat{y}_t|^2+\sum_{i=1}^2\bigg(\int_0^T|\hat{z}^i_t|^2dt+\int_0^T\int_{\mathcal{E}_i}|\hat{\tilde{z}}^i_{(t,e)}|^2\nu_i(de)dt\bigg)\bigg]\\
&\leq C E\bigg[\big|\phi(x_{1,T})-\phi(x_{2,T})\big|^2+\int_0^T\big|g(t,\Theta_1)-g(t,x_2,y_1,z^1_1,z^2_1,\tilde{z}^1_1,\tilde{z}^2_1)\big|^2dt\bigg]\\
&\leq C E\bigg[|C\hat{x}_T|^2+\int_0^TC^2|\hat{x}|^2dt\bigg]\leq C(1+T)E\bigg[\sup_{0\leq t\leq T}|\hat{x}_t|^2\bigg].
\end{aligned}
\end{equation}
Combined with \eqref{hatx}, we have
\begin{equation}
\begin{aligned}
&E\bigg[\sup_{0\leq t\leq T}|\hat{y}_t|^2+\sum_{i=1}^2\bigg(\int_0^T|\hat{z}^i_t|^2dt+\int_0^T\int_{\mathcal{E}_i}|\hat{\tilde{z}}^i_{(t,e)}|^2\nu_i(de)dt\bigg)\bigg]\\
&\leq C(1+T)\frac{C(T+T^2)+TK^2+L_{\sigma_1}^2+L_{\sigma_2}^2+\tilde{C}_{f_1}+\tilde{C}_{f_2}+T\sum_{i=1}^2\int_{\mathcal{E}_i}\rho^2(e)\nu_i(de)}{1-\big[CT^2+CTK^2+CT\sum_{i=1}^2\int_{\mathcal{E}_i}\rho^2(e)\nu_i(de)\big]}\\
&\quad \times E\bigg[\sup_{0\leq t\leq T}|\hat{y}^\prime|^2+\sum_{i=1}^2\bigg(\int_0^T|\hat{z}^{i,\prime}|^2dt+\int_0^T\int_{\mathcal{E}_i}|\hat{\tilde{z}}^{i,\prime}_{(t,e)}|^2\nu_i(de)dt\bigg)\bigg].
\end{aligned}
\end{equation}
We take Lipschitz coefficients $L_{\sigma_1},L_{\sigma_2},\tilde{C}_{f_1}$ and $\tilde{C}_{f_2}$ small sufficient, $C$ is a constant not related to $T$ but changed every step, therefore we can choose $0<\tilde{T}\leq T_0$ such that
\begin{equation}
 C(1+\tilde{T})\frac{C(\tilde{T}+\tilde{T}^2)+\tilde{T}K^2+L_{\sigma_1}^2+L_{\sigma_2}^2+\tilde{C}_{f_1}+\tilde{C}_{f_2}+\tilde{T}\sum_{i=1}^2\int_{\mathcal{E}_i}\rho^2(e)\nu_i(de)}{1-\big[C\tilde{T}^2+C\tilde{T}K^2+C\tilde{T}\sum_{i=1}^2\int_{\mathcal{E}_i}\rho^2(e)\nu_i(de)\big]}<1.
\end{equation}
Then, for any $0\leq T\leq\tilde{T}$, $\mathbb{I}_1$ is a contraction mapping on $[0,T]$ which has a unique fixed point $\mathbb{I}_1(y,z^1,z^2,\tilde{z}^1,\tilde{z}^2)=(y,z^1,z^2,\tilde{z}^1,\tilde{z}^2)$. Furthermore, FBSDEP \eqref{fbsdep1} has a unique solution $(x,y,z^1,z^2,\tilde{z}^1,\tilde{z}^2)$ on small interval $[0,T]$.  The proof is complete.
\end{proof}

\begin{remark}\label{rem24}
Theorem 2.1 only gives the existence and uniqueness of solutions to fully coupled FBSDEP when $\beta=2$. In fact, we encounter some difficulties in obtaining the counterpart in $\mathcal{M}^\beta[0,T]$ for $\beta>2$. However, a special fully coupled FBSDEP, where the jump coefficients do not contain $(z^1,z^2,\tilde{z}^1,\tilde{z}^2)$, can be proved to admit a unique solution $(x,y,z^1,z^2,\tilde{z}^1,\tilde{z}^2)$ in $\mathcal{M}^\beta[0,T]$ for $\beta\geq2$, and the proof is similar, we only list the main theorem in the following and omit the details.
\end{remark}

\begin{theorem}\label{the22}
Suppose that {\bf (A4)} holds, for any $\beta\geq2$, then there exists a constant $\tilde{T}>0$ depending on $K,L_{\sigma_1},L_{\sigma_2},\rho$ such that, for every $0\leq T\leq \tilde{T}$, the fully coupled FBSDEP \eqref{fbsdep1}, in which the jump coefficients do not contain $(z^1,z^2,\tilde{z}^1,\tilde{z}^2)$, has a unique solution in $\mathcal{M}^\beta[0,T]$.
\end{theorem}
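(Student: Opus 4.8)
The plan is to run the contraction-mapping argument of the proof of Theorem~\ref{the21} in the $L^\beta$-spaces $\mathcal{N}^\beta[0,T]$ and $\mathcal{M}^\beta[0,T]$ rather than the $L^2$-ones, the decisive point being that, with the jump coefficients $f_1,f_2$ independent of $(z^1,z^2,\tilde{z}^1,\tilde{z}^2)$, the $\beta$-th moment estimate of the forward jump integral no longer feeds back the $z$- and $\tilde{z}$-differences. First I would fix $(y^\prime,z^{1,\prime},z^{2,\prime},\tilde{z}^{1,\prime},\tilde{z}^{2,\prime})\in\mathcal{N}^\beta[0,T]$. Since now $f_i=f_i(t,x,y,e)$ is Lipschitz and dominated by $\rho(e)(1+|x|+|y|)$ with $\int_{\mathcal{E}_i}\rho^\beta(e)\nu_i(de)<\infty$, the forward SDEP in \eqref{fbsdep2} is a standard Lipschitz SDEP whose free terms satisfy the $L^\beta$-integrability in {\bf (A4)}(5)--(6), hence it has a unique solution $x\in S^\beta[0,T]$; then the BSDEP for $(y,z^1,z^2,\tilde{z}^1,\tilde{z}^2)$, with Lipschitz generator $g$ and terminal value $\phi(x_T)$, has a unique solution in $\mathcal{N}^\beta[0,T]$ by the $L^\beta$-theory of Lipschitz BSDEPs (cf. \cite{QS13}, \cite{KP16}, and the $L^\beta$-stability estimate already invoked in the proof of Theorem~\ref{the21}). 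This defines a map $\mathbb{I}_1:\mathcal{N}^\beta[0,T]\to\mathcal{N}^\beta[0,T]$, and it remains to show $\mathbb{I}_1$ is a contraction for $T$ small.

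The core is the $L^\beta$-bound for $\hat{x}=x_1-x_2$. From the SDEP satisfied by $\hat{x}$ and {\bf (A4)}(4),(6),(7), H\"older's inequality controls the drift and the $L^\beta$-Burkholder--Davis--Gundy inequality controls the two Brownian terms, producing, exactly as in Theorem~\ref{the21}, a coefficient of order $T^\beta+T^{\beta/2}K^\beta$ in front of $E[\sup_{0\le t\le T}(|\hat{x}_t|^\beta+|\hat{y}^\prime_t|^\beta)]$ together with a coefficient $C(L_{\sigma_1}^\beta+L_{\sigma_2}^\beta)$ in front of $\sum_{i=1}^2\big(\|\hat{z}^{i,\prime}\|_{2,\beta}^\beta+\|\hat{\tilde{z}}^{i,\prime}\|_{2,\beta}^\beta\big)$. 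The jump terms $\sum_i\int_0^\cdot\int_{\mathcal{E}_i}[f_i(s,x_{1,s},y^\prime_{1,s},e)-f_i(s,x_{2,s},y^\prime_{2,s},e)]\tilde{N}_i(de,ds)$ are handled by a higher-moment Kunita-type inequality: for $\beta\ge2$,
\begin{equation*}
E\Big[\sup_{0\le t\le T}\Big|\int_0^t\!\!\int_{\mathcal{E}_i}g\,\tilde{N}_i(de,ds)\Big|^\beta\Big]\le C\,E\Big[\Big(\int_0^T\!\!\int_{\mathcal{E}_i}|g|^2\nu_i(de)ds\Big)^{\frac{\beta}{2}}\Big]+C\,E\Big[\int_0^T\!\!\int_{\mathcal{E}_i}|g|^\beta\nu_i(de)ds\Big].
\end{equation*}
Because $f_i$ does not involve the $z$-variables, here $g=\rho(e)(|\hat{x}_s|+|\hat{y}^\prime_s|)$, so both right-hand terms reduce, via $\int_{\mathcal{E}_i}\rho^2(e)\nu_i(de)<\infty$, $\int_{\mathcal{E}_i}\rho^\beta(e)\nu_i(de)<\infty$ and H\"older in time, to $(CT^{\beta/2}+CT)\,E[\sup_{0\le t\le T}(|\hat{x}_t|^\beta+|\hat{y}^\prime_t|^\beta)]$, with no $z$- or $\tilde{z}$-difference appearing. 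Absorbing the $\hat{x}$-term into the left side for $T$ below some $T_0$ gives the $L^\beta$-analogue of \eqref{hatx}: $E[\sup_t|\hat{x}_t|^\beta]$ is bounded by a multiple, small in $T$, of $E[\sup_t|\hat{y}^\prime_t|^\beta]$ plus a multiple, small in $T$ and in $L_{\sigma_1},L_{\sigma_2}$, of $\sum_{i=1}^2\big(\|\hat{z}^{i,\prime}\|_{2,\beta}^\beta+\|\hat{\tilde{z}}^{i,\prime}\|_{2,\beta}^\beta\big)$.

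Next, applying the $L^\beta$-stability estimate for BSDEPs to the equation for $\hat{y}=y_1-y_2$ — whose generator, as in Theorem~\ref{the21}, is Lipschitz in $(\hat{y},\hat{z}^1,\hat{z}^2,\hat{\tilde{z}}^1,\hat{\tilde{z}}^2)$ with ``zero value'' bounded by $C|\hat{x}_t|$ and terminal value bounded by $C|\hat{x}_T|$ — yields $E[\sup_t|\hat{y}_t|^\beta+\sum_{i=1}^2(\|\hat{z}^i\|_{2,\beta}^\beta+\|\hat{\tilde{z}}^i\|_{2,\beta}^\beta)]\le C(1+T)\,E[\sup_t|\hat{x}_t|^\beta]$. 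Combining with the previous bound gives
\begin{equation*}
\big\|\mathbb{I}_1(y^\prime_1,\dots)-\mathbb{I}_1(y^\prime_2,\dots)\big\|_{\mathcal{N}^\beta[0,T]}^\beta\le\Lambda\,\big\|(y^\prime_1,\dots)-(y^\prime_2,\dots)\big\|_{\mathcal{N}^\beta[0,T]}^\beta,
\end{equation*}
where $\Lambda=\Lambda(T,K,L_{\sigma_1},L_{\sigma_2},\rho)$ depends only on $K,L_{\sigma_1},L_{\sigma_2},\rho$ — and, notably, not on $L_{f_1},L_{f_2}$, precisely because the jump term has contributed no $z$- or $\tilde{z}$-difference — and $\Lambda<1$ once $L_{\sigma_1},L_{\sigma_2}$ are sufficiently small as in {\bf (A4)}(7) and $T$ is then chosen small. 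Fixing $\tilde{T}\le T_0$ with $\Lambda<1$ for all $0\le T\le\tilde{T}$, the map $\mathbb{I}_1$ is a contraction on $\mathcal{N}^\beta[0,T]$; its unique fixed point, together with the associated $x\in S^\beta[0,T]$, is the unique solution of \eqref{fbsdep1} in $\mathcal{M}^\beta[0,T]$.

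The step I expect to be the real obstacle is the higher-moment estimate of the forward jump integral, specifically the control of the term $E[\int_0^T\int_{\mathcal{E}_i}|g|^\beta\nu_i(de)ds]$ produced by the Kunita-type inequality. Were $f_i$ allowed to depend on $\tilde{z}^i$, this term would become $E[\int_0^T\int_{\mathcal{E}_i}L_{f_i}^\beta(e)|\hat{\tilde{z}}^{i,\prime}_{(t,e)}|^\beta\nu_i(de)dt]$, which for $\beta>2$ is genuinely not dominated by the $F^{2,\beta}_i$-norm $E[(\int_0^T\int_{\mathcal{E}_i}|\hat{\tilde{z}}^{i,\prime}_{(t,e)}|^2\nu_i(de)dt)^{\beta/2}]$ — this is exactly the difficulty flagged in Remark~\ref{rem24}. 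Dropping the $(z^1,z^2,\tilde{z}^1,\tilde{z}^2)$-dependence of $f_i$ collapses the integrand to $\rho(e)(|\hat{x}|+|\hat{y}^\prime|)$, which is $e$-free up to the factor $\rho(e)$, so that $\int_{\mathcal{E}_i}\rho^\beta(e)\nu_i(de)<\infty$ closes the estimate; everything else is the routine $L^\beta$-analogue of the $L^2$ computations in the proof of Theorem~\ref{the21}.
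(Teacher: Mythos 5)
Your proposal is correct and takes essentially the route the paper intends: the proof of Theorem \ref{the22} is omitted (Remark \ref{rem24}) precisely because it is the $L^\beta$-analogue of the contraction argument of Theorem \ref{the21}, and your computations match those the paper carries out in detail for Theorems \ref{the23} and \ref{the24}. Your Kunita-type two-term inequality for the $\beta$-th moment of the forward jump integral is an equivalent substitute for the paper's combination of the B-D-G inequality with the pure-jump/Gronwall estimate $E\big[\big(\int_0^T\int_{\mathcal{E}_i}\rho^2(e)|x|^2N_i(de,ds)\big)^{\frac{\beta}{2}}\big]\leq C_{\beta,T}E\big[\int_0^T\int_{\mathcal{E}_i}\rho^\beta(e)|x|^\beta\nu_i(de)ds\big]$, and you correctly identify that removing the $(z^1,z^2,\tilde{z}^1,\tilde{z}^2)$-dependence of $f_i$ is exactly what makes the $F^{2,\beta}_i$-norm sufficient to close the estimate.
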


\begin{remark}\label{rem25}
As we mentioned before, we fail to obtain the unique solvability of fully-coupled FBSDEP in $\mathcal{M}^\beta[0,T]$ for $\beta>2$, especially for jump coefficient $f_i$ containing $(z^1,z^2,\tilde{z}^1,\tilde{z}^2)$. More specificly, the critical obstacle appears in the norm defined in $F_i^{2,\beta}[0,T]$, which is not supportive enough for us to get the $L^\beta$-estimate for BSDEP for $\beta>2$. Therefore, for obtaining a better result for a more general fully-coupled FBSDEP ($f_i$ can contain $\tilde{z}^1,\tilde{z}^2$ but is independent of $z^1,z^2$), we should change the space $F_i^{2,\beta}[0,T]$ with an another norm.
\end{remark}

Next, we introduce an another kind of necessary solution space as follows to replace the original space $F_i^{2,\beta}[0,T]$:
\begin{equation*}
\tilde{F}_i^{2,\beta}[0,T]:=\bigg\{\tilde{z}^i\big|\tilde{z}^i\mbox{ is $\mathcal{E}_i$-predictable and }E\bigg[\bigg(\int_0^T\int_{\mathcal{E}_i}|\tilde{z}^i_{(t,e)}|^2N_i(de,dt)\bigg)^{\frac{\beta}{2}}\bigg]<\infty\bigg\},\ i=1,2,
\end{equation*}
with norm $||\tilde{z}^i||_{2,\beta}^\beta:=E\big[\big(\int_0^T\int_{\mathcal{E}_i}|\tilde{z}^i_{(t,e)}|^2N_i(de,dt)\big)^{\frac{\beta}{2}}\big]$.

We set $\tilde{\mathcal{M}}^\beta[0,T]:=S^{\beta}[0,T]\times S^{\beta}[0,T]\times M_1^{2,\beta}[0,T]\times M_2^{2,\beta}[0,T]\times \tilde{F}_1^{2,\beta}[0,T]\times \tilde{F}_2^{2,\beta}[0,T]$ and $\tilde{\mathcal{N}}^\beta[0,T]:=S^{\beta}[0,T]\times M_1^{2,\beta}[0,T]\times M_2^{2,\beta}[0,T]\times \tilde{F}_1^{2,\beta}[0,T]\times \tilde{F}_2^{2,\beta}[0,T]$. Specially, for $\beta=2$, it is easy to find that $\tilde{F}_i^{2,\beta}[0,T]=F_i^{2,\beta}[0,T], \tilde{\mathcal{M}}^\beta[0,T]=\mathcal{M}^\beta[0,T]$ and $\tilde{\mathcal{N}}^\beta[0,T]=\mathcal{N}^\beta[0,T]$.

{\bf (A5)}\quad For any $t\in[0,T]$, $(x,y,z^1,z^2,\tilde{z}^1,\tilde{z}^2)\in R^6$, the jump coefficient $f_i$ is independent of $(z^1,z^2)$ for $i=1,2$.

\begin{theorem}\label{the23}
Suppose that {\bf (A4), (A5)} hold, then there exists a constant $\tilde{T}>0$ depending on $K,L_{\sigma_1},L_{\sigma_2}, L_{f_1},L_{f_2}, \rho$ such that, for every $0\leq T\leq\tilde{T}$ and $\beta\geq2$, the fully coupled FBSDEP \eqref{fbsdep1} has a unique solution in $\tilde{\mathcal{M}}^{\beta}[0,T]$.
\end{theorem}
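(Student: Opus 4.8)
The plan is to run the contraction-mapping scheme of the proof of Theorem \ref{the21} in the $L^\beta$-setting, the essential modification being that the jump components are now measured in the $N_i$-based norm of $\tilde F_i^{2,\beta}[0,T]$ rather than the compensator-based norm of $F_i^{2,\beta}[0,T]$ (see Remark \ref{rem25}). First I would define a map $\tilde{\mathbb I}_1$ on $\tilde{\mathcal N}^\beta[0,T]$ as follows: given $(y^\prime,z^{1,\prime},z^{2,\prime},\tilde z^{1,\prime},\tilde z^{2,\prime})\in\tilde{\mathcal N}^\beta[0,T]$, solve the forward SDEP obtained from \eqref{fbsdep1} by freezing these arguments (as in \eqref{fbsdep2}); by the $L^\beta$-theory of SDEPs with jumps this has a unique solution $x\in S^\beta[0,T]$, whose jump term is controlled through the Burkholder--Davis--Gundy inequality in the form $E\big[\sup_t\big|\int_0^t\!\int_{\mathcal E_i}f_i\,\tilde N_i\big|^\beta\big]\le C\,E\big[\big(\int_0^T\!\int_{\mathcal E_i}|f_i|^2N_i(de,ds)\big)^{\beta/2}\big]$, which is exactly what makes $\tilde F_i^{2,\beta}[0,T]$ the natural data space. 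Then solve the BSDEP with generator $g(t,\Theta(t))$ and terminal value $\phi(x_T)$; its unique solvability and $L^\beta$-estimate in $S^\beta[0,T]\times M_1^{2,\beta}[0,T]\times M_2^{2,\beta}[0,T]\times\tilde F_1^{2,\beta}[0,T]\times\tilde F_2^{2,\beta}[0,T]$ follow from the $L^p\,(p\ge2)$ theory of BSDEPs (\cite{BBP97,QS13,KP16}) once the jump part is normed by $N_i$, and this defines $\tilde{\mathbb I}_1(y^\prime,z^{1,\prime},z^{2,\prime},\tilde z^{1,\prime},\tilde z^{2,\prime})=(y,z^1,z^2,\tilde z^1,\tilde z^2)$.

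Next, for two inputs I would estimate, with $\hat x,\hat y,\hat y^\prime,\hat z^{i},\hat z^{i,\prime},\hat{\tilde z}^{i},\hat{\tilde z}^{i,\prime}$ denoting the differences as in the proof of Theorem \ref{the21}, the quantity $E\big[\sup_{0\le t\le T}|\hat x_t|^\beta\big]$. Splitting the drift, diffusion and jump increments and using (A4), BDG and H\"older, the parts Lipschitz in $(x,y)$ (coefficients $K$ and $\rho(e)$, and the drift's $L_b$) contribute factors that are small for small $T$: for the jump contribution of this type one first compensates, bounding $\int_0^T\!\int_{\mathcal E_i}\rho^2(e)(|\hat x_s|^2+|\hat y^\prime_s|^2)N_i(de,ds)$ by its $\tilde N_i$-martingale part (again estimated by BDG, using $\int_{\mathcal E_i}\rho^\beta(e)\nu_i(de)<\infty$) plus a $ds$-term carrying an explicit factor $T$; the part of $\sigma_i$ Lipschitz in $(z^1,z^2,\tilde z^1,\tilde z^2)$ contributes a factor controlled by $L_{\sigma_i}^\beta$, and, crucially by (A5), the part of $f_i$ Lipschitz in $(\tilde z^1,\tilde z^2)$ contributes a factor controlled by $\tilde C_{f_i}=\max\{\sup_{e}L_{f_i}^\beta(e),\int_{\mathcal E_i}L_{f_i}^\beta(e)\nu_i(de)\}$, acting directly on $\|\hat{\tilde z}^{i,\prime}\|_{\tilde F_i^{2,\beta}}^\beta$. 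This gives an inequality of the same shape as \eqref{hatx} with the $F_i^{2,\beta}$-norms replaced by $\tilde F_i^{2,\beta}$-norms. Combining it with the BSDEP $L^\beta$-estimate, which bounds $E\big[\sup_{0\le t\le T}|\hat y_t|^\beta+\sum_{i=1}^2\big(\int_0^T|\hat z^i_t|^2dt\big)^{\beta/2}+\sum_{i=1}^2\big(\int_0^T\!\int_{\mathcal E_i}|\hat{\tilde z}^i_{(t,e)}|^2N_i(de,dt)\big)^{\beta/2}\big]$ by $C(1+T)E\big[\sup_{0\le t\le T}|\hat x_t|^\beta\big]$ (using that $\phi$ and $g$ are Lipschitz in $x$), one obtains a single contraction inequality whose ratio is a sum of a term tending to $0$ as $T\downarrow0$ and a term controlled by $L_{\sigma_1}^\beta+L_{\sigma_2}^\beta+\tilde C_{f_1}+\tilde C_{f_2}$. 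By the smallness of the Lipschitz data assumed in (A4) and by choosing $\tilde T$ small enough, this ratio is $<1$ for every $0\le T\le\tilde T$, so $\tilde{\mathbb I}_1$ is a contraction on $\tilde{\mathcal N}^\beta[0,T]$; its unique fixed point, together with the associated forward process $x$, is the unique solution of \eqref{fbsdep1} in $\tilde{\mathcal M}^\beta[0,T]$.

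I expect the main obstacle to be precisely the $\beta>2$ control of the jump terms and the verification that the whole scheme closes in $\tilde F_i^{2,\beta}[0,T]$. When $\beta=2$ the identity $E\int_0^T\!\int_{\mathcal E_i}|\cdot|^2N_i(de,ds)=E\int_0^T\!\int_{\mathcal E_i}|\cdot|^2\nu_i(de)ds$ erases the difference between the two jump norms; for $\beta>2$ one must keep the quadratic variation $\int_0^T\!\int_{\mathcal E_i}|\cdot|^2N_i(de,ds)$ inside the $\beta/2$-th power, and this is exactly why (A5) is needed: had $f_i$ depended on $(z^1,z^2)$, the contraction would require dominating $E\big[\big(\int_0^T\!\int_{\mathcal E_i}L_{f_i}^2(e)|\hat z^{j,\prime}_s|^2N_i(de,ds)\big)^{\beta/2}\big]$ by the $M_j^{2,\beta}[0,T]$-norm of $\hat z^{j,\prime}$, which is built on $ds$ and not on $N_i(de,ds)$, and no such domination holds in general for $\beta>2$. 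Under (A5) the $(\tilde z^1,\tilde z^2)$-Lipschitz term of $f_i$ only sees $\tilde F_i^{2,\beta}$-data and the loop closes. The remaining points are routine: checking that the iterated integrals appearing in the estimate are finite, so that $\tilde{\mathbb I}_1$ indeed maps $\tilde{\mathcal N}^\beta[0,T]$ into itself, which uses the growth and integrability conditions in (A4) together with the boundedness and $\nu_i$-integrability of $\rho$ and $L_{f_i}$.
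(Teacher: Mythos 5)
Your proposal is correct and follows essentially the same route as the paper: a Picard contraction on $\tilde{\mathcal N}^\beta[0,T]$ obtained by freezing the backward arguments in the forward equation, with the jump data measured in the $N_i$-based norm and the contraction ratio made $<1$ by smallness of $T$ and of $L_{\sigma_i},\tilde C_{f_i}$. The only difference is that where you cite the $L^p$-theory of BSDEPs for the backward step, the paper proves the required solvability and a priori estimate in the $N_i$-normed space itself (Lemma \ref{lemma22} and Lemma \ref{lemma23}), since the standard references state these in the compensator-based norm; your diagnosis of why {\bf (A5)} is indispensable matches Remark \ref{rem25}.
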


Before we prove the Theorem \ref{the23}, we should give the following necessary lemmas about the existence and uniqueness of solution and a priori estimate of BSDEP.

\begin{lemma}\label{lemma22}
Suppose that {\bf (A4)} hold, then for $\beta\geq2$, the following BSDEP
\begin{equation}\label{bsdep01}
\left\{
\begin{aligned}
-dy_t&=g(t,y_t,z^1_t,z^2_t,\tilde{z}^1_{(t,e)},\tilde{z}^2_{(t,e)})dt-\sum_{i=1}^2z^i_tdW^i_t-\sum_{i=1}^2\int_{\mathcal{E}_i}\tilde{z}^i_{(t,e)}\tilde{N}_i(de,dt),\quad t\in[0,T],\\
y_T&=\xi.
\end{aligned}
\right.
\end{equation}
has a unique solution in $\tilde{\mathcal{N}}^{\beta}[0,T]$.
\end{lemma}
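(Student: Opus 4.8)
The plan is to treat \eqref{bsdep01} as a fixed point problem on $\tilde{\mathcal{N}}^\beta[0,T]$ whose defining map sends a quintuple to the solution of an \emph{affine} BSDEP (generator frozen along that quintuple). When $\beta=2$ one has $\tilde F_i^{2,2}[0,T]=F_i^{2,2}[0,T]$ and the statement reduces to the classical Tang--Li / Barles--Buckdahn--Pardoux result (see \cite{TL94}, \cite{BBP97}, \cite{QS13}); so the real content is the passage to $\beta>2$ with the $N_i$-based norm of $\tilde F_i^{2,\beta}[0,T]$, along the lines of the $L^p$-BSDE technology of \cite{QS13}, \cite{KP16} and the $L^2$-to-$L^p$ observation of \cite{MY21arxiv}.

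\smallskip
\noindent\emph{Step 1: the affine problem and the $L^\beta$-estimate.} Fix $(y^\prime,z^{1,\prime},z^{2,\prime},\tilde z^{1,\prime},\tilde z^{2,\prime})\in\tilde{\mathcal{N}}^\beta[0,T]$ and set $h_t:=g(t,y^\prime_t,z^{1,\prime}_t,z^{2,\prime}_t,\tilde z^{1,\prime}_{(t,\cdot)},\tilde z^{2,\prime}_{(t,\cdot)})$; by {\bf (A4)}(4)--(5) one checks $E\big(\int_0^T|h_t|\,dt\big)^\beta<\infty$. Put $y_t:=E\big[\xi+\int_t^T h_s\,ds\,\big|\,\mathcal{F}_t\big]$; the martingale $E\big[\xi+\int_0^T h_s\,ds\,\big|\,\mathcal{F}_t\big]$ lies in $L^\beta$, so the martingale representation theorem for $(W^1,W^2,N_1,N_2)$ produces $(z^1,z^2,\tilde z^1,\tilde z^2)$ (predictable, resp.\ $\mathcal{E}_i$-predictable) for which $(y,z^1,z^2,\tilde z^1,\tilde z^2)$ solves \eqref{bsdep01} with generator $h$; uniqueness is immediate. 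Doob's inequality gives $\|y\|_\beta^\beta\le C\big(E|\xi|^\beta+E(\int_0^T|h_t|\,dt)^\beta\big)$. For the integrands I would apply It\^o's formula to $|y_t|^2$ on $[0,T]$; by Lemma \ref{lemma21}(vi) the jump part of the quadratic variation is \emph{exactly} $\sum_i\int_0^T\!\int_{\mathcal{E}_i}|\tilde z^i_{(t,e)}|^2N_i(de,dt)$, so that
\[
\sum_{i=1}^2\int_0^T|z^i_t|^2\,dt+\sum_{i=1}^2\int_0^T\!\!\int_{\mathcal{E}_i}|\tilde z^i_{(t,e)}|^2N_i(de,dt)=|\xi|^2-|y_0|^2+2\int_0^T y_t h_t\,dt-(\text{mart}).
\]
Raising to the power $\beta/2$, taking expectations, estimating the martingale term by the Burkholder--Davis--Gundy inequality (its brackets are dominated by $\sup_t|y_t|^2\cdot\int_0^T|z^i_t|^2\,dt$ and by $\sup_t|y_t|^2\cdot\int_0^T\!\int_{\mathcal{E}_i}|\tilde z^i|^2N_i$ plus a $\big(\int_0^T\!\int_{\mathcal{E}_i}|\tilde z^i|^2N_i\big)^2$-type remainder coming from the jumps of $|y|^2$), and absorbing via Young's inequality, one obtains
\[
\|y\|_\beta^\beta+\sum_{i=1}^2\|z^i\|_{2,\beta}^\beta+\sum_{i=1}^2\|\tilde z^i\|_{\tilde F_i^{2,\beta}}^\beta\le C\Big(E|\xi|^\beta+E\Big(\int_0^T|h_t|\,dt\Big)^\beta\Big).
\]
This is precisely where the $N_i$-norm of $\tilde F_i^{2,\beta}[0,T]$ is indispensable (cf.\ Remark \ref{rem25}): for $\beta>2$ the jump integral cannot be routed through its $\nu_i$-compensator. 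The same computation for the difference of two affine problems gives the stability estimate $\|\delta y\|_\beta^\beta+\sum_i\|\delta z^i\|_{2,\beta}^\beta+\sum_i\|\delta\tilde z^i\|_{\tilde F_i^{2,\beta}}^\beta\le C\,E\big(\int_0^T|\delta h_t|\,dt\big)^\beta$.

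\smallskip
\noindent\emph{Step 2: contraction and globalisation.} Let $\Psi:\tilde{\mathcal{N}}^\beta[0,T]\to\tilde{\mathcal{N}}^\beta[0,T]$ be the map of Step 1. To bound $\|\Psi(\cdot)-\Psi(\cdot)\|$ I would combine the stability estimate with {\bf (A4)}(4): $\int_0^T|\delta h_t|\,dt\le L\int_0^T\big(|\delta y^\prime_t|+|\delta z^{1,\prime}_t|+|\delta z^{2,\prime}_t|+\|\delta\tilde z^{1,\prime}_t\|_{L^2(\nu_1)}+\|\delta\tilde z^{2,\prime}_t\|_{L^2(\nu_2)}\big)dt$; a H\"older-in-$t$ step then turns the right side, after raising to the $\beta$-th power and taking expectations, into $CT^{\beta/2}$ times $\big(\|\delta y^\prime\|_\beta^\beta+\sum_i\|\delta z^{i,\prime}\|_{2,\beta}^\beta\big)$ plus $\sum_i E\big(\int_0^T\!\int_{\mathcal{E}_i}|\delta\tilde z^{i,\prime}|^2\nu_i\,dt\big)^{\beta/2}$, and the latter is $\le C\sum_i\|\delta\tilde z^{i,\prime}\|_{\tilde F_i^{2,\beta}}^\beta$ by a domination inequality between an increasing pure-jump process and its dual predictable projection. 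Hence $\|\Psi(\cdot)-\Psi(\cdot)\|\le C\,\theta(T)\,\|\cdot-\cdot\|$ with $\theta(T)\to0$ as $T\to0$, so $\Psi$ is a contraction on a short interval whose length depends only on $K,L_{\sigma_i},L_{f_i},\rho$; this gives existence and uniqueness there. Since that length is independent of the terminal data, one extends to all of $[0,T]$ by concatenating finitely many subintervals, the terminal value of each being in $L^\beta$ by the previous step (equivalently, one may work throughout with the equivalent weighted norm $E[\sup_t e^{\beta\lambda t}|y_t|^\beta]+\cdots$ and take $\lambda$ large).

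\smallskip
\noindent The main obstacle is the $L^\beta$-estimate of Step 1: contrary to the $L^2$ case the jump contributions cannot be compensated away, so the whole estimate must be carried through the pathwise bracket $\int\!\int|\tilde z^i|^2N_i$, and one has to control the It\^o remainders produced by the jumps of $|y_t|^2$ (the terms $|y_{t-}+\tilde z^i|^2-|y_{t-}|^2-2y_{t-}\tilde z^i$ and their $\beta/2$-powers) — which is exactly why the space $\tilde F_i^{2,\beta}[0,T]$ is introduced. A secondary but necessary point is the domination inequality invoked in Step 2 to reconcile the $\nu_i$-norm dictated by the Lipschitz condition with the $N_i$-norm of the solution space.
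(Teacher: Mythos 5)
Your proposal follows essentially the same route as the paper's proof: freeze the generator along a given quintuple, solve the resulting affine BSDEP via the martingale representation theorem, establish the $L^\beta$-estimate in the $N_i$-based norm of $\tilde F_i^{2,\beta}[0,T]$, obtain a contraction on a small interval whose length depends only on the Lipschitz data, and concatenate to reach arbitrary $T$; you also correctly identify the two genuinely delicate points (the necessity of the $N_i$-norm for $\beta>2$ and the domination inequality between the $\nu_i$- and $N_i$-norms, which is the paper's Lemma \ref{lemma24}). The only deviation is technical: for the integrand bound in Step 1 you go through It\^o's formula on $|y_t|^2$ and then control the resulting martingale remainders, whereas the paper applies the one-sided Burkholder--Davis--Gundy inequality directly to the martingale $\sum_i\big(\int_0^\cdot z^i\,dW^i+\int_0^\cdot\!\int_{\mathcal{E}_i}\tilde z^i\,\tilde N_i\big)$, whose terminal value is $\xi+\int_0^T g\,ds-y_0$ and hence is bounded immediately by the data --- a slightly more economical version of the same estimate (your It\^o-on-$|y|^2$ computation is essentially what the paper uses later for the stability estimate of Lemma \ref{lemma23}).
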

\begin{proof}
We first prove the case that $g$ dose not contain $(y,z^1,z^2,\tilde{z}^1,\tilde{z}^2)$, then $y$ is given by the right-continuous version of $y_t=E\big[\xi+\int_t^Tg(s)ds|\mathcal{F}_t\big]$. Thus, for $\beta\geq2$, the martingale representation theorem for locally square integrable martingale in \cite{TL94} implies that there exist unique predictable, $\mathcal{E}_1$-predictable and $\mathcal{E}_2$-predictable $(z^1,z^2,\tilde{z}^1,\tilde{z}^2)\in M_1^{2,\beta}[0,T]\times M_2^{2,\beta}[0,T]\times \tilde{F}_1^{2,\beta}[0,T]\times \tilde{F}_2^{2,\beta}[0,T]$ satisfying:
\begin{equation}
E\bigg[\xi+\int_0^Tg(s)ds\bigg|\mathcal{F}_t\bigg]=y_0+\sum_{i=1}^2\bigg(\int_0^tz^i_sdW_s+\int_0^t\int_{\mathcal{E}_i}\tilde{z}^i_{(s,e)}\tilde{N}_i(de,ds)\bigg),\ P\mbox{-}a.s.
\end{equation}
Next, we have $|y_t|=E\big[|\xi|+\int_0^T|g(s)|ds|\mathcal{F}_t\big]$, and using martingale inequality (Ikeda and Watanabe \cite{IW81}), we get
\begin{equation}
E\bigg[\sup_{0\leq t\leq T}|y_t|^\beta\bigg]\leq C_\beta E\bigg[\bigg(|\xi|+\int_0^T|g(s)|ds\bigg)^\beta\bigg],
\end{equation}
where $C_\beta$ is a positive constant which depends only on the number $\beta$ and not related to $T$. Due to {\bf (A4)}, we continue to deduce that
\begin{equation}
E\bigg[\sup_{0\leq t\leq T}|y_t|^\beta\bigg]\leq C_\beta E\bigg[|\xi|^\beta+\bigg(\int_0^T|g(s)|ds\bigg)^\beta\bigg]=C_\beta \bigg[E|\xi|^\beta+E\bigg(\int_0^T|g(s)|ds\bigg)^\beta\bigg]<\infty,
\end{equation}
and by B-D-G's inequality, we have
\begin{equation*}
\begin{aligned}
&E\bigg[\bigg(\sum_{i=1}^2\bigg(\int_0^T|z^i_s|^2ds+\int_0^T\int_{\mathcal{E}_i}|\tilde{z}^i_{(s,e)}|^2N_i(de,ds)\bigg)\bigg)^{\frac{\beta}{2}}\bigg]\\
&\leq c_\beta E\bigg[\bigg(\sum_{i=1}^2\bigg|\int_0^Tz^i_sdW^i_s+\int_0^T\int_{\mathcal{E}_i}\tilde{z}^i_{(s,e)}\tilde{N}_i(de,ds)\bigg|\bigg)^\beta\bigg]\\
&\leq c_\beta E\bigg[\bigg|\xi+\int_0^Tg(s)ds-y_0\bigg|^\beta\bigg]\leq c_{\beta}E\bigg[|\xi|^{\beta}+\bigg(\int_0^T|g(s)|ds\bigg)^\beta\bigg],
\end{aligned}
\end{equation*}
where $c_\beta$ changed every step. Then we define a mapping $\mathcal{X}:\tilde{\mathcal{N}}^\beta[0,T]\rightarrow \tilde{\mathcal{N}}^\beta[0,T]$ and maps $(y^\prime,z^{1,\prime},z^{2,\prime},\tilde{z}^{1,\prime},\tilde{z}^{2,\prime})$ into the solution $(y,z^1,z^2,\tilde{z}^1,\tilde{z}^2)$ of the BSDEP associated with the generator $g(t,y^\prime,z^{1,\prime},z^{2,\prime},\tilde{z}^{1,\prime},\tilde{z}^{2,\prime})$. Given $(y^{1,\prime},z^{1,1,\prime},z^{2,1,\prime},\tilde{z}^{1,1,\prime},\tilde{z}^{2,1,\prime})$, $(y^{2,\prime},z^{1,2,\prime},z^{2,2,\prime},\tilde{z}^{1,2,\prime},\tilde{z}^{2,2,\prime})\in \tilde{\mathcal{N}}^\beta[0,T]$, define $(y^1,z^{1,1},z^{2,1},\tilde{z}^{1,1},\tilde{z}^{2,1})=\mathcal{X}(y^{1,\prime},z^{1,1,\prime},z^{2,1,\prime},\tilde{z}^{1,1,\prime},\tilde{z}^{2,1,\prime})$ and $(y^2,z^{1,2},z^{2,2},\tilde{z}^{1,2},\tilde{z}^{2,2})\\=\mathcal{X}(y^{2,\prime},z^{1,2,\prime},z^{2,2,\prime},\tilde{z}^{1,2,\prime},\tilde{z}^{2,2,\prime})$, and set
\begin{equation}
\begin{aligned}
&\tilde{y}^\prime=y^{1,\prime}-y^{2,\prime},\ \tilde{z}^{1,\prime}=z^{1,1,\prime}-z^{1,2,\prime},\ \tilde{z}^{2,\prime}=z^{2,1,\prime}-z^{2,2,\prime},\ \tilde{\tilde{z}}^{1,\prime}=\tilde{z}^{1,1,\prime}-\tilde{z}^{1,2,\prime},\ \tilde{\tilde{z}}^{2,\prime}=\tilde{z}^{2,1,\prime}-\tilde{z}^{2,2,\prime},\\
&\tilde{y}=y^1-y^2,\ \tilde{z}^1=z^{1,1}-z^{1,2},\ \tilde{z}^2=z^{2,1}-z^{2,2},\ \tilde{\tilde{z}}^1=\tilde{z}^{1,1}-\tilde{z}^{1,2},\ \tilde{\tilde{z}}^2=\tilde{z}^{2,1}-\tilde{z}^{2,2},
\end{aligned}
\end{equation}
where $(\tilde{y},\tilde{z}^1,\tilde{z}^2,\tilde{\tilde{z}}^1,\tilde{\tilde{z}}^2)$ satisfies
\begin{equation}
\begin{aligned}
\tilde{y}_t&=\int_t^T\big[g(t,y^{1,\prime},z^{1,1,\prime},z^{2,1,\prime},\tilde{z}^{1,1,\prime},\tilde{z}^{2,1,\prime})-g(t,y^{2,\prime},z^{1,2,\prime},z^{2,2,\prime},\tilde{z}^{1,2,\prime},\tilde{z}^{2,2,\prime})\big]ds\\
&\quad-\sum_{i=1}^2\int_t^T\tilde{z}^i_sdW^i_s-\sum_{i=1}^2\int_t^T\int_{\mathcal{E}_i}\tilde{\tilde{z}}^i_{s,e}\tilde{N}_i(de,ds),
\end{aligned}
\end{equation}
and the terminal value is equal to zero. For fixed $(y^{1,\prime},z^{1,1,\prime},z^{2,1,\prime},\tilde{z}^{1,1,\prime},\tilde{z}^{2,1,\prime})$ and \\$(y^{2,\prime},z^{1,2,\prime},z^{2,2,\prime},\tilde{z}^{1,2,\prime},\tilde{z}^{2,2,\prime})$, by the above estimate, we get
\begin{equation}
\begin{aligned}
&E\bigg[\sup_{0\leq t\leq T}|\tilde{y}_t|^\beta+\sum_{i=1}^2\bigg(\bigg(\int_0^T|\tilde{z}^i_s|^2ds\bigg)^{\frac{\beta}{2}}+\bigg(\int_0^T\int_{\mathcal{E}_i}|\tilde{\tilde{z}}^i|^2N_i(de,ds)\bigg)^{\frac{\beta}{2}}\bigg)\bigg]\\
&\leq C_\beta E\bigg[\bigg(\int_0^T\big|g(t,y^{1,\prime},z^{1,1,\prime},z^{2,1,\prime},\tilde{z}^{1,1,\prime},\tilde{z}^{2,1,\prime})-g(t,y^{2,\prime},z^{1,2,\prime},z^{2,2,\prime},\tilde{z}^{1,2,\prime},\tilde{z}^{2,2,\prime})\big|ds\bigg)^\beta\bigg]\\
&\leq C_\beta E\bigg[\bigg(\int_0^TC_g(|\tilde{y}^\prime|+|\tilde{z}^{1,\prime}|+|\tilde{z}^{2,\prime}|+||\tilde{\tilde{z}}^{1,\prime}||+||\tilde{\tilde{z}}^{2,\prime}||)ds\bigg)^\beta\bigg]\\
&\leq T^{\frac{\beta}{2}}C_\beta E\bigg[\bigg(\int_0^TC_g^2(|\tilde{y}^\prime|^2+|\tilde{z}^{1,\prime}|^2+|\tilde{z}^{2,\prime}|^2+||\tilde{\tilde{z}}^{1,\prime}||^2+||\tilde{\tilde{z}}^{2,\prime}||^2)ds\bigg)^{\frac{\beta}{2}}\bigg]\\
&\leq T^{\frac{\beta}{2}}C_g^\beta C_\beta E\bigg[T^{\frac{\beta}{2}}\sup_{0\leq t\leq T}|\tilde{y}^\prime|^\beta+\sum_{i=1}^2\bigg(\bigg(\int_0^T|\tilde{z}^{i,\prime}|^2dt\bigg)^{\frac{\beta}{2}}
+\bigg(\int_0^T\int_{\mathcal{E}_i}|\tilde{\tilde{z}}^{i,\prime}|^2N_i(de,dt)\bigg)^{\frac{\beta}{2}}\bigg)\bigg].
\end{aligned}
\end{equation}
Choosing $T$ such that $C_\beta C_g^\beta T^{\frac{\beta}{2}}<1$ and $C_\beta C_g^\beta T^\beta<1$, then the  mapping, from $\tilde{\mathcal{N}}^\beta[0,T]$ to itself, is a contraction, and admits a fixed point which corresponds to the solution to BSDEP in $\tilde{\mathcal{N}}^\beta[0,T]$. For arbitrary $T$, we can obtain the unique solution $(y,z^1,z^2,\tilde{z}^1,\tilde{z}^2)$ by subdividing interval $[0,T]$ into finite number of pieces. The proof is complete.
\end{proof}

Moreover, we need the following a priori estimate.
\begin{lemma}\label{lemma23}
Let $T>0$ and $\xi^1,\xi^2\in L^2_{\mathcal{F}_T}(\Omega)$, and $g^1,g^2$ satisfy {\bf (A4)}, let $(y^1,z^{1,1},z^{2,1},\tilde{z}^{1,1},\tilde{z}^{2,1})$ and $(y^2,z^{1,2},z^{2,2},\tilde{z}^{1,2},\tilde{z}^{2,2})$ be the solutions to the BSDEP \eqref{bsdep01} associated with the terminal conditions $\xi^1,\xi^2$ and the generators $g^1,g^2$, respectively. Then for $\beta\geq2$, there exist some constant $C_{\beta,g,T,\delta}$ depending on $\beta,T,\delta$ and Lipschitz constants $C_g$ such that:
\begin{equation}\label{apriori01}
\begin{aligned}
&E\bigg[\sup_{0\leq t\leq T}|\tilde{y}_t|^\beta\bigg]+E\bigg[\sum_{i=1}^2\bigg(\bigg(\int_0^T|\tilde{z}^i|^2dt\bigg)^{\frac{\beta}{2}}
+\bigg(\int_0^T\int_{\mathcal{E}_i}|\tilde{\tilde{z}}^i|^2N_i(de,dt)\bigg)^{\frac{\beta}{2}}\bigg)\bigg]\\
&\leq C_{\beta,g,T,\delta}E\bigg[|\xi^1-\xi^2|^\beta+\bigg(\int_0^T\big|g^1(s,y^2,z^{1,2},z^{2,2},\tilde{z}^{1,2},\tilde{z}^{2,2})\\
&\qquad\qquad\qquad -g^2(t,y^2,z^{1,2},z^{2,2},\tilde{z}^{1,2},\tilde{z}^{2,2})\big|ds\bigg)^\beta\bigg].
\end{aligned}
\end{equation}
\end{lemma}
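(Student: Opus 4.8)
The plan is to first \emph{linearize} the differenced equation and then run a standard $L^\beta$ energy estimate for BSDEPs. Write $\tilde{y}=y^1-y^2$, $\tilde{z}^i=z^{i,1}-z^{i,2}$, $\tilde{\tilde{z}}^i=\tilde{z}^{i,1}-\tilde{z}^{i,2}$, and decompose the generator increment as
\begin{equation*}
g^1(t,\Theta^1)-g^2(t,\Theta^2)=\big[g^1(t,\Theta^1)-g^1(t,\Theta^2)\big]+\psi_t,\qquad \psi_t:=g^1(t,\Theta^2)-g^2(t,\Theta^2),
\end{equation*}
where $\Theta^j$ abbreviates $(y^j,z^{1,j},z^{2,j},\tilde{z}^{1,j},\tilde{z}^{2,j})$. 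By {\bf (A4)} the first bracket is bounded by $C_g(|\tilde{y}_t|+|\tilde{z}^1_t|+|\tilde{z}^2_t|+\|\tilde{\tilde{z}}^1_t\|+\|\tilde{\tilde{z}}^2_t\|)$, so $(\tilde{y},\tilde{z}^1,\tilde{z}^2,\tilde{\tilde{z}}^1,\tilde{\tilde{z}}^2)$ solves \eqref{bsdep01} with terminal value $\xi^1-\xi^2$ and a generator Lipschitz in the unknowns plus the exogenous source $\psi$; its existence and uniqueness in $\tilde{\mathcal{N}}^\beta[0,T]$ is Lemma \ref{lemma22}, so only the norm estimate is at stake.

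Next I would apply It\^o's formula to $|\tilde{y}_t|^\beta$ (legitimate for $\beta\geq2$, as $u\mapsto|u|^\beta$ is $C^2$ with gradient $\beta|u|^{\beta-2}u$ and Hessian controlled by $\beta(\beta-1)|u|^{\beta-2}$), using its jump form. Moving the nonnegative correction terms to the left and taking expectations (after localizing away the martingale integrals) yields
\begin{equation*}
E|\tilde{y}_t|^\beta+\tfrac{\beta(\beta-1)}{2}E\!\int_t^T\!|\tilde{y}_s|^{\beta-2}\textstyle\sum_i|\tilde{z}^i_s|^2ds+E\!\sum_{t<s\leq T}\!\Big(|\tilde{y}_{s-}+\Delta\tilde{y}_s|^\beta-|\tilde{y}_{s-}|^\beta-\beta|\tilde{y}_{s-}|^{\beta-2}\tilde{y}_{s-}\Delta\tilde{y}_s\Big)=E|\xi^1-\xi^2|^\beta+\beta E\!\int_t^T\!|\tilde{y}_{s-}|^{\beta-2}\tilde{y}_{s-}\big[g^1(s,\Theta^1)-g^2(s,\Theta^2)\big]ds.
\end{equation*}
I would then bound the last term by the Lipschitz estimate above together with Young's inequality, $|\tilde{y}|^{\beta-1}|\tilde{z}^i|\leq\delta|\tilde{y}|^{\beta-2}|\tilde{z}^i|^2+C_\delta|\tilde{y}|^\beta$ (and likewise for the $\tilde{\tilde{z}}^i$ and $\psi$ contributions), choosing the free parameter $\delta$ small so the $|\tilde y|^{\beta-2}|\tilde z|^2$ pieces are absorbed by the It\^o correction on the left — this is the source of the dependence of $C_{\beta,g,T,\delta}$ on $\delta$. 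Gronwall then gives $\sup_{0\leq t\leq T}E|\tilde{y}_t|^\beta\leq C\,E\big[|\xi^1-\xi^2|^\beta+(\int_0^T|\psi_s|ds)^\beta\big]$, and a further B-D-G application to the recombined martingale part (with the usual norm-splitting and Young absorption of $\sup_s|\tilde{y}_s|^{\beta/2}$) upgrades this to the bound on $E[\sup_t|\tilde{y}_t|^\beta]$.

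It remains to recover the $\tilde{z}^i,\tilde{\tilde{z}}^i$ estimates, for which I would mimic the computation in the proof of Lemma \ref{lemma22}: rearranging \eqref{bsdep01} for the differenced system gives $\sum_i\int_0^T\tilde{z}^i_sdW^i_s+\sum_i\int_0^T\int_{\mathcal{E}_i}\tilde{\tilde{z}}^i_{(s,e)}\tilde{N}_i(de,ds)=\tilde{y}_0-(\xi^1-\xi^2)+\int_0^T[g^1(s,\Theta^1)-g^2(s,\Theta^2)]ds$, so by B-D-G the quantity $E\big[\sum_i((\int_0^T|\tilde{z}^i|^2ds)^{\beta/2}+(\int_0^T\int_{\mathcal{E}_i}|\tilde{\tilde{z}}^i|^2N_i(de,ds))^{\beta/2})\big]$ is dominated by $C_\beta E\big[|\tilde{y}_0|^\beta+|\xi^1-\xi^2|^\beta+(\int_0^T|g^1(s,\Theta^1)-g^2(s,\Theta^2)|ds)^\beta\big]$. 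Splitting the generator increment once more and applying H\"older's inequality in time, the Lipschitz $\tilde{z},\tilde{\tilde{z}}$ contributions carry a factor $T^{\beta/2}C_g^\beta$, which is $<\tfrac12$ on a small interval and can be absorbed on the left (the standard domination between the $\nu_i(de)ds$- and $N_i(de,ds)$-type norms, already used in Lemma \ref{lemma22}, makes this legitimate inside $\tilde{\mathcal{N}}^\beta$); together with the $\sup$-estimate on $\tilde{y}$ this closes \eqref{apriori01}, and for an arbitrary horizon one patches finitely many such subintervals.

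The main obstacle is the jump correction term when $\beta>2$: the sum $\sum_{s\leq T}\big(|\tilde{y}_{s-}+\Delta\tilde{y}_s|^\beta-|\tilde{y}_{s-}|^\beta-\beta|\tilde{y}_{s-}|^{\beta-2}\tilde{y}_{s-}\Delta\tilde{y}_s\big)$ is no longer a clean multiple of $\int\int|\tilde{\tilde{z}}|^2\nu$, so both keeping it as a discardable nonnegative term and bounding the jump martingale after B-D-G require the two-sided elementary inequality $c_\beta\big(|a|^{\beta-2}|b|^2\wedge|b|^\beta\big)\leq|a+b|^\beta-|a|^\beta-\beta|a|^{\beta-2}\langle a,b\rangle\leq C_\beta\big(|a|^{\beta-2}|b|^2+|b|^\beta\big)$, followed by recombining the $|b|^\beta$-pieces over jumps through $(\sum_{s\leq T}|\Delta\tilde{y}_s|^2)^{\beta/2}\lesssim(\int_0^T\int_{\mathcal{E}_i}|\tilde{\tilde{z}}^i|^2N_i(de,ds))^{\beta/2}$. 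This is exactly why the solution space must be $\tilde{F}_i^{2,\beta}[0,T]$ (norm built on $N_i$) rather than $F_i^{2,\beta}[0,T]$ (norm built on the compensator), and why assumption {\bf (A5)} is needed so that the forward estimate in Theorem \ref{the23} matches this backward one.
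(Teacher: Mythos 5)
Your proposal is correct in outline but follows a genuinely different route from the paper at the key technical step. You apply It\^o's formula directly to $|\tilde{y}_t|^\beta$, which forces you to confront the jump correction term $\sum_{s}(|\tilde{y}_{s-}+\Delta\tilde{y}_s|^\beta-|\tilde{y}_{s-}|^\beta-\beta|\tilde{y}_{s-}|^{\beta-2}\tilde{y}_{s-}\Delta\tilde{y}_s)$ and the two-sided elementary inequality you quote — this is the classical Kruse--Popier/Yao style of $L^p$-theory for BSDEs with jumps. The paper instead applies It\^o only to $|\tilde{y}|^2$, obtains a \emph{pathwise} inequality for $\sum_i(\int_0^T|\tilde{z}^i|^2dt+\int_0^T\int_{\mathcal{E}_i}|\tilde{\tilde{z}}^i|^2N_i(de,dt))$ in which the quadratic jump bracket appears exactly (no correction term to estimate), raises that inequality to the power $\beta/2$, and only then takes expectations, handling the stochastic integrals via $E|\int\tilde{y}\tilde{z}^i dW^i|^{\beta/2}\leq C E[(\int|\tilde{y}|^2|\tilde{z}^i|^2dt)^{\beta/4}]$ and Young's inequality; the $\sup|\tilde{y}|^\beta$ estimate is then obtained from the integral form of the equation plus B-D-G and closed by substituting back, under the smallness condition $1-C_{\beta,g,T,\delta}>0$. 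The paper's route buys you complete avoidance of the $\beta$-th power jump calculus (the bracket $\int\int|\tilde{\tilde{z}}^i|^2N_i$ is already the quantity you want to estimate); your route is more standard in the literature and generalizes more readily, but is heavier at the jump term. Your diagnosis of why the norm must be built on $N_i$ rather than $\nu_i$, and the role of {\bf (A5)}, matches the paper's Remark \ref{rem25}. One detail to repair in your write-up: as literally ordered ("Gronwall then B-D-G"), the Young bound on the source term $\beta\int|\tilde{y}_s|^{\beta-2}\tilde{y}_s\psi_s\,ds$ at the $\sup_tE|\tilde{y}_t|^\beta$ stage produces $\int_0^T|\psi_s|^\beta ds$ rather than the required weaker quantity $(\int_0^T|\psi_s|ds)^\beta$; to get the latter you must first pull $\sup_s|\tilde{y}_s|^{\beta-1}$ out of the time integral and absorb $\varepsilon E[\sup_s|\tilde{y}_s|^\beta]$, which is only available after the B-D-G pass — so the two steps have to be run together, exactly as the paper does when it combines its displays before invoking $1-C_{\beta,g,T,\delta}>0$.
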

\begin{proof}
It is easy to derive that $(\tilde{y},\tilde{z}^1,\tilde{z}^2,\tilde{\tilde{z}}^1,\tilde{\tilde{z}}^2)$ satisfies:
\begin{equation}\label{bsdetildey}
\left\{
\begin{aligned}
-d\tilde{y}_t&=\big[g^1(t,y^1,z^{1,1},z^{2,1},\tilde{z}^{1,1},\tilde{z}^{2,1})-g^2(t,y^2,z^{1,2},z^{2,2},\tilde{z}^{1,2},\tilde{z}^{2,2})\big]dt\\
&\quad-\sum_{i=1}^2\tilde{z}^idW^i_t-\sum_{i=1}^2\int_{\mathcal{E}_i}\tilde{\tilde{z}}^i\tilde{N}_i(de,dt),\quad t\in[0,T],\\
\tilde{y}_T&=\xi^1-\xi^2.
\end{aligned}
\right.
\end{equation}
Applying It\^{o}'s formula to $|\tilde{y}|^2$, we have
\begin{equation}
\begin{aligned}
&|\tilde{y}_0|^2+\sum_{i=1}^2\bigg(\int_0^T|\tilde{z}^i|^2dt+\int_0^T\int_{\mathcal{E}_i}|\tilde{\tilde{z}}^i|^2N_i(de,dt)\bigg)\\
&=|\tilde{y}_T|^2+\int_0^T2\tilde{y}\big[g^1(t,y^1,z^{1,1},z^{2,1},\tilde{z}^{1,1},\tilde{z}^{2,1})-g^2(t,y^2,z^{1,2},z^{2,2},\tilde{z}^{1,2},\tilde{z}^{2,2})\big]dt\\
&\qquad -\sum_{i=1}^2\int_0^T2\tilde{y}\tilde{z}^idW^i_t-\sum_{i=1}^2\int_0^T\int_{\mathcal{E}_i}2\tilde{y}\tilde{\tilde{z}}^i\tilde{N}_i(de,dt),
\end{aligned}
\end{equation}
and
\begin{equation*}
\begin{aligned}
&\sum_{i=1}^2\bigg(\int_0^T|\tilde{z}^i|^2dt+\int_0^T\int_{\mathcal{E}_i}|\tilde{\tilde{z}}^i|^2N_i(de,dt)\bigg)\\
&\leq|\xi^1-\xi^2|^2+\int_0^T2\tilde{y}\big[g^1(t,y^1,z^{1,1},z^{2,1},\tilde{z}^{1,1},\tilde{z}^{2,1})-g^1(t,y^2,z^{1,2},z^{2,2},\tilde{z}^{1,2},\tilde{z}^{2,2})\\
&\quad+g^1(t,y^2,z^{1,2},z^{2,2},\tilde{z}^{1,2},\tilde{z}^{2,2})-g^2(t,y^2,z^{1,2},z^{2,2},\tilde{z}^{1,2},\tilde{z}^{2,2})\big]dt\\
&\quad-\sum_{i=1}^2\int_0^T2\tilde{y}\tilde{z}^idW^i_t-\sum_{i=1}^2\int_0^T\int_{\mathcal{E}_i}2\tilde{y}\tilde{\tilde{z}}^i\tilde{N}_i(de,dt)\\
&\leq |\xi^1-\xi^2|^2+\int_0^T2\tilde{y}\big[C_g(|\tilde{y}|+|\tilde{z}^1|+|\tilde{z}^2|+||\tilde{\tilde{z}}^1||+||\tilde{\tilde{z}}^2||)\big]dt\\
&\quad+\int_0^T2\tilde{y}\big[g^1(t,y^2,z^{1,2},z^{2,2},\tilde{z}^{1,2},\tilde{z}^{2,2})-g^2(t,y^2,z^{1,2},z^{2,2},\tilde{z}^{1,2},\tilde{z}^{2,2})\big]dt\\
&\quad+\sum_{i=1}^2\bigg(\bigg|\int_0^T2\tilde{y}\tilde{z}^idW^i_t\bigg|+\bigg|\int_0^T\int_{\mathcal{E}_i}2\tilde{y}\tilde{\tilde{z}}^i\tilde{N}_i(de,dt)\bigg|\bigg)\\
\end{aligned}
\end{equation*}
\begin{equation}
\begin{aligned}
&\leq |\xi^1-\xi^2|^2+\int_0^T\big[2C_g|\tilde{y}|^2+4C_g^2|\tilde{y}|^2+\frac{1}{2}|\tilde{z}^1|^2+\frac{1}{2}|\tilde{z}^2|^2+\frac{1}{2}||\tilde{\tilde{z}}^1||^2+\frac{1}{2}||\tilde{\tilde{z}}^2||^2\big]dt\\
&\quad+2\sup_{0\leq t\leq T}|\tilde{y}|\int_0^T\big[g^1(t,y^2,z^{1,2},z^{2,2},\tilde{z}^{1,2},\tilde{z}^{2,2})-g^2(t,y^2,z^{1,2},z^{2,2},\tilde{z}^{1,2},\tilde{z}^{2,2})\big]dt\\
&\quad+\sum_{i=1}^2\bigg(\bigg|\int_0^T2\tilde{y}\tilde{z}_idW^i_t\bigg|+\bigg|\int_0^T\int_{\mathcal{E}_i}2\tilde{y}\tilde{\tilde{z}}^i\tilde{N}_i(de,dt)\bigg|\bigg)\\
&\leq |\xi^1-\xi^2|^2+(2C_g+4C_g^2)T\sup_{0\leq t\leq T}|\tilde{y}|^2+\frac{1}{2}\bigg[\sum_{i=1}^2\bigg(\int_0^T|\tilde{z}^i|^2dt+\int_0^T\int_{\mathcal{E}_i}|\tilde{\tilde{z}}^i|^2N_i(de,dt)\bigg)\bigg]\\
&\quad+2\delta\sup_{0\leq t\leq T}|\tilde{y}|^2+\frac{1}{2\delta}\bigg(\int_0^T[g^1(t,y^2,z^{1,2},z^{2,2},\tilde{z}^{1,2},\tilde{z}^{2,2})-g^2(t,y^2,z^{1,2},z^{2,2},\tilde{z}^{1,2},\tilde{z}^{2,2})]dt\bigg)^2\\
&\quad+\sum_{i=1}^2\bigg(\bigg|\int_0^T2\tilde{y}\tilde{z}^idW^i_t\bigg|+\bigg|\int_0^T\int_{\mathcal{E}_i}2\tilde{y}\tilde{\tilde{z}}^i\tilde{N}_i(de,dt)\bigg|\bigg).
\end{aligned}
\end{equation}
Then we get
\begin{equation}\label{ineq2}
\begin{aligned}
&E\bigg[\sum_{i=1}^2\bigg(\bigg(\int_0^T|\tilde{z}^i|^2dt\bigg)^{\frac{\beta}{2}}+\bigg(\int_0^T\int_{\mathcal{E}_i}|\tilde{\tilde{z}}^i|^2N_i(de,dt)\bigg)^{\frac{\beta}{2}}\bigg)\bigg]\\
&\leq C_\beta\bigg\{E|\xi^1-\xi^2|^\beta+(2C_g+4C_g^2)^{\frac{\beta}{2}}T^{\frac{\beta}{2}}E\bigg[\sup_{0\leq t\leq T}|\tilde{y}|^\beta\bigg]+\delta^{\frac{\beta}{2}}E\bigg[\sup_{0\leq t\leq T}|\tilde{y}|^\beta\bigg]\\
&\quad+\frac{1}{\delta^{\frac{\beta}{2}}}E\bigg(\int_0^T[g^1(t,y^2,z^{1,2},z^{2,2},\tilde{z}^{1,2},\tilde{z}^{2,2})-g^2(t,y^2,z^{1,2},z^{2,2},\tilde{z}^{1,2},\tilde{z}^{2,2})]dt\bigg)^\beta\\
&\quad+E\sum_{i=1}^2\bigg|\int_0^T\tilde{y}\tilde{z}^idW^i_t\bigg|^{\frac{\beta}{2}}
+E\sum_{i=1}^2\bigg|\int_0^T\int_{\mathcal{E}_i}\tilde{y}\tilde{\tilde{z}}^i\tilde{N}_i(de,dt)\bigg|^{\frac{\beta}{2}}\bigg\}.
\end{aligned}
\end{equation}
Note that
\begin{equation}
\begin{aligned}
E\bigg|\int_0^T\tilde{y}\tilde{z}^idW^i_t\bigg|^{\frac{\beta}{2}}&\leq CE\bigg(\int_0^T|\tilde{y}|^2|\tilde{z}^i|^2dt\bigg)^{\frac{\beta}{4}}
\leq C E\bigg[\sup_{0\leq t\leq T}|\tilde{y}_t|^{\frac{\beta}{2}}\bigg(\int_0^T|\tilde{z}^i|^2dt\bigg)^{\frac{\beta}{4}}\bigg]\\
&\leq \frac{C^2}{2}E\bigg[\sup_{0\leq t\leq T}|\tilde{y}_t|^\beta\bigg]+\frac{1}{2}E\bigg[\bigg(\int_0^T|\tilde{z}^i|^2dt\bigg)^{\frac{\beta}{2}}\bigg],
\end{aligned}
\end{equation}
and
\begin{equation}
\begin{aligned}
&E\bigg|\int_0^T\int_{\mathcal{E}_i}\tilde{y}\tilde{\tilde{z}}^i\tilde{N}_i(de,dt)\bigg|^{\frac{\beta}{2}}
\leq C E\bigg(\int_0^T\int_{\mathcal{E}_i}|\tilde{y}|^2|\tilde{\tilde{z}}^i|^2N_i(de,dt)\bigg)^{\frac{\beta}{4}}\\
&\leq C E\bigg[\sup_{0\leq t\leq T}|\tilde{y}_t|^{\frac{\beta}{2}}\bigg(\int_0^T\int_{\mathcal{E}_i}|\tilde{\tilde{z}}^i|^2N_i(de,dt)\bigg)^{\frac{\beta}{4}}\bigg]\\
&\leq \frac{C^2}{2}E\bigg[\sup_{0\leq t\leq T}|\tilde{y}_t|^\beta\bigg]+\frac{1}{2}E\bigg[\bigg(\int_0^T\int_{\mathcal{E}_i}|\tilde{\tilde{z}}^i|^2N_i(de,dt)\bigg)^{\frac{\beta}{2}}\bigg].\\
\end{aligned}
\end{equation}
Then the inequality \eqref{ineq2} becomes
\begin{equation}\label{ineq3}
\begin{aligned}
&E\bigg[\sum_{i=1}^2\bigg(\bigg(\int_0^T|\tilde{z}^i|^2dt\bigg)^{\frac{\beta}{2}}+\bigg(\int_0^T\int_{\mathcal{E}_i}|\tilde{\tilde{z}}^i|^2N_i(de,dt)\bigg)^{\frac{\beta}{2}}\bigg)\bigg]\\
&\leq C_\beta\bigg\{E|\xi^1-\xi^2|^\beta+\big[(2C_g+4C_g^2)^{\frac{\beta}{2}}T^{\frac{\beta}{2}}+\delta^{\frac{\beta}{2}}+C^2\big]E\bigg[\sup_{0\leq t\leq T}|\tilde{y}_t|^\beta\bigg]\\
&\quad+\frac{1}{\delta^{\frac{\beta}{2}}}E\bigg(\int_0^T[g^1(t,y^2,z^{1,2},z^{2,2},\tilde{z}^{1,2},\tilde{z}^{2,2})-g^2(t,y^2,z^{1,2},z^{2,2},\tilde{z}^{1,2},\tilde{z}^{2,2})]dt\bigg)^\beta\bigg\}.
\end{aligned}
\end{equation}
Meanwhile, we can obtain from the integral form of \eqref{bsdetildey} that
\begin{equation}
\begin{aligned}
|\tilde{y}|^\beta&\leq C_\beta\bigg\{|\xi^1-\xi^2|^\beta+\bigg|\int_t^T[g^1(s,y^1,z^{1,1},z^{2,1},\tilde{z}^{1,1},\tilde{z}^{2,1})-g^2(t,y^2,z^{1,2},z^{2,2},\tilde{z}^{1,2},\tilde{z}^{2,2})]ds\bigg|^\beta\\
&\qquad +\sum_{i=1}^2\bigg(\bigg|\int_t^T\tilde{z}^idW^i_s\bigg|^\beta+\bigg|\int_t^T\int_{\mathcal{E}_i}\tilde{\tilde{z}}^i\tilde{N}_i(de,ds)\bigg|^\beta\bigg)\bigg\}.\\
\end{aligned}
\end{equation}
By B-D-G's inequality, we have
\begin{equation}
\begin{aligned}
&E\bigg[\sup_{0\leq t\leq T}|\tilde{y}_t|^\beta\bigg]\leq C_\beta\bigg\{E|\xi^1-\xi^2|^\beta+E\bigg|\int_0^T[g^1(s,y^1,z^{1,1},z^{2,1},\tilde{z}^{1,1},\tilde{z}^{2,1})\\
&\qquad -g^2(t,y^2,z^{1,2},z^{2,2},\tilde{z}^{1,2},\tilde{z}^{2,2})]ds\bigg|^\beta+E\sum_{i=1}^2\bigg(\int_0^T|\tilde{z}^i|^2ds\bigg)^{\frac{\beta}{2}}\\
&\qquad +E\sum_{i=1}^2\bigg(\int_0^T\int_{\mathcal{E}_i}|\tilde{\tilde{z}}^i|^2N_i(de,ds)\bigg)^{\frac{\beta}{2}}\bigg\}\\
&\leq C_\beta\bigg\{E|\xi^1-\xi^2|^\beta+E\bigg(\int_0^TC_g(|\tilde{y}|+|\tilde{z}^1|+|\tilde{z}^2|+||\tilde{\tilde{z}}^1||+||\tilde{\tilde{z}}^2||)ds\bigg)^\beta\\
&\qquad +E\bigg(\int_0^T|g^1(s,y^2,z^{1,2},z^{2,2},\tilde{z}^{1,2},\tilde{z}^{2,2})-g^2(t,y^2,z^{1,2},z^{2,2},\tilde{z}^{1,2},\tilde{z}^{2,2})|ds\bigg)^\beta\\
&\qquad +E\sum_{i=1}^2\bigg(\int_0^T|\tilde{z}^i|^2ds\bigg)^{\frac{\beta}{2}}+E\sum_{i=1}^2\bigg(\int_0^T\int_{\mathcal{E}_i}|\tilde{\tilde{z}}^i|^2N_i(de,ds)\bigg)^{\frac{\beta}{2}}\bigg\}.
\end{aligned}
\end{equation}
Substituting \eqref{ineq3}, we can rearrange the above inequality as follows
\begin{equation}
\begin{aligned}
E\bigg[\sup_{0\leq t\leq T}|\tilde{y}_t|^\beta\bigg]&\leq C_{\beta,g,T} E|\xi^1-\xi^2|^\beta+C_{\beta,g, T,\delta}E\bigg(\int_0^T|g^1(s,y^2,z^{1,2},z^{2,2},\tilde{z}^{1,2},\tilde{z}^{2,2})\\
&\quad-g^2(t,y^2,z^{1,2},z^{2,2},\tilde{z}^{1,2},\tilde{z}^{2,2})|ds\bigg)^\beta+C_{\beta,g,T,\delta}E\bigg[\sup_{0\leq t\leq T}|\tilde{y}_t|^\beta\bigg].
\end{aligned}
\end{equation}
Let $1-C_{\beta,g,T,\delta}>0$ and we get
\begin{equation}
\begin{aligned}
E\bigg[\sup_{0\leq t\leq T}|\tilde{y}_t|^\beta\bigg]&\leq C_{\beta,g,T,\delta}\bigg[E|\xi^1-\xi^2|^\beta+E\bigg(\int_0^T|g^1(s,y^2,z^{1,2},z^{2,2},\tilde{z}^{1,2},\tilde{z}^{2,2})\\
&\qquad\qquad\quad -g^2(t,y^2,z^{1,2},z^{2,2},\tilde{z}^{1,2},\tilde{z}^{2,2})|ds\bigg)^\beta\bigg].
\end{aligned}
\end{equation}
Finally, the a priori estimate is given in the following
\begin{equation}
\begin{aligned}
&E\bigg[\sup_{0\leq t\leq T}|\tilde{y}_t|^\beta\bigg]+E\bigg[\sum_{i=1}^2\bigg(\bigg(\int_0^T|\tilde{z}^i|^2dt\bigg)^{\frac{\beta}{2}}
+\bigg(\int_0^T\int_{\mathcal{E}_i}|\tilde{\tilde{z}}^i|^2N_i(de,dt)\bigg)^{\frac{\beta}{2}}\bigg)\bigg]\\
&\leq C_{\beta,g,T,\delta}E\bigg[|\xi^1-\xi^2|^\beta+\bigg(\int_0^T|g^1(s,y^2,z^{1,2},z^{2,2},\tilde{z}^{1,2},\tilde{z}^{2,2})\\
&\qquad\qquad\qquad -g^2(t,y^2,z^{1,2},z^{2,2},\tilde{z}^{1,2},\tilde{z}^{2,2})|ds\bigg)^\beta\bigg].\\
\end{aligned}
\end{equation}
The proof is complete.
\end{proof}

We now back to the proof of Theorem \ref{the23}.
\begin{proof}
For any $(y^\prime,z^{1,\prime},z^{2,\prime},\tilde{z}^{1,\prime},\tilde{z}^{2,\prime})\in \tilde{\mathcal{N}}^\beta[0,T]$, it is easy to get a unique solution $(y,z^1,z^2,\tilde{z}^1,\tilde{z}^2)\\\in \tilde{\mathcal{N}}^\beta[0,T]$ to the decoupled FBSDEP as follows:
\begin{equation}\label{fbsdep01}
\left\{
\begin{aligned}
 dx_t&=b(t,x_t,y^\prime_t,z^{1,\prime}_t,z^{2,\prime}_t,\tilde{z}^{1,\prime}_{(t,e)},\tilde{z}^{2,\prime}_{(t,e)})dt+\sum_{i=1}^2\sigma_i(t,x_t,y^\prime_t,z^{1,\prime}_t,z^{2,\prime}_t,\tilde{z}^{1,\prime}_{(t,e)},\tilde{z}^{2,\prime}_{(t,e)})dW^i_t\\
     &\quad +\sum_{i=1}^2\int_{\mathcal{E}_i}f_i(t,x_t,y^\prime_t,\tilde{z}^{1,\prime}_{(t,e)},\tilde{z}^{2,\prime}_{(t,e)},e)\tilde{N}_i(de,dt),\\
-dy_t&=g(t,\Theta(t))dt-\sum_{i=1}^2z^i_tdW^i_t-\sum_{i=1}^2\int_{\mathcal{E}_i}\tilde{z}^i_{(t,e)}\tilde{N}_i(de,dt),\quad t\in[0,T],\\
  x_0&=x_0,\ \ y_T=\phi(x_T),
\end{aligned}
\right.
\end{equation}
by solving forward SDEP first in $S^\beta[0,T]$ and then solving BSDEP in $\tilde{\mathcal{N}}^\beta[0,T]$, sequentially.
Define a map $\mathcal{X}_1$ from $\tilde{\mathcal{N}}^\beta[0,T]$ to itself. Let $(y^\prime_i,z^{1,\prime}_i,z^{2,\prime}_i,\tilde{z}^{1,\prime}_i,\tilde{z}^{2,\prime}_i)\in \tilde{\mathcal{N}}^\beta[0,T]$ and $(y_i,z^1_i,z^2_i,\tilde{z}^1_i,\tilde{z}^2_i)=\mathcal{X}_1(y^\prime_i,z^{1,\prime}_i,z^{2,\prime}_i,\tilde{z}^{1,\prime}_i,\tilde{z}^{2,\prime}_i),i=1,2$ and define
\begin{equation}
\begin{aligned}
&\hat{y}^\prime=y^\prime_1-y^\prime_2,\ \hat{z}^{1,\prime}=z^{1,\prime}_1-z^{1,\prime}_2,\ \hat{z}^{2,\prime}=z^{2,\prime}_1-z^{2,\prime}_2,\ \hat{\tilde{z}}^{1,\prime}=\tilde{z}^{1,\prime}_1-\tilde{z}^{1,\prime}_2,\ \hat{\tilde{z}}^{2,\prime}=\tilde{z}^{2,\prime}_1-\tilde{z}^{2,\prime}_2,\\
&\hat{y}=y_1-y_2,\ \hat{z}^1=z^1_1-z^1_2,\ \hat{z}^2=z^2_1-z^2_2,\ \hat{\tilde{z}}^1=\tilde{z}^1_1-\tilde{z}^1_2,\ \hat{\tilde{z}}^2=\tilde{z}^2_1-\tilde{z}^2_2,\ \hat{x}=x_1-x_2.
\end{aligned}
\end{equation}
By {\bf (A4)}, we get
\begin{equation}
\begin{aligned}
|\hat{x}_t|^\beta&\leq C_\beta\bigg\{\bigg|\int_0^tC\big(|\hat{x}|+|\hat{y}^\prime|+|\hat{z}^{1,\prime}|+|\hat{z}^{2,\prime}|+||\hat{\tilde{z}}^{1,\prime}||+||\hat{\tilde{z}}^{2,\prime}||\big)ds\bigg|^\beta\\
&\qquad\quad+\sum_{i=1}^2\bigg|\int_0^t\big[K(|\hat{x}|+|\hat{y}^\prime|)+L_{\sigma_i}(|\hat{z}^{1,\prime}|+|\hat{z}^{2,\prime}|+||\hat{\tilde{z}}^{1,\prime}||+||\hat{\tilde{z}}^{2,\prime}||)\big]dW^i_s\bigg|^\beta\\
&\qquad\quad +\sum_{i=1}^2\bigg|\int_0^t\int_{\mathcal{E}_i}\big[\rho(e)(|\hat{x}|+|\hat{y}^\prime|)+L_{f_i}(e)(|\hat{\tilde{z}}^{1,\prime}|+|\hat{\tilde{z}}^{2,\prime}|)\big]\tilde{N}_i(de,ds)\bigg|^\beta\bigg\},
\end{aligned}
\end{equation}
and
\begin{equation}
\begin{aligned}
&E\bigg[\sup_{0\leq t\leq T}|\hat{x}_t|^\beta\bigg]\leq C_\beta\bigg\{(CT^{\frac{\beta}{2}}+L_{\sigma_1}^\beta+L_{\sigma_2}^\beta) \sum_{i=1}^2E\bigg(\int_0^T|\hat{z}^{i,\prime}|^2dt\bigg)^{\frac{\beta}{2}}\\
&\qquad+\big(CT^{\frac{\beta}{2}}+L_{\sigma_1}+L_{\sigma_2}+\tilde{C}_{f_1}+\tilde{C}_{f_2}\big)\sum_{i=1}^2E\bigg(\int_0^T\int_{\mathcal{E}_i}|\hat{\tilde{z}}^{i,\prime}|^2N_i(de,dt)\bigg)^{\frac{\beta}{2}}\\
&\qquad+\bigg(C T^\beta+K^\beta T^{\frac{\beta}{2}}+T\sum_{i=1}^2\int_{\mathcal{E}_i}\rho^\beta(e)\nu_i(de)\bigg)E\bigg[\sup_{0\leq t\leq T}|\hat{x}_t|^\beta+\sup_{0\leq t\leq T}|\hat{y}^\prime_t|^\beta\bigg]\bigg\}.
\end{aligned}
\end{equation}
And there exists a $\hat{T}$ such that $1-C_\beta\bigg(C \hat{T}^\beta+K^\beta\hat{T}^{\frac{\beta}{2}}+\hat{T}\sum_{i=1}^2\int_{\mathcal{E}_i}\rho^\beta(e)\nu_i(de)\bigg)>0$, then we have, for $T\leq\hat{T}$,
\begin{equation}\label{hatxbeta}
\begin{aligned}
&E\bigg[\sup_{0\leq t\leq T}|\hat{x}_t|^\beta\bigg]\leq \frac{C_\beta (C T^{\frac{\beta}{2}}+L_{\sigma_1}^\beta+L_{\sigma_2}^\beta)}{1-C_\beta\bigg(C T^\beta+K^\beta T^{\frac{\beta}{2}}+T\sum_{i=1}^2\int_{\mathcal{E}_i}\rho^\beta(e)\nu_i(de)\bigg)}\sum_{i=1}^2E\bigg(\int_0^T|\hat{z}^{i,\prime}|^2dt\bigg)^{\frac{\beta}{2}}\\
&\quad+\frac{C_\beta(C T^{\frac{\beta}{2}}+L_{\sigma_1}^\beta+L_{\sigma_2}^\beta+\tilde{C}_{f_1}+\tilde{C}_{f_2})}{1-C_\beta\bigg(C T^\beta+K^\beta T^{\frac{\beta}{2}}
+T\sum_{i=1}^2\int_{\mathcal{E}_i}\rho^\beta(e)\nu_i(de)\bigg)}\sum_{i=1}^2E\bigg(\int_0^T\int_{\mathcal{E}_i}|\hat{\tilde{z}}^{i,\prime}|^2N_i(de,dt)\bigg)^{\frac{\beta}{2}}\\
&\quad+\frac{C_\beta\bigg(C T^\beta+K^\beta T^{\frac{\beta}{2}}+T\sum_{i=1}^2\int_{\mathcal{E}_i}\rho^\beta(e)\nu_i(de)\bigg)}{1-C_\beta\bigg(C T^\beta+K^\beta T^{\frac{\beta}{2}}+T\sum_{i=1}^2\int_{\mathcal{E}_i}\rho^\beta(e)\nu_i(de)\bigg)}E\bigg[\sup_{0\leq t\leq T}|\hat{y}^\prime_t|^\beta\bigg].
\end{aligned}
\end{equation}
Next, we can get the BSDEP of $(\hat{y},\hat{z}^1,\hat{z}^2,\hat{\tilde{z}}^1,\hat{\tilde{z}}^2)$ in the following:
\begin{equation}
\left\{
\begin{aligned}
-d\hat{y}_t&=\big[g(t,\Theta_1)-g(t,\Theta_2)\big]dt-\sum_{i=1}^2\hat{z}^i_tdW^i_t-\sum_{i=1}^2\int_{\mathcal{E}_i}\hat{\tilde{z}}^i_{(t,e)}\tilde{N}_i(de,dt),\quad t\in[0,T],\\
\hat{y}_T&=\phi(x_{1T})-\phi(x_{2T}).
\end{aligned}
\right.
\end{equation}
By the a priori estimate \eqref{apriori01} in Lemma \ref{lemma23}, we have
\begin{equation}\label{hatyzzbeta}
\begin{aligned}
&E\bigg[\sup_{0\leq t\leq T}|\hat{y}_t|^\beta\bigg]+E\bigg[\sum_{i=1}^2\bigg(\bigg(\int_0^T|\hat{z}^i|^2dt\bigg)^{\frac{\beta}{2}}+\bigg(\int_0^T\int_{\mathcal{E}_i}|\hat{\tilde{z}}^i|^2N_i(de,dt)\bigg)^{\frac{\beta}{2}}\bigg)\bigg]\\
&\leq C\bigg[E|\phi(x_{1T})-\phi(x_{2T})|^\beta+E\bigg(\int_0^T|g(t,x_1,y_1,z^1_1,z^2_1,\tilde{z}^1_1,\tilde{z}^2_1)-g(t,x_2,y_1,z^1_1,z^2_1,\tilde{z}^1_1,\tilde{z}^2_1)|dt\bigg)^\beta\bigg]\\
&\leq C\bigg[E\big[C\hat{x}_T\big]^\beta+E\bigg(\int_0^T\hat{x}_tdt\bigg)^\beta\bigg]
\leq C\big(1+T^\beta\big)E\bigg[\sup_{0\leq t\leq T}|\hat{x}|^\beta\bigg].
\end{aligned}
\end{equation}
Then substituting \eqref{hatxbeta} into \eqref{hatyzzbeta}, we can obtain
\begin{equation}
\begin{aligned}
&E\bigg[\sup_{0\leq t\leq T}|\hat{y}_t|^\beta\bigg]+E\bigg[\sum_{i=1}^2\bigg(\bigg(\int_0^T|\hat{z}^i|^2dt\bigg)^{\frac{\beta}{2}}
+\bigg(\int_0^T\int_{\mathcal{E}_i}|\hat{\tilde{z}}^i|^2N_i(de,dt)\bigg)^{\frac{\beta}{2}}\bigg)\bigg]\\
&\leq \frac{C_\beta\big(1+T^\beta\big)\Big[C (T^{\frac{\beta}{2}}+T^\beta)+L_{\sigma_1}^\beta+L_{\sigma_2}^\beta+\tilde{C}_{f_1}+\tilde{C}_{f_2}+K^\beta T^{\frac{\beta}{2}}+T\sum_{i=1}^2\int_{\mathcal{E}_i}\rho^\beta(e)\nu_i(de)\Big]}
{1-C_\beta\Big[C T^\beta+K^\beta T^{\frac{\beta}{2}}+T\sum_{i=1}^2\int_{\mathcal{E}_i}\rho^\beta(e)\nu_i(de)\Big]}\\
&\quad \times E\bigg[\sum_{i=1}^2\bigg(\bigg(\int_0^T|\hat{z}^{i,\prime}|^2dt\bigg)^{\frac{\beta}{2}}
+\bigg(\int_0^T\int_{\mathcal{E}_i}|\hat{\tilde{z}}^{i,\prime}|^2N_i(de,dt)\bigg)^{\frac{\beta}{2}}\bigg)+\sup_{0\leq t\leq T}|\hat{y}^\prime_t|^\beta\bigg].
\end{aligned}
\end{equation}
We take Lipschitz coefficients $L_{\sigma_1},L_{\sigma_2},\tilde{C}_{f_1}$ and $\tilde{C}_{f_2}$ small enough, $C$ is a constant not related to $T$ but changed every step. Thus, there exists a $\tilde{T}\leq\hat{T}$ small enough such that
\begin{equation}
\begin{aligned}
\frac{C_\beta\big(1+ \tilde{T}^\beta\big)\Big[C (\tilde{T}^{\frac{\beta}{2}}+\tilde{T}^\beta)+L_{\sigma_1}^\beta+L_{\sigma_2}^\beta+\tilde{C}_{f_1}+\tilde{C}_{f_2}+K^\beta \tilde{T}^{\frac{\beta}{2}}+\tilde{T}\sum_{i=1}^2\int_{\mathcal{E}_i}\rho^\beta(e)\nu_i(de)\Big]}
{1-C_\beta\Big[C \tilde{T}^\beta+K^\beta\tilde{T}^{\frac{\beta}{2}}+\tilde{T}\sum_{i=1}^2\int_{\mathcal{E}_i}\rho^\beta(e)\nu_i(de)\Big]}<1.
\end{aligned}
\end{equation}
Then for any $0\leq T\leq \tilde{T}$, $\mathcal{X}_1$ is a contraction on $[0,T]$, which has a unique fixed point $\mathcal{X}_1(y,z^1,z^2,\tilde{z}^1,\tilde{z}^2)=(y,z^1,z^2,\tilde{z}^1,\tilde{z}^2)$. Moreover, FBSDEP (\ref{fbsdep1}) admits a unique solution $(x,y,z^1,z^2,\tilde{z}^1,\tilde{z}^2)$ on small interval $[0,T]$. The proof is complete.
\end{proof}
 The following assumption is needed.

{\bf (A6)}\quad For any $t\in[0,T]$, $(x,y,z^1,z^2,\tilde{z}^1,\tilde{z}^2)\in R^6$, the jump coefficient $f_i$ is independent of $(z^1,z^2,\tilde{z}^1,\tilde{z}^2)$ for $i=1,2$.

If we consider \eqref{fbsdep1} in a parameterized form by initial condition $(t,\xi)\in[0,T]\times L^2(\Omega,\mathcal{F}_t,P;\\\mathbb{R})$:
\begin{equation}\label{fbsdept1}
\left\{
\begin{aligned}
 dx^{t,\xi}_s&=b(s,\Theta_s^{t,\xi})ds+\sum_{i=1}^2\sigma_i(s,\Theta_s^{t,\xi})dW^i_s+\sum_{i=1}^2\int_{\mathcal{E}_i}f_i(s,\Theta_{s-}^{t,\xi},e)\tilde{N}_i(de,ds),\\
-dy^{t,\xi}_s&=g(s,\Theta_s^{t,\xi})ds-\sum_{i=1}^2z^{i,t,\xi}_sdW^i_s-\sum_{i=1}^2\int_{\mathcal{E}_i}\tilde{z}^{i,t,\xi}_{(s,e)}\tilde{N}_i(de,ds),\quad s\in[t,t+\delta],\\
  x^{t,\xi}_t&=\xi,\ \ y^{t,\xi}_{t+\delta}=\phi(x^{t,\xi}_{t+\delta}),
\end{aligned}
\right.
\end{equation}
where $\Theta_s^{t,\xi}=(x^{t,\xi}_s,y^{t,\xi}_s,z^{1,t,\xi}_s,z^{2,t,\xi}_s,\tilde{z}^{1,t,\xi}_{(s,e)},\tilde{z}^{2,t,\xi}_{(s,e)})$. Then, similar to {\bf Theorem 3.4} in Li and Wei \cite{LW14}, we can also obtain the following estimate with respect to the initial value in a conditional expectation form.

\begin{lemma}\label{Lbetaestimationt}
Let assumption {\bf (A4),(A6)} hold. Then, for every $\beta\geq2$, there exists a sufficiently small constant $\tilde{\delta}$ depending on $(K,L_{\sigma_1},L_{\sigma_2},\rho,L_{f_1},L_{f_2})$ and constant $C_{\beta,K,\rho,\sigma_1,\sigma_2,f_1,f_2,L}$ depending on $(\beta,K,\rho,L_{\sigma_1},L_{\sigma_2},L_{f_1},L_{f_2},L)$ such that for every $0\leq\delta\leq\tilde{\delta}$ and $\xi\in L^\beta(\Omega,\mathcal{F}_t,P;\mathbb{R})$,
\begin{equation}\label{Lbeta estimate1}
\begin{aligned}
&E\bigg[\sup_{t\leq s\leq t+\delta}|x^{t,\xi}_s|^{\beta}+\sup_{t\leq s\leq t+\delta}|y^{t,\xi}_s|^{\beta}+\sum_{i=1}^2\bigg(\int_t^{t+\delta}|z^{i,t,\xi}_s|^2ds\bigg)^{\frac{\beta}{2}}\\
&\quad+\sum_{i=1}^2\bigg(\int_t^{t+\delta}\int_{\mathcal{E}_i}|\tilde{z}^{i,t,\xi}_{(s,e)}|^2\nu_i(de)ds\bigg)^{\frac{\beta}{2}}\bigg|\mathcal{F}_t\bigg]
\leq C_{\beta,K,\rho,\sigma_1,\sigma_2,f_1,f_2,L}(1+|\xi|^{\beta}),\ P\mbox{-}a.s..
\end{aligned}
\end{equation}
\end{lemma}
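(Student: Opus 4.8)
The plan is to adapt, to the jump setting and to conditional expectations, the forward/backward a priori estimate scheme of Theorem 3.4 in Li and Wei \cite{LW14}. Under {\bf (A4)} together with {\bf (A6)} the parameterized system \eqref{fbsdept1} (with the possibly random initial datum $\xi\in L^\beta(\Omega,\mathcal{F}_t,P;\mathbb{R})$) falls into the framework of Theorem \ref{the22}, so for $\delta$ small it already admits a unique solution in $\mathcal{M}^{\beta}[t,t+\delta]$, and it remains only to estimate it. The single new feature compared with Theorems \ref{the22}--\ref{the23} is that $\xi$ is $\mathcal{F}_t$-measurable, whence $E[|\xi|^\beta\mid\mathcal{F}_t]=|\xi|^\beta$, and that every stochastic integral appearing below starts at time $t$, hence is an $\{\mathcal{F}_s\}_{s\geq t}$-martingale with vanishing $\mathcal{F}_t$-conditional expectation; consequently the conditional Burkholder--Davis--Gundy, H\"older and Gronwall-type inequalities apply $P$-a.s., and all constants stay uniform in $t$ because every bound in {\bf (A4)} is uniform in $(t,\omega)$.

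\emph{Forward estimate.} Write $x^{t,\xi}_s-\xi$ as the drift integral of $b$ plus the stochastic integrals against $W^1,W^2,\tilde N_1,\tilde N_2$, and take $E[\sup_{t\leq s\leq r}|\cdot|^\beta\mid\mathcal{F}_t]$ using conditional B--D--G (for the Poisson parts, in the $\beta\geq2$ form that also produces a $\int_t^r\!\int_{\mathcal{E}_i}|f_i|^\beta\nu_i(de)\,ds$ term) and conditional H\"older. For the diffusion I split $\sigma_i(s,\Theta^{t,\xi}_s)=\sigma_i(s,x^{t,\xi}_s,y^{t,\xi}_s,0,0,0,0)+[\sigma_i(s,\Theta^{t,\xi}_s)-\sigma_i(s,x^{t,\xi}_s,y^{t,\xi}_s,0,0,0,0)]$: the first summand has linear growth in $(x,y)$ only and contributes a coefficient $O(\delta^{\beta/2})$, while by {\bf (A4)}(7) the second is dominated by $L_{\sigma_i}$ times the $(z^1,z^2,\tilde z^1,\tilde z^2)$-seminorms, contributing a coefficient $CL_{\sigma_i}^\beta$; by {\bf (A6)} the jump coefficients $f_i$ do not depend on $(z^1,z^2,\tilde z^1,\tilde z^2)$, so the Poisson terms enter only through $\rho$ and $\sup_s(1+|x^{t,\xi}_s|^\beta+|y^{t,\xi}_s|^\beta)$ with a coefficient $O(\delta)$; and the $(z^1,z^2,\tilde z^1,\tilde z^2)$-dependence of $b$, sitting inside a Lebesgue integral, carries a coefficient $O(\delta^{\beta/2})$ after Cauchy--Schwarz. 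Absorbing $E[\sup_s|x^{t,\xi}_s|^\beta\mid\mathcal{F}_t]$ from the right (possible for $\delta$ small), I obtain, for $t\leq r\leq t+\delta$,
\[
E\big[\sup_{t\leq s\leq r}|x^{t,\xi}_s|^{\beta}\,\big|\,\mathcal{F}_t\big]\leq C_\beta\big(1+|\xi|^{\beta}\big)+A(\delta)\,E\big[\mathcal{R}_r\,\big|\,\mathcal{F}_t\big],
\]
where $\mathcal{R}_r:=\sup_{t\leq s\leq r}|y^{t,\xi}_s|^{\beta}+\sum_{i=1}^2\big(\int_t^r|z^{i,t,\xi}_s|^2ds\big)^{\beta/2}+\sum_{i=1}^2\big(\int_t^r\!\int_{\mathcal{E}_i}|\tilde z^{i,t,\xi}_{(s,e)}|^2\nu_i(de)\,ds\big)^{\beta/2}$, and $A(\delta)$ is a combination of $O(\delta^{\beta/2})$-terms and of $CL_{\sigma_i}^\beta$, hence small by the smallness of $\delta$ and of $L_{\sigma_i}$ in {\bf (A4)}(7).

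\emph{Backward estimate and combination.} Applying It\^{o}'s formula to $|y^{t,\xi}_s|^2$ on $[s,t+\delta]$ and taking $E[\cdot\mid\mathcal{F}_t]$, the argument of Lemma \ref{lemma23} (in the $\nu_i$-form of the bracket norms, which is the one matching $\mathcal{M}^\beta$ here; cf.\ also \cite{QS13}) gives
\[
E\big[\mathcal{R}_{t+\delta}\,\big|\,\mathcal{F}_t\big]\leq C\big(1+E\big[\sup_{t\leq s\leq t+\delta}|x^{t,\xi}_s|^{\beta}\,\big|\,\mathcal{F}_t\big]\big)+B(\delta)\,E\big[\sup_{t\leq s\leq t+\delta}|x^{t,\xi}_s|^{\beta}\,\big|\,\mathcal{F}_t\big],
\]
where the $O(\delta^{\beta/2})$ factor $B(\delta)$ comes from the $(z^1,z^2,\tilde z^1,\tilde z^2)$-dependence of $g$ inside a drift integral, and where I used the linear growth of $\phi$ and the $L^\beta$-integrability of $g$ at the origin in {\bf (A4)}. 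Substituting this into the forward estimate, for $\delta\leq\tilde\delta$ small enough the coefficient multiplying $E[\sup_{t\leq s\leq t+\delta}|x^{t,\xi}_s|^\beta\mid\mathcal{F}_t]$ on the right (it equals $A(\delta)$ times the bounded backward factor $C+B(\delta)$, hence $\to 0$, with $L_{\sigma_i}$ already small) is at most $\tfrac12$, so it can be absorbed on the left; this gives $E[\sup_{t\leq s\leq t+\delta}|x^{t,\xi}_s|^\beta\mid\mathcal{F}_t]\leq C(1+|\xi|^\beta)$, and feeding it back into the backward estimate yields the same bound for $\mathcal{R}_{t+\delta}$, i.e.\ \eqref{Lbeta estimate1}. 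The threshold $\tilde\delta$ and the constant depend only on $(\beta,K,\rho,L_{\sigma_1},L_{\sigma_2},L_{f_1},L_{f_2},L)$ since every bound invoked is uniform in $(t,\omega)$.

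\emph{Main obstacle.} The difficulty is not any single estimate but the bookkeeping: one must (i) run B--D--G and the absorption/Gronwall step in the $\mathcal{F}_t$-conditional form, being careful that in the jump case the $\beta$-th power B--D--G bound carries the extra $\int\!\int|f_i|^\beta\nu_i\,ds$ (resp.\ $|\tilde z^i|^\beta$) term, and (ii) track every constant so that the final $\tilde\delta$ depends only on $(K,L_{\sigma_1},L_{\sigma_2},\rho,L_{f_1},L_{f_2})$ and not on $t$ --- which is precisely why {\bf (A6)} (no $(z^1,z^2,\tilde z^1,\tilde z^2)$ in $f_i$) and the smallness in {\bf (A4)}(7) are imposed, mirroring the hypotheses of Theorem \ref{the22} and of \cite[Theorem 3.4]{LW14}.
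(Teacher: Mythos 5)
Your argument is correct and is essentially the route the paper intends: the paper states Lemma \ref{Lbetaestimationt} without proof, deferring to Theorem 3.4 of \cite{LW14}, and the computation you reproduce --- a forward estimate placing a small coefficient $A(\delta)$ (built from $O(\delta^{\beta/2})$, $O(\delta)$ for the jump terms, and $L_{\sigma_i}^{\beta}$) in front of the backward quantities, a backward estimate with a bounded coefficient in front of $E[\sup_s|x^{t,\xi}_s|^{\beta}\mid\mathcal{F}_t]$, and then absorption for $\delta$ and $L_{\sigma_i}$ small --- is exactly the scheme carried out unconditionally in the proof of Theorem \ref{the24}. The only deviations (running everything under $E[\cdot\mid\mathcal{F}_t]$ for integrals started at $t$, and using It\^{o}'s formula on $|y|^2$ as in Lemma \ref{lemma23} instead of the martingale-representation route of Theorem \ref{the24} for the backward step) are cosmetic and do not change the argument.
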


Motivated by the {\bf Theorem 2.1} in the recent result in \cite{MY21arxiv}, by the critical and thought-provoking technique in their paper, we give an extension result to the case with Poisson jumps. Then we introduce the first main theorem in our paper.
\begin{theorem}\label{MYextension}
Let {\bf (A4),(A6)} hold, and assume that, for every $\xi,\tilde{\xi}\in L^\beta(\Omega,\mathcal{F}_t,P;\mathbb{R})$, the $L^2$-estimations of FBSDEP \eqref{fbsdept1} are given as follows:

$\mathrm{(i)}$
\begin{equation}\label{L2estimation1}
\begin{aligned}
&E\bigg[\sup_{t\leq s\leq T}|x^{t,\xi}_s|^2+\sup_{t\leq s\leq T}|y^{t,\xi}_s|^2+\sum_{i=1}^2\int_t^T|z^{i,t,\xi}_s|^2ds+\sum_{i=1}^2\int_t^T\int_{\mathcal{E}_i}|\tilde{z}^{i,t,\xi}_{(s,e)}|^2\nu_i(de)ds\bigg|\mathcal{F}_t\bigg]\\
&\qquad\leq K_1(1+|\xi|^2),
\end{aligned}
\end{equation}
$\mathrm{(ii)}$
\begin{equation}\label{L2estimation2}
\begin{aligned}
&E\bigg[\sup_{t\leq s\leq T}|x^{t,\xi}_s-x^{t,\tilde{\xi}}_s|^2+\sup_{t\leq s\leq T}|y^{t,\xi}_s-y^{t,\tilde{\xi}}_s|^2+\sum_{i=1}^2\int_t^T|z^{i,t,\xi}_s-z^{i,t,\tilde{\xi}}_s|^2ds\\
&\qquad+\sum_{i=1}^2\int_t^T\int_{\mathcal{E}_i}|\tilde{z}^{i,t,\xi}_{(s,e)}-\tilde{z}^{i,t,\tilde{\xi}}_{(s,e)}|^2\nu_i(de)ds\bigg|\mathcal{F}_t\bigg]\leq K_1|\xi-\tilde{\xi}|^2,
\end{aligned}
\end{equation}
where constant $K_1$ is positive and independent of $t\in[0,T]$.

Then FBSDEP \eqref{fbsdept1} admits a unique $L^\beta(\beta>2)$-solution with $t=0$ and any given terminal time $T$, i.e.,
\begin{equation}\label{Lbetaresult1}
\begin{aligned}
&E\bigg[\sup_{0\leq s\leq T}|x^{0,\xi}_s|^\beta+\sup_{0\leq s\leq T}|y^{0,\xi}_s|^\beta+\sum_{i=1}^2\bigg(\int_0^T|z^{i,0,\xi}_s|^2ds\bigg)^{\frac{\beta}{2}}+\sum_{i=1}^2\bigg(\int_0^T\int_{\mathcal{E}_i}|\tilde{z}^{i,0,\xi}_{(s,e)}|^2\nu_i(de)ds\bigg)^{\frac{\beta}{2}}\bigg]\\
&\qquad\leq K_2(1+|\xi|^\beta),
\end{aligned}
\end{equation}
\begin{equation}\label{Lbetaresult2}
\begin{aligned}
&E\bigg[\sup_{0\leq s\leq T}|x^{0,\xi}_s-x^{0,\tilde{\xi}}_s|^\beta+\sup_{0\leq s\leq T}|y^{0,\xi}_s-y^{0,\tilde{\xi}}_s|^\beta+\sum_{i=1}^2\bigg(\int_0^T|z^{i,0,\xi}_s-z^{i,0,\tilde{\xi}}_s|^2ds\bigg)^{\frac{\beta}{2}}\\
&\qquad+\sum_{i=1}^2\bigg(\int_0^T\int_{\mathcal{E}_i}|\tilde{z}^{i,0,\xi}_{(s,e)}-\tilde{z}^{i,0,\tilde{\xi}}_{(s,e)}|^2\nu_i(de)ds\bigg)^{\frac{\beta}{2}}\bigg]\leq K_2|\xi-\tilde{\xi}|^\beta,
\end{aligned}
\end{equation}
\end{theorem}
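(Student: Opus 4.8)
The plan is to promote the $L^2$-solution of \eqref{fbsdept1} on $[0,T]$ — whose existence, together with the conditional $L^2$-estimates \eqref{L2estimation1}--\eqref{L2estimation2}, is part of the hypotheses — to an $L^\beta$-solution by chaining the \emph{local} $L^\beta$-estimate of Lemma \ref{Lbetaestimationt} over a fixed finite partition of $[0,T]$. Uniqueness in the $L^\beta$-class comes for free, since an $L^\beta$-solution is in particular an $L^2$-solution and the latter is unique; so only the quantitative bounds \eqref{Lbetaresult1}--\eqref{Lbetaresult2} have to be established, and they will follow as soon as the local estimate can be applied on each piece of the partition with one and the same constant.

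First I would record a decoupling representation of the backward component at the partition points. For $t\in[0,T]$ and $\zeta\in L^2(\Omega,\mathcal{F}_t,P;\mathbb{R})$, let $v_t(\zeta):=y^{t,\zeta}_t$ be the time-$t$ value of the backward part of \eqref{fbsdept1} run on $[t,T]$ from $x^{t,\zeta}_t=\zeta$. Since $y^{t,\zeta}_t$ is $\mathcal{F}_t$-measurable, removing the conditioning in \eqref{L2estimation2} and \eqref{L2estimation1} leaves $|v_t(\zeta)-v_t(\tilde\zeta)|\le\sqrt{K_1}\,|\zeta-\tilde\zeta|$ and $|v_t(\zeta)|\le\sqrt{K_1}(1+|\zeta|)$, $P\mbox{-}a.s.$, with Lipschitz and growth constants \emph{independent of $t$}. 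By $L^2$-uniqueness (flow property), restricting the global solution to any window $[r,r']\subseteq[0,T]$ yields the solution of \eqref{fbsdept1} on $[r,r']$ with initial datum $x_r$ at $r$ and terminal datum $y_{r'}=v_{r'}(x_{r'})$ at $r'$, i.e. a short FBSDEP whose terminal map is $\sqrt{K_1}$-Lipschitz of linear growth.

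Next I would observe that the proof of Lemma \ref{Lbetaestimationt} goes through \emph{verbatim} when the terminal map $\phi$ is replaced by any map that is Lipschitz and of linear growth with constant $\le\sqrt{K_1}$ — at the price of shrinking $\tilde\delta$ to depend also on $\sqrt{K_1}$ and enlarging the resulting constant $C_0:=C_{\beta,K,\rho,\sigma_1,\sigma_2,f_1,f_2,\sqrt{K_1}}$, which however still does not depend on $t$, on $\delta\le\tilde\delta$, or on the particular terminal map. The same contraction/a priori scheme (now using the $L^\beta$ BSDEP a priori estimate of Lemma \ref{lemma23} together with the standard SDEP $L^\beta$-estimate) also gives the difference version $E\big[\sup_{r\le s\le r'}|\hat x_s|^\beta+\sup_{r\le s\le r'}|\hat y_s|^\beta+\cdots\,\big|\,\mathcal{F}_r\big]\le C_0|\hat x_r|^\beta$ for the difference $\hat{\,\cdot\,}$ of two solutions, Lipschitz-continuity of $v_{r'}$ being used to bound the difference of terminal values. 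Concretely, on a window $[r,r']$ with $r'-r\le\tilde\delta$ and terminal map $v_{r'}$, for $x_r\in L^\beta(\Omega,\mathcal{F}_r,P;\mathbb{R})$,
\begin{equation*}
E\Big[\sup_{r\le s\le r'}|x_s|^\beta+\sup_{r\le s\le r'}|y_s|^\beta+\sum_{i=1}^2\Big(\int_r^{r'}|z^i_s|^2ds\Big)^{\frac{\beta}{2}}+\sum_{i=1}^2\Big(\int_r^{r'}\int_{\mathcal{E}_i}|\tilde z^i_{(s,e)}|^2\nu_i(de)ds\Big)^{\frac{\beta}{2}}\,\Big|\,\mathcal{F}_r\Big]\le C_0\big(1+|x_r|^\beta\big),\ P\mbox{-}a.s.
\end{equation*}
Then, choosing $N$ with $T/N\le\tilde\delta$ and $t_k=kT/N$, I would take $[r,r']=[t_k,t_{k+1}]$: extracting the $x$-term gives $E[|x_{t_{k+1}}|^\beta\mid\mathcal{F}_{t_k}]\le C_0(1+|x_{t_k}|^\beta)$, and since $x_0$ is deterministic, an induction on $k$ yields $x_{t_k}\in L^\beta$ with $E|x_{t_k}|^\beta\le(1+C_0)^k(1+|x_0|^\beta)$; summing the displayed estimate over $k=0,\dots,N-1$, taking full expectations, and using $\sup_{[0,T]}|x_s|^\beta\le\sum_k\sup_{[t_k,t_{k+1}]}|x_s|^\beta$ together with Jensen's inequality $\big(\int_0^T|z^i|^2\big)^{\frac{\beta}{2}}\le N^{\frac{\beta}{2}-1}\sum_k\big(\int_{t_k}^{t_{k+1}}|z^i|^2\big)^{\frac{\beta}{2}}$ (and likewise for the jump terms) yields \eqref{Lbetaresult1} with $K_2=K_2(\beta,T,C_0,K_1)$. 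Iterating instead the difference inequality $E[|\hat x_{t_{k+1}}|^\beta\mid\mathcal{F}_{t_k}]\le C_0|\hat x_{t_k}|^\beta$ from $\hat x_0=\xi-\tilde\xi$ and summing over the windows in the same way gives \eqref{Lbetaresult2}.

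The hard part is making the patching work for a genuinely \emph{coupled} system: a forward Gronwall estimate on $x$ alone cannot close (its coefficients see $y,z^1,z^2,\tilde z^1,\tilde z^2$, which are a priori only $L^2$), and a backward estimate on $(y,z,\tilde z)$ alone cannot close either. The way around this is to use the \emph{joint} short-interval $L^\beta$-estimate of Lemma \ref{Lbetaestimationt} in tandem with the identity $y_{t_{k+1}}=v_{t_{k+1}}(x_{t_{k+1}})$; and the reason this succeeds for an \emph{arbitrary fixed} terminal time $T$, not merely for small $T$, is precisely that hypothesis \eqref{L2estimation2} pins the Lipschitz constant $\sqrt{K_1}$ of $v_t$ uniformly in $t$, so that one and the same constant $C_0$ governs all $N$ windows and the finite sums and products over the windows remain bounded. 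A secondary technical point to keep in mind is that Lemma \ref{Lbetaestimationt} is stated only for the terminal map $\phi$, so one must first verify that it persists for the more general $\sqrt{K_1}$-Lipschitz terminal maps $v_{r'}$ furnished by the $L^2$-theory.
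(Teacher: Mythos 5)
Your proposal is correct and follows essentially the same route as the paper: both extract the uniform-in-$t$ Lipschitz decoupling $|y^{0,\xi}_t-y^{0,\tilde{\xi}}_t|\leq\sqrt{K_1}|x^{0,\xi}_t-x^{0,\tilde{\xi}}_t|$ from the conditional $L^2$-estimate, partition $[0,T]$ into finitely many windows of length at most $\tilde{\delta}$, apply the local $L^\beta$-estimate of Lemma \ref{Lbetaestimationt} on each window with that Lipschitz terminal map, and chain the conditional bounds by induction, recombining the $\beta/2$-powers of the integral terms via the elementary subadditivity inequality (your Jensen step plays the role of the paper's $(a+b)^k\leq 2^k(a^k+b^k)$). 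Your explicit remark that Lemma \ref{Lbetaestimationt} must be reproved for general $\sqrt{K_1}$-Lipschitz terminal maps is a point the paper handles only implicitly, but it does not change the argument.
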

\begin{proof}
First, the given $L^2$-estimations $\mathrm{(i),(ii)}$ come to show the existence and uniqueness of $L^2$-solution to FBSDE \eqref{fbsdept1} on $[t,T]$ for any given initial time and its corresponding initial value $(t,\xi)\in L^2(\Omega,\mathcal{F}_t,P;\mathbb{R})$.

Next, we investigate the FBSDEP \eqref{fbsdept1} at the initial time $t=0$,
\begin{equation}
\left\{
\begin{aligned}
 dx^{0,\xi}_s&=b(s,\Theta_s^{0,\xi})ds+\sum_{i=1}^2\sigma_i(s,\Theta_s^{0,\xi})dW^i_s+\sum_{i=1}^2\int_{\mathcal{E}_i}f_i(s,\Theta_{s-}^{0,\xi},e)\tilde{N}_i(de,ds),\\
-dy^{0,\xi}_s&=g(s,\Theta_s^{0,\xi})ds-\sum_{i=1}^2z^{i,0,\xi}_sdW^i_s-\sum_{i=1}^2\int_{\mathcal{E}_i}\tilde{z}^{i,0,\xi}_{(s,e)}\tilde{N}_i(de,ds),\quad s\in[0,T],\\
  x^{0,\xi}_0&=\xi,\ \ y^{0,\xi}_T=\phi(x^{0,\xi}_T).
\end{aligned}
\right.
\end{equation}
Observing the estimation \eqref{L2estimation2}, a key derivation shows
\begin{equation}
\begin{aligned}
&E\bigg[\sup_{t\leq s\leq T}|x^{0,\xi}_s-x^{0,\tilde{\xi}}_s|^2+\sup_{t\leq s\leq T}|y^{0,\xi}_s-y^{0,\tilde{\xi}}_s|^2+\sum_{i=1}^2\int_t^T|z^{i,0,\xi}_s-z^{i,0,\tilde{\xi}}_s|^2ds\\
&\qquad+\sum_{i=1}^2\int_t^T\int_{\mathcal{E}_i}|\tilde{z}^{i,0,\xi}_{(s,e)}-\tilde{z}^{i,0,\tilde{\xi}}_{(s,e)}|^2\nu_i(de)ds\bigg|\mathcal{F}_t\bigg]\leq K_1|x^{0,\xi}_t-x^{0,\tilde{\xi}}_t|^2,
\end{aligned}
\end{equation}
then we get
\begin{equation}
\begin{aligned}
&|y^{0,\xi}_t-y^{0,\tilde{\xi}}_t|^2\leq E\bigg[\sup_{t\leq s\leq T}|y^{0,\xi}_s-y^{0,\tilde{\xi}}_s|^2\bigg|\mathcal{F}_t\bigg]\leq K_1|x^{0,\xi}_t-x^{0,\tilde{\xi}}_t|^2,
\end{aligned}
\end{equation}
which implies the uniformly Lipschitz condition of $y^{0,\xi}_t$ with respect to $x^{0,\xi}_t$ for any $t\in[0,T]$, i.e.,
\begin{equation}\label{uniform Lip}
\begin{aligned}
&|y^{0,\xi}_t-y^{0,\tilde{\xi}}_t|\leq \sqrt{K_1}|x^{0,\xi}_t-x^{0,\tilde{\xi}}_t|,\ t\in[0,T].
\end{aligned}
\end{equation}
Then, for any given $T$, we divide the whole interval $[0,T]$ into several small intervals with same interval length $\delta$, i.e., $[0,\delta],[\delta,2\delta],\dots,[(k-1)\delta,k\delta]$, and without loss of generality, we assume $T=k\delta$ and $k$ is positive integer. For the first small interval $[0,\delta]$, under assumption {\bf (A4),(A6)} and uniformly Lipschitz condition \eqref{uniform Lip} with terminal time $\delta$, then, by Lemma \ref{Lbetaestimationt}, there exists a constant $\delta>0$ such that
\begin{equation}
\begin{aligned}
&E\bigg[\sup_{0\leq s\leq \delta}|x^{0,\xi}_s|^{\beta}+\sup_{0\leq s\leq \delta}|y^{0,\xi}_s|^{\beta}+\sum_{i=1}^2\bigg(\bigg(\int_0^{\delta}|z^{i,0,\xi}_s|^2ds\bigg)^{\frac{\beta}{2}}+\bigg(\int_0^{\delta}\int_{\mathcal{E}_i}|\tilde{z}^{i,0,\xi}_{(s,e)}|^2\nu_i(de)ds\bigg)^{\frac{\beta}{2}}\bigg)\bigg|\mathcal{F}_0\bigg]\\
&\leq C_{K,\rho,\sigma_1,\sigma_2,f_1,f_2,L,K_1}(1+|x^{0,\xi}_0|^{\beta}),
\end{aligned}
\end{equation}
where the constant $C_{K,\rho,\sigma_1,\sigma_2,f_1,f_2,L,K_1}$ depends on $K_1,L$ and Lipschitz constants $(K,\rho,L_{\sigma_1},L_{\sigma_2},\\L_{f_1},L_{f_2})$. Note that the coefficients of FBSDEP \eqref{fbsdept1} satisfy the same assumption in the interval $[0,T]$. Therefore, by the induction for the following interval $[(j-1)\delta,j\delta],1\leq j\leq k$ and combining the uniformly Lipschitz condition \eqref{uniform Lip} for terminal time $2\delta,3\delta,\cdots,k\delta$, respectively, we have, for $1\leq j\leq k$,
\begin{equation}
\begin{aligned}
&E\bigg[\sup_{s\in[(j-1)\delta,j\delta]}|x^{0,\xi}_s|^{\beta}+\sup_{s\in[(j-1)\delta,j\delta]}|y^{0,\xi}_s|^{\beta}+\sum_{i=1}^2\bigg(\int_{(j-1)\delta}^{j\delta}|z^{i,0,\xi}_s|^2ds\bigg)^{\frac{\beta}{2}}\\
&\qquad+\sum_{i=1}^2\bigg(\int_{(j-1)\delta}^{j\delta}\int_{\mathcal{E}_i}|\tilde{z}^{i,0,\xi}_{(s,e)}|^2\nu_i(de)ds\bigg)^{\frac{\beta}{2}}\bigg|\mathcal{F}_{(j-1)\delta}\bigg]\leq C_{K,\rho,\sigma_1,\sigma_2,f_1,f_2,L,K_1}(1+|x^{0,\xi}_{(j-1)\delta}|^{\beta}).
\end{aligned}
\end{equation}
Now, by \eqref{uniform Lip}, we have obtained these $L^\beta$-estimates by Lemma \ref{Lbetaestimationt} for small intervals $[(j-1)\delta,j\delta],1\leq j\leq k$, then we show that how to connect them together.

When considering the case of $j=1,2$, respectively, we have
\begin{equation}\label{0delta1delta}
\begin{aligned}
&E\bigg[\sup_{0\leq s\leq \delta}|x^{0,\xi}_s|^{\beta}+\sup_{0\leq s\leq \delta}|y^{0,\xi}_s|^{\beta}+\sum_{i=1}^2\bigg(\bigg(\int_0^{\delta}|z^{i,0,\xi}_s|^2ds\bigg)^{\frac{\beta}{2}}\\
&\quad +\bigg(\int_0^{\delta}\int_{\mathcal{E}_i}|\tilde{z}^{i,0,\xi}_{(s,e)}|^2\nu_i(de)ds\bigg)^{\frac{\beta}{2}}\bigg)\bigg|\mathcal{F}_0\bigg]\leq C_{K,\rho,\sigma_1,\sigma_2,f_1,f_2,L,K_1}(1+|\xi|^{\beta}),
\end{aligned}
\end{equation}
and
\begin{equation}\label{1delta2delta}
\begin{aligned}
&E\bigg[\sup_{\delta\leq s\leq 2\delta}|x^{0,\xi}_s|^{\beta}+\sup_{\delta\leq s\leq 2\delta}|y^{0,\xi}_s|^{\beta}+\sum_{i=1}^2\bigg(\bigg(\int_{\delta}^{2\delta}|z^{i,0,\xi}_s|^2ds\bigg)^{\frac{\beta}{2}}\\
&\quad +\bigg(\int_{\delta}^{2\delta}\int_{\mathcal{E}_i}|\tilde{z}^{i,0,\xi}_{(s,e)}|^2\nu_i(de)ds\bigg)^{\frac{\beta}{2}}\bigg)\bigg|\mathcal{F}_{\delta}\bigg]
\leq C_{K,\rho,\sigma_1,\sigma_2,f_1,f_2,L,K_1}(1+|x^{0,\xi}_{\delta}|^{\beta}),
\end{aligned}
\end{equation}
we consider
\begin{equation}
\begin{aligned}
&E\big[C_{K,\rho,\sigma_1,\sigma_2,f_1,f_2,L,K_1}(1+|x^{0,\xi}_{\delta}|^\beta)\big|\mathcal{F}_0\big]\\
&\leq C_{K,\rho,\sigma_1,\sigma_2,f_1,f_2,L,K_1}\big[1+C_{K,\rho,\sigma_1,\sigma_2,f_1,f_2,L,K_1}(1+|\xi|^\beta)\big]\\
&\leq (C_{K,\rho,\sigma_1,\sigma_2,f_1,f_2,L,K_1}+C_{K,\rho,\sigma_1,\sigma_2,f_1,f_2,L,K_1}^2)(1+|\xi|^\beta),
\end{aligned}
\end{equation}
and
\begin{equation}
\begin{aligned}
&C_{K,\rho,\sigma_1,\sigma_2,f_1,f_2,L,K_1}(1+|\xi|^\beta)+E\big[C_{K,\rho,\sigma_1,\sigma_2,f_1,f_2,L,K_1}(1+|x^{0,\xi}_{\delta}\big|^\beta)|\mathcal{F}_0\big]\\
& \leq C^{(2)}_{K,\rho,\sigma_1,\sigma_2,f_1,f_2,L,K_1}(1+|\xi|^\beta)
\end{aligned}
\end{equation}
where $C^{(2)}_{K,\rho,\sigma_1,\sigma_2,f_1,f_2,L,K_1}=2C_{K,\rho,\sigma_1,\sigma_2,f_1,f_2,L,K_1}+C_{K,\rho,\sigma_1,\sigma_2,f_1,f_2,L,K_1}^2$.

Taking conditional expectation $E[\cdot|\mathcal{F}_0]$ on both sides of \eqref{1delta2delta}, then adding it to \eqref{0delta1delta}, we get
\begin{equation}
\begin{aligned}
&E\bigg[\sup_{0\leq s\leq 2\delta}|x^{0,\xi}_s|^{\beta}+\sup_{0\leq s\leq 2\delta}|y^{0,\xi}_s|^{\beta}+\sum_{i=1}^2\bigg(\bigg(\int_0^{\delta}|z^{i,0,\xi}_s|^2ds\bigg)^{\frac{\beta}{2}}+\bigg(\int_{\delta}^{2\delta}|z^{i,0,\xi}_s|^2ds\bigg)^{\frac{\beta}{2}}\bigg)\\
&\qquad+\sum_{i=1}^2\bigg(\bigg(\int_0^{\delta}\int_{\mathcal{E}_i}|\tilde{z}^{i,0,\xi}_{(s,e)}|^2\nu_i(de)ds\bigg)^{\frac{\beta}{2}}+\bigg(\int_{\delta}^{2\delta}\int_{\mathcal{E}_i}|\tilde{z}^{i,0,\xi}_{(s,e)}|^2\nu_i(de)ds\bigg)^{\frac{\beta}{2}}\bigg)\bigg|\mathcal{F}_0\bigg]\\
&\leq C^{(2)}_{K,\rho,\sigma_1,\sigma_2,f_1,f_2,L,K_1}(1+|\xi|^{\beta}),
\end{aligned}
\end{equation}
by utilizing the inequality
\begin{equation}
(a+b)^k\leq 2^k(a^k+b^k),\ a,b\geq0,\ k\geq1.
\end{equation}
Then we have, for $i=1,2$,
\begin{equation}
\begin{aligned}
&2^{-\frac{\beta}{2}}\bigg(\int_0^{2\delta}|z^{i,0,\xi}_s|^2ds\bigg)^{\frac{\beta}{2}}\leq\bigg(\int_0^{\delta}|z^{i,0,\xi}_s|^2ds\bigg)^{\frac{\beta}{2}}+\bigg(\int_{\delta}^{2\delta}|z^{i,0,\xi}_s|^2ds\bigg)^{\frac{\beta}{2}},\\
&2^{-\frac{\beta}{2}}\bigg(\int_0^{2\delta}\int_{\mathcal{E}_i}|\tilde{z}^{i,0,\xi}_{(s,e)}|^2\nu_i(de)ds\bigg)^{\frac{\beta}{2}}\leq\bigg(\int_0^{\delta}\int_{\mathcal{E}_i}|\tilde{z}^{i,0,\xi}_{(s,e)}|^2\nu_i(de)ds\bigg)^{\frac{\beta}{2}}+\bigg(\int_{\delta}^{2\delta}\int_{\mathcal{E}_i}|\tilde{z}^{i,0,\xi}_{(s,e)}|^2\nu_i(de)ds\bigg)^{\frac{\beta}{2}}.
\end{aligned}
\end{equation}
Let $\tilde{C}^{(2)}_{K,\rho,\sigma_1,\sigma_2,f_1,f_2,L,K_1}=2^{\frac{\beta}{2}}C^{(2)}_{K,\rho,\sigma_1,\sigma_2,f_1,f_2,L,K_1}$, it follows that
\begin{equation}
\begin{aligned}
&E\bigg[\sup_{0\leq s\leq 2\delta}|x^{0,\xi}_s|^{\beta}+\sup_{0\leq s\leq 2\delta}|y^{0,\xi}_s|^{\beta}+\sum_{i=1}^2\bigg(\bigg(\int_0^{2\delta}|z^{i,0,\xi}_s|^2ds\bigg)^{\frac{\beta}{2}}\\
&\qquad+\bigg(\int_0^{2\delta}\int_{\mathcal{E}_i}|\tilde{z}^{i,0,\xi}_{(s,e)}|^2\nu_i(de)ds\bigg)^{\frac{\beta}{2}}\bigg)\bigg|\mathcal{F}_0\bigg]\leq \tilde{C}^{(2)}_{K,\rho,\sigma_1,\sigma_2,f_1,f_2,L,K_1}(1+|\xi|^{\beta}).
\end{aligned}
\end{equation}
Next, we continue to consider the case of $j=3$,
\begin{equation}
\begin{aligned}
&E\bigg[\sup_{2\delta\leq s\leq 3\delta}|x^{0,\xi}_s|^{\beta}+\sup_{2\delta\leq s\leq 3\delta}|y^{0,\xi}_s|^{\beta}+\sum_{i=1}^2\bigg(\bigg(\int_{2\delta}^{3\delta}|z^{i,0,\xi}_s|^2ds\bigg)^{\frac{\beta}{2}}\\
&\qquad+\bigg(\int_{2\delta}^{3\delta}\int_{\mathcal{E}_i}|\tilde{z}^{i,0,\xi}_{(s,e)}|^2\nu_i(de)ds\bigg)^{\frac{\beta}{2}}\bigg)\bigg|\mathcal{F}_{2\delta}\bigg]\leq C_{K,\rho,\sigma_1,\sigma_2,f_1,f_2,L,K_1}(1+|x^{0,\xi}_{2\delta}|^{\beta}).
\end{aligned}
\end{equation}
Similarly,  we have
\begin{equation}
\begin{aligned}
&E\bigg[\sup_{0\leq s\leq 3\delta}|x^{0,\xi}_s|^{\beta}+\sup_{0\leq s\leq 3\delta}|y^{0,\xi}_s|^{\beta}+\sum_{i=1}^2\bigg(\bigg(\int_0^{3\delta}|z^{i,0,\xi}_s|^2ds\bigg)^{\frac{\beta}{2}}\\
&\qquad+\bigg(\int_0^{3\delta}\int_{\mathcal{E}_i}|\tilde{z}^{i,0,\xi}_{(s,e)}|^2\nu_i(de)ds\bigg)^{\frac{\beta}{2}}\bigg)\bigg|\mathcal{F}_0\bigg]\leq \tilde{C}^{(3)}_{K,\rho,\sigma_1,\sigma_2,f_1,f_2,L,K_1}(1+|\xi|^{\beta}).
\end{aligned}
\end{equation}
By induction for $j\geq4$, we obtain
\begin{equation}
\begin{aligned}
&E\bigg[\sup_{0\leq s\leq T}|x^{0,\xi}_s|^{\beta}+\sup_{0\leq s\leq T}|y^{0,\xi}_s|^{\beta}+\sum_{i=1}^2\bigg(\bigg(\int_0^T|z^{i,0,\xi}_s|^2ds\bigg)^{\frac{\beta}{2}}\\
&\qquad +\bigg(\int_0^T\int_{\mathcal{E}_i}|\tilde{z}^{i,0,\xi}_{(s,e)}|^2\nu_i(de)ds\bigg)^{\frac{\beta}{2}}\bigg)\bigg|\mathcal{F}_0\bigg]\leq \tilde{C}^{(k)}_{K,\rho,\sigma_1,\sigma_2,f_1,f_2,L,K_1}(1+|\xi|^{\beta}).
\end{aligned}
\end{equation}
Finally, by taking $K_2=\tilde{C}^{(k)}_{K,\rho,\sigma_1,\sigma_2,f_1,f_2,L,K_1}$, then we prove the $L^\beta$-estimation \eqref{Lbetaresult1} and can also obtain \eqref{Lbetaresult2} similarly, which guarantee the existence and uniqueness of $L^\beta$-solution to FBSDEP \eqref{fbsdept1} with $t=0$, respectively.
\end{proof}

Now, we can give the $L^\beta(\beta>2)$-solution and its estimate for FBSDEP \eqref{fbsdept1} in $\tilde{\mathcal{M}}^\beta[0,T]$. The following lemma can be obtained similar to Lemma \ref{Lbetaestimationt} by referring to the result in \cite{LW14}.

\begin{lemma}\label{Lbetaestimationt2}
Let assumption {\bf (A4),(A5)} hold. Then, for every $\beta\geq2$, there exists a sufficiently small constant $\tilde{\delta}$ depending on $(K,L_{\sigma_1},L_{\sigma_2},\rho,L_{f_1},L_{f_2})$ and constant $C_{\beta,K,\rho,\sigma_1,\sigma_2,f_1,f_2,L}$ depending on $(\beta,K,\rho,L_{\sigma_1},L_{\sigma_2},L_{f_1},L_{f_2},L)$ such that for every $0\leq\delta\leq\tilde{\delta}$ and $\xi\in L^\beta(\Omega,\mathcal{F}_t,P;\mathbb{R})$,
\begin{equation}\label{Lbeta estimate2}
\begin{aligned}
&E\bigg[\sup_{t\leq s\leq t+\delta}|x^{t,\xi}_s|^{\beta}+\sup_{t\leq s\leq t+\delta}|y^{t,\xi}_s|^{\beta}+\sum_{i=1}^2\bigg(\bigg(\int_t^{t+\delta}|z^{i,t,\xi}_s|^2ds\bigg)^{\frac{\beta}{2}}\\
&\qquad+\bigg(\int_t^{t+\delta}\int_{\mathcal{E}_i}|\tilde{z}^{i,t,\xi}_{(s,e)}|^2N_i(de,ds)\bigg)^{\frac{\beta}{2}}\bigg)\bigg|\mathcal{F}_t\bigg]
\leq C_{\beta,K,\rho,\sigma_1,\sigma_2,f_1,f_2,L}(1+|\xi|^{\beta}),\ P\mbox{-}a.s..
\end{aligned}
\end{equation}
\end{lemma}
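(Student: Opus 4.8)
The statement I must prove is Lemma \ref{Lbetaestimationt2}, which is the analogue of Lemma \ref{Lbetaestimationt} but under assumptions \textbf{(A4),(A5)} instead of \textbf{(A4),(A6)}, and with the jump term estimated in the $\tilde{F}_i^{2,\beta}$-norm, i.e.\ the $L^{\beta/2}$-norm of $\int_0^T\int_{\mathcal{E}_i}|\tilde{z}^i|^2 N_i(de,ds)$ rather than of $\int_0^T\int_{\mathcal{E}_i}|\tilde{z}^i|^2\nu_i(de)ds$. The approach is a Picard-iteration/contraction argument on a short interval $[t,t+\delta]$ combined with a priori $L^\beta$-estimates, exactly paralleling the proof strategy that produced Theorem \ref{the23} and Lemma \ref{lemma23}, but now carried out conditionally on $\mathcal{F}_t$. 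First I would set up the parameterized FBSDEP \eqref{fbsdept1} on $[t,t+\delta]$ and, for a fixed input $(y',z^{1,\prime},z^{2,\prime},\tilde z^{1,\prime},\tilde z^{2,\prime})\in \tilde{\mathcal{N}}^\beta$, solve the forward SDEP for $x^{t,\xi}$ in $S^\beta$ and then the decoupled BSDEP for $(y,z^1,z^2,\tilde z^1,\tilde z^2)$ in $\tilde{\mathcal{N}}^\beta$; existence/uniqueness of each piece is guaranteed by standard SDEP theory and by Lemma \ref{lemma22} respectively.

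\textbf{Key steps, in order.} (1) Forward estimate: apply It\^o's formula to $|x^{t,\xi}_s|^\beta$ (or rather use the Burkholder--Davis--Gundy and Doob inequalities after raising to power $\beta$), use the linear-growth bounds in \textbf{(A4)(6)} together with \textbf{(A5)} (so that $f_i$ does not depend on $(z^1,z^2)$), and take conditional expectation $E[\cdot|\mathcal{F}_t]$; this yields, for $\delta$ small enough that the Gronwall-type constant $C_\beta(C\delta^{\beta}+K^\beta\delta^{\beta/2}+\delta\sum_i\int_{\mathcal{E}_i}\rho^\beta(e)\nu_i(de))<1$, a bound of the form $E[\sup_{[t,t+\delta]}|x^{t,\xi}_s|^\beta|\mathcal{F}_t]\le C(1+|\xi|^\beta) + (\text{small})\,E[\sup|y'_s|^\beta + \sum_i\|z^{i,\prime}\|^\beta_{M^{2,\beta}} + \sum_i\|\tilde z^{i,\prime}\|^\beta_{\tilde F^{2,\beta}}\,|\mathcal{F}_t]$, where crucially the jump input is measured in the $N_i(de,ds)$-norm, matching the space $\tilde{F}_i^{2,\beta}$. (2) Backward estimate: apply the a priori $L^\beta$-estimate for BSDEP from Lemma \ref{lemma23} (with $\xi^2=0$, $g^2=0$), in which the $\tilde z$-part is controlled precisely in the $\int_0^T\int_{\mathcal{E}_i}|\tilde z^i|^2 N_i(de,dt)$-norm, to get $E[\sup_{[t,t+\delta]}|y^{t,\xi}_s|^\beta + \sum_i\|z^i\|^\beta_{M^{2,\beta}} + \sum_i\|\tilde z^i\|^\beta_{\tilde F^{2,\beta}}\,|\mathcal{F}_t]\le C(1+|\phi(0)|^\beta) + C(1+\delta^\beta)E[\sup_{[t,t+\delta]}|x^{t,\xi}_s|^\beta|\mathcal{F}_t]$, using the linear growth of $g$ and $\phi$ in \textbf{(A4)}. (3) Combine (1) and (2): substitute the forward bound into the backward bound, absorb the self-referential $x$-term (possible because of the smallness from step (1)), and conclude that the Picard map is a contraction on $\tilde{\mathcal{N}}^\beta[t,t+\delta]$ for $\delta\le\tilde\delta$ with $\tilde\delta$ depending only on $(K,L_{\sigma_1},L_{\sigma_2},\rho,L_{f_1},L_{f_2})$; its fixed point is the unique $L^\beta$-solution, and the final a priori bound $E[\cdots|\mathcal{F}_t]\le C_{\beta,K,\rho,\sigma_1,\sigma_2,f_1,f_2,L}(1+|\xi|^\beta)$, $P$-a.s., follows by plugging the fixed point back into (1)+(2). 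Throughout, all estimates are taken in conditional form $E[\cdot|\mathcal{F}_t]$, which is legitimate since $\xi$ is $\mathcal{F}_t$-measurable and the stochastic integrals over $[t,s]$ have conditionally vanishing mean; the constants do not depend on $t$.

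\textbf{Main obstacle.} The delicate point—and the reason \textbf{(A5)} rather than \textbf{(A6)} suffices here, whereas Lemma \ref{Lbetaestimationt} needed \textbf{(A6)}—is the handling of the jump term in the $\tilde{F}_i^{2,\beta}$-norm when $f_i$ depends on $(\tilde z^1,\tilde z^2)$. In step (1), the contribution of $\int_0^\cdot\int_{\mathcal{E}_i} L_{f_i}(e)(|\hat{\tilde z}^{1,\prime}|+|\hat{\tilde z}^{2,\prime}|)\tilde N_i(de,ds)$ must be controlled by BDG and then by $\sup_{e}L_{f_i}^2(e)$ (not by $\int L_{f_i}^2(e)\nu_i(de)$) times $\int_0^\cdot\int_{\mathcal{E}_i}|\hat{\tilde z}^{i,\prime}|^2 N_i(de,ds)$, which is exactly the $\tilde F_i^{2,\beta}$-norm; this is why the new space is introduced and why the corresponding smallness constant $\tilde C_{f_i}=\max\{\sup_e L_{f_i}^\beta(e),\int_{\mathcal{E}_i}L_{f_i}^\beta(e)\nu_i(de)\}$ appears. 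In step (2), I must invoke the a priori estimate \eqref{apriori01} of Lemma \ref{lemma23}, which was precisely formulated with the $N_i(de,dt)$-jump norm, so the backward half is already compatible. The remaining care is purely bookkeeping of which small constant multiplies which norm so that the final contraction factor can be made $<1$ by shrinking $L_{\sigma_i}, \tilde C_{f_i}$ and $\delta$; since the structure mirrors the already-completed proof of Theorem \ref{the23} verbatim (replacing the terminal time $T$ with the generic window length $\delta$ and taking conditional expectations), I expect no essential new difficulty beyond this norm-matching, and I would therefore only indicate these modifications and refer to Theorem 3.4 of Li and Wei \cite{LW14} and to the proof of Theorem \ref{the23} above for the routine computations.
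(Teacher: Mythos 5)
Your proposal is correct and follows exactly the route the paper intends: the paper gives no proof of this lemma, merely stating that it is obtained "similar to Lemma \ref{Lbetaestimationt} by referring to the result in \cite{LW14}", and your reconstruction (short-interval Picard contraction plus the forward estimate of Theorem \ref{the23} and the backward a priori estimate of Lemma \ref{lemma23}, all taken conditionally on $\mathcal{F}_t$) is precisely that argument. Your identification of the norm-matching issue — that under {\bf (A5)} the jump input must be measured in the $N_i(de,ds)$-norm via $\sup_e L_{f_i}^2(e)$ rather than in the $\nu_i(de)ds$-norm — is the same point the paper makes in Remark \ref{rem25} to motivate the space $\tilde{F}_i^{2,\beta}[0,T]$.
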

Similarly, we can also obtain the following important theorem based on {\bf Theorem 2.1} in \cite{MY21arxiv}, which shows that, for any given terminal time $T$, under some $L^2$-estimate, the $L^2$-solution are the $L^\beta$-solution in $\tilde{\mathcal{M}}^\beta[0,T](\beta>2)$. So we omit the detailed proof.

\begin{theorem}
Let {\bf (A4),(A5)} hold, and assume that, for every $\xi,\tilde{\xi}\in L^\beta(\Omega,\mathcal{F}_t,P;\mathbb{R})$, the $L^2$-estimations of FBSDEP \eqref{fbsdept1} are given as follows:

$\mathrm{(i)}$
\begin{equation}
\begin{aligned}
&E\bigg[\sup_{t\leq s\leq T}|x^{t,\xi}_s|^2+\sup_{t\leq s\leq T}|y^{t,\xi}_s|^2+\sum_{i=1}^2\bigg(\int_t^T|z^{i,t,\xi}_s|^2ds+\int_t^T\int_{\mathcal{E}_i}|\tilde{z}^{i,t,\xi}_{(s,e)}|^2N_i(de,ds)\bigg)\bigg|\mathcal{F}_t\bigg]\\
&\qquad\leq K_1(1+|\xi|^2),
\end{aligned}
\end{equation}
$\mathrm{(ii)}$
\begin{equation}
\begin{aligned}
&E\bigg[\sup_{t\leq s\leq T}|x^{t,\xi}_s-x^{t,\tilde{\xi}}_s|^2+\sup_{t\leq s\leq T}|y^{t,\xi}_s-y^{t,\tilde{\xi}}_s|^2+\sum_{i=1}^2\bigg(\int_t^T|z^{i,t,\xi}_s-z^{i,t,\tilde{\xi}}_s|^2ds\\
&\qquad+\int_t^T\int_{\mathcal{E}_i}|\tilde{z}^{i,t,\xi}_{(s,e)}-\tilde{z}^{i,t,\tilde{\xi}}_{(s,e)}|^2N_i(de,ds)\bigg)\bigg|\mathcal{F}_t\bigg]\leq K_1|\xi-\tilde{\xi}|^2,
\end{aligned}
\end{equation}
where constant $K_1$ is positive and independent of $t\in[0,T]$.

Then FBSDEP \eqref{fbsdept1} admits a unique $L^\beta(\beta>2)$-solution with $t=0$ and any given terminal time $T$, i.e.,
\begin{equation}
\begin{aligned}
&E\bigg[\sup_{0\leq s\leq T}|x^{0,\xi}_s|^\beta+\sup_{0\leq s\leq T}|y^{0,\xi}_s|^\beta+\sum_{i=1}^2\bigg(\bigg(\int_0^T|z^{i,0,\xi}_s|^2ds\bigg)^{\frac{\beta}{2}}\\
&\qquad +\bigg(\int_0^T\int_{\mathcal{E}_i}|\tilde{z}^{i,0,\xi}_{(s,e)}|^2N_i(de,ds)\bigg)^{\frac{\beta}{2}}\bigg)\bigg]\leq K_2(1+|\xi|^\beta),
\end{aligned}
\end{equation}
\begin{equation}
\begin{aligned}
&E\bigg[\sup_{0\leq s\leq T}|x^{0,\xi}_s-x^{0,\tilde{\xi}}_s|^\beta+\sup_{0\leq s\leq T}|y^{0,\xi}_s-y^{0,\tilde{\xi}}_s|^\beta+\sum_{i=1}^2\bigg(\bigg(\int_0^T|z^{i,0,\xi}_s-z^{i,0,\tilde{\xi}}_s|^2ds\bigg)^{\frac{\beta}{2}}\\
&\qquad+\bigg(\int_0^T\int_{\mathcal{E}_i}|\tilde{z}^{i,0,\xi}_{(s,e)}-\tilde{z}^{i,0,\tilde{\xi}}_{(s,e)}|^2N_i(de,ds)\bigg)^{\frac{\beta}{2}}\bigg)\bigg]\leq K_2|\xi-\tilde{\xi}|^\beta.
\end{aligned}
\end{equation}
\end{theorem}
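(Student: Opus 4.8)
The plan is to reproduce, with the obvious modifications, the argument used for Theorem \ref{MYextension}, replacing Lemma \ref{Lbetaestimationt} by Lemma \ref{Lbetaestimationt2} and working throughout in $\tilde{\mathcal{M}}^\beta[0,T]$, i.e.\ with the jump integrals taken against $N_i(de,ds)$ rather than against its compensator. First, the hypotheses $\mathrm{(i)}$, $\mathrm{(ii)}$ give, for every initial datum $(t,\xi)\in L^2(\Omega,\mathcal{F}_t,P;\mathbb{R})$, the existence and uniqueness of the $L^2$-solution of \eqref{fbsdept1} on $[t,T]$, and hence — by uniqueness — the flow consistency $x^{0,\xi}_s=x^{t,x^{0,\xi}_t}_s$, $y^{0,\xi}_s=y^{t,x^{0,\xi}_t}_s$ for $s\geq t$. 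Applying $\mathrm{(ii)}$ with the $\mathcal{F}_t$-measurable data $x^{0,\xi}_t,x^{0,\tilde{\xi}}_t$ and using $|y^{0,\xi}_t-y^{0,\tilde{\xi}}_t|^2\leq E\big[\sup_{t\leq s\leq T}|y^{0,\xi}_s-y^{0,\tilde{\xi}}_s|^2\big|\mathcal{F}_t\big]$ then yields the pointwise-in-time Lipschitz bound
\[
|y^{0,\xi}_t-y^{0,\tilde{\xi}}_t|\leq\sqrt{K_1}\,|x^{0,\xi}_t-x^{0,\tilde{\xi}}_t|,\qquad t\in[0,T],
\]
with a constant $\sqrt{K_1}$ independent of $t$.

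Next, fix $\beta>2$ and choose $k$ so that $\delta:=T/k\leq\tilde{\delta}$, where $\tilde{\delta}$ is the length threshold in Lemma \ref{Lbetaestimationt2}; partition $[0,T]$ into the pieces $[(j-1)\delta,j\delta]$, $1\leq j\leq k$. On the $j$-th piece one regards \eqref{fbsdept1} as an FBSDEP with terminal time $j\delta$ and terminal value $y^{0,\xi}_{j\delta}$, which by the previous paragraph is $\sqrt{K_1}$-Lipschitz in $x^{0,\xi}_{j\delta}$; since \textbf{(A4)}, \textbf{(A5)} hold on all of $[0,T]$, Lemma \ref{Lbetaestimationt2} applies and produces a constant $C$ depending only on $\beta,K,\rho,L_{\sigma_1},L_{\sigma_2},L_{f_1},L_{f_2},L,K_1$ — crucially independent of $j$ — such that
\[
E\Big[\sup_{(j-1)\delta\leq s\leq j\delta}\!\big(|x^{0,\xi}_s|^\beta+|y^{0,\xi}_s|^\beta\big)+\sum_{i=1}^2\Big(\big(\!\int_{(j-1)\delta}^{j\delta}\!|z^{i,0,\xi}_s|^2ds\big)^{\frac{\beta}{2}}+\big(\!\int_{(j-1)\delta}^{j\delta}\!\!\int_{\mathcal{E}_i}\!|\tilde{z}^{i,0,\xi}_{(s,e)}|^2N_i(de,ds)\big)^{\frac{\beta}{2}}\Big)\Big|\mathcal{F}_{(j-1)\delta}\Big]\leq C\big(1+|x^{0,\xi}_{(j-1)\delta}|^\beta\big).
\]

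Then one patches these local estimates together by induction on $j$, exactly as in the proof of Theorem \ref{MYextension}: take $E[\,\cdot\,|\mathcal{F}_0]$ of the $j$-th estimate, bound $E[|x^{0,\xi}_{(j-1)\delta}|^\beta|\mathcal{F}_0]$ by the (already obtained) estimate on $[0,(j-1)\delta]$ via the tower property, and glue the time-integrals over $[0,(j-1)\delta]$ and $[(j-1)\delta,j\delta]$ using $(a+b)^{\beta/2}\leq 2^{\beta/2}(a^{\beta/2}+b^{\beta/2})$; this glueing step is valid for the random measure $N_i$ as well, because $\int_0^{(j-1)\delta}\!\int_{\mathcal{E}_i}+\int_{(j-1)\delta}^{j\delta}\!\int_{\mathcal{E}_i}=\int_0^{j\delta}\!\int_{\mathcal{E}_i}$ for the nonnegative measure $N_i$ over disjoint time intervals. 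After $k$ steps this yields the desired $L^\beta$-estimate on $[0,T]$ with $K_2=\tilde{C}^{(k)}$; running the same argument on the differences of solutions, with $\mathrm{(ii)}$ and the difference form of Lemma \ref{Lbetaestimationt2}, gives the Lipschitz-in-initial-data estimate. Existence and uniqueness of the $L^\beta$-solution of \eqref{fbsdept1} at $t=0$ for arbitrary $T$ follow.

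The main obstacle is the same as in Theorem \ref{MYextension}: one must be certain that the constant $C$ furnished by the one-step Lemma \ref{Lbetaestimationt2} is genuinely uniform across the $k$ subintervals. This requires both that \textbf{(A4)}, \textbf{(A5)} are global assumptions (so the coefficients are ``the same'' on each piece) and that the terminal condition on every piece carries the single Lipschitz constant $\sqrt{K_1}$ — which is exactly the content of converting the $L^2$-hypothesis $\mathrm{(ii)}$ into the pointwise Lipschitz bound of the first paragraph. A point specific to the present theorem, as opposed to Theorem \ref{MYextension}, is that the $\tilde{F}_i^{2,\beta}$-norms involve $N_i(de,ds)$ rather than $\nu_i(de)ds$; one has to check that the patching inequality and the conditioning manipulations are insensitive to this change, and they are, since $N_i$ is a nonnegative random measure additive over disjoint time sets and Lemma \ref{Lbetaestimationt2} is already stated in precisely the $N_i$-norm.
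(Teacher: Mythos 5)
Your proposal is correct and follows exactly the route the paper intends: the paper omits the detailed proof precisely because it is the argument of Theorem \ref{MYextension} with Lemma \ref{Lbetaestimationt} replaced by Lemma \ref{Lbetaestimationt2} and the $\nu_i(de)ds$-norms replaced by the $N_i(de,ds)$-norms of $\tilde{\mathcal{M}}^\beta[0,T]$, which is what you carry out. Your additional check that the patching inequality and conditioning survive the passage to the random measure $N_i$ (by additivity over disjoint time intervals) is the only point genuinely specific to this theorem, and you handle it correctly.
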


Then we give the $L^{\beta}(\beta\geq2)$-estimate for fully-coupled FBSDEP.

\begin{theorem}\label{the24}
Suppose that {\bf (A4), (A6)} hold. Then, for any $\beta\geq2$, suppose that $(x,y,z^1,z^2,\tilde{z}^1,\\\tilde{z}^2)$ is the solution to \eqref{fbsdep1}, there exist a sufficiently small constant $\tilde{\tilde{T}}>0$ depending on $K,L_{\sigma_1},L_{\sigma_2},\rho$ and some constant $C_{\beta,K,\rho,\sigma_1,\sigma_2}$ depending on $\beta$ and Lipschitz constants $K,\rho,L_{\sigma_1}$, $L_{\sigma_2}$, such that, for every $0\leq T\leq\tilde{\tilde{T}}$,
\begin{equation}\label{Lbeta estimate1}
\begin{aligned}
&E\bigg[\sup_{0\leq t\leq T}|x_t|^{\beta}+\sup_{0\leq t\leq T}|y_t|^{\beta}+\sum_{i=1}^2\bigg(\bigg(\int_0^T|z^i_t|^2dt\bigg)^{\frac{\beta}{2}}
 +\bigg(\int_0^T\int_{\mathcal{E}_i}|\tilde{z}^i|^2\nu_i(de)dt\bigg)^{\frac{\beta}{2}}\bigg)\bigg]\\
&\leq C_{\beta,K,\rho,\sigma_1,\sigma_2}E\bigg[|x_0|^{\beta}+|\phi(0)|^{\beta}+\bigg(\int_0^T|g(t,0,0,0,0,0,0)|dt\bigg)^\beta+\bigg(\int_0^T|b(t,0,0,0,0,0,0)|dt\bigg)^\beta\\
&\quad +\sum_{i=1}^2\bigg(\bigg(\int_0^T|\sigma_i(t,0,0,0,0,0,0)|^2dt\bigg)^{\frac{\beta}{2}}+\bigg(\int_0^T\int_{\mathcal{E}_i}|f_i(t,0,0,e)|^2N_i(de,dt)\bigg)^{\frac{\beta}{2}}\bigg)\bigg].
\end{aligned}
\end{equation}
Moreover, Let $(x^1,y^1,z^{1,1},z^{2,1},\tilde{z}^{1,1},\tilde{z}^{2,1}),(x^2,y^2,z^{1,2},z^{2,2},\tilde{z}^{1,2},\tilde{z}^{2,2})$ be two solutions with different initial values $x^1_0,x^2_0$, respectively. Then we also give an estimate with respect to the initial value in the following
\begin{equation}\label{difference estimate}
\begin{aligned}
&E\bigg[\sup_{0\leq t\leq T}|x^1_t-x^2_t|^{\beta}+\sup_{0\leq t\leq T}|y^1_t-y^2_t|^{\beta}+\sum_{i=1}^2\bigg(\bigg(\int_0^T|z^{i,1}_t-z^{i,2}_t|^2dt\bigg)^{\frac{\beta}{2}}\\
&\quad+\bigg(\int_0^T\int_{\mathcal{E}_i}|\tilde{z}^{i,1}-\tilde{z}^{i,2}|^2\nu_i(de)dt\bigg)^{\frac{\beta}{2}}\bigg)\bigg]\leq C_{\beta,K,\rho,\sigma_1,\sigma_2}|x^1_0-x^2_0|^{\beta}.
\end{aligned}
\end{equation}
\end{theorem}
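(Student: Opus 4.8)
The plan is to establish two coupled a priori inequalities --- one for the forward component $x$ and one for the backward component $(y,z^1,z^2,\tilde z^1,\tilde z^2)$ --- and then to close the loop on a sufficiently short interval; the stability estimate \eqref{difference estimate} will follow by applying the first estimate to the difference of two solutions. Since under \textbf{(A6)} the jump coefficients $f_1,f_2$ depend only on $(x,y)$, Theorem \ref{the22} already provides a solution of \eqref{fbsdep1} in $\mathcal{M}^\beta[0,T]$ for small $T$, so every $L^\beta$-quantity below is finite from the start. \emph{Step 1 (forward estimate).} Applying It\^{o}'s formula to $|x_t|^2$, raising to the power $\beta/2$ and passing to $\sup_t$, then invoking the Burkholder--Davis--Gundy inequality for the $dW^i$-integrals, the Kunita inequality for the $\tilde N_i$-integrals, the linear-growth and Lipschitz bounds of \textbf{(A4)}, and H\"older's inequality to turn $\big(\int_0^T|\cdot|\,ds\big)^\beta$- and $\big(\int_0^T|\cdot|^2ds\big)^{\beta/2}$-type terms into the natural norms at the price of a factor $T^{\beta/2}$, one obtains
\begin{equation*}
\begin{aligned}
E\Big[\sup_{0\le t\le T}|x_t|^\beta\Big]\le{}& C_\beta\,\mathbb{D}_x+\gamma\,E\Big[\sup_{0\le t\le T}\big(|x_t|^\beta+|y_t|^\beta\big)\\
&+\sum_{i=1}^2\Big(\Big(\int_0^T|z^i_t|^2dt\Big)^{\frac{\beta}{2}}+\Big(\int_0^T\!\!\int_{\mathcal E_i}|\tilde z^i_{(t,e)}|^2\nu_i(de)dt\Big)^{\frac{\beta}{2}}\Big)\Big],
\end{aligned}
\end{equation*}
where $\mathbb{D}_x$ is the part of the right-hand side of \eqref{Lbeta estimate1} built from $x_0,b,\sigma_1,\sigma_2,f_1,f_2$, and $\gamma=\gamma(T,K,\rho,L_{\sigma_1},L_{\sigma_2})$ can be made as small as we wish: the factors multiplying the $x$- and $y$-terms all carry a power of $T$, the factor multiplying the $(z^i,\tilde z^i)$-terms coming from the drift carries $T^{\beta/2}$, that coming from the diffusions is of order $C_\beta(L_{\sigma_1}^\beta+L_{\sigma_2}^\beta)$ and is small by \textbf{(A4)(7)}, while --- and this is where \textbf{(A6)} enters --- the jump integrals contribute no $(z^1,z^2,\tilde z^1,\tilde z^2)$-dependence whatsoever.

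\emph{Step 2 (backward estimate).} Viewing $x$ as a given $S^\beta$-process and $g(t,x_t,\cdot)$ as a random, time-dependent generator (Lipschitz in $(y,z^1,z^2,\tilde z^1,\tilde z^2)$ with the constants of \textbf{(A4)}), I apply the a priori estimate of Lemma \ref{lemma23} to the backward part of \eqref{fbsdep1} against the trivial data $(\xi^2,g^2)=(0,0)$, whose solution is identically zero, to get
\begin{equation*}
\begin{aligned}
&E\Big[\sup_{0\le t\le T}|y_t|^\beta+\sum_{i=1}^2\Big(\Big(\int_0^T|z^i_t|^2dt\Big)^{\frac{\beta}{2}}+\Big(\int_0^T\!\!\int_{\mathcal E_i}|\tilde z^i_{(t,e)}|^2N_i(de,dt)\Big)^{\frac{\beta}{2}}\Big)\Big]\\
&\qquad\le C_{\beta,g,T}\,E\Big[|\phi(x_T)|^\beta+\Big(\int_0^T|g(t,x_t,0,0,0,0,0)|dt\Big)^\beta\Big],
\end{aligned}
\end{equation*}
and then, using $|\phi(x_T)|\le|\phi(0)|+K|x_T|$ and $|g(t,x_t,0,\ldots,0)|\le|g(t,0,\ldots,0)|+K|x_t|$, I bound the right-hand side by $C_\beta\,\mathbb{D}_y+C_\beta(1+T^\beta)\,E[\sup_{0\le t\le T}|x_t|^\beta]$ with $\mathbb{D}_y=E[|\phi(0)|^\beta+(\int_0^T|g(t,0,\ldots,0)|dt)^\beta]$. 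Finally I replace $\int_0^T\!\!\int_{\mathcal E_i}|\tilde z^i|^2N_i$ by $\int_0^T\!\!\int_{\mathcal E_i}|\tilde z^i|^2\nu_i\,dt$ on the left: for $\beta=2$ these have the same expectation, and for $\beta>2$ one uses the Burkholder-type comparison between the purely discontinuous increasing process $\int_0^\cdot\!\!\int_{\mathcal E_i}|\tilde z^i|^2N_i$ and its predictable compensator $\int_0^\cdot\!\!\int_{\mathcal E_i}|\tilde z^i|^2\nu_i\,dt$ (cf.\ Lemma \ref{lemma21}); this loses nothing because the $f_i$-data on the right-hand side of \eqref{Lbeta estimate1} is itself written with $N_i$.

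\emph{Step 3 (closing the loop) and Step 4 (stability).} Substituting the bound of Step 2 into that of Step 1, the $\sup|y|^\beta$-, $(\int|z^i|^2)^{\beta/2}$- and $(\int\!\!\int|\tilde z^i|^2\nu_i)^{\beta/2}$-terms on the right of Step 1 get estimated by $E[\sup|x|^\beta]$ plus data, each multiplied by the small factor $\gamma$; choosing $\tilde{\tilde{T}}>0$ (depending only on $K,L_{\sigma_1},L_{\sigma_2},\rho$) so that the resulting self-coefficient of $E[\sup|x|^\beta]$ is at most $\tfrac12$, I absorb it and conclude $E[\sup_{0\le t\le T}|x_t|^\beta]\le C_{\beta,K,\rho,\sigma_1,\sigma_2}(\mathbb{D}_x+\mathbb{D}_y)$ for $0\le T\le\tilde{\tilde{T}}$; feeding this back into Step 2 yields \eqref{Lbeta estimate1}. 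For \eqref{difference estimate}, the differences $(\hat x,\hat y,\hat z^1,\hat z^2,\hat{\tilde z}^1,\hat{\tilde z}^2)$ of two solutions with initial values $x_0^1,x_0^2$ solve an FBSDEP of exactly the same structure, whose coefficients --- being differences of $b,\sigma_i,f_i,g$ evaluated along the two solutions --- vanish at the origin and are Lipschitz with the same constants, whose terminal condition $\phi(x^1_T)-\phi(x^2_T)$ vanishes at $\hat x_T=0$, and which still satisfies \textbf{(A6)}; applying the already proved estimate \eqref{Lbeta estimate1} to this difference system makes all data terms on the right vanish except $|\hat x_0|^\beta=|x_0^1-x_0^2|^\beta$, which is \eqref{difference estimate}.

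\emph{Main obstacle.} The delicate point is Step 3: the forward estimate unavoidably generates $(\int_0^T|z^i_t|^2dt)^{\beta/2}$-terms with coefficient of order $L_{\sigma_i}^\beta$, which is \emph{not} made small by shrinking $T$ alone, so the loop closes only thanks to the smallness of $L_{\sigma_1},L_{\sigma_2}$ imposed in \textbf{(A4)(7)} together with $T\le\tilde{\tilde{T}}$. Had $f_1,f_2$ been allowed to depend on $(z^1,z^2,\tilde z^1,\tilde z^2)$, the forward estimate would in addition carry a $\big(\int_0^T\!\!\int_{\mathcal E_i}|\cdot|^2N_i\big)^{\beta/2}$-term that cannot be dominated by the $\nu_i$-weighted norm on the left of \eqref{Lbeta estimate1} --- this is precisely why \textbf{(A6)} is imposed --- while the $N_i$-versus-$\nu_i$ exchange for $\beta>2$ is a secondary, purely technical ingredient handled as in Step 2.
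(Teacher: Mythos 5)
Your proposal is correct and follows essentially the same route as the paper's proof: two coupled a priori estimates for the forward and backward components, closed on a short interval by absorbing the self-term in $E[\sup_t|x_t|^\beta]$ using the smallness of $T$ together with the smallness of $L_{\sigma_1},L_{\sigma_2}$ from {\bf (A4)}(7), the $N_i$-versus-$\nu_i$ exchange of jump norms (which is the paper's Lemma \ref{lemma24}, stated inside the proof, rather than Lemma \ref{lemma21}), and the stability estimate \eqref{difference estimate} obtained by applying \eqref{Lbeta estimate1} to the difference system. The only cosmetic differences are that the paper presents the backward estimate first and derives it directly from the martingale representation theorem with Doob's and the B-D-G inequalities (absorbing the $y$- and $z$-self-terms in two intermediate steps) instead of invoking Lemma \ref{lemma23} against trivial data as you do.
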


\begin{proof}
The second estimate \eqref{difference estimate} are the direct result of \eqref{Lbeta estimate1}, so we mainly prove the first estimate. We should note that, as we analyzed in Remark \ref{rem23}, under {\bf (A6)}, we have obtained the existence and uniqueness of the solution to the fully coupled FBSDEP \eqref{fbsdep1}, which can guarantee the $\beta$-integrability in the following. From the backward equation in \eqref{fbsdep1}, $y$ is given by the right-continuous version of $y_t=E\big[\phi(x_T)+\int_t^Tg(t,x,y,z^1,z^2,\tilde{z}^1,\tilde{z}^2)ds|\mathcal{F}_t\big]$. Since $\beta\geq2$, by the martingale representation theorem in \cite{TL94} for local square integrable martingale, $(z^1,z^2,\tilde{z}^1,\tilde{z}^2)$ corresponds to the unique pair of $\mathcal{E}_1,\mathcal{E}_2$-predictable processes satisfying
\begin{equation*}
\begin{aligned}
&E\Big[\phi(x_T)+\int_0^Tg(t,x,y,z^1,z^2,\tilde{z}^1,\tilde{z}^2)ds\big|\mathcal{F}_t\Big]\\
&\qquad=y_0+\sum_{i=1}^2\bigg(\int_0^tz^i_sdW^i_s+\int_0^t\int_{\mathcal{E}_i}\tilde{z}^i_{(s,e)}\tilde{N}_i(de,ds)\bigg),\ P\mbox{-}a.s.,
\end{aligned}
\end{equation*}
we have $|y_t|\leq E\big[|\phi(x_T)|+\int_0^T|g(t,x,y,z^1,z^2,\tilde{z}^1,\tilde{z}^2)|ds|\mathcal{F}_t\big]$. Using martingale inequality, we get
\begin{equation}\label{ybeta}
\begin{aligned}
&E\Big[\sup_{0\leq t\leq T}|y_t|^\beta\Big]\leq E\bigg[\sup_{0\leq t\leq T}\bigg|E\Big[|\phi(x_T)|+\int_0^T|g(t,\Theta)|ds\Big|\mathcal{F}_t\Big]\bigg|^\beta\bigg]\\
&\leq C_\beta E\bigg[\Big|E\Big[|\phi(x_T)|+\int_0^T|g(t,\Theta)|ds\Big|\mathcal{F}_T\Big]\Big|^\beta\bigg]\\
&\leq C_\beta E\Big[\Big||\phi(x_T)|+\int_0^T|g(t,\Theta)|ds\Big|^\beta\Big]\\
&\leq C_\beta E\bigg[|\phi(x_T)-\phi(0)+\phi(0)|^\beta+\Big|\int_0^T|g(t,\Theta)-g(t,0,0,0,0,0,0)+g(t,0,0,0,0,0,0)|ds\Big|^\beta\bigg]\\
&\leq C_\beta E\bigg[\big|C|x_T|+\phi(0)\big|^\beta+\Big|\int_0^T|C(|x|+|y|+|z^1+|z^2|+||\tilde{z}^1||+||\tilde{z}^2||)\\
&\qquad\qquad +g(t,0,0,0,0,0,0)|ds\Big|^\beta\bigg]\\
&\leq C_\beta E\bigg[C^\beta|x_T|^\beta+|\phi(0)|^\beta+\Big|\int_0^TC(|x|+|y|+|z^1|+|z^2|+||\tilde{z}^1||+||\tilde{z}^2||)ds\Big|^\beta\\
&\qquad+\Big|\int_0^T|g(t,0,0,0,0,0,0)|ds\Big|^\beta\bigg]\\
&\leq C_\beta E\bigg[C^\beta\sup_{0\leq t\leq T}|x_t|^\beta+|\phi(0)|^\beta+\Big|\int_0^TC|x|ds\bigg|^\beta+\bigg|\int_0^TC|y|ds\Big|^\beta\\
&\qquad\quad +\sum_{i=1}^2\bigg(\Big|\int_0^TC|z^i|ds\Big|^\beta+\Big|\int_0^TC||\tilde{z}^i||ds\Big|^\beta\bigg)+\Big|\int_0^T|g(t,0,0,0,0,0,0)|ds\Big|^\beta\bigg]\\
&\leq C_\beta E\bigg[(C^\beta+T^\beta C^\beta)\sup_{0\leq t\leq T}|x_t|^\beta+|\phi(0)|^{\beta}
+T^\beta C^{\beta}\sup_{0\leq t\leq T}|y_t|^\beta+C^\beta T^{\frac{\beta}{2}}\sum_{i=1}^2\Big|\int_0^T|z^i|^2ds\Big|^{\frac{\beta}{2}}\\
&\qquad\quad +C^\beta T^{\frac{\beta}{2}}\sum_{i=1}^2\Big|\int_0^T\int_{\mathcal{E}_i}|\tilde{z}^i|^2\nu_i(de)ds\Big|^{\frac{\beta}{2}}+\Big|\int_0^T|g(t,0,0,0,0,0,0)|ds\Big|^\beta\bigg].
\end{aligned}
\end{equation}
We need the following lemma (\cite{LW14}, Lemma 3.1).
\begin{lemma}\label{lemma24}
For any $\beta\geq2$, we have
\begin{equation}
E\bigg[\bigg(\int_0^T\int_{\mathcal{E}_i}|\tilde{z}^i|^2\nu_i(de)ds\bigg)^{\frac{\beta}{2}}\bigg]\leq C_\beta E\bigg[\bigg(\int_0^T\int_{\mathcal{E}_i}|\tilde{z}^i|^2N_i(de,ds)\bigg)^{\frac{\beta}{2}}\bigg],\ i=1,2.
\end{equation}
\end{lemma}
Then, by Lemma \ref{lemma22} and B-D-G's inequality, we can get the estimate
\begin{equation}\label{ztildezestimate}
\begin{aligned}
&E\bigg[\sum_{i=1}^2\bigg(\bigg(\int_0^T|z^i|^2ds\bigg)^{\frac{\beta}{2}}+\bigg(\int_0^T\int_{\mathcal{E}_i}|\tilde{z}^i|^2\nu_i(de)ds\bigg)^{\frac{\beta}{2}}\bigg)\bigg]\\
&\leq E\bigg[\sum_{i=1}^2\bigg(\int_0^T|z^i|^2ds\bigg)^{\frac{\beta}{2}}\bigg]+C_\beta E\bigg[\sum_{i=1}^2\bigg(\int_0^T\int_{\mathcal{E}_i}|\tilde{z}^i|^2N_i(de,ds)\bigg)^{\frac{\beta}{2}}\bigg]\\
&\leq C_\beta E\bigg[\sup_{0\leq t\leq T}\bigg|\sum_{i=1}^2\int_0^tz^i_sdW^i_s+\sum_{i=1}^2\int_0^t\int_{\mathcal{E}_i}\tilde{z}^i_{(s,e)}\tilde{N}_i(de,ds)\bigg|^{\beta}\bigg]\\
&\leq C_\beta E\bigg[\sup_{0\leq t\leq T}\bigg(|y_t|^\beta+|y_0|^\beta+\bigg|\int_0^tg(s,\Theta(s))ds\bigg|^\beta\bigg)\bigg]\\
&\leq C_\beta E\bigg[\sup_{0\leq t\leq T}|y_t|^\beta+\bigg(\int_0^T|g(s,\Theta(s))|ds\bigg)^\beta\bigg]+E\bigg[|\phi(x_T)|^\beta+\bigg|\int_0^Tg(s,\Theta(s))ds\bigg|^\beta\bigg]\\
&\leq C_\beta E\bigg[\sup_{0\leq t\leq T}|y_t|^\beta+\bigg(\int_0^T|g(s,\Theta)-g(s,0,0,0,0,0,0)+g(s,0,0,0,0,0,0)|ds\bigg)^{\beta}\bigg]\\
&\quad +E\bigg[C^\beta|x_T|^{\beta}+|\phi(0)|^\beta+\bigg|\int_0^Tg(s,\Theta)-g(s,0,0,0,0,0,0)+g(s,0,0,0,0,0,0)ds\bigg|^\beta\bigg]\\
&\leq C_\beta E\bigg[(1+C^{\beta}T^\beta)\sup_{0\leq t\leq T}|y_t|^\beta+(C^\beta+C^\beta T^\beta)\sup_{0\leq t\leq T}|x_t|^\beta+|\phi(0)|^\beta\\
&\quad +C^\beta T^{\frac{\beta}{2}}\sum_{i=1}^2\bigg(\bigg(\int_0^T|z^i|^2ds\bigg)^{\frac{\beta}{2}}+\bigg(\int_0^T\int_{\mathcal{E}_i}|\tilde{z}^i|^2\nu_i(de)ds\bigg)^{\frac{\beta}{2}}\bigg)\\
&\quad +\bigg(\int_0^T|g(s,0,0,0,0,0,0)|ds\bigg)^\beta\bigg].
\end{aligned}
\end{equation}
From \eqref{ybeta}, we have
\begin{equation*}
\begin{aligned}
&(1-T^{\beta}C_\beta)E\bigg[\sup_{0\leq t\leq T}|y_t|^\beta\bigg]\leq C_\beta E\bigg[(C^\beta+T^\beta C^\beta)\sup_{0\leq t\leq T}|x_t|^\beta+|\phi(0)|^\beta\\
&\qquad+C^\beta T^{\frac{\beta}{2}}\sum_{i=1}^2\bigg(\bigg|\int_0^T|z^i|^2ds\bigg|^{\frac{\beta}{2}}+\bigg|\int_0^T\int_{\mathcal{E}_i}|\tilde{z}^i|^2\nu_i(de)ds\bigg|^{\frac{\beta}{2}}\bigg)+\bigg|\int_0^T|g(t,0,0,0,0,0,0)|ds\bigg|^\beta\bigg].
\end{aligned}
\end{equation*}
Choosing $T_1$ small enough such that $T_1^{\beta}C_{\beta}<1$, and for every $0\leq T\leq T_1$, we get
\begin{equation}\label{yestimate2}
\begin{aligned}
&E\bigg[\sup_{0\leq t\leq T}|y_t|^\beta\bigg]\leq C_{\beta,T}E\bigg[(C^{\beta}+T^{\beta}C^{\beta})\sup_{0\leq t\leq T}|x_t|^\beta
+|\phi(0)|^{\beta}+C^{\beta}T^{\frac{\beta}{2}}\sum_{i=1}^2\bigg(\bigg|\int_0^T|z^i|^2ds\bigg|^{\frac{\beta}{2}}\\
&\qquad+\bigg|\int_0^T\int_{\mathcal{E}_i}|\tilde{z}^i|^2\nu_i(de)ds\bigg|^{\frac{\beta}{2}}\bigg)+\bigg|\int_0^T|g(t,0,0,0,0,0,0)|ds\bigg|^\beta\bigg].
\end{aligned}
\end{equation}
From \eqref{ztildezestimate}, we also have
\begin{equation*}
\begin{aligned}
&(1-C_\beta T^{\frac{\beta}{2}})E\bigg[\sum_{i=1}^2\bigg(\bigg(\int_0^T|z^i|^2ds\bigg)^{\frac{\beta}{2}}+\bigg(\int_0^T\int_{\mathcal{E}_i}|\tilde{z}^i|^2\nu_i(de)ds\bigg)^{\frac{\beta}{2}}\bigg)\bigg]\\
&\leq C_\beta E\bigg[(1+C^\beta T^\beta)\sup_{0\leq t\leq T}|y_t|^\beta+(C^\beta+C^\beta T^\beta)\sup_{0\leq t\leq T}|x_t|^\beta\\
&\qquad\quad +|\phi(0)|^\beta+\bigg(\int_0^T|g(s,0,0,0,0,0,0)|ds\bigg)^\beta\bigg].
\end{aligned}
\end{equation*}
Then combining with the estimate \eqref{yestimate2}, we get
\begin{equation*}
\begin{aligned}
&(1-C_{\beta,T}T^{\frac{\beta}{2}})E\bigg[\sum_{i=1}^2\bigg(\bigg(\int_0^T|z^i|^2ds\bigg)^{\frac{\beta}{2}}+\bigg(\int_0^T\int_{\mathcal{E}_i}|\tilde{z}^i|^2\nu_i(de)ds\bigg)^{\frac{\beta}{2}}\bigg)\bigg]\\
&\leq C_{\beta,T}E\bigg[(C^\beta+T^\beta C^\beta)\sup_{0\leq t\leq T}|x_t|^\beta+|\phi(0)|^\beta+\bigg|\int_0^T|g(t,0,0,0,0,0,0)|ds\bigg|^\beta\bigg].
\end{aligned}
\end{equation*}
Choosing $0\leq T_2\leq T_1$ small enough such that $C_{\beta,T_2}T_2^{\frac{\beta}{2}}<1$, and for every $0\leq T\leq T_2$, we get
\begin{equation}\label{ztildezestimate2}
\begin{aligned}
&E\bigg[\sum_{i=1}^2\bigg(\bigg(\int_0^T|z^i|^2ds\bigg)^{\frac{\beta}{2}}+\bigg(\int_0^T\int_{\mathcal{E}_i}|\tilde{z}^i|^2\nu_i(de)ds\bigg)^{\frac{\beta}{2}}\bigg)\bigg]\\
&\leq C_{\beta,T}E\bigg[(C^\beta+T^\beta C^\beta)\sup_{0\leq t\leq T}|x_t|^\beta+|\phi(0)|^\beta+\bigg|\int_0^T|g(t,0,0,0,0,0,0)|ds\bigg|^\beta\bigg],
\end{aligned}
\end{equation}
where the constant $C_{\beta,T}$ changed every step. Next, we begin to deal with the forward equation in \eqref{fbsdep1}. First, since
\begin{equation*}
\begin{aligned}
|x_t|^\beta&=C_\beta\bigg[|x_0|^\beta+\bigg|\int_0^tb(s,\Theta)ds\bigg|^\beta+\sum_{i=1}^2\bigg|\int_0^t\sigma_i(s,\Theta)dW^i_s\bigg|^\beta\\
           &\qquad +\sum_{i=1}^2\bigg|\int_0^t\int_{\mathcal{E}_i}f_i(s,x,y,e)\tilde{N}_i(de,ds)\bigg|^{\beta}\bigg],
\end{aligned}
\end{equation*}
by B-D-G's inequality and H\"{o}lder's inequality, we have
\begin{equation*}
\begin{aligned}
&E\bigg[\sup_{0\leq t\leq T}|x_t|^\beta\bigg]\leq C_\beta E\big[|x_0|^\beta\big]+C_\beta E\bigg[\int_0^T|b(s,\Theta)|ds\bigg]^\beta\\
&\quad +C_\beta E\bigg[\sum_{i=1}^2\bigg(\int_0^T|\sigma_i(s,\Theta)|^2ds\bigg)^{\frac{\beta}{2}}\bigg]
+C_\beta E\bigg[\sum_{i=1}^2\bigg(\int_0^T\int_{\mathcal{E}_i}|f_i(s,x,y,e)|^2N_i(de,ds)\bigg)^{\frac{\beta}{2}}\bigg]\\
&\leq C_\beta E\big[|x_0|^\beta\big]+C_\beta E\bigg[\int_0^T|b(s,\Theta)-b(s,0,0,0,0,0,0)+b(s,0,0,0,0,0,0)|ds\bigg]^\beta\\
&\quad +C_\beta E\bigg[\sum_{i=1}^2\bigg(\int_0^T|\sigma_i(s,\Theta)-\sigma_i(s,0,0,0,0,0,0)+\sigma_i(s,0,0,0,0,0,0)|^2ds\bigg)^{\frac{\beta}{2}}\bigg]\\
&\quad +C_\beta E\bigg[\sum_{i=1}^2\bigg(\int_0^T\int_{\mathcal{E}_i}|f_i(s,x,y,e)-f_i(s,0,0,e)+f_i(s,0,0,e)|^2N_i(de,ds)\bigg)^{\frac{\beta}{2}}\bigg]\\
\end{aligned}
\end{equation*}
\begin{equation}\label{xestimate2}
\begin{aligned}
&\leq C_\beta E[|x_0|^{\beta}]+C_\beta E\bigg[\bigg(\int_0^TC(|x|+|y|+|z^1|+|z^2|+||\tilde{z}^1||+||\tilde{z}^2||)ds\bigg)^{\beta}\bigg]\\
&\quad +C_\beta E\bigg[\bigg(\int_0^T|b(s,0,0,0,0,0,0)|ds\bigg)^{\beta}\bigg]+C_\beta E\bigg[\sum_{i=1}^2\bigg(\int_0^TK^2(|x|^2+|y|^2)\\
&\quad +L^2_{\sigma_i}(|z^1|^2+|z^2|^2+||\tilde{z}^1||^2+||\tilde{z}^2||^2)+|\sigma_i(s,0,0,0,0,0,0)|^2ds\bigg)^{\frac{\beta}{2}}\bigg]\\
&\quad +C_\beta E\bigg[\sum_{i=1}^2\bigg(\int_0^T\int_{\mathcal{E}_i}\rho^2(e)(|x|^2+|y|^2)+|f_i(s,0,0,e)|^2N_i(de,ds)\bigg)^{\frac{\beta}{2}}\bigg]\\
&\leq C_\beta E\big[|x_0|^\beta\big]+C_\beta E\bigg[C^\beta\bigg(T^\beta\sup_{0\leq t\leq T}|x|^\beta
+T^\beta\sup_{0\leq t\leq T}|y|^\beta+T^{\frac{\beta}{2}}\sum_{i=1}^2\bigg(\bigg(\int_0^T|z^i|^2ds\bigg)^{\frac{\beta}{2}}\\
&\qquad +\bigg(\int_0^T\int_{\mathcal{E}_i}|\tilde{z}^i|^2\nu_i(de)ds\bigg)^{\frac{\beta}{2}}\bigg)\bigg)\bigg]+C_\beta E\bigg[\bigg(\int_0^T|b(s,0,0,0,0,0,0)|ds\bigg)^\beta\bigg]\\
&\quad +C_\beta E\bigg[\sum_{i=1}^2\bigg(\int_0^TK^2(|x|^2+|y|^2)+L^2_{\sigma_i}(|z^1|^2+|z^2|^2+||\tilde{z}^1||^2+||\tilde{z}^2||^2)ds\bigg)^{\frac{\beta}{2}}\\
&\qquad+\sum_{i=1}^2\bigg(\int_0^T|\sigma_i(s,0,0,0,0,0,0)|^2ds\bigg)^{\frac{\beta}{2}}\bigg]\\
&\quad +C_\beta E\bigg[\sum_{i=1}^2\bigg(\int_0^T\int_{\mathcal{E}_i}\rho^2(e)(|x|^2+|y|^2)N_i(de,ds)\bigg)^{\frac{\beta}{2}}\\
&\qquad+\sum_{i=1}^2\bigg(\int_0^T\int_{\mathcal{E}_i}|f_i(s,0,0,e)|^2N_i(de,ds)\bigg)^{\frac{\beta}{2}}\bigg]\\
&\leq C_\beta E\big[|x_0|^\beta\big]+C_\beta E\bigg[C^\beta\bigg(T^\beta\sup_{0\leq t\leq T}|x|^\beta+T^\beta\sup_{0\leq t\leq T}|y|^\beta+T^{\frac{\beta}{2}}\sum_{i=1}^2\bigg(\bigg(\int_0^T|z^i|^2ds\bigg)^{\frac{\beta}{2}}\\
&\qquad +\bigg(\int_0^T\int_{\mathcal{E}_i}|\tilde{z}^i|^2\nu_i(de)ds\bigg)^{\frac{\beta}{2}}\bigg)\bigg)\bigg]+C_\beta E\bigg[\bigg(\int_0^T|b(s,0,0,0,0,0,0)|ds\bigg)^\beta\bigg]\\
&\quad +C_\beta E\bigg[T^{\frac{\beta}{2}}K^\beta\sup_{0\leq t\leq T}(|x|^{\beta}+|y|^\beta)+(L^\beta_{\sigma_1}+L^\beta_{\sigma_2})\sum_{i=1}^2\bigg(\bigg(\int_0^T|z^i|^2ds\bigg)^{\frac{\beta}{2}}\\
&\qquad+\bigg(\int_0^T\int_{\mathcal{E}_i}|\tilde{z}^i|^2\nu_i(de)ds\bigg)^{\frac{\beta}{2}}\bigg)+\sum_{i=1}^2\bigg(\bigg(\int_0^T|\sigma_i(s,0,0,0,0,0,0)|^2ds\bigg)^{\frac{\beta}{2}}\\
&\qquad+\bigg(\int_0^T\int_{\mathcal{E}_i}|f_i(s,0,0,e)|^2N_i(de,ds)\bigg)^{\frac{\beta}{2}}\bigg)\bigg]\\
&\quad +C_{\beta,T}E\bigg[\sum_{i=1}^2\int_0^T\int_{\mathcal{E}_i}\rho^{\beta}(e)(|x|^\beta+|y|^{\beta})\nu_i(de)ds\bigg].
\end{aligned}
\end{equation}
Here we have used the fact that, for $i=1,2$,
\begin{equation}
E\bigg[\bigg(\int_0^T\int_{\mathcal{E}_i}\rho^2(e)(|x|^2+|y|^2)N_i(de,ds)\bigg)^{\frac{\beta}{2}}\bigg]\leq C_{\beta,T}E\bigg[\int_0^T\int_{\mathcal{E}_i}\rho^{\beta}(e)(|x|^{\beta}+|y|^{\beta})\nu_i(de)ds\bigg].
\end{equation}
Indeed, we have
\begin{equation*}
E\bigg[\bigg(\int_0^T\int_{\mathcal{E}_i}\rho^2(e)|x|^2N_i(de,ds)\bigg)^{\frac{\beta}{2}}\bigg]\leq C_{\beta,T}E\bigg[\int_0^T\int_{\mathcal{E}_i}\rho^{\beta}(e)|x|^{\beta}\nu_i(de)ds\bigg],\ i=1,2.
\end{equation*}
Set $H_{i,t}=\int_0^t\int_{\mathcal{E}_i}\rho^2(e)|x|^2N_i(de,ds)$, and
\begin{equation*}
\begin{aligned}
H_{i,T}^{\frac{\beta}{2}}&=\sum_{s\leq T}(H^{\frac{\beta}{2}}_{i,s}-H^{\frac{\beta}{2}}_{i,s-})=\sum_{s\leq T}\int_{\mathcal{E}_i}\bigg(|H_{i,s-}+\rho^2(e)|x|^2|^{\frac{\beta}{2}}-H^{\frac{\beta}{2}}_{i,s-}\bigg)N_i(de,\{s\})\\
&=\int_0^T\int_{\mathcal{E}_i}\bigg(|H_{i,s-}+\rho^2(e)|x|^2|^{\frac{\beta}{2}}-H^{\frac{\beta}{2}}_{i,s-}\bigg)N_i(de,ds)\\
&\leq C_\beta\int_0^T\int_{\mathcal{E}_i}\bigg(H^{\frac{\beta}{2}}_{i,s-}+\rho^\beta(e)|x|^\beta\bigg)N_i(de,ds),
\end{aligned}
\end{equation*}
then
\begin{equation*}
\begin{aligned}
EH_{i,T}^{\frac{\beta}{2}}&\leq C_\beta E\bigg[\int_0^T\int_{\mathcal{E}_i}\bigg(H^{\frac{\beta}{2}}_{i,s-}+\rho^\beta(e)|x|^\beta\bigg)\nu_i(de)ds\bigg]\\
&\leq C_\beta E\int_0^TH^{\frac{\beta}{2}}_{i,s}ds+C_\beta E\bigg[\int_0^T\int_{\mathcal{E}_i}\rho^\beta(e)|x|^\beta\nu_i(de)ds\bigg].
\end{aligned}
\end{equation*}
From Gronwall's inequality, we get
\begin{equation}
E\bigg[\bigg(\int_0^T\int_{\mathcal{E}_i}\rho^2(e)|x|^2N_i(de,ds)\bigg)^{\frac{\beta}{2}}\bigg]\leq C_{\beta,T}E\bigg[\int_0^T\int_{\mathcal{E}_i}\rho^\beta(e)|x|^\beta\nu_i(de)ds\bigg].
\end{equation}
Then we can manage \eqref{xestimate2} as follows
\begin{equation*}
\begin{aligned}
&E\bigg[\sup_{0\leq t\leq T}|x_t|^\beta\bigg]\leq C_\beta E\big[|x_0|^\beta\big]
+\Big[C_\beta(T^\beta C^\beta+T^{\frac{\beta}{2}}K^{\beta})+C_{\beta,T}\sum_{i=1}^2\int_{\mathcal{E}_i}\rho^\beta(e)\nu_i(de)\Big]\\
&\quad\times\bigg(E\bigg[\sup_{0\leq t\leq T}|x|^\beta\bigg]+E\bigg[\sup_{0\leq t\leq T}|y|^\beta\bigg]\bigg)\\
&\quad +C_\beta(T^{\frac{\beta}{2}}C^\beta+L^\beta_{\sigma_1}+L^\beta_{\sigma_2})E\bigg[\sum_{i=1}^2\bigg(\bigg(\int_0^T|z^i|^2ds\bigg)^{\frac{\beta}{2}}
+\bigg(\int_0^T\int_{\mathcal{E}_i}|\tilde{z}^i|^2\nu_i(de)ds\bigg)^{\frac{\beta}{2}}\bigg)\bigg]\\
&\quad +C_\beta E\bigg[\bigg(\int_0^T|b(s,0,0,0,0,0,0)|ds\bigg)^\beta+\sum_{i=1}^2\bigg(\int_0^T|\sigma_i(s,0,0,0,0,0,0)|^2ds\bigg)^{\frac{\beta}{2}}\\
&\qquad +\sum_{i=1}^2\bigg(\int_0^T\int_{\mathcal{E}_i}|f_i(s,0,0,e)|^2N_i(de,ds)\bigg)^{\frac{\beta}{2}}\bigg].
\end{aligned}
\end{equation*}
From \eqref{yestimate2} and \eqref{ztildezestimate2}, we then have
\begin{equation*}
\begin{aligned}
&E\bigg[\sup_{0\leq t\leq T}|x_t|^\beta\bigg]\\
&\leq C_\beta E\big[|x_0|^\beta\big]+\bigg\{\Big[C_\beta(T^\beta C^\beta+T^{\frac{\beta}{2}}K^\beta)+C_{\beta,T}\sum_{i=1}^2\int_{\mathcal{E}_i}\rho^\beta(e)\nu_i(de)\Big]
\Big[1+C_{\beta,T}(C^\beta+T^\beta C^\beta)\Big]\\
&\qquad +C_\beta(T^{\frac{\beta}{2}}C^\beta+L^\beta_{\sigma_1}+L^\beta_{\sigma_2})C_{\beta,T}(C^\beta+T^\beta C^\beta)\bigg\}E\bigg[\sup_{0\leq t\leq T}|x|^\beta\bigg]
 +\Big[C_\beta(T^\beta C^\beta+T^{\frac{\beta}{2}}K^\beta)\\
&\qquad +C_{\beta,T}\sum_{i=1}^2\int_{\mathcal{E}_i}\rho^\beta(e)\nu_i(de)+C_\beta(T^{\frac{\beta}{2}}C^\beta+L^\beta_{\sigma_1}+L^\beta_{\sigma_2})\Big]C_{\beta,T}E|\phi(0)|^\beta
+\Big[C_\beta(T^\beta C^\beta+T^{\frac{\beta}{2}}K^\beta)\\
&\qquad +C_{\beta,T}\sum_{i=1}^2\int_{\mathcal{E}_i}\rho^{\beta}(e)\nu_i(de)+C_{\beta}(T^{\frac{\beta}{2}}C^{\beta}+L^{\beta}_{\sigma_1}+L^{\beta}_{\sigma_2})\Big]C_{\beta,T}
E\bigg[\bigg(\int_0^T|g(t,0,0,0,0,0,0)|ds\bigg)^{\beta}\bigg]\\
&\quad +C_\beta E\bigg[\bigg(\int_0^T|b(s,0,0,0,0,0,0)|ds\bigg)^\beta+\sum_{i=1}^2\bigg(\int_0^T|\sigma_i(s,0,0,0,0,0,0)|^2ds\bigg)^{\frac{\beta}{2}}\\
&\qquad +\sum_{i=1}^2\bigg(\int_0^T\int_{\mathcal{E}_i}|f_i(s,0,0,e)|^2N_i(de,ds)\bigg)^{\frac{\beta}{2}}\bigg]\\
&\leq C_\beta E\big[|x_0|^\beta\big]+C_{\beta,K,\rho,\sigma_1,\sigma_2,T}E\Big[\sup_{0\leq t\leq T}|x|^\beta\bigg]+\Big[C_\beta(T^\beta C^\beta+T^{\frac{\beta}{2}}K^\beta)\\
&\qquad +C_{\beta,T}\sum_{i=1}^2\int_{\mathcal{E}_i}\rho^\beta(e)\nu_i(de)+C_\beta(T^{\frac{\beta}{2}}C^\beta+L^\beta_{\sigma_1}+L^\beta_{\sigma_2})\Big]C_{\beta,T}E|\phi(0)|^\beta
+\Big[C_{\beta}(T^{\beta}C^{\beta}+T^{\frac{\beta}{2}}K^{\beta})\\
&\qquad +C_{\beta,T}\sum_{i=1}^2\int_{\mathcal{E}_i}\rho^\beta(e)\nu_i(de)+C_\beta(T^{\frac{\beta}{2}}C^\beta+L^\beta_{\sigma_1}
+L^\beta_{\sigma_2})\Big]C_{\beta,T}E\bigg[\bigg(\int_0^T|g(t,0,0,0,0,0,0)|ds\bigg)^\beta\bigg]\\
&\quad +C_\beta E\bigg[\bigg(\int_0^T|b(s,0,0,0,0,0,0)|ds\bigg)^\beta+\sum_{i=1}^2\bigg(\int_0^T|\sigma_i(s,0,0,0,0,0,0)|^2ds\bigg)^{\frac{\beta}{2}}\\
&\qquad +\sum_{i=1}^2\bigg(\int_0^T\int_{\mathcal{E}_i}|f_i(s,0,0,e)|^2N_i(de,ds)\bigg)^{\frac{\beta}{2}}\bigg].
\end{aligned}
\end{equation*}
Therefore, we have
\begin{equation*}
\begin{aligned}
&(1-C_{\beta,K,\rho,\sigma_1,\sigma_2,T})E\bigg[\sup_{0\leq t\leq T}|x_t|^\beta\bigg]\leq C_\beta E\big[|x_0|^\beta\big]\\
&\quad +\Big[C_\beta(T^\beta C^\beta+T^{\frac{\beta}{2}}K^\beta)
+C_{\beta,T}\sum_{i=1}^2\int_{\mathcal{E}_i}\rho^\beta(e)\nu_i(de)+C_\beta(T^{\frac{\beta}{2}}C^\beta+L^\beta_{\sigma_1}+L^\beta_{\sigma_2})\Big]C_{\beta,T}E|\phi(0)|^\beta\\
&\quad +\Big[C_\beta(T^\beta C^\beta+T^{\frac{\beta}{2}}K^\beta)+C_{\beta,T}\sum_{i=1}^2\int_{\mathcal{E}_i}\rho^\beta(e)\nu_i(de)
+C_\beta(T^{\frac{\beta}{2}}C^\beta+L^\beta_{\sigma_1}+L^\beta_{\sigma_2})\Big]C_{\beta,T}\\
\end{aligned}
\end{equation*}
\begin{equation*}
\begin{aligned}
&\qquad \times E\bigg[\bigg(\int_0^T|g(t,0,0,0,0,0,0)|ds\bigg)^\beta\bigg]+C_\beta E\bigg[\bigg(\int_0^T|b(s,0,0,0,0,0,0)|ds\bigg)^\beta\\
&\qquad +\sum_{i=1}^2\bigg(\bigg(\int_0^T|\sigma_i(s,0,0,0,0,0,0)|^2ds\bigg)^{\frac{\beta}{2}}+\bigg(\int_0^T\int_{\mathcal{E}_i}|f_i(s,0,0,e)|^2N_i(de,ds)\bigg)^{\frac{\beta}{2}}\bigg)\bigg].
\end{aligned}
\end{equation*}
Choosing $0\leq T_3\leq T_2$ and $L_{\sigma}$ small enough such that $C_{\beta,K,\rho,\sigma_1,\sigma_2,T_3}<1$, and for every $0\leq T\leq T_3$, we get
\begin{equation}\label{xestimate22}
\begin{aligned}
E\bigg[\sup_{0\leq t\leq T}|x_t|^\beta\bigg]&\leq C_{\beta,K,\rho,\sigma_1,\sigma_2,T}E\bigg[|x_0|^{\beta}+|\phi(0)|^{\beta}+\bigg(\int_0^T|g(t,0,0,0,0,0,0)|ds\bigg)^{\beta}\\
& +\bigg(\int_0^T|b(s,0,0,0,0,0,0)|ds\bigg)^{\beta}+\sum_{i=1}^2\bigg(\int_0^T|\sigma_i(s,0,0,0,0,0,0)|^2ds\bigg)^{\frac{\beta}{2}}\\
&+\sum_{i=1}^2\bigg(\int_0^T\int_{\mathcal{E}_i}|f_i(s,0,0,e)|^2N_i(de,ds)\bigg)^{\frac{\beta}{2}}\bigg].
\end{aligned}
\end{equation}
Finally, from (\ref{yestimate2}), (\ref{ztildezestimate2}) and (\ref{xestimate22}), we can choose $\bar{T}=\min\{T_1,T_2,T_3\}$, and for every $0\leq T\leq\bar{T}$ such that
\begin{equation*}
\begin{aligned}
&E\bigg[\sup_{0\leq t\leq T}|x_t|^\beta+\sup_{0\leq t\leq T}|y_t|^\beta+\sum_{i=1}^2\bigg(\bigg(\int_0^T|z^i|^2ds\bigg)^{\frac{\beta}{2}}
+\bigg(\int_0^T\int_{\mathcal{E}_i}|\tilde{z}^i|^2\nu_i(de)ds\bigg)^{\frac{\beta}{2}}\bigg)\bigg]\\
&\leq C_{\beta,K,\rho,\sigma_1,\sigma_2}E\bigg[|x_0|^\beta+|\phi(0)|^\beta+\bigg(\int_0^T|g(t,0,0,0,0,0,0)|ds\bigg)^\beta\\
&\quad +\bigg(\int_0^T|b(s,0,0,0,0,0,0)|ds\bigg)^\beta+\sum_{i=1}^2\bigg(\int_0^T|\sigma_i(s,0,0,0,0,0,0)|^2ds\bigg)^{\frac{\beta}{2}}\\
\end{aligned}
\end{equation*}
\begin{equation}
\begin{aligned}
&\quad +\sum_{i=1}^2\bigg(\int_0^T\int_{\mathcal{E}_i}|f_i(s,0,0,e)|^2N_i(de,ds)\bigg)^{\frac{\beta}{2}}\bigg],
\end{aligned}
\end{equation}
where the positive constant $C_{\beta,K,\rho,\sigma_1,\sigma_2}$ depends on $\beta$ and Lipschitz constants $K,\rho,L_{\sigma_1},L_{\sigma_2}$. The proof is complete.
\end{proof}

\begin{remark}\label{rem26}
Similar to Theorem \ref{the24}, under {\bf (A4), (A5)}, we can also obtain the $L^\beta(\beta\geq2)$-estimate for fully-coupled FBSDEP \eqref{fbsdep1} in $\tilde{\mathcal{M}}^\beta[0,T]$, we immediately give the similar result which is more general than Theorem \ref{the24} as follows.
\end{remark}

\begin{theorem}\label{the25}
Suppose that {\bf (A4), (A5)} hold. Then, for any $\beta\geq2$, suppose that $(x,y,z^1,z^2,\tilde{z}^2,\tilde{z}^2)$ is the solution to \eqref{fbsdep1}, there exists a sufficiently small constant $\tilde{\tilde{T}}>0$ depending on $K,L_{\sigma_1},L_{\sigma_2},L_{f_1},\\L_{f_2},\rho$ and some constant $C_{\beta,K,\rho,\sigma_1,\sigma_2,f_1,f_2}$ depending on $\beta$ and Lipschitz constants $K,\rho,L_{\sigma_1},L_{\sigma_2},L_{f_1},\\L_{f_2}$ such that, for every $0\leq T\leq\tilde{\tilde{T}}$
\begin{equation}
\begin{aligned}
&E\bigg[\sup_{0\leq t\leq T}|x_t|^{\beta}+\sup_{0\leq t\leq T}|y_t|^{\beta}+\sum_{i=1}^2\bigg(\bigg(\int_0^T|z^i|^2dt\bigg)^{\frac{\beta}{2}}+\bigg(\int_0^T\int_{\mathcal{E}_i}|\tilde{z}^i|^2N_i(de,dt)\bigg)^{\frac{\beta}{2}}\bigg)\bigg]\\
&\leq C_{\beta,K,\rho,\sigma_1,\sigma_2,f_1,f_2}E\bigg[|x_0|^{\beta}+|\phi(0)|^{\beta}+\bigg(\int_0^T|g(t,0,0,0,0,0,0)|dt\bigg)^{\beta}\\
&\quad +\bigg(\int_0^T|b(s,0,0,0,0,0,0)|dt\bigg)^{\beta}+\sum_{i=1}^2\bigg(\int_0^T|\sigma_i(s,0,0,0,0,0,0)|^2dt\bigg)^{\frac{\beta}{2}}\\
&\quad +\sum_{i=1}^2\bigg(\int_0^T\int_{\mathcal{E}_i}|f_i(s,0,0,0,0,e)|^2N_i(de,dt)\bigg)^{\frac{\beta}{2}}\bigg].
\end{aligned}
\end{equation}
\end{theorem}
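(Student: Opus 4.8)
The plan is to follow the proof of Theorem \ref{the24} almost verbatim; the only genuinely new feature is the dependence of the jump coefficients $f_i$ on $(\tilde{z}^1,\tilde{z}^2)$, and this is precisely what forces us to work in $\tilde{\mathcal{M}}^\beta[0,T]$, whose jump component is measured in the $N_i$-norm $(\int_0^T\int_{\mathcal{E}_i}|\tilde{z}^i|^2N_i(de,dt))^{\beta/2}$ rather than the $\nu_i$-norm. First I would invoke Theorem \ref{the23} (valid under {\bf (A4), (A5)}) to obtain, for $T$ small enough, a unique solution $(x,y,z^1,z^2,\tilde{z}^1,\tilde{z}^2)\in\tilde{\mathcal{M}}^\beta[0,T]$, which legitimizes the $\beta$-integrability of every quantity appearing below, and then proceed by estimating the backward part, then the forward part, and finally absorbing the self-referential terms by choosing $T$ and the relevant Lipschitz constants small.

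For the backward part, writing $y_t=E[\phi(x_T)+\int_t^Tg(s,\Theta(s))\,ds\,|\,\mathcal{F}_t]$ and using the martingale (Ikeda--Watanabe) inequality together with Doob's inequality, one bounds $E[\sup_{0\le t\le T}|y_t|^\beta]$ by a constant times $E[\sup_{0\le t\le T}|x_t|^\beta+|\phi(0)|^\beta+(\int_0^T|g(t,0,\dots,0)|\,dt)^\beta]$ plus small-$T$ multiples of $E[\sup|y_t|^\beta]$ and of the $(z,\tilde{z})$-functionals; the Lipschitz bound on $g$ produces the latter with the $\nu_i$-norm $(\int_0^T\int_{\mathcal{E}_i}|\tilde{z}^i|^2\nu_i(de)dt)^{\beta/2}$, which I would then replace by the $N_i$-norm via Lemma \ref{lemma24}. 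Representing the orthogonal martingale increment of the BSDEP, applying B--D--G, and recalling $[\tilde{z}^i\cdot\tilde{N}_i]_T=\int_0^T\int_{\mathcal{E}_i}|\tilde{z}^i|^2N_i(de,dt)$ from Lemma \ref{lemma21}, I would likewise bound $E[\sum_i((\int_0^T|z^i|^2dt)^{\beta/2}+(\int_0^T\int_{\mathcal{E}_i}|\tilde{z}^i|^2N_i(de,dt))^{\beta/2})]$ in terms of $E[\sup|y_t|^\beta]$, $E[\sup|x_t|^\beta]$, $|\phi(0)|^\beta$, $(\int_0^T|g(t,0,\dots,0)|dt)^\beta$ and a small-$T$ multiple of the same functionals. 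Choosing $T$ small then yields, exactly as in \eqref{yestimate2}--\eqref{ztildezestimate2}, bounds for $E[\sup|y_t|^\beta]$ and for the $(z,\tilde{z})$-functionals solely in terms of $E[\sup|x_t|^\beta]$ and the data.

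For the forward part, starting from the integral form of the forward SDEP and using B--D--G and H\"older's inequalities, the drift and the two Brownian terms are handled as in \eqref{xestimate2}. For the jump terms I would decompose $f_i(s,x,y,\tilde{z}^1,\tilde{z}^2,e)$ around the reference point into $f_i(s,0,0,0,0,e)$, $\rho(e)(|x|+|y|)$ and $L_{f_i}(e)(|\tilde{z}^1|+|\tilde{z}^2|)$. The $\rho^2(e)(|x|^2+|y|^2)$ contribution is controlled by the Gronwall/dual-predictable-projection argument already used in Theorem \ref{the24}: with $H_{i,t}=\int_0^t\int_{\mathcal{E}_i}\rho^2(e)(|x|^2+|y|^2)N_i(de,ds)$ one gets $EH_{i,T}^{\beta/2}\le C_{\beta,T}(\int_{\mathcal{E}_i}\rho^\beta(e)\nu_i(de))E[\sup|x_t|^\beta+\sup|y_t|^\beta]$. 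The crucial new term is the $L_{f_i}(e)$-piece, which I would dominate \emph{pointwise in} $e$ by $(\int_0^T\int_{\mathcal{E}_i}L_{f_i}^2(e)|\tilde{z}^i|^2N_i(de,ds))^{\beta/2}\le(\sup_{e\in\mathcal{E}_i}L_{f_i}^\beta(e))(\int_0^T\int_{\mathcal{E}_i}|\tilde{z}^i|^2N_i(de,ds))^{\beta/2}\le\tilde{C}_{f_i}(\int_0^T\int_{\mathcal{E}_i}|\tilde{z}^i|^2N_i(de,ds))^{\beta/2}$; this is exactly the step that would break down in the $\nu_i$-norm. Inserting the backward bounds then gives an inequality of the form $(1-\varepsilon)E[\sup|x_t|^\beta]\le C_{\beta,K,\rho,\sigma_1,\sigma_2,f_1,f_2}(\text{data})$, with $\varepsilon$ a finite sum of terms proportional to $T^{\beta/2}$, $T^\beta$, $T\int_{\mathcal{E}_i}\rho^\beta(e)\nu_i(de)$, $L_{\sigma_i}^\beta$ and $\tilde{C}_{f_i}$.

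Finally, choosing $\tilde{\tilde{T}}$ small enough and $L_{\sigma_1},L_{\sigma_2},\tilde{C}_{f_1},\tilde{C}_{f_2}$ small enough (the latter uniformly in $T$), one gets $\varepsilon<1$, so $E[\sup|x_t|^\beta]$ is bounded by the data alone; feeding this back into the backward estimates delivers the full bound for $E[\sup|x_t|^\beta+\sup|y_t|^\beta+\sum_i((\int_0^T|z^i|^2dt)^{\beta/2}+(\int_0^T\int_{\mathcal{E}_i}|\tilde{z}^i|^2N_i(de,dt))^{\beta/2})]$. I expect the main obstacle to be the self-referential occurrence of $(\int_0^T\int_{\mathcal{E}_i}|\tilde{z}^i|^2N_i(de,dt))^{\beta/2}$ on both sides once $f_i$ depends on $\tilde{z}^i$: it can be absorbed only because its coefficient on the forward side splits into a $T$-small part and the Lipschitz constants $L_{\sigma_i}^\beta,\tilde{C}_{f_i}$ which are small independently of $T$, and this absorption is available exclusively in the $\tilde{F}^{2,\beta}$-norm, since $L_{f_i}(e)$ cannot be pulled out of a $\nu_i$-integral the way it can out of an $N_i$-integral. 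The proof is complete.
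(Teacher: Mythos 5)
Your proposal is correct and follows exactly the route the paper intends: the paper omits the proof of this theorem, stating only that it is ``similar to Theorem \ref{the24}'', and your argument reproduces that template (backward estimate via martingale representation, B--D--G and Lemma \ref{lemma24}, forward estimate via B--D--G/H\"older and the dual-predictable-projection Gronwall trick for the $\rho^2(e)(|x|^2+|y|^2)$ term, then absorption for small $T$ and small Lipschitz constants). You also correctly isolate the single genuinely new ingredient relative to Theorem \ref{the24} --- the $L_{f_i}(e)(|\tilde z^1|+|\tilde z^2|)$ contribution in the forward jump term, which must be dominated by $\tilde C_{f_i}$ times the $\tilde F_i^{2,\beta}$-norm $(\int_0^T\int_{\mathcal{E}_i}|\tilde z^i|^2N_i(de,dt))^{\beta/2}$ and absorbed there --- which is precisely why the statement is placed in $\tilde{\mathcal{M}}^\beta[0,T]$ under {\bf (A5)} rather than {\bf (A6)}.
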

In fact, due to Lemma \ref{lemma24}, Theorem \ref{the25} deduces to Theorem \ref{the24} when changing {\bf (A5)} into {\bf (A6)}.

\begin{remark}\label{rem27}
Theorem \ref{the23} and Theorem \ref{the25} are better and more general results compared to Theorem \ref{the22} and Theorem \ref{the24}, when extending the stronger Assumption {\bf (A6)} into a weaker one {\bf (A5)}. Moreover, they are nice results for studying the stochastic control problem of fully coupled FBSDEPs in the future. However, in our setting of this paper, the partially observed stochastic control problem involves a special fully coupled FBSDEP, and it is enough to solve them by Theorem \ref{the22} and Theorem \ref{the24}.

Therefore, we still use the space $\mathcal{M}^\beta[0,T]$ in the following discussion.
\end{remark}

\subsection{Decoupling random field method for solvability of FBSDEPs}

In the following, under {\bf (A1)}, for any $u\in\mathcal{U}_{ad}[0,T]$, we show that the state equation \eqref{stateeq2} admits a unique solution $(x,y,z^1,z^2,\tilde{z}^1,\tilde{z}^2)$ for any given time $T$ under $P$.

Observing that \eqref{stateeq2} is a kind of special fully coupled FBSDEP in which $\sigma_i,f_i$ do not contain $(z^1,z^2,\tilde{z}^1,\tilde{z}^2)$, then we can give the following theorem which is similar to the main result of \cite{SP20}

\begin{theorem}
Let ${\bf (A4)}$ hold or ${\bf (A1)}$ hold without control $u$, then there exists an $L^2$-solution $(x,y,z^{1},z^{2},\tilde{z}^{1},\tilde{z}^{2})$ to FBSDEP:
\begin{equation}\label{fbstate1}
\left\{
\begin{aligned}
 dx_t&=\tilde{b}_1\big(t,\Theta(t)\big)dt+\sum_{i=1}^2\sigma_i(t,x_t)dW^i_t+\sum_{i=1}^2\int_{\mathcal{E}_i}f_i(t,x_{t-},e)\tilde{N}_i(de,dt),\\
-dy_t&=g\big(t,\Theta(t)\big)dt-\sum_{i=1}^2z^{i}_tdW^i_t-\sum_{i=1}^2\int_{\mathcal{E}_i}\tilde{z}^{i}_{(t,e)}\tilde{N}_i(de,dt),\quad t\in[0,T],\\
  x_0&=x_0,\ \ y_T=\phi(x_T),
\end{aligned}
\right.
\end{equation}
 such that $x$ satisfies the following equation
\begin{equation}\label{xt}
\begin{aligned}
x_t&=x_0+\int_0^t\tilde{b}_1\Big(s,x_s,\theta(s,x_s),\partial_x\theta(s,x_s)\sigma_1(s,x_s),\partial_x\theta(s,x_s)\sigma_2(s,x_s),\\
   &\qquad\qquad\qquad \theta(s,x_s+f_1(s,x_s,e))-\theta(s,x_s),\theta(s,x_s+f_2(s,x_s,e))-\theta(s,x_s)\Big)ds\\
   &\quad +\sum_{i=1}^2\int_0^t\sigma_i(s,x_s)dW^i_s+\sum_{i=1}^2\int_0^t\int_{\mathcal{E}_i}f_i(s,x_{s-},e)\tilde{N}_i(de,ds),
\end{aligned}
\end{equation}
where, under some suitable conditions (see \cite{SP20}), $\theta(t,x)$ is the unique $C^{1,2}_b([0,T],\mathbb{R})$-solution with bounded $\partial_x\theta$ and $\partial_{xx}\theta$ to the following partial integro-differential equation (PIDE for short):
\begin{equation}\label{PIDE}
\left\{
\begin{aligned}
&\partial_x\theta\Big\{\tilde{b}_1\big(t,x,\theta,\partial_x\theta\sigma_1(t,x),\partial_x\theta\sigma_2(t,x),\theta(t,x+f_1(t,x,e))-\theta,\theta(t,x+f_2(t,x,e))-\theta\big)\\
&-\sum_{i=1}^2\int_{\mathcal{E}_i}f_i(t,x,e)\nu_i(de)\Big\}+\frac{1}{2}\sum_{i=1}^2\partial^2_{xx}\theta(\sigma_i(t,x))^2\\
&+g\big(t,x,\theta,\partial_x\theta\sigma_1(t,x),\partial_x\theta\sigma_2(t,x),\theta(t,x+f_1(t,x,e))-\theta,\theta(t,x+f_2(t,x,e))-\theta\big)\\
&+\sum_{i=1}^2\int_{\mathcal{E}_i}(\theta(t,x+f_i(t,x,e))-\theta)\nu_i(de)+\partial_t\theta=0,\qquad \theta(T,x)=\phi(x),
\end{aligned}
\right.
\end{equation}
where $\theta,\partial_t\theta,\partial_x\theta,\partial^2_{xx}\theta$ are everywhere evaluated at $(t,x)$(here, we omit the arguments $(t,x)$ for simplicity). Moreover, $y,z^1,z^2,\tilde{z}^1,\tilde{z}^2$ are explicitly expressed via the solution $\theta$ to PIDE \eqref{PIDE} by the formulas
\begin{equation}\label{relationPIDE}
\begin{aligned}
y_t=\theta(t,x_t),\ z^{i}_t=\partial_x\theta(t,x_t)\sigma_i(t,x_t),\ \tilde{z}^{i}_t=\theta(t,x_{t-}+f_i(t,x_{t-},e))-\theta(t,x_{t-}),\ i=1,2,
\end{aligned}
\end{equation}
and the $L^2$-solution $(x,y,z^{1},z^{2},\tilde{z}^{1},\tilde{z}^{2})$ is unique in $\mathcal{M}^2[0,T]$.
\end{theorem}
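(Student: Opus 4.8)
The plan is to carry out the decoupling random field (``four step'') scheme, now in the jump setting, in three stages: solve the associated PIDE globally, use its solution to \emph{build} an $L^{2}$-solution of \eqref{fbstate1}, and then show by a difference argument that any $L^{2}$-solution must be the one so constructed.

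\emph{Step 1: solvability of the PIDE.} Because in \eqref{fbstate1} the coefficients $\sigma_{i}$ and $f_{i}$ depend only on $(t,x)$ (not on $(y,z^{1},z^{2},\tilde z^{1},\tilde z^{2})$), equation \eqref{PIDE} is a \emph{semilinear} nonlocal parabolic equation: its second-order coefficient $\tfrac12\sum_{i=1}^{2}(\sigma_{i}(t,x))^{2}$ is decoupled from $\theta$, while $\theta$, $\partial_{x}\theta$ and the two nonlocal increments $\theta(t,x+f_{i}(t,x,e))-\theta(t,x)$ enter only through $\tilde b_{1}$ and $g$, which are Lipschitz and of linear growth by {\bf (A4)} (equivalently {\bf (A1)} without control). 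Under the structural hypotheses recalled from \cite{SP20}, this PIDE admits a unique solution $\theta\in C^{1,2}_{b}([0,T]\times\mathbb{R})$ with $\partial_{x}\theta$ and $\partial_{xx}\theta$ bounded and with $\theta(T,\cdot)=\phi$; I would invoke that result, which is the source of the \emph{global}-in-$T$ statement.

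\emph{Step 2: construction of the solution and the representations.} Given $\theta$, the coefficients of the closed forward equation \eqref{xt} are Lipschitz in $x$ with linear growth — this uses boundedness of $\partial_{x}\theta$, Lipschitz continuity and linear growth of $\sigma_{i},f_{i}$ in $x$, and Lipschitz continuity of $\tilde b_{1}$ — so \eqref{xt} has a unique strong solution $x\in S^{\beta}[0,T]$ for every $\beta\ge2$. I then define $(y,z^{1},z^{2},\tilde z^{1},\tilde z^{2})$ by \eqref{relationPIDE} and apply the It\^o formula for RCLL semimartingales to $\theta(t,x_{t})$. Substituting the identity \eqref{PIDE} for $\partial_{t}\theta$, and observing that the nonlocal It\^o correction $\int_{\mathcal{E}_{i}}\big(\theta(t,x_{t-}+f_{i})-\theta(t,x_{t-})-\partial_{x}\theta(t,x_{t-})f_{i}\big)\nu_{i}(de)$ is exactly the block $-\partial_{x}\theta\int_{\mathcal{E}_{i}}f_{i}\nu_{i}(de)+\int_{\mathcal{E}_{i}}(\theta(\cdot+f_{i})-\theta)\nu_{i}(de)$ of \eqref{PIDE}, the drift of $\theta(t,x_{t})$ collapses to $-g(t,\Theta(t))$ and its martingale part is $\sum_{i=1}^{2}z^{i}_{t}dW^{i}_{t}+\sum_{i=1}^{2}\int_{\mathcal{E}_{i}}\tilde z^{i}_{(t,e)}\tilde N_{i}(de,dt)$; together with $y_{T}=\theta(T,x_{T})=\phi(x_{T})$ this shows $(x,y,z^{1},z^{2},\tilde z^{1},\tilde z^{2})$ solves \eqref{fbstate1}, with the explicit formulas \eqref{relationPIDE} holding by construction. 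Membership in $\mathcal{M}^{2}[0,T]$ is immediate: $y=\theta(t,x_{t})$ has linear growth and $x\in S^{2}[0,T]$; $z^{i}=\partial_{x}\theta\,\sigma_{i}\in M^{2}_{i}[0,T]$ by boundedness of $\partial_{x}\theta$; and $|\tilde z^{i}_{(t,e)}|\le\|\partial_{x}\theta\|_{\infty}|f_{i}(t,x_{t-},e)|\le C\rho(e)(1+|x_{t-}|)$ with $\int_{\mathcal{E}_{i}}\rho^{2}(e)\nu_{i}(de)<\infty$, so $\tilde z^{i}\in F^{2}_{i}[0,T]$.

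\emph{Step 3: uniqueness in $\mathcal{M}^{2}[0,T]$, and the main obstacle.} Let $(\bar x,\bar y,\bar z^{1},\bar z^{2},\bar{\tilde z}^{1},\bar{\tilde z}^{2})$ be any $L^{2}$-solution and set $\delta y_{t}:=\bar y_{t}-\theta(t,\bar x_{t})$, $\delta z^{i}_{t}:=\bar z^{i}_{t}-\partial_{x}\theta(t,\bar x_{t})\sigma_{i}(t,\bar x_{t})$ and $\delta\tilde z^{i}_{(t,e)}:=\bar{\tilde z}^{i}_{(t,e)}-\big(\theta(t,\bar x_{t-}+f_{i}(t,\bar x_{t-},e))-\theta(t,\bar x_{t-})\big)$. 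Applying It\^o to $\theta(t,\bar x_{t})$ along the dynamics of $\bar x$, subtracting from the backward equation for $\bar y$, and again eliminating $\partial_{t}\theta$ via \eqref{PIDE}, one checks that $(\delta y,\delta z^{1},\delta z^{2},\delta\tilde z^{1},\delta\tilde z^{2})$ solves a BSDEP with terminal value $\phi(\bar x_{T})-\phi(\bar x_{T})=0$ and with a generator that vanishes when $(\delta y,\delta z^{1},\delta z^{2},\delta\tilde z^{1},\delta\tilde z^{2})=0$ and is Lipschitz in those variables (using the Lipschitz bounds on $\tilde b_{1}$ and $g$ from {\bf (A4)} and the boundedness of $\partial_{x}\theta$). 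The uniqueness for BSDEPs behind Lemma \ref{lemma22}, applied on finitely many short subintervals covering $[0,T]$, forces $\delta y\equiv0$, $\delta z^{i}\equiv0$, $\delta\tilde z^{i}\equiv0$; hence $\bar y,\bar z^{i},\bar{\tilde z}^{i}$ are given by \eqref{relationPIDE} with $x$ replaced by $\bar x$, so $\bar x$ itself solves \eqref{xt}, whose unique solution is $x$. Thus $\bar x=x$ and the whole tuple agrees with the one built in Step 2. I expect the genuine difficulty to be entirely in Step 1 — the $C^{1,2}_{b}$-solvability of the nonlocal semilinear PIDE \eqref{PIDE} with a priori bounds on $\partial_{x}\theta$ and $\partial_{xx}\theta$, which is where the ``suitable conditions'' of \cite{SP20} (a non-degeneracy/regularity structure, or a probabilistic representation combined with interior Schauder-type estimates for integro-parabolic operators) enter — while Steps 2 and 3 reduce to well-posedness of Lipschitz SDEPs, the It\^o formula for jump diffusions, and standard BSDEP estimates; the only delicate computation there is the bookkeeping of the Poisson compensators when applying It\^o's formula, so that the nonlocal terms cancel precisely against the corresponding block of \eqref{PIDE}.
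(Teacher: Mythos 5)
Your proposal is correct and follows essentially the same route as the paper's proof: treat the $C^{1,2}_b$-solvability of the PIDE as given from \cite{SP20}, solve the closed (decoupled) forward SDEP \eqref{xt} using the boundedness of $\partial_x\theta,\partial_{xx}\theta$ to get Lipschitz coefficients, apply the It\^o formula for jump diffusions to $\theta(t,x_t)$ and cancel the drift against \eqref{PIDE} to verify \eqref{relationPIDE}, and prove uniqueness by a difference argument. The only difference is one of detail: the paper compresses Step 3 into ``similar to \cite{SP20}, so we omit the details,'' whereas you spell out the standard argument (the deviation $(\delta y,\delta z^i,\delta\tilde z^i)$ solves a BSDEP with zero terminal data and a Lipschitz generator vanishing at zero, hence vanishes, forcing $\bar x$ to solve \eqref{xt}), and your bookkeeping of the Poisson compensator terms in the It\^o expansion is accurate.
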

\begin{proof}
First, defining $\tilde{\tilde{b}}_1(t,x)=\tilde{b}_1\big(t,x,\theta(t,x),\partial_x\theta(t,x)\sigma_1(t,x),\partial_x\theta(t,x)\sigma_2(t,x),\theta(t,x+f_1(t,x,e))\\-\theta(t,x),\theta(t,x+f_2(t,x,e))-\theta(t,x)\big)$, the assumption {\bf (A4)} (or {\bf (A1)}) and boundedness of $\partial_x\theta,\partial_{xx}\theta$ can guarantee that $\tilde{\tilde{b}}_1, \sigma_1,\sigma_2,f_1,f_2$ satisfy the Lipschitz condition w.r.t $x$ and
\begin{equation}
E\int_0^T\Big|\tilde{\tilde{b}}_1(t,0)\Big|^2dt+\sum_{i=1}^2E\int_0^T|\sigma_i(t,0)|^2dt+\sum_{i=1}^2E\int_{\mathcal{E}_i}\int_0^T|f_i(t,0,e)|^2\nu_i(de)dt<\infty.
\end{equation}
Then \eqref{xt} admits a unique solution $x$. Next, by applying time-dependent It\^{o}'s formula ({\bf Lemma 6}, \cite{SP20}) to $\theta(t,x)$ and combining \eqref{PIDE} satisfied by $\theta(t,x)$, we can prove the existence of solution $y,z^1,z^2,\tilde{z}^1,\tilde{z}^2$ defined by \eqref{relationPIDE} of BSDEP in \eqref{fbstate1}. Finally, the proof of uniqueness is similar to that of \cite{SP20}, so we omit the details.
\end{proof}

The idea above mainly motivates us to consider the decoupling function $\theta$, so we study the following FBSDEP with random coefficients and initial value $x_0=\xi$:
\begin{equation}\label{stateeq20}
\left\{
\begin{aligned}
x^{0,\xi}_t&=\xi+\int_0^t\tilde{b}_1\big(r,\omega,\Theta^{0,\xi}(r)\big)dr+\sum_{i=1}^2\int_0^t\sigma_i(r,\omega,x^{0,\xi}_r)dW^i_r\\
           &\quad +\sum_{i=1}^2\int_0^t\int_{\mathcal{E}_i}f_i(r,\omega,x^{0,\xi}_{r-},e)\tilde{N}_i(de,dr),\\
y^{0,\xi}_t&=\phi(\omega,x_T^{0,\xi})+\int_t^Tg\big(r,\omega,\Theta^{0,\xi}(r)\big)dr-\sum_{i=1}^2\int_t^Tz^{i,0,\xi}_rdW^i_r\\
           &\quad -\sum_{i=1}^2\int_t^T\int_{\mathcal{E}_i}\tilde{z}^{i,0,\xi}_{(r,e)}\tilde{N}_i(de,dr),\quad t\in[0,T],
\end{aligned}
\right.
\end{equation}
where coefficients $\tilde{b}_1,\sigma_1,\sigma_2,f_1,f_2,g$ and $\phi$ are random, and $\sigma_1,\sigma_2,f_1,f_2$ are independent of \\$(y,z^1,z^2,\tilde{z}^1,\tilde{z}^2)$.

Let $0\leq t_1<t_2\leq T$, $\eta$ be an $\mathcal{F}_{t_1}$-measurable square integrable random variable, and $\varphi(\omega,x)$ be a random field such that $\varphi$ is $\mathcal{F}_{t_2}$-measurable for any fixed $x$ and uniformly Lipschitz continuous in $x$ with a Lipschitz constant $\bar{K}$. Consider the following FBSDEP over $[t_1,t_2]$:
\begin{equation}\label{stateeq21}
\left\{
\begin{aligned}
x^{0,\xi}_t&=\eta+\int_{t_1}^t\tilde{b}_1\big(r,\omega,\Theta^{0,\xi}(r)\big)dr+\sum_{i=1}^2\int_{t_1}^t\sigma_i(r,\omega,x^{0,\xi}_r)dW^i_r\\
           &\quad+\sum_{i=1}^2\int_{t_1}^t\int_{\mathcal{E}_i}f_i(r,\omega,x^{0,\xi}_{r-},e)\tilde{N}_i(de,dr),\\
y^{0,\xi}_t&=\varphi(\omega,x_{t_2}^{0,\xi})+\int_t^{t_2}g\big(r,\omega,\Theta^{0,\xi}(r)\big)dr-\sum_{i=1}^2\int_t^{t_2}z^{i,0,\xi}_rdW^i_r\\
           &\quad -\sum_{i=1}^2\int_t^{t_2}\int_{\mathcal{E}_i}\tilde{z}^{i,0,\xi}_{(r,e)}\tilde{N}_i(de,dr),\quad t\in[t_1,t_2].
\end{aligned}
\right.
\end{equation}
Note that, by applying Theorem \ref{the21}, there exists a constant $\tilde{T}(\bar{K})$ depending on the Lipschitz constants in {\bf (A4)} and Lipschitz constant $\bar{K}$ of $\varphi$ such that whenever $t_2-t_1\leq\tilde{T}(\bar{K})$, FBSDEP \eqref{stateeq21} admits a unique $L^2$-solution.

Now, we give the following result, whose non-jump case is introduced by Cvitani\'c and Zhang \cite{CZ12}(see also {\bf Lemma 4.1} \cite{MY21arxiv}).

\begin{theorem}\label{RF}
Let {\bf (A4)} hold, and there exists a random field $\theta(t,\omega,x)$ such that

(i)\ $\theta(T,\omega,x)=\phi(\omega,x)$;

(ii)\ For each $(t,x)$, $\theta$ is $\mathcal{F}_t$-measurable;

(iii)\ $|\theta(t,x_1)-\theta(t,x_2)|\leq \bar{K}|x_1-x_2|$,\ $x_1,x_2\in\mathbb{R}$;

(iv)\ For any $0\leq t_1<t_2\leq T$ such that $|t_2-t_1|\leq \tilde{T}(\bar{K})$, where $\tilde{T}(\bar{K})$, depending on the Lipschitz constants in {\bf (A4)} and Lipschitz constant $\bar{K}$, is a sufficient small constant, the unique solution to FBSDEP \eqref{stateeq21} over $[t_1,t_2]$ with terminal condition $\varphi(\omega,x):=\theta(t_2,\omega,x)$ satisfies $y^{0,\xi}_{t_1}=\theta(t_1,\omega,x^{0,\xi}_{t_1})$.

Then, for any given $T>0$, FBSDEP \eqref{stateeq20} admits a unique $L^2$-solution on $[0,T]$ and $y^{0,\xi}_t=\theta(t,\omega,x_t^{0,\xi})$.
\end{theorem}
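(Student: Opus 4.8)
The plan is to build the global $L^2$-solution by concatenating local solutions over a finite partition whose mesh is dictated by the uniform Lipschitz constant $\bar K$ of $\theta$. Fix $0=t_0<t_1<\cdots<t_N=T$ with $t_j-t_{j-1}\le\tilde T(\bar K)$ for every $j$; since (iii) gives a Lipschitz constant $\bar K$ that does not depend on $t$, one single mesh works. On each $[t_{j-1},t_j]$ I solve the FBSDEP \eqref{stateeq21} with the known, $\mathcal F_{t_j}$-measurable, $\bar K$-Lipschitz terminal random field $\varphi(\omega,x):=\theta(t_j,\omega,x)$; Theorem \ref{the21}, together with the time-shift remark already made after \eqref{stateeq21}, provides a unique $L^2$-solution. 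Running this forward in $j$ — starting on $[0,t_1]$ from $x_0=\xi$, then taking $\eta:=x_{t_1}$ as initial datum on $[t_1,t_2]$, and so on — produces pieces which by hypothesis (iv) satisfy $y_{t_{j-1}}=\theta(t_{j-1},x_{t_{j-1}})$.

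Next I would glue the pieces. The forward component is continuous at the junction points by construction ($x_{t_j}$ from the $j$-th piece is the initial datum of the $(j{+}1)$-th), has RCLL paths on each subinterval, and the SDEP integral equation on $[0,T]$ follows from additivity of the stochastic integrals across the partition; the same holds backwards for $(y,z^1,z^2,\tilde z^1,\tilde z^2)$ once one checks that the terminal value $\theta(t_j,x_{t_j})$ imposed on $[t_{j-1},t_j]$ coincides with the value $y_{t_j}$ produced on $[t_j,t_{j+1}]$, which is exactly (iv) applied to the $(j{+}1)$-th piece. This yields a global $(x,y,z^1,z^2,\tilde z^1,\tilde z^2)\in\mathcal M^2[0,T]$ solving \eqref{stateeq20}. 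To get $y_t=\theta(t,x_t)$ for \emph{all} $t\in[0,T]$, not just at mesh points, I observe that for $t\in[t_{j-1},t_j]$ the restriction of the constructed solution to $[t,t_j]$ is, by short-interval uniqueness (Theorem \ref{the21}), the unique $L^2$-solution of the FBSDEP over $[t,t_j]$ with terminal field $\theta(t_j,\cdot)$; since $t_j-t\le\tilde T(\bar K)$, hypothesis (iv) applies and delivers $y_t=\theta(t,x_t)$.

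For uniqueness, let $(x',y',z^{1,\prime},z^{2,\prime},\tilde z^{1,\prime},\tilde z^{2,\prime})$ be any $L^2$-solution of \eqref{stateeq20}. I would first show $y'_t=\theta(t,x'_t)$ for every $t$ by backward induction on the subintervals: on $[t_{N-1},T]$ the restriction solves the FBSDEP with initial datum $x'_{t_{N-1}}$ and terminal condition $\phi(\cdot)=\theta(T,\cdot)$, so by (iv) and the restriction argument above $y'_t=\theta(t,x'_t)$ there; in particular $y'_{t_{N-1}}=\theta(t_{N-1},x'_{t_{N-1}})$, which is the field $\theta(t_{N-1},\cdot)$ evaluated at $x'_{t_{N-1}}$, so the same argument applies on $[t_{N-2},t_{N-1}]$, and so on down to $[0,t_1]$. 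Having established this, a forward induction finishes the proof: on $[0,t_1]$ the restriction of $(x',\dots)$ solves the FBSDEP with initial datum $\xi$ and terminal field $\theta(t_1,\cdot)$ — the very equation whose unique $L^2$-solution is the first constructed piece — hence the two coincide on $[0,t_1]$, in particular $x'_{t_1}=x_{t_1}$; iterating over $j$ gives $(x',y',z',\tilde z')=(x,y,z,\tilde z)$ on all of $[0,T]$.

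The main obstacle is the bootstrapping between the two directions: property (iv) is a single-step statement, and one must carefully interlock the forward propagation of the flow $x$ with the backward propagation of the value relation $y=\theta(t,x)$, verifying at every junction that the terminal condition fed into one subinterval is genuinely the $y$-value delivered by the adjacent one, and that restrictions of global $L^2$-solutions remain $L^2$-solutions of the correspondingly truncated FBSDEPs so that the short-interval uniqueness of Theorem \ref{the21} can be invoked. The measure-theoretic bookkeeping of concatenating processes across the partition — RCLL paths, adaptedness to $(\mathcal F_t)$, and $\mathcal M^2$-integrability of the glued process — is routine but should be handled with some care.
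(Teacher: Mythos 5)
Your proposal follows essentially the same route as the paper: partition $[0,T]$ into subintervals of length at most $\tilde T(\bar K)$, solve the local FBSDEP on each piece with terminal field $\theta(t_j,\cdot)$, concatenate, obtain $y_t=\theta(t,x_t)$ at interior times by restricting to $[t,t_j]$ and invoking (iv), and prove uniqueness via the backward/forward induction through the decoupling relation (which the paper merely cites from Cvitani\'c--Zhang). The only point you gloss over is the bookkeeping at the partition times in the jump setting --- the forward component is not literally continuous at the $t_j$ (a jump may land exactly there and is charged to the preceding subinterval, while the degenerate integral at the start of the next subinterval vanishes), which the paper treats explicitly; but since your construction takes the terminal value of one piece as the initial datum of the next, the matching holds by definition and the argument is unaffected.
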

\begin{proof}
{\bf Existence.} Let $0=t_0<\cdots<t_n=T$ be a partition of $[0,T]$ such that $t_k-t_{k-1}\leq \tilde{T}(\bar{K})$ for $k=1,\dots,n$. Denote $x_0^{0,\xi,0}:=\xi$. For $k=1,\dots,n$, let $(x^k,y^k,z^{1,k},z^{2,k},\tilde{z}^{1,k},\tilde{z}^{2,k})$ be the unique solution to the following FBSDEP over $[t_{k-1},t_k]$:
\begin{equation}\label{stateeq22}
\left\{
\begin{aligned}
x^{0,\xi,k}_s&=x^{0,\xi,k-1}_{t_{k-1}}+\int_{t_{k-1}}^s\tilde{b}_1\big(r,\omega,\Theta^{0,\xi,k}(r)\big)dr+\sum_{i=1}^2\int_{t_{k-1}}^s\sigma_i(r,\omega,x^{0,\xi,k}_r)dW^i_r\\
             &\quad +\sum_{i=1}^2\int_{t_{k-1}}^s\int_{\mathcal{E}_i}f_i(r,\omega,x^{0,\xi,k}_{r-},e)\tilde{N}_i(de,dr),\\
y^{0,\xi,k}_s&=\theta(t_k,\omega,x_{t_k}^{0,\xi,k})+\int_s^{t_k}g\big(r,\omega,\Theta^{0,\xi,k}(r)\big)dr-\sum_{i=1}^2\int_s^{t_k}z^{i,0,\xi,k}_rdW^i_r\\
             &\quad -\sum_{i=1}^2\int_s^{t_k}\int_{\mathcal{E}_i}\tilde{z}^{i,0,\xi,k}_{(r,e)}\tilde{N}_i(de,dr).
\end{aligned}
\right.
\end{equation}
Define
\begin{equation}
\begin{aligned}
x_t^{0,\xi}:=&\sum_{k=1}^nx_t^{0,\xi,k}1_{[t_{k-1},t_k)}(t)+x_T^{0,\xi,n}1_{\{T\}}(t),\\
y_t^{0,\xi}:=&\sum_{k=1}^ny_t^{0,\xi,k}1_{[t_{k-1},t_k)}(t)+y_T^{0,\xi,n}1_{\{T\}}(t),\\
z_t^{i,0,\xi}:=&\sum_{k=1}^nz_t^{i,0,\xi,k}1_{[t_{k-1},t_k)}(t)+z_T^{i,0,\xi,n}1_{\{T\}}(t),\ i=1,2,\\
\tilde{z}_{(t,e)}^{i,0,\xi}:=&\sum_{k=1}^n\tilde{z}_{(t,e)}^{i,0,\xi,k}1_{[t_{k-1},t_k)}(t)+\tilde{z}_{(T,e)}^{i,0,\xi,n}1_{\{T\}}(t),\ i=1,2.\\
\end{aligned}
\end{equation}
Note that we should consider a necessary situation that the jumps occur at the time $t_k,k=1,\dots,n$. Without loss of generality, we consider that the jump appears at $t_k$ and for forward equation, $x_t^{0,\xi,k}$ is an adapted RCLL process on $[t_{k-1},t_k], k=1,\dots,n$. Due to the appearance of  jump at $t_k$, it can be understood by the following equation from \eqref{stateeq22}:
\begin{equation}\label{1}
\begin{aligned}
(i)\ x^{0,\xi,k+1}_{t_k}&=x^{0,\xi,k}_{t_k}+\int_{t_k}^{t_k}\tilde{b}_1\big(r,\omega,\Theta^{0,\xi,k+1}(r)\big)dr+\sum_{i=1}^2\int_{t_k}^{t_k}\sigma_i(r,\omega,x^{0,\xi,k+1}_r)dW^i_r\\
                        &\quad+\sum_{i=1}^2\int_{t_k}^{t_k}\int_{\mathcal{E}_i}f_i(r,\omega,x^{0,\xi,k+1}_{r-},e)\tilde{N}_i(de,dr)\\
                        &=x^{0,\xi,k}_{t_k},
\end{aligned}
\end{equation}
\begin{equation}\label{2}
\begin{aligned}
(ii)\ x^{0,\xi,k}_{t_k}&=x^{0,\xi,k-1}_{t_{k-1}}+\int_{t_{k-1}}^{t_k}\tilde{b}_1\big(r,\omega,\Theta^{0,\xi,k}(r)\big)dr+\sum_{i=1}^2\int_{t_{k-1}}^{t_k}\sigma_i(r,\omega,x^{0,\xi,k}_r)dW^i_r\\
                       &\quad +\sum_{i=1}^2\int_{t_{k-1}}^{t_k}\int_{\mathcal{E}_i}f_i(r,\omega,x^{0,\xi,k}_{r-},e)\tilde{N}_i(de,dr).
\end{aligned}
\end{equation}
In (i), $x_{t_k}^{0,\xi,k+1}$ can be regarded as the initial value of $x_t^{0,\xi,k+1}$ on $[t_k,t_{k+1}]$. In (ii), $x_{t_k}^{0,\xi,k}$ can be regraded as the terminal value of $x_t^{0,\xi,k}$ on $[t_{k-1},t_k]$.

Because there exists a jump at $t_k$, the integrals, especially for $\int_{t_k}^{t_k}\int_{\mathcal{E}_i}f_i(r,\omega,x^{0,\xi,k+1}_{r-},e)\tilde{N}_i(de,\\dr)$ in (i), are equal to 0. However, the integrals, like $\int_{t_{k-1}}^{t_k}\int_{\mathcal{E}_i}f_i(r,\omega,x^{0,\xi,k}_{r-},e)\tilde{N}_i(de,dr)$ in (ii), are not equal to 0.

In particular, when $f_i\equiv1$, $\int_{t_k}^{t_k}\int_{\mathcal{E}_i}1\tilde{N}_i(de,dr)=\tilde{N}_i(\mathcal{E}_i,t_k)=0$,\ i.e., the quantity of jump at $t_k$ which is the initial time of $[t_k,t_{k+1}]$, is 0 for (i). And $\int_{t_{k-1}}^{t_k}\int_{\mathcal{E}_i}1\tilde{N}_i(de,dr)=\tilde{N}_i(\mathcal{E}_i,[t_{k-1},t_k])=1$,\ i.e., the quantity of jump on time interval $[t_{k-1},t_k]$ is 1 for (ii), and the jump just occur at $t_k$ which is the terminal time of $[t_{k-1},t_k]$. We have similar analysis for $y_t^{0,\xi,k}$ of the backward equation.

Based on the above analysis, we have $x_{t_{k-1}}^{0,\xi,k}=x_{t_{k-1}}^{0,\xi,k-1}$ and $y_{t_{k-1}}^{0,\xi,k}=\theta(t_{k-1},\omega,x^{0,\xi,k}_{t_{k-1}})=\theta(t_{k-1},\omega,x^{0,\xi,k-1}_{t_{k-1}})=y_{t_{k-1}}^{0,\xi,k-1}$.

Therefore, for any two adjacent intervals, $[t_{k-1},t_k]$ and $[t_k,t_{k+1}], k=1,\dots,n-1$, the initial value of $(x^{0,\xi,k+1},y^{0,\xi,k+1})$ at initial $t_k$ of current interval $[t_k,t_{k+1}]$ is equal to the terminal value of $(x^{0,\xi,k},y^{0,\xi,k})$ at terminal $t_k$ of previous interval $[t_{k-1},t_k]$. Moreover, we can connect these small pieces naturally and can check straightforwardly that $(x^{0,\xi},y^{0,\xi},z^{1,0,\xi},z^{2,0,\xi},\tilde{z}^{1,0,\xi},\tilde{z}^{2,0,\xi})$ solves \eqref{stateeq20}.

{\bf Uniqueness.} The proof is similar to that in \cite{CZ12}.

Finally, for any $t\in[t_{k-1},t_k]$, consider the FBSDEP on $[t,t_k]$, we see that $y^{0,\xi}_t=\theta(t,\omega,x_t^{0,\xi})$. The proof is complete.
\end{proof}

Now, we give a sufficient condition for the existence of the random field $\theta$ in Theorem \ref{RF}.
\begin{theorem}
Let {\bf (A4)} hold. $\tilde{b}_1,\sigma_1,\sigma_2,f_1,f_2$ are continuously differentiable in $(x,y,z^1,z^2,\\\tilde{z}^1,\tilde{z}^2)$ and assume that
\begin{equation}\label{Condition1}
\tilde{b}_{1y}+\sum_{i=1}^2\tilde{b}_{1z^i}\sigma_{ix}+\sum_{i=1}^2\tilde{b}_{1\tilde{z}^i}\int_{\mathcal{E}_i}f_{ix}\nu_i(de)=0.
\end{equation}
Then there exists a random field $\theta$ satisfying the conditions in Theorem \ref{RF}, and consequently FBSDEP \eqref{stateeq20} admits a unique solution.
\end{theorem}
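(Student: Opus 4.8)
The plan is to produce a Lipschitz decoupling random field $\theta(t,\omega,x)$ satisfying conditions (i)--(iv) of Theorem \ref{RF} and then to invoke that theorem. The identity \eqref{Condition1} is needed at exactly one point --- to keep the spatial Lipschitz constant of $\theta$ from exploding when the local pieces are glued --- and the construction is otherwise the standard backward patching built on the local well-posedness of FBSDEP \eqref{stateeq21} provided by Theorem \ref{the21}.

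First, \emph{before} choosing any partition, I would fix a constant $\bar K:=(L_\phi+1)e^{C_0T}$, where $L_\phi$ is the Lipschitz constant of $\phi$ in $x$ and $C_0$ is the $\bar K$-\emph{independent} constant produced by the a priori estimate below (it depends only on $T$ and on the structural constants $K,\rho,L_{\sigma_1},L_{\sigma_2},L_{f_1},L_{f_2},L$ of {\bf (A4)}). Then choose a partition $0=t_0<\dots<t_n=T$ with $\max_k(t_k-t_{k-1})\le\tilde T(\bar K)$, $\tilde T(\cdot)$ denoting the unique-solvability horizon of \eqref{stateeq21} furnished by Theorem \ref{the21}, which may be taken non-increasing in the Lipschitz constant of the terminal field. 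Setting $\theta(T,\omega,\cdot):=\phi(\omega,\cdot)$ and running $k$ from $n$ down to $1$, I define $\theta(t,\omega,x)$ for $t\in[t_{k-1},t_k)$ to be the time-$t$ value $y_t$ of the backward component of the unique $L^2$-solution of \eqref{stateeq21} on $[t,t_k]$ with terminal field $\varphi:=\theta(t_k,\omega,\cdot)$ and forward value $x$ at time $t$; this is legitimate because the induction keeps $\ell(t_k):=\mathrm{Lip}_x\theta(t_k,\cdot)\le\bar K$, so $t_k-t\le\tilde T(\bar K)\le\tilde T(\ell(t_k))$. Conditions (i) and (ii) are then immediate, while joint measurability and RCLL dependence on $t$ follow from the continuous dependence of the FBSDEP solution on its data and starting time.

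The crux is the uniform bound (iii), and this is the sole use of \eqref{Condition1}. Fix a mesh interval $[t_{k-1},t_k]$ and $t\in[t_{k-1},t_k)$, compare the two solutions started from $x$ and $x'$ at time $t$, and write $\hat x,\hat y,\hat z^i,\hat{\tilde{z}}^i$ for the differences. Because $\sigma_i$ and $f_i$ in \eqref{stateeq21} depend on $x$ only, their increments are bounded by $K|\hat x|$ and $\rho(e)|\hat x|$; thus the only potentially dangerous contribution in the dynamics of $\hat x$ is the part of $\tilde b_1(r,\Theta_r)-\tilde b_1(r,\Theta'_r)$ beyond $\tilde{b}_{1x}\hat x$, namely $\tilde{b}_{1y}\hat y+\sum_i\tilde{b}_{1z^i}\hat z^i+\sum_i\tilde{b}_{1\tilde{z}^i}\int_{\mathcal{E}_i}\hat{\tilde{z}}^i\nu_i(de)$, each summand being of size $\bar K|\hat x|$ through the terminal Lipschitz bound. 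Along the solution, however, the backward differences are slaved to $\hat x$ by $\hat y\approx\partial_x\theta\,\hat x$, $\hat z^i\approx\partial_x\theta\,\sigma_{ix}\hat x$ and $\int_{\mathcal{E}_i}\hat{\tilde{z}}^i\nu_i(de)\approx\partial_x\theta\int_{\mathcal{E}_i}f_{ix}\nu_i(de)\,\hat x$, so that their joint coefficient in the $\hat x$-equation equals $\partial_x\theta\cdot\big(\tilde{b}_{1y}+\sum_i\tilde{b}_{1z^i}\sigma_{ix}+\sum_i\tilde{b}_{1\tilde{z}^i}\int_{\mathcal{E}_i}f_{ix}\nu_i(de)\big)$, which vanishes identically by \eqref{Condition1}. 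In the rigorous version the formal factors $\partial_x\theta$ are replaced by difference quotients --- equivalently, one carries the linearized FBSDEP for $\nabla_x(x,y,z^i,\tilde{z}^i)$ alongside the construction --- and the cancellation leaves a conditional estimate $E\big[|\hat x_{t_k}|^2\,\big|\,\mathcal{F}_t\big]\le\big(1+C_0(t_k-t)\big)|x-x'|^2$ with $C_0$ depending only on the structural constants, not on $\bar K$. Feeding this into the backward equation and using the $L^2$ a priori estimate for BSDEPs then gives $|\hat y_t|\le\ell(t_k)\big(1+C_0(t_k-t)\big)|x-x'|+C_0(t_k-t)|x-x'|$, i.e. $\ell(t)+1\le\big(\ell(t_k)+1\big)\big(1+C_0(t_k-t)\big)$ on $[t_{k-1},t_k]$. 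Telescoping across the partition and using $\sum_k(t_k-t_{k-1})=T$ yields $\ell(t)+1\le(L_\phi+1)e^{C_0T}=\bar K$ for every $t$, uniformly in the partition; this simultaneously closes the backward induction and establishes (iii).

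It remains to verify the flow property (iv): for $t_1<t_2$ with $t_2-t_1\le\tilde T(\bar K)$, the unique solution of \eqref{stateeq21} on $[t_1,t_2]$ with terminal $\varphi:=\theta(t_2,\omega,\cdot)$ satisfies $y_{t_1}=\theta(t_1,\omega,x_{t_1})$. If $[t_1,t_2]$ lies inside a single mesh interval this is the definition of $\theta$ together with uniqueness on $[t_1,t_2]$ (Theorem \ref{the21}); if it straddles grid points, one restricts the patched solution successively to $[t_1,t_j],[t_j,t_{j+1}],\dots,[t_m,t_2]$, observes that on each subinterval the restriction is the unique solution with the matching terminal field, and identifies the whole restriction with the given solution by uniqueness on $[t_1,t_2]$ --- exactly as in Cvitani\'c and Zhang \cite{CZ12}. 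With (i)--(iv) in hand, Theorem \ref{RF} applies and yields the unique $L^2$-solution of FBSDEP \eqref{stateeq20} on $[0,T]$ with $y^{0,\xi}_t=\theta(t,\omega,x_t^{0,\xi})$. The main obstacle, and the only place where \eqref{Condition1} is used, is the third paragraph: making rigorous that the cancellation removes \emph{all} of the $\bar K$-proportional part of the forward drift, so that $C_0$ is genuinely $\bar K$-free; the bookkeeping for the jump contributions $\int_{\mathcal{E}_i}\hat{\tilde{z}}^i\nu_i(de)$ and the passage between the $N_i$- and $\nu_i$-norms there (cf. Lemma \ref{lemma24}) is what distinguishes the argument from its Brownian analogues in \cite{CZ12,MY21arxiv}.
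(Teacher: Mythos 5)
Your overall architecture matches the paper's: backward patching of the local solutions from Theorem \ref{the21}, a uniform-in-partition bound on $\mathrm{Lip}_x\theta$ obtained by exploiting \eqref{Condition1}, a telescoped exponential constant of the form $(L_\phi+1)e^{CT}$, and then Theorem \ref{RF}. You also correctly locate the single point where \eqref{Condition1} must act. But the mechanism you propose for the crucial step does not close. Your slaving relations $\hat z^i\approx\partial_x\theta\,\sigma_{ix}\hat x$ and $\int_{\mathcal{E}_i}\hat{\tilde z}^i\nu_i(de)\approx\partial_x\theta\int_{\mathcal{E}_i}f_{ix}\nu_i(de)\,\hat x$ retain only the ``first-derivative'' part of the $z$-differences. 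In the linearized system \eqref{stateeq25} the exact identities are $\bar z^i_s=\hat z^i_s+\sigma_{ix}\hat y_s$ and $\bar{\tilde z}^i_{(s,e)}=(1+f_{ix})\hat{\tilde z}^i_{(s,e)}+f_{ix}\hat y_s$, where $\hat z^i,\hat{\tilde z}^i$ are the genuine (generally nonzero, morally second-derivative) martingale integrands of $\hat y_s=\nabla y_s(\nabla x_s)^{-1}$. Substituting into the forward drift, \eqref{Condition1} annihilates only the $\hat y$-proportional block $\big(\tilde b_{1y}+\sum_i\tilde b_{1z^i}\sigma_{ix}+\sum_i\tilde b_{1\tilde z^i}\int_{\mathcal{E}_i}f_{ix}\nu_i(de)\big)\hat y$; the residual $\sum_i\tilde b_{1z^i}\hat z^i+\sum_i\tilde b_{1\tilde z^i}\int_{\mathcal{E}_i}(1+f_{ix})\hat{\tilde z}^i\nu_i(de)$ survives, and its only available a priori bound, $E\big[\int|\hat z^i|^2\big|\mathcal{F}_t\big]\lesssim\bar K^2\,E\big[\sup|\hat x|^2\big|\mathcal{F}_t\big]$ coming from the BSDEP estimate with terminal field of Lipschitz constant $\bar K$, is not $\bar K$-free. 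Hence your claimed inequality $E[|\hat x_{t_k}|^2\mid\mathcal{F}_t]\le(1+C_0(t_k-t))|x-x'|^2$ with $C_0$ independent of $\bar K$ does not follow from the cancellation you describe, and the Gronwall/telescoping step that defines $\bar K$ remains circular.

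The paper escapes this not through the forward equation at all, but by deriving a self-contained scalar BSDEP for the ratio $\hat y_s=\nabla y_s(\nabla x_s)^{-1}$ (its Step 4). There \eqref{Condition1} kills the Riccati term $\big(\tilde b_{1y}+\sum_i\tilde b_{1z^i}\sigma_{ix}+\sum_i\tilde b_{1\tilde z^i}\int_{\mathcal{E}_i}f_{ix}\nu_i(de)\big)\hat y_s^2$, and the residual contributions $\sum_i[\tilde b_{1z^i}\hat y_s+g_{z^i}+\sigma_{ix}]\hat z^i_s$ and the analogous jump terms are \emph{linear in the martingale integrands of $\hat y$ itself}, so they are absorbed by the Doléans--Dade change of measure $\hat P$ rather than estimated in $L^2$. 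The resulting Feynman--Kac representation $\hat y_t=E^{\hat P}\big[\Gamma_{t_n}\hat y_{t_n}+\int_t^{t_n}g_x\Gamma_s\,ds\big|\mathcal{F}_t\big]$ with $\Gamma_s\le e^{\hat K(s-t)}$ and $\hat K=(2+\sum_i\nu_i(\mathcal{E}_i))K^2+2K$ gives the structural bound $|\theta_x(t,x)|\le e^{\hat K(t_n-t)}(K_0+1)-1$, which telescopes to $\bar K=e^{\hat KT}(1+K_0)-1$. To repair your argument you would need to import exactly this ratio-plus-Girsanov step (including the invertibility of $1+f_{ix}$ and the admissibility of the jump part of the exponential), at which point you are reproducing the paper's proof rather than giving an alternative to it.
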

\begin{proof}
{\bf Step 1.} Let $K$ denote the Lipschitz constant of $\tilde{b}_1,\sigma_1,\sigma_2,f_1,f_2,g$ with respect to $(x,y,z^1,z^2,\tilde{z}^1,\tilde{z}^2)$ and $K_0$ the Lipschitz constant of $\phi$ with respect to $x$. Denote
\begin{equation}
\bar{K}:=e^{\hat{K}T}(1+K_0)-1,
\end{equation}
where $\hat{K}:=(2+\sum_{i=1}^2\nu_i(\mathcal{E}_i))K^2+2K$ and let $\tilde{T}(\bar{K})$ be the constant introduced in Theorem \ref{RF}. Fix a partition $0=t_0<\cdots<t_n=T$ such that $\Delta t_k=t_k-t_{k-1}\leq\tilde{T}(\bar{K}),k=1,\dots,n$. For each $(t,x)\in[t_{n-1},t_n]\times\mathbb{R}$, consider the following FBSDEP on $[t,t_n]$:
\begin{equation}\label{stateeq23}
\left\{
\begin{aligned}
x^{t,x}_s&=x+\int_{t}^s\tilde{b}_1\big(r,\omega,\Theta^{t,x}(r)\big)dr+\sum_{i=1}^2\int_{t}^s\sigma_i(r,\omega,x^{t,x}_r)dW^i_r\\
         &\quad +\sum_{i=1}^2\int_{t}^s\int_{\mathcal{E}_i}f_i(r,\omega,x^{t,x}_{r-},e)\tilde{N}_i(de,dr),\\
y^{t,x}_s&=\theta(t_n,\omega,x_{t_n}^{t,x})+\int_s^{t_n}g\big(r,\omega,\Theta^{t,x}(r)\big)dr-\sum_{i=1}^2\int_s^{t_n}z^{i,t,x}_rdW^i_r\\
         &\quad -\sum_{i=1}^2\int_s^{t_n}\int_{\mathcal{E}_i}\tilde{z}^{i,t,x}_{(r,e)}\tilde{N}_i(de,dr).
\end{aligned}
\right.
\end{equation}
By Theorem \ref{the21}, FBSDEP \eqref{stateeq23} admits a unique solution. Define $\theta(t,x):=y^{t,x}_t, t\in[t_{n-1},t_n]$.

{\bf Step 2.} Given $x_1,x_2$ and $t\in[t_{n-1},t_n]$, denote $\Delta x:=x_1-x_2$, $\Delta\Theta:=\Theta^{t,x_1}-\Theta^{t,x_2}$. Then $\Delta\Theta$ satisfies the following FBSDEP:
\begin{equation}\label{stateeq24}
\left\{
\begin{aligned}
\Delta x_s&=\Delta x+\int_t^s[\tilde{b}^\prime_{1x}(r)\Delta x_r+\tilde{b}^\prime_{1y}(r)\Delta y_r+\tilde{b}^\prime_{1z^1}(r)\Delta z^1_r+\tilde{b}^\prime_{1z^2}(r)\Delta z^2_r\\
          &\quad +\tilde{b}^\prime_{1\tilde{z}^1}(r)\int_{\mathcal{E}_1}\Delta \tilde{z}^1_{(r,e)}\nu_1(de)+\tilde{b}^\prime_{1\tilde{z}^2}(r)\int_{\mathcal{E}_2}\Delta \tilde{z}^2_{(r,e)}\nu_2(de)]dr\\
          &\quad +\sum_{i=1}^2\int_t^s\sigma^\prime_{ix}(r)\Delta x_rdW^i_r+\sum_{i=1}^2\int_t^s\int_{\mathcal{E}_i}f^\prime_{ix}(r,e)\Delta x_{r-}\tilde{N}_i(de,dr)\\
\Delta y_s&=\phi^\prime_x(t_n)\Delta x_{t_n}+\int_s^{t_n}[g^\prime_{x}(r)\Delta x_r+g^\prime_{y}(r)\Delta y_r+g^\prime_{z^1}(r)\Delta z^1_r+g^\prime_{z^2}(r)\Delta z^2_r\\
          &\quad +g^\prime_{\tilde{z}^1}(r)\int_{\mathcal{E}_1}\Delta \tilde{z}^1_{(r,e)}\nu_1(de)+g^\prime_{\tilde{z}^2}(r)\int_{\mathcal{E}_2}\Delta \tilde{z}^2_{(r,e)}\nu_2(de)]dr\\
          &\quad -\sum_{i=1}^2\int_s^{t_n}\Delta z^i_rdW^i_r-\sum_{i=1}^2\int_s^{t_n}\int_{\mathcal{E}_i}\Delta \tilde{z}^i_{(r,e)}\tilde{N}_i(de,dr),
\end{aligned}
\right.
\end{equation}
where $\tilde{b}^\prime_{1x}(r):=\int_0^1\tilde{b}_{1x}(r,\omega,\Theta^{t,x_2}(r)+\lambda(\Theta^{t,x_1}(r)-\Theta^{t,x_2}(r)))d\lambda$ and $\tilde{b}^\prime_{1y}(r),\tilde{b}^\prime_{1z^i}(r),\tilde{b}^\prime_{1\tilde{z}^i}(r),\sigma^\prime_{ix}(r),\\f^\prime_{ix}(r,e),g^\prime_{x}(r),g^\prime_{y}(r),g^\prime_{z^i}(r),g^\prime_{\tilde{z}^i}(r),\phi^\prime_x(t_n), i=1,2,$ have similar definition. Moreover, $|\alpha_\beta(r,e)|\leq K$, for $\alpha=\tilde{b}_1,\sigma_1,\sigma_2,f_1,f_2$ and $g$, $\beta=x,y,z^1,z^2,\tilde{z}^1,\tilde{z}^2$, and $|\phi_x^\prime(t_n)|\leq K_0$. Then, FBSDEP \eqref{stateeq24} also satisfies {\bf (A4)}. Here, we note that FBSDEP \eqref{stateeq24} is on time interval $[t,t_n]$, and all the arguments in Theorem \ref{the21} and Theorem \ref{the24} hold by replacing the expectation with conditional expectation. By Theorem \ref{the24}, we have
\begin{equation}
\begin{aligned}
E\bigg[\sup_{t\leq s\leq t_n}(|\Delta x_s|^2+|\Delta y_s|^2)+\sum_{i=1}^2\bigg(\int_t^{t_n}|\Delta z^i_s|^2ds+\int_t^{t_n}\int_{\mathcal{E}_i}|\Delta\tilde{z}^i_{(s,e)}|^2\nu_i(de)ds\bigg)\bigg|\mathcal{F}_t\bigg]\leq C|\Delta x|^2,
\end{aligned}
\end{equation}
where the precise form of constant $C$ will be considered later. In particular, this implies
\begin{equation}
|\theta(t,x_1)-\theta(t,x_2)|^2=|\Delta y_t|^2\leq E[|\Delta y_t|^2|\mathcal{F}_t]\leq E[\sup_{t\leq s\leq t_n}|\Delta y_s|^2|\mathcal{F}_t]\leq C|\Delta x|^2
\end{equation}
i.e.,
\begin{equation}\label{UFC}
|\theta(t,x_1)-\theta(t,x_2)|\leq \sqrt{C}|x_1-x_2|.
\end{equation}
That is, for $t\in[t_{n-1},t_n]$, $\theta(t,x)$ is uniformly Lipschitz continuous in $x$.

{\bf Step 3.} Let $\eta=\sum_{j=1}^mx_j1_{E_j}$, where $x_1,\cdots,x_m\in\mathbb{R}$ and $E_1,\cdots,E_m\in\mathcal{F}_t$ form a partition of $\Omega$. One can check straightforwardly that $\Theta^{t,\eta}:=\sum_{j=1}^m\Theta^{t,x_j}1_{E_j}$ satisfies FBSDEP \eqref{stateeq23} with initial value $x^{t,\eta}_t:=\eta$. In particular, this implies that
\begin{equation}
y^{t,\eta}_t=\sum_{j=1}^my^{t,x_j}_t1_{E_j}=\sum_{j=1}^m\theta(t,\omega,x_j)1_{E_j}=\theta(t,\omega,\sum_{j=1}^mx_j1_{E_j})=\theta(t,\omega,\eta).
\end{equation}
Moreover, for general $\mathcal{F}_t$-measurable square integrable $\eta$, there exist $\{\eta_m,m\geq1\}$ taking the above form such that $\lim_{m\rightarrow\infty}E[|\eta_m-\eta|^2]=0$. Denote again by $\Theta^{t,\eta}$ the unique solution to FBSDEP \eqref{stateeq23} with initial value $x^{t,\eta}_t=\eta$. By the previous analysis, we obtain that
\begin{equation}
E[|y^{t,\eta}-y^{t,\eta_m}|^2]\leq CE[|\eta-\eta_m|^2]\rightarrow0.
\end{equation}
Since $\theta(t,x)$ is Lipschitz continuous in $x$, we get
\begin{equation}
y_t^{t,\eta}=\lim_{m\rightarrow\infty}y_t^{t,\eta_m}=\lim_{m\rightarrow\infty}\theta(t,\omega,\eta_m)=\theta(t,\omega,\lim_{m\rightarrow\infty}\eta_m)=\theta(t,\omega,\eta).
\end{equation}

{\bf Step 4.} We now get a more precise estimate for constant $C$ in \eqref{UFC}. Since $\tilde{b}_1,\sigma_1,\sigma_2,f_1,f_2,g$ and $\phi$ are continuously differentiable, by standard arguments, one can see that $\theta(t,x)$ is differentiable in $x$ with $\theta_x(t,x)=\nabla y^{t,x}_t$ where

\begin{equation}\label{stateeq25}
\left\{
\begin{aligned}
\nabla x_s&=1+\int_t^s[\tilde{b}_{1x}(r)\nabla x_r+\tilde{b}_{1y}(r)\nabla y_r+\tilde{b}_{1z^1}(r)\nabla z^1_r+\tilde{b}_{1z^2}(r)\nabla z^2_r\\
          &\quad +\tilde{b}_{1\tilde{z}^1}(r)\int_{\mathcal{E}_1}\nabla \tilde{z}^1_{(r,e)}\nu_1(de)+\tilde{b}_{1\tilde{z}^2}(r)\int_{\mathcal{E}_2}\nabla \tilde{z}^2_{(r,e)}\nu_2(de)]dr\\
          &\quad +\sum_{i=1}^2\int_t^s\sigma_{ix}(r)\nabla x_rdW^i_r+\sum_{i=1}^2\int_t^s\int_{\mathcal{E}_i}f_{ix}(r,e)\nabla x_{r-}\tilde{N}_i(de,dr)\\
\nabla y_s&=\phi_x(\omega,x_{t_n})\nabla x_{t_n}+\int_s^{t_n}[g_{x}(r)\nabla x_r+g_{y}(r)\nabla y_r+g_{z^1}(r)\nabla z^1_r+g_{z^2}(r)\nabla z^2_r\\
          &\quad +g_{\tilde{z}^1}(r)\int_{\mathcal{E}_1}\nabla \tilde{z}^1_{(r,e)}\nu_1(de)+g_{\tilde{z}^2}(r)\int_{\mathcal{E}_2}\nabla \tilde{z}^2_{(r,e)}\nu_2(de)]dr\\
          &\quad -\sum_{i=1}^2\int_s^{t_n}\nabla z^i_rdW^i_r-\sum_{i=1}^2\int_s^{t_n}\int_{\mathcal{E}_i}\nabla \tilde{z}^i_{(r,e)}\tilde{N}_i(de,dr),
\end{aligned}
\right.
\end{equation}
where $\tilde{b}_{1x}(r):=\tilde{b}_{1x}(r,\omega,\Theta(r))$ and other terms are defined similarly.

Denote
\begin{equation}
\begin{aligned}
\hat{y}_s:&=\nabla y_s(\nabla x_s)^{-1},\ \bar{z}^i_s:=\nabla z^i_s(\nabla x_s)^{-1},\ \bar{\tilde{z}}^i_{(s,e)}:=\nabla\tilde{z}^i_{(s,e)}(\nabla x_s)^{-1},\\
\hat{z}^i_s:&=\bar{z}^i_s-\sigma_{ix}\hat{y}_s,\ \hat{\tilde{z}}^i_{(s,e)}:=\bar{\tilde{z}}^i_{(s,e)}(1+f_{ix})^{-1}-\hat{y}_sf_{ix}(1+f_{ix})^{-1},\ i=1,2.
\end{aligned}
\end{equation}
Applying It\^o's formula, we obtain
\begin{equation}
\begin{aligned}
d(\nabla x_s)^{-1}&=-(\nabla x_s)^{-1}\bigg[\tilde{b}_{1x}+\tilde{b}_{1y}\hat{y}_s+\sum_{i=1}^2\bigg(\tilde{b}_{1z^i}\bar{z}^i_s+\tilde{b}_{1\tilde{z}^i}\int_{\mathcal{E}_i}\bar{\tilde{z}}^i_{(s,e)}\nu_i(de)-\sigma^2_{ix}\\
                  &\quad -\int_{\mathcal{E}_i}f_{ix}^2(1+f_{ix})^{-1}\nu_i(de)\bigg)\bigg]ds-(\nabla x_s)^{-1}\sum_{i=1}^2\sigma_{ix}dW^i_s\\
                  &\quad -(\nabla x_s)^{-1}\sum_{i=1}^2\int_{\mathcal{E}_i}f_{ix}(1+f_{ix})^{-1}\tilde{N}_i(de,ds),\\
        d\hat{y}_s&=-\hat{y}_s\bigg[\tilde{b}_{1x}+\tilde{b}_{1y}\hat{y}_s+\sum_{i=1}^2\bigg(\tilde{b}_{1z^i}\bar{z}^i_s+\tilde{b}_{1\tilde{z}^i}\int_{\mathcal{E}_i}\bar{\tilde{z}}^i_{(s,e)}\nu_i(de)-\sigma_{ix}^2\\
                  &\quad -\int_{\mathcal{E}_i}f^2_{ix}(1+f_{ix})^{-1}\nu_i(de)\bigg)\bigg]ds-\bigg[g_x+g_y\hat{y}_s+\sum_{i=1}^2g_{z^i}\bar{z}^i_s\\
                  &\quad +\sum_{i=1}^2g_{\tilde{z}^i}\int_{\mathcal{E}_i}\bar{\tilde{z}}^i_{(s,e)}\nu_i(de)\bigg]ds-\sum_{i=1}^2\int_{\mathcal{E}_i}f_{ix}(1+f_{ix})^{-1}\bar{\tilde{z}}^i_{(s,e)}\nu_i(de)ds\\
                  &\quad -\sum_{i=1}^2\sigma_{ix}\bar{z}^i_sds+\sum_{i=1}^2\hat{z}^i_sdW^i_s+\sum_{i=1}^2\int_{\mathcal{E}_i}\hat{\tilde{z}}^i_{(s,e)}\tilde{N}_i(de,ds).
\end{aligned}
\end{equation}
Note that $\bar{z}^i_s=\hat{z}^i_s+\sigma_{ix}\hat{y}_s$, $\bar{\tilde{z}}^i_{(s,e)}=\hat{\tilde{z}}^i_{(s,e)}(1+f_{ix})+\hat{y}_sf_{ix}$, $i=1,2$, we obtain
\begin{equation}
\begin{aligned}
d\hat{y}_s&=-\bigg[g_x+\bigg(\tilde{b}_{1x}+g_y+\sum_{i=1}^2g_{z^i}\sigma_{ix}+\sum_{i=1}^2g_{\tilde{z}^i}\int_{\mathcal{E}_i}f_{ix}\nu_i(de)\bigg)\hat{y}_s\\
          &\quad +\bigg(\tilde{b}_{1y}+\sum_{i=1}^2\tilde{b}_{1z^i}\sigma_{ix}+\sum_{i=1}^2\tilde{b}_{1\tilde{z}^i}\int_{\mathcal{E}_i}f_{ix}\nu_i(de)\bigg)\hat{y}^2_s\bigg]ds\\
          &\quad -\sum_{i=1}^2\big[\tilde{b}_{1z^i}\hat{y}_s+g_{z^i}+\sigma_{ix}\big]\hat{z}^i_sds+\sum_{i=1}^2\hat{z}^i_sdW^i_s+\sum_{i=1}^2\int_{\mathcal{E}_i}\hat{\tilde{z}}^i_{(s,e)}\tilde{N}_i(de,ds)\\
          &\quad -\sum_{i=1}^2\bigg[(\tilde{b}_{1\tilde{z}^i}\hat{y}_s+g_{\tilde{z}^i})\int_{\mathcal{E}_i}(1+f_{ix})\hat{\tilde{z}}^i_{(s,e)}\nu_i(de)+\int_{\mathcal{E}_i}f_{ix}\hat{\tilde{z}}^i_{(s,e)}\nu_i(de)\bigg]ds.
\end{aligned}
\end{equation}
We note that $\hat{y}_s=\theta_x(s,x_s)$ and is bounded for $s\in[t,t_n]\subset[t_{n-1},t_n]$. Denote
\begin{equation}
\begin{aligned}
d\hat{P}:&=\exp\bigg\{\sum_{i=1}^2\int_t^{t_n}(\tilde{b}_{1z^i}\hat{y}_s+g_{z^i}+\sigma_{ix})dW^i_s-\frac{1}{2}\sum_{i=1}^2\int_t^{t_n}|\tilde{b}_{1z^i}\hat{y}_s+g_{z^i}+\sigma_{ix}|^2ds\\
         &\quad +\sum_{i=1}^2\int_t^{t_n}\int_{\mathcal{E}_i}\log[(\tilde{b}_{1\tilde{z}^i}\hat{y}_s+g_{\tilde{z}^i}+1)(1+f_{ix})]N_i(de,ds)\\
         &\quad +\sum_{i=1}^2\int_t^{t_n}\int_{\mathcal{E}_i}[1-(\tilde{b}_{1\tilde{z}^i}\hat{y}_s+g_{\tilde{z}^i}+1)(1+f_{ix})]\nu_i(de)ds\bigg\}dP,
\end{aligned}
\end{equation}
and introduce
\begin{equation}
\begin{aligned}
\Gamma_s&=\exp\bigg\{\int_t^s\bigg[\tilde{b}_{1x}+g_y+\sum_{i=1}^2g_{z^i}\sigma_{ix}+\sum_{i=1}^2g_{\tilde{z}^i}\int_{\mathcal{E}_i}f_{ix}\nu_i(de)\\
        &\quad +\bigg(\tilde{b}_{1y}+\sum_{i=1}^2\tilde{b}_{1z^i}\sigma_{ix}+\sum_{i=1}^2\tilde{b}_{1\tilde{z}^i}\int_{\mathcal{E}_i}f_{ix}\nu_i(de)\bigg)\hat{y}_r\bigg]dr\bigg\}.
\end{aligned}
\end{equation}
Then we have
\begin{equation}
\hat{y}_t=E^{\hat{P}}\bigg[\Gamma_{t_n}\hat{y}_{t_n}+\int_t^{t_n}g_x\Gamma_sds\bigg|\mathcal{F}_t\bigg].
\end{equation}
Note that $|\hat{y}_{t_n}|=|\phi_x(x_{t_n})|\leq K_0$ and combine with condition \eqref{Condition1}
\begin{equation}
\begin{aligned}
&\tilde{b}_{1x}+g_y+\sum_{i=1}^2g_{z^i}\sigma_{ix}+\sum_{i=1}^2g_{\tilde{z}^i}\int_{\mathcal{E}_i}f_{ix}\nu_i(de)+\bigg(\tilde{b}_{1y}+\sum_{i=1}^2\tilde{b}_{1z^i}\sigma_{ix}+\sum_{i=1}^2\tilde{b}_{1\tilde{z}^i}\int_{\mathcal{E}_i}f_{ix}\nu_i(de)\bigg)\hat{y}_r\\
&=\tilde{b}_{1x}+g_y+\sum_{i=1}^2g_{z^i}\sigma_{ix}+\sum_{i=1}^2g_{\tilde{z}^i}\int_{\mathcal{E}_i}f_{ix}\nu_i(de)\\
&\leq \bigg(2+\sum_{i=1}^2\nu_i(\mathcal{E}_i)\bigg)K^2+2K=\hat{K},
\end{aligned}
\end{equation}
thus we have $\Gamma_s\leq e^{\hat{K}(s-t)}$. Then,
\begin{equation}
\hat{y}_t\leq e^{\hat{K}(t_n-t)}K_0+\int_t^{t_n}Ke^{\hat{K}(s-t)}ds\leq e^{\hat{K}(t_n-t)}K_0+e^{\hat{K}(t_n-t)}-1=e^{\hat{K}(t_n-t)}(K_0+1)-1,
\end{equation}
which implies that
\begin{equation}
|\theta_x(t,x)|\leq \tilde{K}:=e^{\hat{K}(t_n-t_{n-1})}(K_0+1)-1,\ t\in[t_{n-1},t_n].
\end{equation}

{\bf Step 5.} We note that $\tilde{K}\leq \bar{K}$. Assume that $\theta$ is defined on $[t_k,t_n]$ with $|\theta_x(t_k,x)|\leq K_k\leq\bar{K}$. Define $\theta(t,x):=y^{t,x}_t$ for $t\in[t_{k-1},t_k]$, $k=n-1,n-2,\cdots,1$, where
\begin{equation}
\left\{
\begin{aligned}
x^{t,x}_s&=x+\int_{t}^s\tilde{b}_1\big(r,\omega,\Theta^{t,x}(r)\big)dr+\sum_{i=1}^2\int_{t}^s\sigma_i(r,\omega,x^{t,x}_r)dW^i_r\\
         &\quad +\sum_{i=1}^2\int_{t}^s\int_{\mathcal{E}_i}f_i(r,\omega,x^{t,x}_{r-},e)\tilde{N}_i(de,dr),\\
y^{t,x}_s&=\theta(t_k,\omega,x_{t_k}^{t,x})+\int_s^{t_k}g\big(r,\omega,\Theta^{t,x}(r)\big)dr-\sum_{i=1}^2\int_s^{t_k}z^{i,t,x}_rdW^i_r\\
         &\quad -\sum_{i=1}^2\int_s^{t_k}\int_{\mathcal{E}_i}\tilde{z}^{i,t,x}_{(r,e)}\tilde{N}_i(de,dr),
\end{aligned}
\right.
\end{equation}
by the analysis in {\bf Step 4}, we can obtain
\begin{equation}
|\theta_x(t,x)|\leq K_{k-1}:=e^{\hat{K}(t_k-t_{k-1})}(K_k+1)-1, t\in[t_{k-1},t_k].
\end{equation}
By induction, note that $K_n=K_0$, we get
\begin{equation}
K_k=e^{\hat{K}(t_n-t_k)}(K_0+1)-1\leq \bar{K},\ k=1,\dots,n.
\end{equation}

Therefore, the backward induction can continue until $k=1$, and thus we may define $\theta$ on $[0,T]$ and $|\theta_x(t,x)|\leq\bar{K}$ for all $t\in[0,T]$. Finally, it is clear that $\theta$ satisfies the other requirements of Theorem \ref{RF} and the existence of $\theta$ is proved.
\end{proof}

Then next lemma shows the $L^2$-estimation of FBSDEP \eqref{stateeq20}.

\begin{lemma}\label{L^2estimation}
Let {\bf (A4)} hold, and $\theta(t,x)$ satisfies the conditions in Theorem \ref{RF}. Then we obtain the $L^2$-estimations of FBSDEP \eqref{stateeq20}
\begin{equation}
\begin{aligned}
&E\bigg[\sup_{0\leq s\leq T}|x^{0,\xi}_s-x^{0,\tilde{\xi}}_s|^2+\sup_{0\leq s\leq T}|y^{0,\xi}_s-y^{0,\tilde{\xi}}_s|^2+\sum_{i=1}^2\int_0^T|z^{i,0,\xi}_s-z^{i,0,\tilde{\xi}}_s|^2ds\\
&\qquad+\sum_{i=1}^2\int_0^T\int_{\mathcal{E}_i}|\tilde{z}^{i,0,\xi}_{(s,e)}-\tilde{z}^{i,0,\tilde{\xi}}_{(s,e)}|^2\nu_i(de)ds\bigg|\mathcal{F}_0\bigg]\leq C|\xi-\tilde{\xi}|^2,
\end{aligned}
\end{equation}
\begin{equation}
\begin{aligned}
&E\bigg[\sup_{t\leq s\leq T}|x^{0,\xi}_s-x^{0,\tilde{\xi}}_s|^2+\sup_{t\leq s\leq T}|y^{0,\xi}_s-y^{0,\tilde{\xi}}_s|^2+\sum_{i=1}^2\int_t^T|z^{i,0,\xi}_s-z^{i,0,\tilde{\xi}}_s|^2ds\\
&\qquad+\sum_{i=1}^2\int_t^T\int_{\mathcal{E}_i}|\tilde{z}^{i,0,\xi}_{(s,e)}-\tilde{z}^{i,0,\tilde{\xi}}_{(s,e)}|^2\nu_i(de)ds\bigg|\mathcal{F}_t\bigg]\leq C|x^{0,\xi}_t-x^{0,\tilde{\xi}}_t|^2,
\end{aligned}
\end{equation}
where constant $C$ depends on $\beta,K,\rho$ and $L$ in {\bf (A4)}.
\begin{proof}
Based on the idea in the proof of Theorem \ref{MYextension}, it is easy to prove the two estimations.
\end{proof}
\end{lemma}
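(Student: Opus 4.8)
The plan is to reduce the two global estimates to a finite chain of short-interval estimates, using the decoupling random field $\theta$ furnished by Theorem \ref{RF} to keep the constants uniform at the junction points. Write $\Delta x_s:=x^{0,\xi}_s-x^{0,\tilde{\xi}}_s$ and likewise $\Delta y,\Delta z^i,\Delta\tilde{z}^i$. Subtracting the two copies of \eqref{stateeq20} shows that $(\Delta x,\Delta y,\Delta z^1,\Delta z^2,\Delta\tilde{z}^1,\Delta\tilde{z}^2)$ solves a linear fully coupled FBSDEP with random coefficients bounded by the Lipschitz constants $K,\rho$ of {\bf (A4)} (exactly of the form \eqref{stateeq24}), in which the forward diffusion and jump coefficients involve only $x$; thus {\bf (A4),(A6)} hold for this difference system, and the a priori bound \eqref{difference estimate} of Theorem \ref{the24} with $\beta=2$ is available on any interval of length at most $\tilde{\tilde{T}}$, its constant being controlled by $K,\rho,L$ and by the Lipschitz modulus of the terminal datum. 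I would also record that, by Theorem \ref{RF}, $y^{0,\xi}_s=\theta(s,\omega,x^{0,\xi}_s)$, so $|\Delta y_s|\leq\bar{K}|\Delta x_s|$ for every $s$, where $\bar{K}$ is the uniform Lipschitz constant of $\theta$.

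First I would fix a partition $0=t_0<\cdots<t_n=T$ with mesh at most $\min\{\tilde{T}(\bar{K}),\tilde{\tilde{T}}\}$, $\tilde{T}(\bar{K})$ being the interval length from Theorem \ref{RF}. On each block $[t_{k-1},t_k]$ the equation \eqref{stateeq20} restricted to that block carries the terminal datum $\theta(t_k,\omega,\cdot)$ (and $\phi$ on the last block), which is $\bar{K}$-Lipschitz; since the arguments of Theorems \ref{the21} and \ref{the24} go through verbatim with $E[\cdot]$ replaced by $E[\cdot\,|\mathcal{F}_{t_{k-1}}]$ (as already used in the construction of $\theta$), there is a single constant $C=C(K,\rho,L,\bar{K})\geq1$, independent of $k$, such that
\begin{equation*}
\begin{aligned}
&E\Big[\sup_{t_{k-1}\leq s\leq t_k}\big(|\Delta x_s|^2+|\Delta y_s|^2\big)+\sum_{i=1}^2\Big(\int_{t_{k-1}}^{t_k}|\Delta z^i_s|^2ds+\int_{t_{k-1}}^{t_k}\int_{\mathcal{E}_i}|\Delta\tilde{z}^i_{(s,e)}|^2\nu_i(de)ds\Big)\,\Big|\,\mathcal{F}_{t_{k-1}}\Big]\\
&\qquad\leq C|\Delta x_{t_{k-1}}|^2,
\end{aligned}
\end{equation*}
and the same bound holds on $[t,t_k]$ conditionally on $\mathcal{F}_t$, with $\Delta x_{t_{k-1}}$ replaced by $\Delta x_t$, for any $t\in[t_{k-1},t_k]$.

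Then I would splice the blocks together by the tower property. For $t\in[t_{\ell-1},t_\ell]$, iterating $E[\,\cdot\,|\mathcal{F}_t]=E\big[E[\,\cdot\,|\mathcal{F}_{t_{k-1}}]\,|\mathcal{F}_t\big]$ and $|\Delta x_{t_{k-1}}|^2\leq\sup_{t_{k-2}\leq s\leq t_{k-1}}|\Delta x_s|^2$ gives $E[\,|\Delta x_{t_{k-1}}|^2\,|\mathcal{F}_t]\leq C^{\,k-\ell}|\Delta x_t|^2$, hence $E[\sup_{[t_{k-1},t_k]}(\cdots)\,|\mathcal{F}_t]\leq C^{\,k-\ell+1}|\Delta x_t|^2$ for every $k\geq\ell$. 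Summing over $k=\ell,\dots,n$, and using that $\sup_{[t,T]}$ is dominated by the sum of the block suprema while the $ds$- and $\nu_i(de)\,ds$-integrals over $[t,T]$ are the sums of the block integrals, yields the second asserted estimate with a constant $C'$ that depends only on $K,\rho,L$ once $T$ (hence $n$ and $\bar{K}$) is fixed. The first asserted estimate is the case $t=0$: since $\mathcal{F}_0$ is $P$-complete and $x^{0,\xi}_0=\xi$, it reads $E[\cdots]\leq C'|\xi-\tilde{\xi}|^2$. (The "$\beta$" in the statement is vestigial here; everything is at $\beta=2$.)

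The hard part is not any individual estimate but the uniformity of $C$ across the blocks — the same bookkeeping as in the proof of Theorem \ref{MYextension}: it rests on the uniform Lipschitz bound $\bar{K}$ for $\theta(t_k,\cdot)$ at every node, guaranteed by Theorem \ref{RF}(iii), together with the fact that the constant in \eqref{difference estimate} enters only through the Lipschitz modulus of the terminal condition and not through $k$. Checking that the conditional-expectation versions of Theorems \ref{the21} and \ref{the24} hold on a block $[t_{k-1},t_k]$ with $\mathcal{F}_{t_{k-1}}$-dependent data is routine, since those proofs invoke only the B-D-G, H\"{o}lder and Gronwall inequalities, all of which have conditional analogues.
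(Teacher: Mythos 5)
Your proposal is correct and follows the same route the paper intends: its proof of this lemma simply defers to the idea of Theorem \ref{MYextension}, namely partitioning $[0,T]$ into short blocks, invoking the conditional short-interval estimates of Theorems \ref{the21} and \ref{the24} with the uniformly $\bar{K}$-Lipschitz terminal data $\theta(t_k,\cdot)$ supplied by Theorem \ref{RF}, and splicing the blocks via the tower property. You have merely written out the details that the paper leaves implicit, including the correct observation that the constant's uniformity across blocks rests on Theorem \ref{RF}(iii).
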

Therefore, for any $u\in\mathcal{U}_{ad}[0,T]$ and any given terminal time $T$, under {\bf (A1)} and the conditions in Theorem \ref{RF}, and combining Theorem \ref{RF}, Lemma \ref{L^2estimation} and Theorem \ref{MYextension}, there exists a unique $L^\beta(\beta>2)$-solution $(x,y,z^1,z^2,\tilde{z}^1,\tilde{z}^2)$ to state equation \eqref{stateeq2} on $[0,T]$.

\section{Global Maximum Principle for Partially Observed Optimal Control Problem of FBSDEPs}

In this section, we study the partially observed global maximum principe for the optimal control problem of FBSDEPs. The main result is an extension of the work in \cite{Hu17} and \cite{STW20}.

\subsection{Spike variation}

We consider the state equation \eqref{stateeq1}, observation equation
\begin{equation}\label{observation000}
\left\{
\begin{aligned}
dY_t^u&=b_2(t,x_t^u,u_t)dt+\sigma_3(t)d\tilde{W}^2_t+\int_{\mathcal{E}_2}f_3(t,e)\tilde{N}^{\prime}_2(de,dt),\quad t\in[0,T],\\
 Y_0^u&=0,
\end{aligned}
\right.
\end{equation}
and the cost functional \eqref{cf1}. Noting that $b_2$ is independent of $(y,z^1_t,z^2_t,\tilde{z}^1_{(t,e)},\tilde{z}^2_{(t,e)})$ here, comparing with \eqref{observation}. The general $b_2$ case will not be considered in this section, which is our future research topic.

According to Section 2, we can transfer the partially observed optimal control problem into the complete information case with the controlled FBSDEP
\begin{equation*}
\left\{
\begin{aligned}
 dx_t^u&=\tilde{b}_1\big(t,x_t^u,u_t\big)dt+\sum^2_{i=1}\sigma_i(t,x_t^u,u_t)dW^i_t+\sum^2_{i=1}\int_{\mathcal{E}_i}f_i(t,x_{t-}^u,u_t,e)\tilde{N}_i(de,dt),\\
-dy_t^u&=g\big(t,x_t^u,y_t^u,z^{1,u}_t,z^{2,u}_t,\tilde{z}^{1,u}_{(t,e)},\tilde{z}^{2,u}_{(t,e)},u_t\big)dt-\sum^2_{i=1}z^{i,u}_tdW^i_t-\sum^2_{i=1}\int_{\mathcal{E}_i}\tilde{z}^{i,u}_{(t,e)}\tilde{N}_i(de,dt),\quad t\in[0,T],\\
  x_0^u&=x_0,\quad y_T^u=\phi(x_T^u),
\end{aligned}
\right.
\end{equation*}
and \eqref{RN2}, where $\tilde{b}_1$ does not contain $(y,z^1_t,z^2_t,\tilde{z}^1_{(t,e)},\tilde{z}^2_{(t,e)})$, which admits a unique solution $(x^u,y^u,z^{1,u},z^{2,u},\tilde{z}^{1.u},\tilde{z}^{2,u},\tilde{\Gamma}^u)$ under $P$ by {\bf (A1)}.
Moreover, the cost functional \eqref{cf1} becomes
\begin{equation}\label{CCFF}
J(u)=E\bigg[\int_0^T\tilde{\Gamma}_tl\big(t,x_t^u,y_t^u,z^{1,u}_t,z^{2,u}_t,\tilde{z}^{1,u}_{(t,e)},\tilde{z}^{2,u}_{(t,e)},u_t\big)dt+\tilde{\Gamma}_T\Phi(x_T^u)+\Gamma(y_0^u)\bigg].
\end{equation}

Since $U$ is not necessarily convex, we apply the new spike variation technique \cite{STW20}. Suppose that $\bar{u}\in\mathcal{U}_{ad}$ is an optimal control, for any $\bar{t}\in[0,T]$, define $u^{\epsilon}$ as follows. For $s\in[0,T]$,
\begin{equation*}\label{Songyuanzhuo}
\begin{aligned}
u^\epsilon_s=
\begin{cases}
u,&\text{if $(s,\omega)\in\mathcal{O}:=\rrbracket\bar{t},\bar{t}+\epsilon\rrbracket\backslash\bigcup_{n=1}^\infty\llbracket T_n\rrbracket$},\\
\bar{u}_s,&\text{otherwise},
\end{cases}
\end{aligned}
\end{equation*}
where $\llbracket T_n\rrbracket:=\big\{(\omega,t)\in\Omega\times[0,T]|T_n(\omega)=t\big\}$, which is the graph of stopping time $T_n$ when the jump of the observable process appears, is a progressive set, and $u$ is a bounded $\mathcal{F}^Y_{\bar{t}}$-measurable function taking values in $U$. The big difference is that the value of $u^\epsilon_s$ at $T_n(\omega)$ is equal to $\bar{u}_s$ rather than $u$ when some jump appears in $(\bar{t},\bar{t}+\epsilon]$, that is, some $T_n(\omega)$ is in $(\bar{t},\bar{t}+\epsilon]$. Moreover, different from completely observed case and due to technical difficulties, we also assume that there is no jumps of the unobservable process appearing in $(\bar{t},\bar{t}+\epsilon]$. It is easy to show that $u^\epsilon\in\mathcal{U}_{ad}[0,T]$ defined in \eqref{ad_control_set}.

Denoting by $(\bar{x},\bar{y},\bar{z}^1,\bar{z}^2,\bar{\tilde{z}}^1,\bar{\tilde{z}}^2)$ the trajectory of $\bar{u}$ and by $(x^\epsilon,y^\epsilon,z^{1,\epsilon},z^{2,\epsilon},\tilde{z}^{1,\epsilon},\tilde{z}^{2,\epsilon})$ the trajectory of $u^\epsilon$. For simplicity, for $i=1,2$, for $f_i,\varphi=\tilde{b}_1,\sigma_i,g,\phi$ and $\kappa=x,y,z^i,\tilde{z}^i$, denote
\begin{equation*}
\begin{aligned}
f_i(t,e)&=f_i(t,\bar{x}_t,\bar{u}_t,e),\quad f_{ix}(t,e)=f_{ix}(t,\bar{x}_t,\bar{u}_t,e),\quad \delta f_i(t,e)=f_i(t,\bar{x}_t,u_t,e)-f_i(t,e),\\
        \delta f_{ix}(t,e)&=f_{ix}(t,\bar{x}_t,u_t,e)-f_{ix}(t,e),\quad \varphi(t)=\varphi(t,\bar{\Theta}(t),\bar{u}_t),\quad
                        \varphi_\kappa(t)=\varphi_\kappa(t,\bar{\Theta}(t),\bar{u}_t),\\
       \delta\varphi(t)&=\varphi(t,\bar{\Theta}(t),u_t)-\varphi(t),\quad \delta\varphi_\kappa(t)=\varphi_\kappa(t,\bar{\Theta}(t),u_t)-\varphi_\kappa(t),\\
\delta\varphi(t,\Delta^1,\Delta^2)&=\varphi(t,\bar{x}_t,\bar{y}_t,\bar{z}^1_t+\Delta^1(t),\bar{z}^2_t+\Delta^2(t),\bar{\tilde{z}}^1,\bar{\tilde{z}}^2,u_t)-\varphi(t),\\
                        \delta\varphi_\kappa(t,\Delta^1,\Delta^2)&=\varphi_\kappa(t,\bar{x}_t,\bar{y}_t,\bar{z}^1_t+\Delta^1(t),\bar{z}^2_t+\Delta^2(t),\bar{\tilde{z}}^1,\bar{\tilde{z}}^2,u_t)-\varphi_\kappa(t),
\end{aligned}
\end{equation*}
where $\Delta^1(\cdot),\Delta^2(\cdot)$ are $\mathcal{F}_t$-adapted processes, to be determined later.

Denote $\bar{\Theta}(t):=(\bar{x}_t,\bar{y}_t,\bar{z}^1_t,\bar{z}^2_t,\bar{\tilde{z}}^1_{(t,e)},\bar{\tilde{z}}^2_{(t,e)})$ and $\Theta^\epsilon(t):=(x^\epsilon_t,y^\epsilon_t,z^{1,\epsilon}_t,z^{2,\epsilon}_t,\tilde{z}^{1,\epsilon}_{(t,e)},\tilde{z}^{2,\epsilon}_{(t,e)})$, and set
\begin{equation}\label{notation0}
\xi^{1,\epsilon}_t:=x^\epsilon_t-\bar{x}_t,\ \ \eta^{1,\epsilon}_t:=y^\epsilon_t-\bar{y}_t,\ \ \zeta^{i,1,\epsilon}_t:=z^{i,\epsilon}_t-\bar{z}^i_t,\ \ \lambda^{i,1,\epsilon}_{(t,e)}:=\tilde{z}^{i,\epsilon}_{(t,e)}-\bar{\tilde{z}}^i_{(t,e)},\quad i=1,2,
\end{equation}
we have
\begin{equation}\label{eta1}
\left\{
\begin{aligned}
  d\xi^{1,\epsilon}_t&=\big[\tilde{b}^{\epsilon}_{1x}(t)\xi^{1,\epsilon}_t+\delta \tilde{b}_1(t)\mathbbm{1}_{[\bar{t},\bar{t}+\epsilon]}\big]dt
                     +\sum^2_{i=1}\big[\tilde{\sigma}^{\epsilon}_{ix}(t)\xi^{1,\epsilon}_t+\delta\sigma_i(t)\mathbbm{1}_{[\bar{t},\bar{t}+\epsilon]}\big]dW^i_t\\
                     &\quad+\sum^2_{i=1}\int_{\mathcal{E}_i}\big[\tilde{f}^{\epsilon}_{ix}(t,e)\xi^{1,\epsilon}_{t-}+\delta f_i(t,e)\mathbbm{1}_{\mathcal{O}}\big]\tilde{N}_i(de,dt),\\
d\eta^{1,\epsilon}_t&=-\bigg[\tilde{g}^{\epsilon}_x(t)\xi^{1,\epsilon}_t+\tilde{g}^{\epsilon}_y(t)\eta^{1,\epsilon}_t
                     +\sum^2_{i=1}\bigg(\tilde{g}^{\epsilon}_{z^i}(t)\zeta^{i,1,\epsilon}_t+\tilde{g}^{\epsilon}_{\tilde{z}^i}(t)\int_{\mathcal{E}_i}\lambda^{i,1,\epsilon}_{(t,e)}\nu_i(de)\bigg)\\
                     &\quad+\delta g(t)\mathbbm{1}_{[\bar{t},\bar{t}+\epsilon]}\bigg]dt+\sum^2_{i=1}\zeta^{i,1,\epsilon}_tdW^i_t+\sum^2_{i=1}\int_{\mathcal{E}_i}\lambda^{i,1,\epsilon}_{(t,e)}\tilde{N}_i(de,dt),\quad t\in[0,T],\\
  \xi^{1,\epsilon}_0&=0,\quad \eta^{1,\epsilon}_T=\tilde{\phi}_x^\epsilon(T)\xi^{1,\epsilon}_T,
\end{aligned}
\right.
\end{equation}
where
\begin{equation}\label{notation1}
\begin{aligned}
\tilde{b}^{\epsilon}_{1x}(t)&:=\int_0^1\tilde{b}_{1x}(t,\bar{x}_t+\theta(x^\epsilon_t-\bar{x}_t),u^\epsilon_t)d\theta,\\
\tilde{g}^{\epsilon}_x(t)&:=\int_0^1g_x(t,\bar{\Theta}(t)+\theta(\Theta^\epsilon(t)-\bar{\Theta}(t)),u^\epsilon_t)d\theta,
\end{aligned}
\end{equation}
and $\tilde{\sigma}^{\epsilon}_{ix}(t),\tilde{f}^{\epsilon}_{ix}(t,e),\tilde{g}^{\epsilon}_y(t),\tilde{g}^{\epsilon}_{z^i}(t),\tilde{g}^{\epsilon}_{\tilde{z}^i}(t)$ and $\tilde{\phi}_x^\epsilon(T), i=1,2$ have similar definitions.

Then, by $L^{\beta}(\beta\geq2)$-estimate of FBSDEPs in Section 2, we get
\begin{equation*}
\begin{aligned}
&E\bigg[\sup_{0\leq t\leq T}(|\xi^{1,\epsilon}_t|^\beta+|\eta^{1,\epsilon}_t|^\beta)+\sum^2_{i=1}\bigg[\bigg(\int_0^T|\zeta^{i,1,\epsilon}_t|^2dt\bigg)^{\frac{\beta}{2}}
 +\bigg(\int_0^T\int_{\mathcal{E}_i}|\lambda^{i,1,\epsilon}_{(t,e)}|^2\nu_i(de)dt\bigg)^{\frac{\beta}{2}}\bigg]\bigg]\\
\end{aligned}
\end{equation*}
\begin{equation*}
\begin{aligned}
&\leq C E\bigg[\bigg(\int_0^T\delta g(t)\mathbbm{1}_{[\bar{t},\bar{t}+\epsilon]}dt\bigg)^\beta+\bigg(\int_0^T\delta \tilde{b}_1(t)\mathbbm{1}_{[\bar{t},\bar{t}+\epsilon]}dt\bigg)^\beta\\
&\quad +\sum_{i=1}^2\bigg(\int_0^T|\delta \sigma_i(t)\mathbbm{1}_{[\bar{t},\bar{t}+\epsilon]}|^2dt\bigg)^{\frac{\beta}{2}}+\sum_{i=1}^2\bigg(\int_0^T\int_{\mathcal{E}_i}|\delta f_i(t,e)\mathbbm{1}_{\mathcal{O}}|^2N_i(de,dt)\bigg)^{\frac{\beta}{2}}\bigg]\\
&\leq C E\bigg[\bigg(\int_0^T(1+|\bar{x}|+|\bar{y}|+|\bar{z}^1|+|\bar{z}^2|+||\bar{\tilde{z}}^1||+||\bar{\tilde{z}}^2||+|\bar{u}|+|u|)\mathbbm{1}_{[\bar{t},\bar{t}+\epsilon]}dt\bigg)^\beta\\
&\quad +\bigg(\int_0^T(1+|\bar{x}|+|\bar{u}|+|u|)\mathbbm{1}_{[\bar{t},\bar{t}+\epsilon]}dt\bigg)^\beta
 +\bigg(\int_0^T(1+|\bar{x}|+|\bar{u}|+|u|)^2\mathbbm{1}_{[\bar{t},\bar{t}+\epsilon]}dt\bigg)^{\frac{\beta}{2}}\\
&\quad +\sum_{i=1}^2\bigg(\int_0^T\int_{\mathcal{E}_i}|\delta f_i(t,e)\mathbbm{1}_{\mathcal{O}}|^2N_i(de,dt)\bigg)^{\frac{\beta}{2}}\bigg]\\
&\leq C E\bigg[\bigg(\int_{\bar{t}}^{\bar{t}+\epsilon}(1+|\bar{x}|+|\bar{y}|+|\bar{u}|+|u|)dt\bigg)^{\beta}+\sum_{i=1}^2\bigg(\int_{\bar{t}}^{\bar{t}+\epsilon}|\bar{z}^i|dt\bigg)^\beta\\
&\quad +\sum_{i=1}^2\bigg(\int_{\bar{t}}^{\bar{t}+\epsilon}||\bar{\tilde{z}}^i||dt\bigg)^\beta+\bigg(\int_{\bar{t}}^{\bar{t}+\epsilon}(1+|\bar{x}|+|\bar{u}|+|u|)dt\bigg)^\beta\\
&\quad +\bigg(\int_{\bar{t}}^{\bar{t}+\epsilon}(1+|\bar{x}|+|\bar{u}|+|u|)^2dt\bigg)^{\frac{\beta}{2}}+\sum_{i=1}^2\bigg(\int_0^T\int_{\mathcal{E}_i}|\delta f_i(t,e)\mathbbm{1}_{\mathcal{O}}|^2N_i(de,dt)\bigg)^{\frac{\beta}{2}}\bigg]\\
&\leq C\bigg\{\epsilon^{\beta-1}E\bigg[\bigg(\int_{\bar{t}}^{\bar{t}+\epsilon}(1+|\bar{x}|^\beta+|\bar{y}|^\beta+|\bar{u}|^\beta+|u|^\beta)dt\bigg)\bigg]
 +\epsilon^{\frac{\beta}{2}}E\bigg[\sum_{i=1}^2\bigg(\int_{\bar{t}}^{\bar{t}+\epsilon}|\bar{z}^i|^2dt\bigg)^{\frac{\beta}{2}}\bigg]\\
&\quad +\epsilon^{\frac{\beta}{2}}E\bigg[\sum_{i=1}^2\bigg(\int_{\bar{t}}^{\bar{t}+\epsilon}\int_{\mathcal{E}_i}|\bar{\tilde{z}}^i|^2\nu_i(de)dt\bigg)^{\frac{\beta}{2}}\bigg]
 +\epsilon^{\frac{\beta}{2}-1}E\bigg[\bigg(\int_{\bar{t}}^{\bar{t}+\epsilon}(1+|\bar{x}|^\beta+|\bar{u}|^\beta+|u|^\beta)dt\bigg)\bigg]\\
&\quad +\sum_{i=1}^2E\bigg[\bigg(\int_0^T\int_{\mathcal{E}_i}|\delta f_i(t,e)\mathbbm{1}_{\mathcal{O}}|^2N_i(de,dt)\bigg)^{\frac{\beta}{2}}\bigg]\bigg\}\\
&\leq C\bigg\{\epsilon^{\frac{\beta}{2}}+\epsilon^\beta+(\epsilon^\beta+\epsilon^{\frac{\beta}{2}})E\bigg[\sup_{0\leq t\leq T}|\bar{x}|^\beta\bigg]
 +\epsilon^\beta E\bigg[\sup_{0\leq t\leq T}|\bar{y}|^\beta\bigg]+(\epsilon^\beta+\epsilon^{\frac{\beta}{2}})\sup_{0\leq t\leq T}E|\bar{u}|^\beta\\
 &\quad +(\epsilon^\beta+\epsilon^{\frac{\beta}{2}})\sup_{0\leq t\leq T}E|u|^\beta+\epsilon^{\frac{\beta}{2}}E\bigg[\sum_{i=1}^2\bigg(\int_0^T|\bar{z}^i|^2dt\bigg)^{\frac{\beta}{2}}\bigg]
 +\epsilon^{\frac{\beta}{2}}E\bigg[\sum_{i=1}^2\bigg(\int_0^T\int_{\mathcal{E}_i}|\bar{\tilde{z}}^i|^2\nu_i(de)dt\bigg)^{\frac{\beta}{2}}\bigg]\\
&\quad +\sum_{i=1}^2E\bigg[\bigg(\int_0^T\int_{\mathcal{E}_i}|\delta f_i(t,e)\mathbbm{1}_{\mathcal{O}}|^2N_i(de,dt)\bigg)^{\frac{\beta}{2}}\bigg]\bigg\}\leq O(\epsilon^{\frac{\beta}{2}})+O(\epsilon^{\beta}),
\end{aligned}
\end{equation*}
where we have used that, under {\bf (A1)}, by Theorem \ref{the22},
\begin{equation*}
E\bigg[\sup_{0\leq t\leq T}|\bar{x}|^\beta+\sup_{0\leq t\leq T}|\bar{y}|^\beta
+\sum_{i=1}^2\bigg(\int_0^T|\bar{z}^i|^2dt\bigg)^{\frac{\beta}{2}}
+\sum_{i=1}^2\bigg(\int_0^T\int_{\mathcal{E}_i}|\bar{\tilde{z}}^i|^2\nu_i(de)dt\bigg)^{\frac{\beta}{2}}\bigg]<\infty.
\end{equation*}
Noticing that the estimate is derived by subtracting the jump part on $\mathcal{O}$, which means the jump term does not influence the order of variation. If not, the part with jump
\begin{equation*}
E\bigg[\bigg(\int_0^T\int_{\mathcal{E}_i}|\delta f_i(t,e)|^2\mathbbm{1}_{[\bar{t},\bar{t}+\epsilon]}N_i(de,dt)\bigg)^{\frac{\beta}{2}}\bigg],\ i=1,2,
\end{equation*}
is always of order $O(\epsilon)$, no matter how large $\beta$ is. Indeed, we have
\begin{equation*}
\begin{aligned}
E\bigg[\bigg(\int_0^T\int_{\mathcal{E}_i}|\delta f_i(t,e)|^2\mathbbm{1}_{[\bar{t},\bar{t}+\epsilon]}N_i(de,dt)\bigg)^{\frac{\beta}{2}}\bigg]
\leq C E\bigg[\bigg(\int_0^T(1+|\bar{x}|+|\bar{u}|+|u|)^2\mathbbm{1}_{[\bar{t},\bar{t}+\epsilon]}N_i({\mathcal{E}_i},dt)\bigg)^{\frac{\beta}{2}}\bigg].
\end{aligned}
\end{equation*}
Set $I_s:=(1+|\bar{x}_{s-}|+|\bar{u}|+|u|)^2\mathbbm{1}_{[\bar{t},\bar{t}+\epsilon]}$, and $J_{it}:=\int_0^tI_sN_i({\mathcal{E}_i},ds)$, then $J_{it}$ is a pure jump process and so is $J_{it}^{\frac{\beta}{2}}$. Moreover, the jump time of $J_i^{\frac{\beta}{2}}$ is also a jump time of $N_i$ and the jump size of $N_i$ is always equal to 1. Therefore we have
\begin{equation*}
\begin{aligned}
J_{iT}^{\frac{\beta}{2}}&=\sum_{s\leq T}\big(J_{is}^{\frac{\beta}{2}}-J_{is-}^{\frac{\beta}{2}}\big)=\sum_{s\leq T}\big(J_{is}^{\frac{\beta}{2}}-J_{is-}^{\frac{\beta}{2}}\big)\mathbbm{1}_{N_i(\mathcal{E}_i,\{s\})\neq0}
=\sum_{s\leq T}\big(|J_{is-}+I_s|^{\frac{\beta}{2}}-J_{is-}^{\frac{\beta}{2}}\big)N_i(\mathcal{E}_i,\{s\})\\
&=\int_0^T\big(|J_{is-}+I_s|^{\frac{\beta}{2}}-J_{is-}^{\frac{\beta}{2}}\big)N_i(\mathcal{E}_i,ds)\leq C\int_0^T\big(J_{is-}^{\frac{\beta}{2}}+I_s^{\frac{\beta}{2}}\big)N_i(\mathcal{E}_i,ds).
\end{aligned}
\end{equation*}
By the predictability of $J_{i\cdot-}$ and $I$, we get
\begin{equation*}
EJ_{iT}^{\frac{\beta}{2}}\leq C E\int_0^T\big(J_{is}^{\frac{\beta}{2}}+I_s^{\frac{\beta}{2}}\big)ds
\leq C E\int_0^TJ_{is}^{\frac{\beta}{2}}ds+C E\int_0^T(1+|\bar{x}_s|+|\bar{u}_s|+|u_s|)^\beta\mathbbm{1}_{[\bar{t},\bar{t}+\epsilon]}ds.
\end{equation*}
From Gronwall's inequality, we have
\begin{equation*}
EJ_{iT}^{\frac{\beta}{2}}\leq C E\int_{\bar{t}}^{\bar{t}+\epsilon}(1+|\bar{x}_s|+|\bar{u}_s|+|u_s|)^\beta ds\leq O(\epsilon).
\end{equation*}
Therefore, we derive
\begin{equation}\label{estimate2}
\begin{aligned}
&E\bigg[\sup_{0\leq t\leq T}(|\xi^{1,\epsilon}_t|^\beta+|\eta^{1,\epsilon}_t|^\beta)
 +\sum_{i=1}^2\bigg(\int_0^T|\zeta^{i,1,\epsilon}_t|^2dt\bigg)^{\frac{\beta}{2}}+\sum_{i=1}^2\bigg(\int_0^T\int_{\mathcal{E}_i}|\lambda^{i,1,\epsilon}_{(t,e)}|^2\nu_i(de)dt\bigg)^{\frac{\beta}{2}}\bigg]\\
&=O(\epsilon^{\frac{\beta}{2}})+O(\epsilon^{\beta}).
\end{aligned}
\end{equation}
According to the estimate \eqref{estimate2} in the above, we could set
\begin{equation}
\begin{aligned}
&x^\epsilon_t-\bar{x}_t=x^1_t+x^2_t+o(\epsilon),\quad y^\epsilon_t-\bar{y}_t=y^1_t+y^2_t+o(\epsilon),\\
&z^{i,\epsilon}_t-\bar{z}^i_t=z^{i,1}_t+z^{i,2}_t+o(\epsilon),\quad \tilde{z}^{i,\epsilon}_{(t,e)}-\bar{\tilde{z}}^i_{(t,e)}=\tilde{z}^{i,1}_{(t,e)}+\tilde{z}^{i,2}_{(t,e)}+o(\epsilon),\ i=1,2,
\end{aligned}
\end{equation}
where $x^1,y^1,z^{1,1},z^{2,1},\tilde{z}^{1,1},\tilde{z}^{2,1}\sim O(\sqrt{\epsilon})$ and $x^2,y^2,z^{1,2},z^{2,2},\tilde{z}^{1,2},\tilde{z}^{2,2}\sim O(\epsilon)$.

\subsection{Variational equations}

Then, we introduce the first and second order variational equations for the SDEP as follows:
\begin{equation}\label{x1x2}
\begin{aligned}
x^1_t&=\int_0^t\tilde{b}_{1x}(s)x_s^1ds+\sum^2_{i=1}\int_0^t\big[\sigma_{ix}(s)x_s^1+\delta\sigma_i(s)\mathbbm{1}_{[\bar{t},\bar{t}+\epsilon]}\big]dW^i_s\\
     &\quad +\sum^2_{i=1}\int_0^t\int_{\mathcal{\mathcal{E}}_i}f_{ix}(s,e)x_{s-}^1\tilde{N}_i(de,ds),
\end{aligned}
\end{equation}
\begin{equation}
\begin{aligned}
x^2_t&=\int_0^t\Big[\tilde{b}_{1x}(s)x_s^2+\frac{1}{2}\tilde{b}_{1xx}(s)(x_s^1)^2+\delta \tilde{b}_1(s)\mathbbm{1}_{[\bar{t},\bar{t}+\epsilon]}\Big]ds\\
     &\quad +\sum^2_{i=1}\int_0^t\Big[\sigma_{ix}(s)x_s^2+\frac{1}{2}\sigma_{ixx}(s)(x_s^1)^2+\delta\sigma_{ix}(s)x_s^1\mathbbm{1}_{[\bar{t},\bar{t}+\epsilon]}\Big]dW^i_s\\
     &\quad +\sum^2_{i=1}\int_0^t\int_{\mathcal{\mathcal{E}}_i}\Big[f_{ix}(s,e)x_{s-}^2+\frac{1}{2}f_{ixx}(s,e)(x_{s-}^1)^2\Big]\tilde{N}_i(de,ds),
\end{aligned}
\end{equation}
with $x^1_0=x^2_0=0$. Inspired by \cite{Hu17} (or \cite{HJX18}), we guess that $z_t^{i,1}, \tilde{z}_{(t,e)}^{i,1},i=1,2$ have the following forms, respectively:
\begin{equation}\label{ztildez}
z_t^{i,1}=\Delta^i(t)\mathbbm{1}_{[\bar{t},\bar{t}+\epsilon]}+z_t^{i,1'},\quad \tilde{z}_{(t,e)}^{i,1}=\tilde{\Delta}^i(t)\mathbbm{1}_{\mathcal{O}}+\tilde{z}_{(t,e)}^{i,1'},\quad i=1,2,
\end{equation}
where $\Delta^1(t),\Delta^2(t),\tilde{\Delta}^1(t),\tilde{\Delta}^2(t)$ are $\mathcal{F}_t$-adapted processes, to be determined later, and importantly, $z_t^{i,1'},\tilde{z}_{(t,e)}^{i,1'},i=1,2$ have good estimates similarly as $x_t^1$. Then
\begin{equation*}
\begin{aligned}
&g\big(t,x^\epsilon_t,y^\epsilon_t,z^{1,\epsilon}_t,z^{2,\epsilon}_t,\tilde{z}^{1,\epsilon}_{(t,e)},\tilde{z}^{2,\epsilon}_{(t,e)},u^\epsilon_t\big)
 -g\big(t,\bar{x}_t,\bar{y}_t,\bar{z}^1_t,\bar{z}^2_t,\bar{\tilde{z}}^1_{(t,e)},\bar{\tilde{z}}^2_{(t,e)},\bar{u}_t\big)\\
&=g\big(t,\bar{x}_t+x_t^1+x_t^2,\bar{y}_t+y_t^1+y_t^2,\bar{z}^1_t+\Delta^1(t)\mathbbm{1}_{[\bar{t},\bar{t}+\epsilon]}+z_t^{1,1'}+z_t^{1,2},\\
&\qquad\bar{z}^2_t+\Delta^2(t)\mathbbm{1}_{[\bar{t},\bar{t}+\epsilon]}+z_t^{2,1'}+z_t^{2,2},\bar{\tilde{z}}^1_{(t,e)}+\tilde{\Delta}^1(t)\mathbbm{1}_{\mathcal{O}}+\tilde{z}_{(t,e)}^{1,1'}+\tilde{z}_{(t,e)}^{1,2},\\
&\qquad\bar{\tilde{z}}^2_{(t,e)}+\tilde{\Delta}^2(t)\mathbbm{1}_{\mathcal{O}}+\tilde{z}_{(t,e)}^{2,1'}+\tilde{z}_{(t,e)}^{2,2},u^\epsilon_t\big)-g(t)+o(\epsilon)\\
&=g_x(t)(x_t^1+x_t^2)+g_y(t)(y_t^1+y_t^2)+\sum_{i=1}^2g_{z^i}(t)(z_t^{i,1'}+z_t^{i,2})\\
&\quad+\sum_{i=1}^2g_{\tilde{z}^i}(t)\int_{\mathcal{E}_i}(\tilde{z}_{(t,e)}^{i,1'}+\tilde{z}_{(t,e)}^{i,2})\nu_i(de)+\frac{1}{2}\Xi(t)D^2g(t)\Xi(t)^\top\\
&\quad +\delta g(t,\Delta^1,\Delta^2,\tilde{\Delta}^1,\tilde{\Delta}^2)\mathbbm{1}_{[\bar{t},\bar{t}+\epsilon]}+o(\epsilon),
\end{aligned}
\end{equation*}
where $\Xi(t):=\big[x_t^1,y_t^1,z_t^{1,1'},z_t^{2,1'},\int_{\mathcal{E}_1}\tilde{z}_{(t,e)}^{1,1'}\nu_1(de),\int_{\mathcal{E}_2}\tilde{z}_{(t,e)}^{2,1'}\nu_2(de)\big]$ for simplicity.
So we can obtain the following equation for the BSDEP:
\begin{equation}\label{y1y2}
\left\{
\begin{aligned}
-d(y_t^1+y_t^2)&=\bigg\{g_x(t)(x_t^1+x_t^2)+g_y(t)(y_t^1+y_t^2)+\sum^2_{i=1}g_{z^i}(t)(z_t^{i,1'}+z_t^{i,2})\\
               &\qquad+\sum^2_{i=1}g_{\tilde{z}^i}(t)\int_{\mathcal{E}_i}(\tilde{z}_{(t,e)}^{i,1'}+\tilde{z}_{(t,e)}^{i,2})\nu_i(de)
               +\delta g(t,\Delta^1,\Delta^2,\tilde{\Delta}^1,\tilde{\Delta}^2)\mathbbm{1}_{[\bar{t},\bar{t}+\epsilon]}\\
               &\qquad+\frac{1}{2}\Xi(t)D^2g\Xi(t)^\top\bigg\}dt\\
               &\quad-\sum^2_{i=1}(z_t^{i,1}+z_t^{i,2})dW^i_t-\sum^2_{i=1}\int_{\mathcal{E}_i}(\tilde{z}_{(t,e)}^{i,1}+\tilde{z}_{(t,e)}^{i,2})\tilde{N}_i(de,dt),\quad t\in[0,T],\\
    y_T^1+y_T^2&=\phi_x(\bar{x}_T)(x_T^1+x_T^2)+\frac{1}{2}\phi_{xx}(\bar{x}_T)(x_T^1)^2,
\end{aligned}
\right.
\end{equation}

Next, we need derive the corresponding first and second order variational equations for the BSDEP. Noticing that the terminal value in \eqref{y1y2}, $y_T^1=\phi_x(\bar{x}_T)x_T^1$, we guess that $y_t^1=p_tx_t^1$, where $(p_t,q^1_t,q^2_t,\tilde{q}^1_{(t,e)},\tilde{q}^2_{(t,e)})$ satisfies the following adjoint equation:
\begin{equation*}
\left\{
\begin{aligned}
-dp_t&=F(t)dt-\sum_{i=1}^2q^i_tdW^i_t-\sum_{i=1}^2\int_{\mathcal{E}_i}\tilde{q}^i_{(t,e)}\tilde{N}_i(de,dt),\quad t\in[0,T],\\
  p_T&=\phi_x(\bar{x}_T),
\end{aligned}
\right.
\end{equation*}
where the $\mathcal{F}_t$-adapted process $F(\cdot)$ is to be determined.

Applying It\^{o}'s formula to $p_tx_t^1$, we can get
\begin{equation}\label{y1}
\left\{
\begin{aligned}
-dy_t^1&=\bigg\{g_x(t)x_t^1+g_y(t)y_t^1+\sum^2_{i=1}\bigg[g_{z^i}(t)(z_t^{i,1}-\Delta^i(t)\mathbbm{1}_{[\bar{t},\bar{t}+\epsilon]})\\
       &\qquad +g_{\tilde{z}^i}(t)\int_{\mathcal{E}_i}\big(\tilde{z}_{(t,e)}^{i,1}-\tilde{\Delta}^i(t)\mathbbm{1}_{\mathcal{O}}\big)\nu_i(de)\bigg]
        -\sum^2_{i=1}q^i_t\delta\sigma_i(t)\mathbbm{1}_{[\bar{t},\bar{t}+\epsilon]}\bigg\}dt\\
       &\quad -\sum^2_{i=1}z_t^{i,1}dW^i_t-\sum^2_{i=1}\int_{\mathcal{E}_i}\tilde{z}_{(t,e)}^{i,1}\tilde{N}_i(de,dt),\quad t\in[0,T],\\
  y_T^1&=\phi_x(\bar{x}_T)x_T^1.
\end{aligned}
\right.
\end{equation}
and $(p_t,q^1_t,q^2_t,\tilde{q}^1_{(t,e)},\tilde{q}^2_{(t,e)})\in \mathcal{N}^\beta[0,T]$ is the unique solution to the following first order adjoint equation:
\begin{equation}\label{p}
\left\{
\begin{aligned}
-dp_t&=\bigg\{\tilde{b}_{1x}(t)p_t+\sum^2_{i=1}\sigma_{ix}(t)q^i_t+g_x(t)+g_y(t)p_t+\sum^2_{i=1}g_{z^i}(t)k^i_1(t)\\
     &\qquad +\sum^2_{i=1}\bigg[g_{\tilde{z}^i}(t)\int_{\mathcal{E}_i}k^i_2(t,e)\nu_i(de)+\int_{\mathcal{E}_i}\mathbb{E}_i\big[f_{ix}(t,e)\big|\mathcal{P}\otimes\mathcal{B}(\mathcal{E}_i)\big]\tilde{q}^i_{(t,e)}\nu_i(de)\bigg]\bigg\}dt\\
     &\quad -\sum^2_{i=1}q^i_tdW^i_t-\sum^2_{i=1}\int_{\mathcal{E}_i}\tilde{q}^i_{(t,e)}\tilde{N}_i(de,dt),\quad t\in[0,T],\\
  p_T&=\phi_x(\bar{x}_T).
\end{aligned}
\right.
\end{equation}
From the deduction above, it is not hard to get the following relationships:
\begin{equation}
y_t^1=p_tx_t^1,\quad z_t^{i,1}=k^i_1(t)x_t^1+p_t\delta\sigma_i(t)\mathbbm{1}_{[\bar{t},\bar{t}+\epsilon]},\quad\tilde{z}_{(t,e)}^{i,1}=k^i_2(t,e)x_{t-}^1,\ i=1,2,
\end{equation}
where
\begin{equation}\label{k1k2}
\begin{aligned}
k^i_1(t)&:=\sigma_{ix}(t)p_t+q^i_t,\quad k^i_2(t,e):=f_{ix}(t,e)p_{t-}+\tilde{q}^i_{(t,e)}+f_{ix}(t,e)\tilde{q}^i_{(t,e)},\ i=1,2.
\end{aligned}
\end{equation}
Observing \eqref{ztildez}, we obtain
\begin{equation}
\begin{aligned}
z_t^{i,1'}=k^i_1(t)x_t^1,\quad \tilde{z}_{(t,e)}^{i,1'}=k^i_2(t,e)x_{t-}^1,\quad\Delta^i(t)=p_t\delta\sigma_i(t),\quad\tilde{\Delta}^i(t)=0,\ i=1,2.
\end{aligned}
\end{equation}

\begin{remark}\label{rem31}
We notice that, different from the continuous martingale ``$dW^i$'' terms $z_t^{i,1}$, $\tilde{\Delta}^i(t)=0,i=1,2$, that is $\tilde{z}_{(t,e)}^{i,1}=\tilde{z}_{(t,e)}^{i,1'},i=1,2,$ which means the ``jump term" in the first order variation has no change. Indeed, inspired by \cite{Hu17} (or \cite{HJX18}), the diffusion term of the first order variational equation of $x$ should include the term $\delta\sigma_i(t)\mathbbm{1}_{[\bar{t},\bar{t}+\epsilon]}$, so it is natural to guess another form of $z_t^{i,1}$ in \eqref{ztildez}. However, considering the first order variational equation of $x$ in our paper, motivated by the new variation technique in \cite{STW20}, the ``jump term" does not include the similar term $``\delta f_i(t,e)\mathbbm{1}_{\mathcal{O}}"$. Therefore, the form of $\tilde{z}_{(t,e)}^{i,1}$ has no need to change like $z_t^{i,1}$ in heuristic derivation, which has been checked in the above discussion.
\end{remark}

From \eqref{y1y2} and \eqref{y1}, we introduce the first and second variational equations of the BSDEP as follows:
\begin{equation}\label{y1andy2}
\left\{
\begin{aligned}
-dy_t^1&=\Big\{g_x(t)x_t^1+g_y(t)y_t^1+\sum^2_{i=1}g_{z^i}(t)\big(z_t^{i,1}-p_t\delta\sigma_i(t)\mathbbm{1}_{[\bar{t},\bar{t}+\epsilon]}\big)+\sum^2_{i=1}g_{\tilde{z}^i}(t)\int_{\mathcal{E}_i}\tilde{z}_{(t,e)}^{i,1}\nu_i(de)\\
       &\quad -\sum^2_{i=1}q^i_t\delta\sigma_i(t)\mathbbm{1}_{[\bar{t},\bar{t}+\epsilon]}\Big\}dt-\sum^2_{i=1}z_t^{i,1}dW^i_t-\sum^2_{i=1}\int_{\mathcal{E}_i}\tilde{z}_{(t,e)}^{i,1}\tilde{N}_i(de,dt),\\
-dy_t^2&=\bigg\{g_x(t)x_t^2+g_y(t)y_t^2+\sum^2_{i=1}g_{z^i}(t)z_t^{i,2}+\sum^2_{i=1}g_{\tilde{z}^i}(t)\int_{\mathcal{E}_i}\tilde{z}_{(t,e)}^{i,2}\nu_i(de)+\frac{1}{2}\tilde{\Xi}(t)D^2g\tilde{\Xi}(t)^\top\\
       &\quad+\delta g(t,p_t\delta\sigma_1(t),p_t\delta\sigma_2(t))\mathbbm{1}_{[\bar{t},\bar{t}+\epsilon]}+\sum^2_{i=1}q^i_t\delta\sigma_i(t)\mathbbm{1}_{[\bar{t},\bar{t}+\epsilon]}\bigg\}dt\\
       &\quad-\sum^2_{i=1}z_t^{i,2}dW^i_t-\sum^2_{i=1}\int_{\mathcal{E}_i}\tilde{z}_{(t,e)}^{i,2}\tilde{N}_i(de,dt),\quad t\in[0,T],\\
  y_T^1&=\phi_x(\bar{x}_T)x_T^1,\ y_T^2=\phi_x(\bar{x}_T)x_T^2+\frac{1}{2}\phi_{xx}(\bar{x}_T)(x_T^1)^2,
\end{aligned}
\right.
\end{equation}
where {\footnotesize$\tilde{\Xi}(t)=\big[x_t^1,y_t^1,z_t^{1,1}-p_t\delta\sigma_1(t)\mathbbm{1}_{[\bar{t},\bar{t}+\epsilon]},z_t^{2,1}-p_t\delta\sigma_2(t)\mathbbm{1}_{[\bar{t},\bar{t}+\epsilon]},
\int_{\mathcal{E}_1}\tilde{z}_{(t,e)}^{1,1}\nu_1(de),\int_{\mathcal{E}_2}\tilde{z}_{(t,e)}^{2,1}\nu_2(de)\big]$}. It is easy to check that the first and second order variational equations \eqref{x1x2}, \eqref{y1andy2} admit unique solutions $x^1,x^2$, $(y^1,z^{1,1},z^{2,1},\tilde{z}^{1,1},\tilde{z}^{2,1})$ and $(y^2,z^{1,2},z^{2,2},\tilde{z}^{1,2},\tilde{z}^{2,2})$, respectively.

\begin{lemma}\label{lemma31}
Let {\bf (A1)} hold, then we have
\begin{equation}\label{relationshipy1}
\begin{aligned}
y_t^1&=p_tx_t^1,\quad z_t^{i,1}=k^i_1(t)x_t^1+p_t\delta\sigma_i(t)\mathbbm{1}_{[\bar{t},\bar{t}+\epsilon]},\quad\tilde{z}_{(t,e)}^{i,1}=k^i_2(t,e)x_{t-}^1,\ i=1,2,
\end{aligned}
\end{equation}
where $p_t$ satisfies \eqref{p}, and $k^i_1(t),k^i_2(t,e),i=1,2,$ are given by \eqref{k1k2}.
\end{lemma}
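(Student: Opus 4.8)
\textbf{Proof proposal for Lemma \ref{lemma31}.}
The plan is to verify directly that the triple defined on the right-hand side of \eqref{relationshipy1} solves the first-order variational BSDEP in \eqref{y1andy2}, and then to invoke uniqueness of solutions of that linear BSDEP (which holds under {\bf (A1)} by the $L^\beta$-theory of Section 2, in particular Lemma \ref{lemma22} and Theorem \ref{the24}) to conclude that this triple must coincide with $(y^1,z^{1,1},z^{2,1},\tilde{z}^{1,1},\tilde{z}^{2,1})$. Concretely, I would set $\hat{y}_t:=p_tx_t^1$, $\hat{z}^i_t:=k^i_1(t)x_t^1+p_t\delta\sigma_i(t)\mathbbm{1}_{[\bar t,\bar t+\epsilon]}$ and $\hat{\tilde z}^i_{(t,e)}:=k^i_2(t,e)x_{t-}^1$, with $k^i_1,k^i_2$ as in \eqref{k1k2}, and then apply It\^o's formula for jump processes to the product $p_tx_t^1$, using the dynamics \eqref{p} for $p$ and \eqref{x1x2} for $x^1$.

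The core computation is the It\^o expansion of $d(p_tx_t^1)$. The drift collects: the $\tilde b_{1x}$-term from $x^1$ paired with $p$; the $F(t)$-type drift of $p$ (the whole bracket in \eqref{p}) paired with $x^1$; the quadratic-covariation contributions from the Brownian parts, $\sum_i \sigma_{ix}(t)q^i_t x_t^1$ plus the cross term $\sum_i q^i_t\,\delta\sigma_i(t)\mathbbm{1}_{[\bar t,\bar t+\epsilon]}$; and the jump-covariation contributions $\sum_i\int_{\mathcal{E}_i}\mathbb{E}_i[f_{ix}(t,e)\mid \mathcal{P}\otimes\mathcal{B}(\mathcal{E}_i)]\tilde q^i_{(t,e)}x_t^1\,\nu_i(de)\,dt$ coming from the $f_{ix}x^1_{t-}$ jump of $x^1$ against the $\tilde q^i$ jump of $p$ (here Lemma \ref{lemma21}(i),(iv) is what licenses replacing $\int_{\mathcal{E}_i} f_{ix}\tilde q^i\,N_i$ by its dual predictable projection with the conditional expectation $\mathbb{E}_i[f_{ix}\mid\cdot]$). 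The martingale part of $d(p_tx_t^1)$ is $\sum_i[\sigma_{ix}(t)p_t+q^i_t]x_t^1+p_t\delta\sigma_i(t)\mathbbm{1}_{[\bar t,\bar t+\epsilon]}$ against $dW^i$, i.e. exactly $\hat z^i_t\,dW^i_t$ since $k^i_1=\sigma_{ix}p+q^i$, and $\sum_i\int_{\mathcal{E}_i}[f_{ix}(t,e)p_{t-}+\tilde q^i_{(t,e)}+f_{ix}(t,e)\tilde q^i_{(t,e)}]x_{t-}^1\,\tilde N_i(de,dt)=\sum_i\int_{\mathcal{E}_i}\hat{\tilde z}^i_{(t,e)}\tilde N_i(de,dt)$ since $k^i_2=f_{ix}p_-+\tilde q^i+f_{ix}\tilde q^i$. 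Matching the drift against the generator of the $y^1$-equation in \eqref{y1andy2} then requires checking that, after substituting $z_t^{i,1}=\hat z^i_t$ and $\tilde z_{(t,e)}^{i,1}=\hat{\tilde z}^i_{(t,e)}$, the generator reproduces precisely the drift of $d(p_tx^1_t)$; the $g_x x^1$, $g_y p x^1=g_y\hat y$, $\sum_i g_{z^i}(z^{i,1}_t-p_t\delta\sigma_i(t)\mathbbm{1}_{[\bar t,\bar t+\epsilon]})=\sum_i g_{z^i}k^i_1 x^1$, and $\sum_i g_{\tilde z^i}\int_{\mathcal{E}_i}\tilde z_{(t,e)}^{i,1}\nu_i(de)=\sum_i g_{\tilde z^i}\int_{\mathcal{E}_i}k^i_2 x^1_{t-}\nu_i(de)$ terms are exactly the terms carrying $g_x,g_y,g_{z^i},g_{\tilde z^i}$ in the bracket of \eqref{p} (multiplied by $x^1$), while the $-\sum_i q^i_t\delta\sigma_i(t)\mathbbm{1}_{[\bar t,\bar t+\epsilon]}$ term in the $y^1$-generator cancels the cross Brownian-covariation term produced by It\^o's formula. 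The terminal condition matches because $p_T=\phi_x(\bar x_T)$ gives $\hat y_T=\phi_x(\bar x_T)x^1_T$. Hence $(\hat y,\hat z^1,\hat z^2,\hat{\tilde z}^1,\hat{\tilde z}^2)$ solves the same linear BSDEP as $(y^1,z^{1,1},z^{2,1},\tilde z^{1,1},\tilde z^{2,1})$, so by uniqueness they agree, which is \eqref{relationshipy1}.

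The main obstacle I anticipate is bookkeeping in the jump covariation term: one must be careful that $[p,x^1]$ picks up $\sum_i\int f_{ix}(t,e)\,\tilde q^i_{(t,e)}\,x^1_{t-}\,N_i(de,dt)$ (note the product of the two jumps, which is why $k^i_2$ contains the extra $f_{ix}\tilde q^i$ term rather than just $f_{ix}p_-+\tilde q^i$), and that compensating this random measure correctly introduces the conditional-expectation operator $\mathbb{E}_i[\,\cdot\mid\mathcal{P}\otimes\mathcal{B}(\mathcal{E}_i)]$ exactly as written in \eqref{p}. A secondary subtlety is justifying that all the appearing processes lie in the correct spaces so that the stochastic integrals are genuine martingales and uniqueness applies — but this is guaranteed by the $L^\beta$-estimates already established (Theorem \ref{the24}, Lemma \ref{lemma22}) together with the boundedness assumptions on the derivatives of the coefficients in {\bf (A1)}. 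Once the It\^o expansion is written out term by term, the identification is a direct matching of coefficients, so I would not expect any genuinely hard estimate here, only care with the jump terms.
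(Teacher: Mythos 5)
Your proposal is correct and follows essentially the same route as the paper: the paper likewise defines $\hat{y}^1_t:=p_t\hat{x}^1_t$, $\hat{z}^{i,1}_t:=k^i_1(t)\hat{x}^1_t+p_t\delta\sigma_i(t)\mathbbm{1}_{[\bar t,\bar t+\epsilon]}$, $\hat{\tilde z}^{i,1}_{(t,e)}:=k^i_2(t,e)\hat{x}^1_{t-}$ for the solution $\hat{x}^1$ of the first-order variational SDEP, applies It\^o's formula to $p_t\hat{x}^1_t$, and concludes by the uniqueness result of Theorem \ref{the24}. Your handling of the jump covariation (the extra $f_{ix}\tilde q^i$ term in $k^i_2$ and the compensation via $\mathbb{E}_i[\,\cdot\,|\mathcal{P}\otimes\mathcal{B}(\mathcal{E}_i)]$) matches the paper's computation.
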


\begin{proof}
We introduce the following SDEP:
\begin{equation*}
\left\{
\begin{aligned}
d\hat{x}_t^1&=\tilde{b}_{1x}(t)\hat{x}_t^1dt+\sum_{i=1}^2\big[\sigma_{ix}(t)\hat{x}_t^1+\delta\sigma_i(t)\mathbbm{1}_{[\bar{t},\bar{t}+\epsilon]}\big]dW^i_t
             +\sum_{i=1}^2\int_{\mathcal{E}_i}f_{ix}(t,e)\hat{x}_{t-}^1\tilde{N}_i(de,dt),\quad t\in[0,T],\\
 \hat{x}_0^1&=0,
\end{aligned}
\right.
\end{equation*}
which admits a unique solution $\hat{x}_t^1$, and set
\begin{equation*}
\hat{y}_t^1:=p_t\hat{x}_t^1,\quad \hat{z}_t^{i,1}:=k^i_1(t)\hat{x}_t^1+p_t\delta\sigma_i(t)\mathbbm{1}_{[\bar{t},\bar{t}+\epsilon]},\quad \hat{\tilde{z}}_{(t,e)}^{i,1}:=k^i_2(t,e)\hat{x}_{t-}^1,\ i=1,2.
\end{equation*}
Applying It\^{o}'s formula to $p_t\hat{x}_t^1$, we can get
\begin{equation*}
\begin{aligned}
\hat{y}_t^1&=\phi_{\hat{x}}(\hat{\bar{x}}_T)\hat{x}_T^1+\int_t^T\Big\{g_{\hat{x}}(s)\hat{x}_s^1+g_{\hat{y}}(s)\hat{y}_s^1+\sum_{i=1}^2g_{\hat{z}^i}(s)k^i_1(s)\hat{x}_s^1
            +\sum_{i=1}^2g_{\hat{\tilde{z}}^i}(s)\int_{\mathcal{E}_i}k^i_2(s,e)\nu_i(de)\hat{x}_s^1\\
           &\quad -\sum_{i=1}^2q^i_s\delta\sigma_i(s)\mathbbm{1}_{[\bar{t},\bar{t}+\epsilon]}\Big\}ds-\sum_{i=1}^2\int_t^T\hat{z}_s^{i,1}dW^i_s-\sum_{i=1}^2\int_t^T\int_{\mathcal{E}_i}\hat{\tilde{z}}_{(s,e)}^{i,1}\tilde{N}_i(de,ds).
\end{aligned}
\end{equation*}
It is easy to check that $(\hat{x}^1,\hat{y}^1,\hat{z}^{1,1},\hat{z}^{2,1},\hat{\tilde{z}}^{1,1},\hat{\tilde{z}}^{2,1})$ solves \eqref{x1x2} and \eqref{y1andy2}, and by Theorem \ref{the24}, $(\hat{x}^1,\hat{y}^1,\hat{z}^{1,1},\hat{z}^{2,1},\hat{\tilde{z}}^{1,1},\hat{\tilde{z}}^{2,1})=(x^1,y^1,z^{1,1},z^{2,1},\tilde{z}^{1,1},\tilde{z}^{2,1})$. The proof is complete.
\end{proof}

\subsection{First-order and second-order expansions}

The following standard estimates of FBSDEPs are needed.
\begin{lemma}\label{lemma32}
Suppose that {\bf (A1)} holds, for $\beta\geq2$, we have the following estimates:
\begin{equation}\label{estimate1}
\begin{aligned}
&E\bigg[\sup_{0\leq t\leq T}(|x_t^1|^\beta+|y_t^1|^\beta)+\sum^2_{i=1}\bigg[\bigg(\int_0^T|z_t^{i,1}|^2dt\bigg)^{\frac{\beta}{2}}
 +\bigg(\int_0^T\int_{\mathcal{E}_i}|\tilde{z}_{(t,e)}^{i,1}|^2\nu_i(de)dt\bigg)^{\frac{\beta}{2}}\bigg]\bigg]=O(\epsilon^{\frac{\beta}{2}}),\\
&E\bigg[\sup_{0\leq t\leq T}|x_t^\epsilon-\bar{x}_t-x_t^1|^2+\sup_{0\leq t\leq T}|y_t^\epsilon-\bar{y}_t-y_t^1|^2+\sum^2_{i=1}\int_0^T|z_t^{i,\epsilon}-\bar{z}^i_t-z_t^{i,1}|^2dt\bigg]\\
&\quad +E\bigg[\sum^2_{i=1}\int_0^T\int_{\mathcal{E}_i}|\tilde{z}_{(t,e)}^{i,\epsilon}-\bar{\tilde{z}}^i_{(t,e)}-\tilde{z}_{(t,e)}^{i,1}|^2\nu_i(de)dt\bigg]=o(\epsilon),\\
&E\bigg[\sup_{0\leq t\leq T}|x_t^\epsilon-\bar{x}_t-x_t^1|^4+\sup_{0\leq t\leq T}|y_t^\epsilon-\bar{y}_t-y_t^1|^4+\sum^2_{i=1}\bigg(\int_0^T|z_t^{i,\epsilon}-\bar{z}^i_t-z_t^{i,1}|^2dt\bigg)^2\bigg]\\
&\quad +E\bigg[\sum^2_{i=1}\bigg(\int_0^T\int_{\mathcal{E}_i}|\tilde{z}_{(t,e)}^{i,\epsilon}-\bar{\tilde{z}}^i_{(t,e)}-\tilde{z}_{(t,e)}^{i,1}|^2\nu_i(de)dt\bigg)^2\bigg]=o(\epsilon^2).
\end{aligned}
\end{equation}
\end{lemma}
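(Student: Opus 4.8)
The plan is to deduce all three estimates in \eqref{estimate1} from the $L^\beta$-estimates for linear SDEPs, BSDEPs and FBSDEPs of Section~2, applied first to the variational equations of Section~3.2 and then to the equations solved by the first-order remainders. For the first line I observe that $x^1$ solves the \emph{autonomous} linear SDEP \eqref{x1x2}, whose only inhomogeneous datum is $\delta\sigma_i(t)\mathbbm{1}_{[\bar t,\bar t+\epsilon]}$ in the diffusion, and that $(y^1,z^{1,1},z^{2,1},\tilde z^{1,1},\tilde z^{2,1})$ then solves the linear BSDEP in \eqref{y1andy2} with $x^1$ as a known input, generator inhomogeneity $-\sum_i(g_{z^i}(t)p_t+q^i_t)\delta\sigma_i(t)\mathbbm{1}_{[\bar t,\bar t+\epsilon]}$ and terminal value $\phi_x(\bar x_T)x^1_T$. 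Applying the $L^\beta$-estimates of Section~2 (Theorem~\ref{the24} and Lemma~\ref{lemma23}) and estimating the inhomogeneous terms by H\"older's and Cauchy--Schwarz' inequalities — extracting a factor $\epsilon^{\beta/2}$ from the indicator $\mathbbm{1}_{[\bar t,\bar t+\epsilon]}$ inside the stochastic integrals and a factor $\epsilon^{\beta-1}$ from it inside the $dt$-integrals, and using the $L^p$-integrability for every $p\ge1$ of $\bar\Theta$, of $p$ and of $(q^1,q^2)$ furnished by the $L^\beta$-solvability of \eqref{stateeq2} and of the adjoint equation \eqref{p} — one gets $O(\epsilon^{\beta/2})+O(\epsilon^{\beta})=O(\epsilon^{\beta/2})$, which is the first line of \eqref{estimate1}.

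For the remaining two lines put $\xi^{2,\epsilon}_t:=x^\epsilon_t-\bar x_t-x^1_t$ and define $\eta^{2,\epsilon},\zeta^{i,2,\epsilon},\lambda^{i,2,\epsilon}$ in the same way. Subtracting \eqref{x1x2}--\eqref{y1andy2} from \eqref{eta1} exhibits $(\xi^{2,\epsilon},\eta^{2,\epsilon},\zeta^{\cdot,2,\epsilon},\lambda^{\cdot,2,\epsilon})$ as the solution of a linear FBSDEP with bounded averaged-derivative coefficients ($\xi^{2,\epsilon}$ again autonomous, since $\tilde b_1,\sigma_i,f_i$ depend only on $(x,u)$), whose inhomogeneous data are of three types: (i) genuine second-order terms, quadratic in the first-order differences $\xi^{1,\epsilon}=x^\epsilon-\bar x$, $\eta^{1,\epsilon},\zeta^{i,1,\epsilon},\lambda^{i,1,\epsilon}$; (ii) ``mismatch'' terms, products of a coefficient difference such as $\tilde b_{1x}^\epsilon-\tilde b_{1x}$ or $\tilde g_\kappa^\epsilon-g_\kappa$ (bounded, and $O(|\xi^{1,\epsilon}_t|+|\eta^{1,\epsilon}_t|+\dots)$ off $[\bar t,\bar t+\epsilon]$ by the mean-value theorem) with a first-order quantity; and (iii) the spike terms $\delta\tilde b_1(t)\mathbbm{1}_{[\bar t,\bar t+\epsilon]}$, $\delta g(t)\mathbbm{1}_{[\bar t,\bar t+\epsilon]}$, $(g_{z^i}(t)p_t+q^i_t)\delta\sigma_i(t)\mathbbm{1}_{[\bar t,\bar t+\epsilon]}$, and the jump term $\delta f_i(t,e)\mathbbm{1}_{\mathcal O}$. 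Types (i) and (ii) are $O(\epsilon^{\beta})$ in $L^\beta$, by Cauchy--Schwarz together with the first line of \eqref{estimate1} and with \eqref{estimate2} used with exponent $2\beta$; the spike $\delta\tilde b_1\mathbbm{1}_{[\bar t,\bar t+\epsilon]}$ is likewise $O(\epsilon^{\beta})$ because its integrand grows only like $1+|\bar x|+|\bar u|+|u|$; and the jump spike contributes nothing, since by the corrected spike technique of \cite{STW20} the random set $\mathcal O$ meets no atom of $N_i$ (the observable jump times are removed from $\mathcal O$ and no unobservable jump occurs on $(\bar t,\bar t+\epsilon]$), whence $\int_0^T\int_{\mathcal E_i}|\delta f_i|^2\mathbbm{1}_{\mathcal O}N_i(de,dt)=0$ — in contrast to the $O(\epsilon)$ that $\mathbbm{1}_{[\bar t,\bar t+\epsilon]}$ would produce, the point at which \cite{TL94} was inaccurate.

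The only data that are not $O(\epsilon^{\beta})$ are the spike terms $\delta g(t)\mathbbm{1}_{[\bar t,\bar t+\epsilon]}$ and $(g_{z^i}(t)p_t+q^i_t)\delta\sigma_i(t)\mathbbm{1}_{[\bar t,\bar t+\epsilon]}$ in the generator of the backward remainder: splitting their integrands into an $L^p$-bounded part (giving $O(\epsilon^{\beta})$) plus parts linear in $|\bar z^i_t|$, $\|\bar{\tilde z}^i_t\|$ or $|q^i_t|$, H\"older in $ds$ reduces the latter to a power of $\epsilon$ times $\int_{\bar t}^{\bar t+\epsilon}(|\bar z^i_s|^2+\|\bar{\tilde z}^i_s\|^2+|q^i_s|^2)\,ds$, and this integral, dominated by the $L^{\beta/2}$-integrable $\int_0^T(\cdots)\,ds$ and tending to $0$ $P$-a.s. as $\epsilon\downarrow0$, tends to $0$ in $L^{\beta/2}$ by dominated convergence; hence these terms are $o(\epsilon^{\beta/2})$ — which is precisely why \eqref{estimate1} is stated with ``$o$'' rather than ``$O$''. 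Feeding all these bounds into the $L^\beta$-estimate for the linear remainder FBSDEP (Theorem~\ref{the24} again) gives $E[\sup_{0\le t\le T}(|\xi^{2,\epsilon}_t|^\beta+|\eta^{2,\epsilon}_t|^\beta)+\sum_{i=1}^2((\int_0^T|\zeta^{i,2,\epsilon}_t|^2dt)^{\beta/2}+(\int_0^T\int_{\mathcal E_i}|\lambda^{i,2,\epsilon}_{(t,e)}|^2\nu_i(de)dt)^{\beta/2})]=O(\epsilon^{\beta})+o(\epsilon^{\beta/2})$, so that $\beta=2$ yields the second line of \eqref{estimate1} and $\beta=4$ the third.

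The step I expect to be the main obstacle is the careful handling of the jump contributions: one must verify rigorously that $\mathcal O$ indeed annihilates (or makes negligible at the required order) the quadratic-variation contribution of $\delta f_i\mathbbm{1}_{\mathcal O}$ and of the mismatch jump terms — exactly the point corrected in \cite{STW20} relative to \cite{TL94} — which, when $\nu_i$ is infinite, relies on the pathwise Gronwall estimate for $J_{i,t}=\int_0^t\int_{\mathcal E_i}(\cdots)N_i(de,ds)$ already used before \eqref{estimate2}. A secondary but nontrivial point is that the $L^\beta$-estimates of Theorem~\ref{the24} are proved only on small time intervals, so the bounds for the variational and remainder (F)BSDEPs over the whole interval $[0,T]$ must be pieced together via the decoupling-random-field results of Section~2.2, exactly as for the state equation \eqref{stateeq2}.
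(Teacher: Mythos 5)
Your proposal is correct and follows essentially the same route as the paper: the first line comes from the $L^\beta$-estimates of Section~2 applied to \eqref{x1x2} and \eqref{y1andy2} with the spike inhomogeneities $\delta\sigma_i\mathbbm{1}_{[\bar t,\bar t+\epsilon]}$ and $(g_{z^i}p_t+q^i_t)\delta\sigma_i\mathbbm{1}_{[\bar t,\bar t+\epsilon]}$, and the last two lines come from the $L^2$- and $L^4$-estimates for the remainder FBSDEP of $(\xi^{2,\epsilon},\eta^{2,\epsilon},\zeta^{\cdot,2,\epsilon},\lambda^{\cdot,2,\epsilon})$, with the decisive observation — made explicitly in the paper as well — that the term $E\big[\big(\int_0^T q^i_t\delta\sigma_i(t)\mathbbm{1}_{[\bar t,\bar t+\epsilon]}dt\big)^{\beta}\big]$ is only $o(\epsilon^{\beta/2})$, which is exactly why the conclusion is $o(\epsilon)$ and $o(\epsilon^2)$ rather than $O(\epsilon^2)$ and $O(\epsilon^4)$. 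Your handling of the mismatch terms via Cauchy--Schwarz with \eqref{estimate2} at exponent $2\beta$ and of the jump spike via the progressive set $\mathcal O$ matches the paper's argument, so no gap remains.
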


\begin{proof}
By {\bf (A1)}, for any $\beta\geq2$, we have
\begin{equation*}
E\bigg[\sup_{0\leq t\leq T}|p_t|^\beta+\sum_{i=1}^2\bigg(\int_0^T|q^i_t|^2dt\bigg)^{\frac{\beta}{2}}
 +\sum_{i=1}^2\bigg(\int_0^T\int_{\mathcal{E}_i}|\tilde{q}^i_{(t,e)}|^2\nu_i(de)dt\bigg)^{\frac{\beta}{2}}\bigg]<\infty.
\end{equation*}
and thus
\begin{equation*}
\begin{aligned}
&E\bigg[\sup_{0\leq t\leq T}|x_t^1|^\beta+\sup_{0\leq t\leq T}|y_t^1|^\beta+\sum_{i=1}^2\bigg[\bigg(\int_0^T|z_t^{i,1}|^2dt\bigg)^{\frac{\beta}{2}}
 +\bigg(\int_0^T\int_{\mathcal{E}_i}|\tilde{z}_{(t,e)}^{i,1}|^2\nu_i(de)dt\bigg)^{\frac{\beta}{2}}\bigg]\bigg]\\
&\leq C_\beta E\bigg\{\sum_{i=1}^2\bigg[\bigg(\int_0^T|g_{z^i}(t)p_t\delta\sigma_i(t)|\mathbbm{1}_{[\bar{t},\bar{t}+\epsilon]}+|q^i_t\delta\sigma_i(t)|\mathbbm{1}_{[\bar{t},\bar{t}+\epsilon]}dt\bigg)^{\beta}\\
&\qquad +\bigg(\int_0^T|\delta\sigma_i(t)\mathbbm{1}_{[\bar{t},\bar{t}+\epsilon]}|^2dt\bigg)^{\frac{\beta}{2}}\bigg]\bigg\}\\
&\leq C_\beta\bigg\{E\bigg[\bigg(\int_{\bar{t}}^{\bar{t}+\epsilon}(1+|\bar{x}|+|\bar{u}|+|u|)^2dt\bigg)^{\frac{\beta}{2}}\bigg]
 +E\bigg[\sum_{i=1}^2\bigg(\int_0^T|g_{z^i}(t)p_t\delta\sigma_i(t)|\mathbbm{1}_{[\bar{t},\bar{t}+\epsilon]}dt\bigg)^\beta\bigg]\\
&\qquad +E\bigg[\sum_{i=1}^2\bigg(\int_0^T|q^i_t\delta\sigma_i(t)|\mathbbm{1}_{[\bar{t},\bar{t}+\epsilon]}dt\bigg)^\beta\bigg]\bigg\}\\
&\leq C_\beta\bigg\{E\bigg[\sum_{i=1}^2\bigg(\int_0^T|p_t\delta\sigma_i(t)|\mathbbm{1}_{[\bar{t},\bar{t}+\epsilon]}dt\bigg)^\beta\bigg]
 +E\bigg[\sum_{i=1}^2\bigg(\int_0^T|q^i_t\delta\sigma_i(t)|\mathbbm{1}_{[\bar{t},\bar{t}+\epsilon]}dt\bigg)^\beta\bigg]\bigg\}+O(\epsilon^{\frac{\beta}{2}})\\
&\leq C_\beta\bigg\{E\bigg[\sup_{0\leq t\leq T}|p_t|^\beta\sum_{i=1}^2\bigg(\int_{\bar{t}}^{\bar{t}+\epsilon}|\delta\sigma_i(t)|dt\bigg)^\beta\bigg]\\
 \end{aligned}
 \end{equation*}
\begin{equation*}
\begin{aligned}
&\qquad+\sum_{i=1}^2E\bigg[\bigg(\int_{\bar{t}}^{\bar{t}+\epsilon}|q^i_t|^2dt\bigg)^{\frac{\beta}{2}}
 \bigg(\int_{\bar{t}}^{\bar{t}+\epsilon}|\delta\sigma_i(t)|^2dt\bigg)^{\frac{\beta}{2}}\bigg]\bigg\}+O(\epsilon^{\frac{\beta}{2}})\\
&\leq C_\beta\bigg\{\bigg\{E\bigg[\sup_{0\leq t\leq T}|p_t|^{2\beta}\bigg]\bigg\}^{\frac{1}{2}}
 \bigg\{E\bigg[\bigg(\int_{\bar{t}}^{\bar{t}+\epsilon}(1+|\bar{x}|+|\bar{u}|+|u|)dt\bigg)^{2\beta}\bigg]\bigg\}^{\frac{1}{2}}\\
&\qquad +\sum_{i=1}^2\bigg\{E\bigg[\bigg(\int_{\bar{t}}^{\bar{t}+\epsilon}|q^i_t|^2dt\bigg)^{\beta}\bigg]\bigg\}^{\frac{1}{2}}
 \bigg\{E\bigg[\bigg(\int_{\bar{t}}^{\bar{t}+\epsilon}(1+|\bar{x}|+|\bar{u}|+|u|)^2dt\bigg)^{\beta}\bigg]\bigg\}^{\frac{1}{2}}\bigg\}+O(\epsilon^{\frac{\beta}{2}})\\
&\leq C_\beta\bigg\{\epsilon^{\beta-\frac{1}{2}}\bigg\{E\bigg[\bigg(\int_{\bar{t}}^{\bar{t}+\epsilon}(1+|\bar{x}|^{2\beta}
 +|\bar{u}|^{2\beta}+|u|^{2\beta})dt\bigg)\bigg]\bigg\}^{\frac{1}{2}}
 +\epsilon^{\frac{\beta}{2}-\frac{1}{2}}\sum_{i=1}^2\bigg\{E\bigg[\bigg(\int_{\bar{t}}^{\bar{t}+\epsilon}|q^i_t|^2dt\bigg)^{\beta}\bigg]\bigg\}^{\frac{1}{2}}\\
&\qquad\quad \times\bigg\{E\bigg[\bigg(\int_{\bar{t}}^{\bar{t}+\epsilon}(1+|\bar{x}|^{2\beta}+|\bar{u}|^{2\beta}+|u|^{2\beta})dt\bigg)\bigg]\bigg\}^{\frac{1}{2}}\bigg\}
 +O(\epsilon^{\frac{\beta}{2}})\\
&\leq C_\beta\bigg\{\epsilon^\beta+\epsilon^{\frac{\beta}{2}}\sum_{i=1}^2\bigg\{E\bigg[\bigg(\int_{\bar{t}}^{\bar{t}+\epsilon}|q^i_t|^2dt\bigg)^\beta\bigg]\bigg\}^{\frac{1}{2}}\bigg\}
 +O(\epsilon^{\frac{\beta}{2}})
 \leq O(\epsilon^{\frac{\beta}{2}}).
\end{aligned}
\end{equation*}
The first estimate in \eqref{estimate1} holds. We continue to use the notations $\xi_t^{1,\epsilon},\eta_t^{1,\epsilon},\zeta_t^{1,1,\epsilon},\zeta_t^{2,1,\epsilon},\lambda_{(t,e)}^{1,1,\epsilon}$ and $\lambda_{(t,e)}^{2,1,\epsilon}$ in \eqref{notation0} and let
\begin{equation}\label{notation2}
\begin{aligned}
&\xi_t^{2,\epsilon}:=x^\epsilon_t-\bar{x}_t-x_t^1,\quad \eta_t^{2,\epsilon}:=y^\epsilon_t-\bar{y}_t-y_t^1,\quad \zeta_t^{i,2,\epsilon}:=z^{i,\epsilon}_t-\bar{z}^i_t-z_t^{i,1},\\
&\lambda_{(t,e)}^{i,2,\epsilon}:=\tilde{z}^{i,\epsilon}_{(t,e)}-\bar{\tilde{z}}^i_{(t,e)}-\tilde{z}_{(t,e)}^{i,1},\ i=1,2,\quad \Theta(t,\Delta^1(t)\mathbbm{1}_{[\bar{t},\bar{t}+\epsilon]},\Delta^2(t)\mathbbm{1}_{[\bar{t},\bar{t}+\epsilon]})\\
&:=(\bar{x}_t,\bar{y}_t,\bar{z}^1_t+\Delta^1(t)\mathbbm{1}_{[\bar{t},\bar{t}+\epsilon]},\bar{z}^2_t+\Delta^2(t)\mathbbm{1}_{[\bar{t},\bar{t}+\epsilon]},\bar{\tilde{z}}^1_{(t,e)},\bar{\tilde{z}}^2_{(t,e)}),
\end{aligned}
\end{equation}
thus
\begin{equation*}
\left\{
\begin{aligned}
 d\xi_t^{2,\epsilon}&=\Big[\tilde{b}_{1x}^\epsilon(t)\xi_t^{2,\epsilon}+(\tilde{b}_{1x}^\epsilon(t)-\tilde{b}_{1x}(t))x^1_t+\delta \tilde{b}_1(t)\mathbbm{1}_{[\bar{t},\bar{t}+\epsilon]}\Big]dt\\
                    &\quad +\sum^2_{i=1}\Big[\tilde{\sigma}_{ix}^\epsilon(t)\xi_t^{2,\epsilon}+(\tilde{\sigma}_{ix}^\epsilon(t)-\sigma_{ix}(t))x^1_t\Big]dW^i_t\\
                    &\quad +\sum^2_{i=1}\int_{\mathcal{E}_i}\Big[\tilde{f}_{ix}^\epsilon(t,e)\xi_{t-}^{2,\epsilon}+(\tilde{f}_{ix}^\epsilon(t,e)
                     -f_{ix}(t,e))x^1_{t-}+\delta f_i(t,e)\mathbbm{1}_{\mathcal{O}}\Big]\tilde{N}_i(de,dt),\\
d\eta_t^{2,\epsilon}&=-\Big[\tilde{g}_x^\epsilon(t)\xi_t^{2,\epsilon}+\tilde{g}_y^\epsilon(t)\eta_t^{2,\epsilon}+\sum^2_{i=1}\tilde{g}_{z^i}^\epsilon(t)\zeta_t^{i,2,\epsilon}
                     +\sum^2_{i=1}\tilde{g}_{\tilde{z}^i}^\epsilon(t)\int_{\mathcal{E}_i}\lambda_{(t,e)}^{i,2,\epsilon}\nu_i(de)+A_1^\epsilon(t)\Big]dt\\
                    &\quad +\sum^2_{i=1}\zeta_t^{i,2,\epsilon}dW^i_t+\sum^2_{i=1}\int_{\mathcal{E}_i}\lambda_{(t,e)}^{i,2,\epsilon}\tilde{N}_i(de,dt),\quad t\in[0,T],\\
  \xi_0^{2,\epsilon}&=0,\quad \eta_T^{2,\epsilon}=\tilde{\phi}_x^\epsilon(T)\xi_T^{2,\epsilon}+(\tilde{\phi}_x^\epsilon(T)-\phi_x(\bar{x}_T))x_T^1,
\end{aligned}
\right.
\end{equation*}
where $\tilde{b}_{1x}^\epsilon(t),\tilde{\sigma}_{ix}^\epsilon(t),\tilde{f}_{ix}^\epsilon(t,e),\tilde{g}_x^\epsilon(t),\tilde{g}_y^\epsilon(t),\tilde{g}_{z^i}^\epsilon(t),
\tilde{g}_{\tilde{z}^i}^\epsilon(t)$ and $\tilde{\phi}_x^\epsilon(T)$ are defined in \eqref{notation1}, and
\begin{equation*}\label{A1}
\begin{aligned}
A_1^\epsilon(t)&:=\delta g(t)\mathbbm{1}_{[\bar{t},\bar{t}+\epsilon]}
                +\big[\tilde{g}_x^\epsilon(t)-g_x(t)\big]x_t^1+\big[\tilde{g}_y^\epsilon(t)-g_y(t)\big]y_t^1+\sum^2_{i=1}\Big[\big[\tilde{g}_{z^i}^\epsilon(t)-g_{z^i}(t)\big]z_t^{i,1}\\
               &\quad +\big[\tilde{g}_{\tilde{z}^i}^\epsilon(t)-g_{\tilde{z}^i}(t)\big]\int_{\mathcal{E}_i}\tilde{z}_{(t,e)}^{i,1}\nu_i(de)
                +g_{z^i}(t)\Delta^i(t)\mathbbm{1}_{[\bar{t},\bar{t}+\epsilon]}+q^i_t\delta\sigma_i(t)\mathbbm{1}_{[\bar{t},\bar{t}+\epsilon]}\Big].
\end{aligned}
\end{equation*}
By the $L^\beta$-estimate in Section 2, we have
\begin{equation*}
\begin{aligned}
&E\bigg[\sup_{0\leq t\leq T}(|\xi^{2,\epsilon}_t|^2+|\eta^{2,\epsilon}_t|^2)+\sum^2_{i=1}\bigg[\int_0^T|\zeta^{i,2,\epsilon}_t|^2dt+\int_0^T\int_{\mathcal{E}_i}|\lambda^{i,2,\epsilon}_{(t,e)}|^2\nu_i(de)dt\bigg]\bigg]\\
&\leq C_\beta E\bigg[\big|(\tilde{\phi}_x^\epsilon(T)-\phi_x(\bar{x}_T))x_T^1\big|^2+\bigg(\int_0^TA_1^\epsilon(t)dt\bigg)^2\\
&\quad +\bigg(\int_0^T\big|(\tilde{b}_{1x}^\epsilon(t)-\tilde{b}_{1x}(t))x^1_t+\delta \tilde{b}_1(t)\mathbbm{1}_{[\bar{t},\bar{t}+\epsilon]}\big|dt\bigg)^2+\sum_{i=1}^2\bigg(\int_0^T\big|(\tilde{\sigma}_{ix}^\epsilon(t)-\sigma_{ix}(t))x^1_t\big|^2dt\bigg)\\
&\quad +\sum_{i=1}^2\bigg(\int_0^T\int_{\mathcal{E}_i}\big|(\tilde{f}_{ix}^\epsilon(t,e)-f_{ix}(t,e))x^1_{t-}+\delta f_i(t,e)\mathbbm{1}_{\mathcal{O}}\big|^2N_i(de,dt)\bigg)\bigg]\\
&\leq C_{\beta}\bigg\{E\bigg[\bigg|\int_0^1\phi_x(\bar{x}_T+\theta(x^\epsilon_T-\bar{x}_T))-\phi_x(\bar{x}_T)d\theta\bigg|^2|x_T^1|^2\bigg]
 +E\bigg[\bigg(\int_0^TA_1^\epsilon(t)dt\bigg)^2\bigg]\\
&\quad +E\bigg[\bigg(\int_0^T|(\tilde{b}_{1x}^\epsilon(t)-\tilde{b}_{1x}(t))x^1_t+\delta \tilde{b}_1(t)\mathbbm{1}_{[\bar{t},\bar{t}+\epsilon]}|dt\bigg)^2\bigg]\\
&\quad +\sum_{i=1}^2E\bigg[\bigg(\int_0^T(\tilde{\sigma}_{ix}^\epsilon(t)-\sigma_{ix}(t))^2|x^1_t|^2dt\bigg)\bigg]\\
&\quad +\sum_{i=1}^2E\bigg[\bigg(\int_0^T\int_{\mathcal{E}_i}(\tilde{f}_{ix}^\epsilon(t,e)-f_{ix}(t,e))^2|x^1_{t-}|^2+(\delta f_i(t,e))^2\mathbbm{1}_{\mathcal{O}}N_i(de,dt)\bigg)\bigg]\bigg\}\\
&\leq C_{\beta,T}\bigg\{\bigg[E\sup_{0\leq t\leq T}|\xi^{1,\epsilon}_t|^4\bigg]^{\frac{1}{2}}\bigg[E\sup_{0\leq t\leq T}|x_t^1|^4\bigg]^{\frac{1}{2}}
 +E\bigg[\bigg(\int_0^TA_1^\epsilon(t)dt\bigg)^2\bigg]\\
&\quad +E\bigg[\bigg(\int_0^T(|\xi^{1,\epsilon}_t|^2+|\delta \tilde{b}_{1x}(t)|^2\mathbbm{1}_{[\bar{t},\bar{t}+\epsilon]})|x^1_t|^2dt+\epsilon\int_{\bar{t}}^{\bar{t}+\epsilon}|\delta \tilde{b}_1(t)|^2dt\bigg)\bigg]\\
&\quad +\sum_{i=1}^2E\bigg[\bigg(\int_0^T(|\xi^{1,\epsilon}_t|^2+|\delta\sigma_{ix}(t)|^2\mathbbm{1}_{[\bar{t},\bar{t}+\epsilon]})|x^1_t|^2dt\bigg)\bigg]\\
&\quad +\sum_{i=1}^2E\bigg[\bigg(\int_0^T\int_{\mathcal{E}_i}(|\xi^{1,\epsilon}_{t-}|^2+|\delta f_{ix}(t,e)|^2\mathbbm{1}_{\mathcal{O}})|x^1_{t-}|^2+(\delta f_i(t,e))^2\mathbbm{1}_{\mathcal{O}}N_i(de,dt)\bigg)\bigg]\bigg\}\\
&\leq C_{\beta,T}\bigg\{\epsilon^2+E\bigg[\bigg(\int_0^TA_1^\epsilon(t)dt\bigg)^2\bigg]+\sum_{i=1}^2E\bigg[\sup_{0\leq t\leq T}|x^1_t|^2\int_{\bar{t}}^{\bar{t}+\epsilon}|\delta\sigma_{ix}(t)|^2dt\bigg]\\
&\quad +E\bigg[\sup_{0\leq t\leq T}|\xi^{1,\epsilon}_t|^2\sup_{0\leq t\leq T}|x^1_t|^2+\sup_{0\leq t\leq T}|x^1_t|^2\int_{\bar{t}}^{\bar{t}+\epsilon}|\delta \tilde{b}_{1x}(t)|^2dt
 +\epsilon\int_{\bar{t}}^{\bar{t}+\epsilon}|\delta \tilde{b}_1(t)|^2dt\bigg]\\
&\quad +\sum_{i=1}^2E\bigg[\bigg(\int_0^T\int_{\mathcal{E}_i}|x^1_{t-}|^2|\delta f_{ix}(t,e)|^2\mathbbm{1}_{\mathcal{O}}+(\delta f_i(t,e))^2\mathbbm{1}_{\mathcal{O}}N_i(de,dt)\bigg)\bigg]\bigg\}\\
&\leq C_{\beta,T}\bigg\{\epsilon^2+E\bigg[\bigg(\int_0^TA_1^\epsilon(t)dt\bigg)^2\bigg]+\epsilon E\bigg[\sup_{0\leq t\leq T}|x^1_t|^2\bigg]\\
&\quad +\bigg[E\bigg[\sup_{0\leq t\leq T}|\xi^{1,\epsilon}_t|^4\bigg]\bigg]^{\frac{1}{2}}\bigg[E\bigg[\sup_{0\leq t\leq T}|x^1_t|^4\bigg]\bigg]^{\frac{1}{2}}
 +\epsilon E\bigg[\int_{\bar{t}}^{\bar{t}+\epsilon}(1+|\bar{x}|^2+|\bar{u}|^2+|u|^2)dt\bigg]\bigg\}\\
\end{aligned}
\end{equation*}
\begin{equation*}
\begin{aligned}
&\leq C_{\beta,T}\bigg\{\epsilon^2+E\bigg[\bigg(\int_0^TA_1^\epsilon(t)dt\bigg)^2\bigg]\bigg\}.
\end{aligned}
\end{equation*}
Next, we give the estimate of $E\big[\big(\int_0^TA_1^\epsilon(t)dt\big)^2\big]$. In fact, by {\bf (A1)}, we first have
\begin{equation*}
\begin{aligned}
&E\bigg[\bigg(\int_0^Tx_t^1(\tilde{g}_x^\epsilon(t)-g_x(t))dt\bigg)^2\bigg]
\leq \bigg\{E\bigg[\sup_{0\leq t\leq T}|x_t^1|^4\bigg]\bigg\}^{\frac{1}{2}}\bigg\{E
\bigg[\bigg(\int_0^T(\tilde{g}_x^\epsilon(t)-g_x(t))dt\bigg)^4\bigg]\bigg\}^{\frac{1}{2}}\\
&\leq \epsilon\bigg\{E\bigg[\bigg(\int_0^T\tilde{g}_{xx}^\epsilon(t)\xi_t^{1,\epsilon}+\tilde{g}_{xy}^\epsilon(t)\eta_t^{1,\epsilon}
 +\sum_{i=1}^2\bigg[\tilde{g}_{xz^i}^\epsilon(t)\zeta_t^{i,1,\epsilon}+\tilde{g}_{x\tilde{z}^i}^\epsilon(t)\int_{\mathcal{E}_i}\lambda_{(t,e)}^{i,1,\epsilon}\nu_i(de)\bigg]\\
&\qquad +\delta g_x(t)\mathbbm{1}_{[\bar{t},\bar{t}+\epsilon]}dt\bigg)^4\bigg]\bigg\}^{\frac{1}{2}}\\
&\leq \epsilon C\bigg\{E\bigg[\bigg(\int_0^T\xi_t^{1,\epsilon}+\eta_t^{1,\epsilon}+\sum_{i=1}^2\bigg[\zeta_t^{i,1,\epsilon}+\int_{\mathcal{E}_i}\lambda_{(t,e)}^{i,1,\epsilon}\nu_i(de)\bigg]
 +\delta g_x(t)\mathbbm{1}_{[\bar{t},\bar{t}+\epsilon]}dt\bigg)^4\bigg]\bigg\}^{\frac{1}{2}}\\
 &\leq \epsilon C\bigg\{E\bigg[\sup_{0\leq t\leq T}\big(|\xi_t^{1,\epsilon}|^4+|\eta_t^{1,\epsilon}|^4\big)+\sum_{i=1}^2\bigg[\bigg(\int_0^T|\zeta_t^{i,1,\epsilon}|^2dt\bigg)^2
 +\bigg(\int_0^T||\lambda_{(t,e)}^{i,1,\epsilon}||^2dt\bigg)^2\bigg]\\
&\qquad +\epsilon^2\bigg(\int_{\bar{t}}^{\bar{t}+\epsilon}|\delta g_x(t)|^2dt\bigg)^2\bigg]\bigg\}^{\frac{1}{2}}\\
&\leq \epsilon C\bigg\{\epsilon^2+\epsilon^4+\sum_{i=1}^2E\bigg[\bigg(\int_0^T|\zeta_t^{i,1,\epsilon}|^2dt\bigg)^2
 +\bigg(\int_0^T\int_{\mathcal{E}_i}|\lambda_{(t,e)}^{i,1,\epsilon}|^2\nu_i(de)dt\bigg)^2\bigg]\bigg\}^{\frac{1}{2}}
\leq O(\epsilon^2),
\end{aligned}
\end{equation*}
where
\begin{equation*}\label{notation3}
\begin{aligned}
&\tilde{g}_{xa}^\epsilon(t)=\int_0^1\int_0^1\theta g_{xa}(t,\bar{\Theta}+\lambda\theta(\Theta^\epsilon-\bar{\Theta}),u^\epsilon)d\lambda d\theta,\ a=x,y,z^1,z^2,\tilde{z}^1,\tilde{z}^2,
\end{aligned}
\end{equation*}
and $\tilde{g}_{ya}^\epsilon(t),\tilde{g}_{z^1a}^\epsilon(t),\tilde{g}_{z^2a}^\epsilon(t),\tilde{g}_{\tilde{z}^1a}^\epsilon(t),\tilde{g}_{\tilde{z}^2a}^\epsilon(t)$, for $a=x,y,z^1,z^2,\tilde{z}^1,\tilde{z}^2$ are defined similarly. Moreover, we have, for i=1,2,
\begin{equation*}
\begin{aligned}
&E\bigg[\bigg(\int_0^Tz_t^{i,1}(\tilde{g}_{z^i}^\epsilon(t)-g_{z^i}(t))dt\bigg)^2\bigg]\leq E\bigg[\bigg(\int_0^T|z_t^{i,1}|^2dt\bigg)\bigg(\int_0^T|(\tilde{g}_{z^i}^\epsilon(t)-g_{z^i}(t))|^2dt\bigg)\bigg]\\
&\leq \bigg\{E\bigg[\bigg(\int_0^T|z_t^{i,1}|^2dt\bigg)^2\bigg]\bigg\}^{\frac{1}{2}}
 \bigg\{E\bigg[\bigg(\int_0^T|(\tilde{g}_{z^i}^\epsilon(t)-g_{z^i}(t))|^2dt\bigg)^2\bigg]\bigg\}^{\frac{1}{2}}\\
&\leq \epsilon\bigg\{E\bigg[\bigg(\int_0^T|(\tilde{g}_{z^i}^\epsilon(t)-g_{z^i}(t))|^2dt\bigg)^2\bigg]\bigg\}^{\frac{1}{2}}\leq O(\epsilon^2),
\end{aligned}
\end{equation*}
and similarly,
\begin{equation*}
\begin{aligned}
&E\bigg[\bigg(\int_0^Ty_t^1(\tilde{g}_y^\epsilon(t)-g_y(t))dt\bigg)^2\bigg]\leq O(\epsilon^2),\\
&E\bigg[\bigg(\int_0^T\int_{\mathcal{E}_i}\tilde{z}_{(t,e)}^{i,1}\nu_i(de)(\tilde{g}_{\tilde{z}^i}^\epsilon(t)-g_{\tilde{z}^i}(t))dt\bigg)^2\bigg]\leq O(\epsilon^2),\ i=1,2.
\end{aligned}
\end{equation*}
And
\begin{equation*}
\begin{aligned}
&E\bigg[\bigg(\int_0^Tg_{z^i}(t)p_t\delta\sigma_i(t)\mathbbm{1}_{[\bar{t},\bar{t}+\epsilon]}dt\bigg)^2\bigg]
\leq E\bigg[\sup_{0\leq t\leq T}|p_t|^2\bigg(\int_{\bar{t}}^{\bar{t}+\epsilon}\delta\sigma_i(t)dt\bigg)^2\bigg]\\
&\leq \bigg\{E\bigg[\sup_{0\leq t\leq T}|p_t|^4\bigg]\bigg\}^{\frac{1}{2}}\bigg\{E\bigg[\bigg(\int_{\bar{t}}^{\bar{t}+\epsilon}(1+|\bar{x}|+|\bar{u}|+|u|)dt\bigg)^4\bigg]\bigg\}^{\frac{1}{2}}\\
&\leq \epsilon^{\frac{3}{2}}C\bigg\{E\bigg[\bigg(\int_{\bar{t}}^{\bar{t}+\epsilon}(1+|\bar{x}|^4+|\bar{u}|^4+|u|^4)dt\bigg)\bigg]\bigg\}^{\frac{1}{2}}\leq O(\epsilon^2),\ i=1,2,
\end{aligned}
\end{equation*}
and similarly,
\begin{equation*}
E\bigg[\bigg(\int_0^T\delta g(t)\mathbbm{1}_{[\bar{t},\bar{t}+\epsilon]}dt\bigg)^2\bigg]\leq O(\epsilon^2).
\end{equation*}
However, we only have, for i=1,2,
\begin{equation*}
\begin{aligned}
&E\bigg[\bigg(\int_0^Tq^i_t\delta\sigma_i(t)\mathbbm{1}_{[\bar{t},\bar{t}+\epsilon]}dt\bigg)^2\bigg]\\
&\leq \epsilon^{\frac{1}{2}}\bigg\{E\bigg[\bigg(\int_{\bar{t}}^{\bar{t}+\epsilon}|q^i_t|^2dt\bigg)^2\bigg]\bigg\}^{\frac{1}{2}}
 \bigg\{E\bigg[\bigg(\int_{\bar{t}}^{\bar{t}+\epsilon}(1+|\bar{x}|^4+|\bar{u}|^4+|u|^4)dt\bigg)\bigg]\bigg\}^{\frac{1}{2}}\leq o(\epsilon).
\end{aligned}
\end{equation*}
Thus, we get the second estimate in \eqref{estimate1}. The third estimate in \eqref{estimate1} can be proved similarly. The proof is complete.
\end{proof}

\begin{lemma}\label{lemma33}
Suppose that {\bf (A1)} holds, then for $\beta\geq2$, we have the following estimates:
\begin{equation}\label{estimate4}
\begin{aligned}
&E\bigg[\sup_{0\leq t\leq T}(|x_t^2|^2+|y_t^2|^2)+\sum^2_{i=1}\bigg[\int_0^T|z_t^{i,2}|^2dt
 +\int_0^T\int_{\mathcal{E}_i}|\tilde{z}_{(t,e)}^{i,2}|^2\nu_i(de)dt\bigg]\bigg]=o(\epsilon),\\
&E\bigg[\sup_{0\leq t\leq T}(|x_t^2|^\beta+|y_t^2|^\beta)+\sum^2_{i=1}\bigg[\bigg(\int_0^T|z_t^{i,2}|^2dt\bigg)^{\frac{\beta}{2}}
+\bigg(\int_0^T\int_{\mathcal{E}_i}|\tilde{z}_{(t,e)}^{i,2}|^2\nu_i(de)dt\bigg)^{\frac{\beta}{2}}\bigg]\bigg]=o(\epsilon^{\frac{\beta}{2}}),\\
&E\bigg[\sup_{0\leq t\leq T}|x_t^\epsilon-\bar{x}_t-x_t^1-x_t^2|^2+\sup_{0\leq t\leq T}|y_t^\epsilon-\bar{y}_t-y_t^1-y_t^2|^2+\sum^2_{i=1}\bigg[\int_0^T|z_t^{i,\epsilon}-\bar{z}^i_t-z_t^{i,1}-z_t^{i,2}|^2dt\\
&\quad +\int_0^T\int_{\mathcal{E}_i}|\tilde{z}_{(t,e)}^{i,\epsilon}-\bar{\tilde{z}}^i_{(t,e)}-\tilde{z}_{(t,e)}^{i,1}-\tilde{z}_{(t,e)}^{i,2}|^2\nu_i(de)dt\bigg]\bigg]=o(\epsilon^2).
\end{aligned}
\end{equation}
\end{lemma}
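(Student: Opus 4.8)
The plan is to treat the three displays of \eqref{estimate4} in turn: the first is simply the case $\beta=2$ of the second, and the third will follow from the second together with Lemma \ref{lemma32}. Since in this section $\tilde b_1$ does not depend on $(y,z^1,z^2,\tilde z^1,\tilde z^2)$, the $x^2$- and $(y^2,z^{1,2},z^{2,2},\tilde z^{1,2},\tilde z^{2,2})$-equations of \eqref{x1x2}, \eqref{y1andy2} form a linear FBSDEP with bounded coefficients, so the $L^\beta$-estimate for FBSDEPs from Section 2 (Theorem \ref{the24}, iterated over $[0,T]$) bounds the left-hand side of the second display of \eqref{estimate4} by a constant times the sum of the $\beta$-th (or $\beta/2$-th) moments of the inhomogeneous and terminal terms of \eqref{x1x2}--\eqref{y1andy2}. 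Thus everything reduces to showing each such moment is $o(\epsilon^{\frac{\beta}{2}})$.

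For most of them one even gets $O(\epsilon^{\beta})$, which suffices since $\epsilon^{\beta}=o(\epsilon^{\frac{\beta}{2}})$ for $\beta\geq2$. Indeed, applying the first estimate of Lemma \ref{lemma32} with $2\beta$ in place of $\beta$ gives $E[\sup_{0\leq t\leq T}|x^1_t|^{2\beta}]=O(\epsilon^{\beta})$ and $E[(\int_0^T|z^{i,1}_t|^2dt)^{\beta}+(\int_0^T\int_{\mathcal E_i}|\tilde z^{i,1}_{(t,e)}|^2\nu_i(de)dt)^{\beta}]=O(\epsilon^{\beta})$, while under {\bf (A1)} $p\in S^{\beta'}$, $q^i\in M^{2,\beta'}_i$, $\tilde q^i\in F^{2,\beta'}_i$ for every $\beta'\geq2$. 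Combined with the relations $z^{i,1'}_t=k^i_1(t)x^1_t$, $\tilde z^{i,1}_{(t,e)}=k^i_2(t,e)x^1_{t-}$ of Lemma \ref{lemma31}, the boundedness of $\tilde b_{1xx},\sigma_{ixx},f_{ixx},\phi_{xx},D^2g$ under {\bf (A1)}, H\"older's inequality, and Lemma \ref{lemma24} to pass from $N_i$ to $\nu_i$, the terms quadratic in the first-order processes ($\frac{1}{2}\tilde b_{1xx}(t)(x^1_t)^2$, $\frac{1}{2}\sigma_{ixx}(t)(x^1_t)^2$, $\frac{1}{2}f_{ixx}(t,e)(x^1_{t-})^2$, $\frac{1}{2}\tilde{\Xi}(t)D^2g\tilde{\Xi}(t)^{\top}$, and the terminal $\frac{1}{2}\phi_{xx}(\bar x_T)(x^1_T)^2$) all contribute $O(\epsilon^{\beta})$; and the ``$\delta$-type'' terms with a bounded or $L^\beta(dt)$-integrable multiplier ($\delta\tilde b_1(t)\mathbbm{1}_{[\bar t,\bar t+\epsilon]}$, $\delta\sigma_{ix}(t)x^1_t\mathbbm{1}_{[\bar t,\bar t+\epsilon]}$, and the part of $\delta g(t,p_t\delta\sigma_1(t),p_t\delta\sigma_2(t))\mathbbm{1}_{[\bar t,\bar t+\epsilon]}$ not involving $\bar z^i,\bar{\tilde z}^i$) contribute $O(\epsilon^{\beta})$ by H\"older together with the finiteness of $\sup_t E|\bar u_t|^{\beta'}$, $\sup_t E|u_t|^{\beta'}$ and the $S^{\beta'}$-bounds on $\bar x,\bar y,p$.

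The only terms that are merely $o(\epsilon^{\frac{\beta}{2}})$ (and not $O(\epsilon^{\beta})$) are those whose $\mathbbm{1}_{[\bar t,\bar t+\epsilon]}$-supported multiplier lies only in $M^{2,\beta}_i$ or $F^{2,\beta}_i$: namely $\sum_i q^i_t\delta\sigma_i(t)\mathbbm{1}_{[\bar t,\bar t+\epsilon]}$ and the $\bar z^i,\bar{\tilde z}^i$-dependent part of $\delta g(t,p_t\delta\sigma_1(t),p_t\delta\sigma_2(t))\mathbbm{1}_{[\bar t,\bar t+\epsilon]}$. For the former, Cauchy--Schwarz gives
\begin{equation*}
E\bigg[\bigg(\int_0^Tq^i_t\delta\sigma_i(t)\mathbbm{1}_{[\bar t,\bar t+\epsilon]}dt\bigg)^{\beta}\bigg]\leq\bigg\{E\bigg[\bigg(\int_{\bar t}^{\bar t+\epsilon}|q^i_t|^2dt\bigg)^{\beta}\bigg]\bigg\}^{\frac{1}{2}}\bigg\{E\bigg[\bigg(\int_{\bar t}^{\bar t+\epsilon}|\delta\sigma_i(t)|^2dt\bigg)^{\beta}\bigg]\bigg\}^{\frac{1}{2}};
\end{equation*}
the second factor is $O(\epsilon^{\frac{\beta}{2}})$ by H\"older and the growth of $\delta\sigma_i$, while the first factor is $o(1)$: its argument is dominated by $(\int_0^T|q^i_t|^2dt)^{\beta}\in L^1$ (finite under {\bf (A1)}) and converges to $0$ almost surely, so dominated convergence applies. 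The same device (with $\bar z^i$, $\bar{\tilde z}^i$ in place of $q^i$) handles the other term. This is exactly the mechanism that already forced ``$o(\epsilon)$'' in the second estimate of Lemma \ref{lemma32}. Collecting all the bounds and substituting into the a priori $L^\beta$-estimate gives the second display of \eqref{estimate4}; its $\beta=2$ specialization is the first display.

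For the third display I would set $\xi^{3,\epsilon}_t:=x^\epsilon_t-\bar x_t-x^1_t-x^2_t$, $\eta^{3,\epsilon}_t:=y^\epsilon_t-\bar y_t-y^1_t-y^2_t$, $\zeta^{i,3,\epsilon}_t:=z^{i,\epsilon}_t-\bar z^i_t-z^{i,1}_t-z^{i,2}_t$, $\lambda^{i,3,\epsilon}_{(t,e)}:=\tilde z^{i,\epsilon}_{(t,e)}-\bar{\tilde z}^i_{(t,e)}-\tilde z^{i,1}_{(t,e)}-\tilde z^{i,2}_{(t,e)}$; subtracting \eqref{x1x2} and \eqref{y1andy2} from the equations for $x^\epsilon-\bar x$, $y^\epsilon-\bar y$ and the corresponding $z,\tilde z$ differences shows these solve a linear FBSDEP with the same bounded homogeneous coefficients, whose inhomogeneity is the second-order Taylor remainder of $\tilde b_1,\sigma_i,f_i,g$ (and of $\phi$ in the terminal term) along the segment from $\bar\Theta$ to $\Theta^\epsilon$. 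By Taylor's formula with integral remainder and the continuity/boundedness of the relevant second derivatives under {\bf (A1)}, this remainder is dominated by a sum of products of $|\xi^{1,\epsilon}|$, $|x^1|$, $|x^2|$, $|\eta^{1,\epsilon}|$, $|y^1|$, $|y^2|$, the first-order $z,\tilde z$ processes, and the bounded, almost surely vanishing increments of the coefficient derivatives; feeding this into the $L^2$ a priori estimate (Theorem \ref{the24}, $\beta=2$) and estimating each product by Cauchy--Schwarz --- using $E[\sup_t|\xi^{2,\epsilon}_t|^2]=o(\epsilon)$ and $E[\sup_t|\xi^{2,\epsilon}_t|^4]=o(\epsilon^2)$ from Lemma \ref{lemma32}, the second display of \eqref{estimate4} with $\beta=4$ giving $E[\sup_t|x^2_t|^4]=o(\epsilon^2)$, the $O(\sqrt{\epsilon})$-bounds on the first-order processes, and dominated convergence wherever a bounded non-Lipschitz increment appears --- shows each term is $o(\epsilon^2)$; the $\delta f_i(t,e)\mathbbm{1}_{\mathcal O}$-type jump contributions are treated, as in Lemma \ref{lemma32}, by the pure-jump/Gronwall device (via $J_{i,t}=\int_0^t\int_{\mathcal E_i}I_sN_i(de,ds)$ and the predictability of $I$), and on $\mathcal O$, disjoint from every jump graph $\llbracket T_n\rrbracket$ and (by assumption) from any jump of the unobservable process on $(\bar t,\bar t+\epsilon]$, they are in fact of higher order. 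The main obstacle is twofold: the $q^i\delta\sigma_i\mathbbm{1}_{[\bar t,\bar t+\epsilon]}$ (and analogous $\bar z^i,\bar{\tilde z}^i$) term, where one must surrender a power of $\epsilon$ and invoke dominated convergence rather than a clean H\"older bound to get $o$ instead of $O$; and, in the third display, the bookkeeping required to confirm that each of the many mixed remainder terms combining the variational interval $[\bar t,\bar t+\epsilon]$ with the already-small first- and second-order processes is genuinely $o(\epsilon^2)$ and not merely $O(\epsilon^2)$.
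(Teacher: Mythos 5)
Your proposal is correct and follows essentially the same route as the paper: the first two displays are obtained by the technique of Lemma \ref{lemma32} via the $L^\beta$ a priori estimate for the linear variational FBSDEP, with the $q^i_t\delta\sigma_i(t)\mathbbm{1}_{[\bar t,\bar t+\epsilon]}$-type terms correctly identified as the sole obstruction to $O(\epsilon^{\beta})$ and handled by Cauchy--Schwarz plus the vanishing of $E[(\int_{\bar t}^{\bar t+\epsilon}|q^i_t|^2dt)^{\beta}]$, which is exactly the paper's mechanism. The third display is likewise proved, as in the paper, by writing the FBSDEP for $(\xi^{3,\epsilon},\eta^{3,\epsilon},\zeta^{i,3,\epsilon},\lambda^{i,3,\epsilon})$ and estimating the second-order Taylor remainders $A_2^\epsilon,B_1^\epsilon,C_{i1}^\epsilon,D_{i1}^\epsilon$ with the fourth-moment bounds from Lemma \ref{lemma32} and the pure-jump/Gronwall device on $\mathcal{O}$.
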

\begin{proof}
We can use the same technique in Lemma \ref{lemma32}, to obtain the first estimate. However, we should note that we fail to make the order reach $O(\epsilon^2)$ due to the appearance of $E\big[\big(\int_0^Tq^i_t\delta\sigma_i(t)\mathbbm{1}_{[\bar{t},\bar{t}+\epsilon]}dt\big)^2\big]$ term, and so does the second estimate. Then, we give the proof of the third estimate directly.
Let
\begin{equation}\label{notation4}
\begin{aligned}
&\xi_t^{3,\epsilon}:=x^\epsilon_t-\bar{x}_t-x_t^1-x_t^2,\quad \eta_t^{3,\epsilon}:=y^\epsilon_t-\bar{y}_t-y_t^1-y_t^2,\\
&\zeta_t^{i,3,\epsilon}:=z^{i,\epsilon}_t-\bar{z}^i_t-z_t^{i,1}-z_t^{i,2},\quad \lambda_{(t,e)}^{i,3,\epsilon}:=\tilde{z}^{i,\epsilon}_{(t,e)}-\bar{\tilde{z}}^i_{(t,e)}-\tilde{z}_{(t,e)}^{i,1}-\tilde{z}_{(t,e)}^{i,2},\ i=1,2,
\end{aligned}
\end{equation}
and recall that $\xi_t^{1,\epsilon},\eta_t^{1,\epsilon},\zeta_t^{i,1,\epsilon},\lambda_{(t,e)}^{i,1,\epsilon},\xi_t^{2,\epsilon},\eta_t^{2,\epsilon},\zeta_t^{i,2,\epsilon}$ and $\lambda_{(t,e)}^{i,2,\epsilon},i=1,2$ are defined in \eqref{notation2}. Further, we define
\begin{equation*}
\begin{aligned}
&\tilde{\Upsilon}_{xx}^{\epsilon}(t):=2\int_0^1\int_0^1\theta \Upsilon_{xx}(t,\bar{x}+\lambda\theta(x^{\epsilon}-\bar{x}),u^\epsilon)d\lambda d\theta,\mbox{ for }\Upsilon=\tilde{b}_1,\sigma_1,\sigma_2,f_1,f_2,\\
&\widetilde{D^2g}^\epsilon(t):=2\int_0^1\int_0^1\theta D^2g(t,\Theta(t,\Delta^1\mathbbm{1}_{[\bar{t},\bar{t}+\epsilon]},\Delta^2\mathbbm{1}_{[\bar{t},\bar{t}+\epsilon]})
 +\lambda\theta(\Theta^\epsilon-\Theta(t,\Delta^1\mathbbm{1}_{[\bar{t},\bar{t}+\epsilon]},\\
 &\Delta^2\mathbbm{1}_{[\bar{t},\bar{t}+\epsilon]})),u^\epsilon)d\lambda d\theta,
\end{aligned}
\end{equation*}
and
$\widetilde{\phi}_{xx}^\epsilon(T):=2\int_0^1\int_0^1\theta \phi_{xx}(\bar{x}_T+\lambda\theta(x_T^\epsilon-\bar{x}_T))d\lambda d\theta$,
then we have the following equation:
\begin{equation}
\left\{
\begin{aligned}
 d\xi_t^{3,\epsilon}&=\big[\tilde{b}_{1x}(t)\xi_t^{3,\epsilon}+B_1^\epsilon(t)\big]dt+\sum^2_{i=1}\Big[\sigma_{ix}(t)\xi_t^{3,\epsilon}+C_{i1}^\epsilon(t)\Big]dW^i_t\\
                    &\quad +\sum^2_{i=1}\int_{\mathcal{E}_i}\Big[f_{ix}(t,e)\xi_{t-}^{3,\epsilon}+D_{i1}^\epsilon(t,e)\Big]\tilde{N}_i(de,dt),\\
d\eta_t^{3,\epsilon}&=-\Big\{g_x(t)\xi_t^{3,\epsilon}+g_y(t)\eta_t^{3,\epsilon}+\sum^2_{i=1}\Big[g_{z^i}(t)\zeta_t^{i,3,\epsilon}
                     +g_{\tilde{z}^i}(t)\int_{\mathcal{E}_i}\lambda_{(t,e)}^{i,3,\epsilon}\nu_i(de)\Big]+A_2^\epsilon(t)\Big\}dt\\
                    &\quad +\sum^2_{i=1}\zeta_t^{i,3,\epsilon}dW^i_t+\sum^2_{i=1}\int_{\mathcal{E}_i}\lambda_{(t,e)}^{i,3,\epsilon}\tilde{N}_i(de,dt),\quad t\in[0,T],\\
  \xi_0^{3,\epsilon}&=0,\quad \eta_T^{3,\epsilon}=\phi_x(\bar{x}_T)\xi_T^{3,\epsilon}+\frac{1}{2}\widetilde{\phi}_{xx}^\epsilon(T)(\xi_T^{1,\epsilon})^2-\frac{1}{2}\phi_{xx}(\bar{x}_T)(x_T^1)^2,
\end{aligned}
\right.
\end{equation}
where
\begin{equation*}
\begin{aligned}
B_1^\epsilon(t)&:=\delta \tilde{b}_{1x}(t)\xi_t^{1,\epsilon}\mathbbm{1}_{[\bar{t},\bar{t}+\epsilon]}+\frac{1}{2}\tilde{b}^{\epsilon}_{1xx}(t)|\xi_t^{1,\epsilon}|^2-\frac{1}{2}\tilde{b}_{1xx}(t)(x^1_t)^2,\\
C_{i1}^\epsilon(t)&:=\delta \sigma_{ix}(t)\xi_t^{2,\epsilon}\mathbbm{1}_{[\bar{t},\bar{t}+\epsilon]}+\frac{1}{2}\tilde{\sigma}^{\epsilon}_{ixx}(t)|\xi_t^{1,\epsilon}|^2-\frac{1}{2}\sigma_{ixx}(t)(x^1_t)^2,\\
D_{i1}^\epsilon(t,e)&:=\delta f_{ix}(t,e)\xi_t^{1,\epsilon}\mathbbm{1}_{\mathcal{O}}+\frac{1}{2}\tilde{f}^{\epsilon}_{ixx}(t,e)|\xi_t^{1,\epsilon}|^2-\frac{1}{2}f_{ixx}(t,e)(x^1_t)^2
 +\delta f_i(t,e)\mathbbm{1}_{\mathcal{O}},\\
A_2^\epsilon(t)&:=\Big\{\delta g_x(t,\Delta^1,\Delta^2)\xi_t^{1,\epsilon}+\delta g_y(t,\Delta^1,\Delta^2)\eta_t^{1,\epsilon}
 +\sum^2_{i=1}\Big[\delta g_{z^i}(t,\Delta^1,\Delta^2)(\zeta_t^{i,1,\epsilon}-\Delta^i(t)\mathbbm{1}_{[\bar{t},\bar{t}+\epsilon]})\\
&\qquad +\delta g_{\tilde{z}^i}(t,\Delta^1,\Delta^2)\int_{\mathcal{E}_i}\lambda_{(t,e)}^{i,1,\epsilon}\nu_i(de)\Big]\Big\}\mathbbm{1}_{[\bar{t},\bar{t}+\epsilon]}
 +\frac{1}{2}\check{\Xi}(t)\widetilde{D^2g}^\epsilon(t)\check{\Xi}(t)^\top-\frac{1}{2}\tilde{\Xi}(t)D^2g(t)\tilde{\Xi}(t)^\top,\\
\check{\Xi}(t)&:=\big[\xi_t^{1,\epsilon},\eta_t^{1,\epsilon},\zeta_t^{1,1,\epsilon}
                -\Delta^1(t)\mathbbm{1}_{[\bar{t},\bar{t}+\epsilon]},\zeta_t^{2,1,\epsilon}
                -\Delta^2(t)\mathbbm{1}_{[\bar{t},\bar{t}+\epsilon]},\int_{\mathcal{E}_1}\lambda_{(t,e)}^{1,1,\epsilon}\nu_1(de),\int_{\mathcal{E}_2}\lambda_{(t,e)}^{2,1,\epsilon}\nu_2(de)\big].
\end{aligned}
\end{equation*}
By the $L^\beta$-estimate of FBSDEP in Section 2, we have
\begin{equation*}
\begin{aligned}
&E\bigg[\sup_{0\leq t\leq T}(|\xi^{3,\epsilon}_t|^2+|\eta^{3,\epsilon}_t|^2)+\sum^2_{i=1}\bigg[\int_0^T|\zeta^{i,3,\epsilon}_t|^2dt
 +\int_0^T\int_{\mathcal{E}_i}|\lambda^{i,3,\epsilon}_{(t,e)}|^2\nu_i(de)dt\bigg]\bigg]\\
 &\leq C E\bigg[\bigg|\frac{1}{2}\widetilde{\phi}_{xx}^\epsilon(T)(\xi_T^{1,\epsilon})^2-\frac{1}{2}\phi_{xx}(\bar{x}_T)(x_T^1)^2\bigg|^2
 +\bigg(\int_0^TA_2^\epsilon(t)dt\bigg)^2+\bigg(\int_0^TB^\epsilon_1(t)dt\bigg)^2\\
\end{aligned}
\end{equation*}
\begin{equation}\label{estimate3}
\begin{aligned}
&\qquad +\sum_{i=1}^2\bigg(\int_0^T|C^\epsilon_{i1}(t)|^2dt\bigg)+\sum_{i=1}^2\bigg(\int_0^T\int_{\mathcal{E}_i}|D^\epsilon_{i1}(t,e)|^2N_i(de,dt)\bigg)\bigg]\\
&\leq C E\bigg[\bigg|\frac{1}{2}\widetilde{\phi}_{xx}^\epsilon(T)\big[(\xi_T^{1,\epsilon})^2-(x_T^1)^2\big]
 +\frac{1}{2}\big[\widetilde{\phi}_{xx}^\epsilon(T)-\phi_{xx}(\bar{x}_T)\big](x_T^1)^2\bigg|^2
 +\bigg(\int_0^TA_2^\epsilon(t)dt\bigg)^2\\
&\qquad +\bigg(\int_0^TB^\epsilon_1(t)dt\bigg)^2+\sum_{i=1}^2\bigg[\bigg(\int_0^T|C^\epsilon_{i1}(t)|^2dt\bigg)+\bigg(\int_0^T\int_{\mathcal{E}_i}|D^\epsilon_{i1}(t,e)|^2N_i(de,dt)\bigg)\bigg]\bigg]\\
&\leq C\bigg\{E\bigg[\bigg|\frac{1}{2}\widetilde{\phi}_{xx}^\epsilon(T)\xi_T^{2,\epsilon}(\xi_T^{1,\epsilon}+x_T^1)
 +\frac{1}{2}\big[\widetilde{\phi}_{xx}^\epsilon(T)-\phi_{xx}(\bar{x}_T)\big](x_T^1)^2\bigg|^2
 +\bigg(\int_0^TA_2^\epsilon(t)dt\bigg)^2\\
 &\qquad +\bigg(\int_0^TB^\epsilon_1(t)dt\bigg)^2+\sum_{i=1}^2\bigg[\bigg(\int_0^T|C^\epsilon_{i1}(t)|^2dt\bigg)+\bigg(\int_0^T\int_{\mathcal{E}_i}|D^\epsilon_{i1}(t,e)|^2N_i(de,dt)\bigg)\bigg]\bigg]\bigg\}\\
&\leq C\bigg\{E\bigg[|\xi_T^{2,\epsilon}|^2|\xi_T^{1,\epsilon}+x_T^1|^2\bigg]+E\bigg[[\widetilde{\phi}_{xx}^\epsilon(T)-\phi_{xx}(\bar{x}_T)]^2(x_T^1)^4\bigg]
 +E\bigg[\bigg(\int_0^TA_2^\epsilon(t)dt\bigg)^2\\
&\qquad +\bigg(\int_0^TB^\epsilon_1(t)dt\bigg)^2+\sum_{i=1}^2\bigg[\bigg(\int_0^T|C^\epsilon_{i1}(t)|^2dt\bigg)+\bigg(\int_0^T\int_{\mathcal{E}_i}|D^\epsilon_{i1}(t,e)|^2N_i(de,dt)\bigg)\bigg]\bigg]\bigg\}\\
&\leq C\bigg\{\bigg\{E\bigg[\sup_{0\leq t\leq T}|\xi_t^{2,\epsilon}|^4\bigg]\bigg\}^\frac{1}{2}\bigg\{E\bigg[\sup_{0\leq t\leq T}|\xi_t^{1,\epsilon}+x_t^1|^4\bigg]\bigg\}^\frac{1}{2}\\
&\qquad +\bigg\{E\bigg[[\widetilde{\phi}_{xx}^\epsilon(T)-\phi_{xx}(\bar{x}_T)]^4\bigg]\bigg\}^\frac{1}{2}
 \bigg\{E\bigg[\sup_{0\leq t\leq T}|x_t^1|^8\bigg]\bigg\}^\frac{1}{2}+E\bigg[\bigg(\int_0^TA_2^\epsilon(t)dt\bigg)^2\\
&\qquad +\bigg(\int_0^TB^\epsilon_1(t)dt\bigg)^2+\sum_{i=1}^2\bigg[\bigg(\int_0^T|C^\epsilon_{i1}(t)|^2dt\bigg)+\bigg(\int_0^T\int_{\mathcal{E}_i}|D^\epsilon_{i1}(t,e)|^2N_i(de,dt)\bigg)\bigg]\bigg]\bigg\}\\
&\leq C\bigg\{o(\epsilon^2)+E\bigg[\bigg(\int_0^TA_2^\epsilon(t)dt\bigg)^2+\bigg(\int_0^TB^\epsilon_1(t)dt\bigg)^2+\sum_{i=1}^2\bigg[\bigg(\int_0^T|C^\epsilon_{i1}(t)|^2dt\bigg)\\
&\qquad +\bigg(\int_0^T\int_{\mathcal{E}_i}|D^\epsilon_{i1}(t,e)|^2N_i(de,dt)\bigg)\bigg]\bigg\}.
\end{aligned}
\end{equation}
Next, we need to give the estimate of $E\big[\big(\int_0^TA_2^\epsilon(t)dt\big)^2\big]$. In fact,
\begin{equation}\label{A2}
\begin{aligned}
&E\bigg[\bigg(\int_0^TA_2^\epsilon(t)dt\bigg)^2\bigg]=E\bigg[\bigg(\int_0^T\bigg[\delta g_x(t,\Delta^1,\Delta^2)\xi_t^{1,\epsilon}+\delta g_y(t,\Delta^1,\Delta^2)\eta_t^{1,\epsilon}\\
&\quad+\sum^2_{i=1}\bigg[\delta g_{z^i}(t,\Delta^1,\Delta^2)(\zeta_t^{i,1,\epsilon}-\Delta^i(t)\mathbbm{1}_{[\bar{t},\bar{t}+\epsilon]})
 +\delta g_{\tilde{z}^i}(t,\Delta^1,\Delta^2)\int_{\mathcal{E}_i}\lambda_{(t,e)}^{i,1,\epsilon}\nu_i(de)\bigg]\bigg]\mathbbm{1}_{[\bar{t},\bar{t}+\epsilon]}\\
&\quad +\frac{1}{2}\check{\Xi}(t)\widetilde{D^2g}^\epsilon(t)\check{\Xi}(t)^\top-\frac{1}{2}\tilde{\Xi}(t)D^2g(t)\tilde{\Xi}(t)^\top dt\bigg)^2\bigg].
\end{aligned}
\end{equation}
We divide the right hand of \eqref{A2} into several parts to get the following estimates:
\begin{equation*}
\begin{aligned}
&E\bigg[\bigg(\int_0^T\delta g_x(t,\Delta^1,\Delta^2)\xi_t^{1,\epsilon}\mathbbm{1}_{[\bar{t},\bar{t}+\epsilon]}dt\bigg)^2\bigg]
\leq C E\bigg[\bigg(\int_{\bar{t}}^{\bar{t}+\epsilon}\xi_t^{1,\epsilon}dt\bigg)^2\bigg]
\leq C\epsilon^2E\bigg[\sup_{0\leq t\leq T}|\xi_t^{1,\epsilon}|^2\bigg]\leq o(\epsilon^2),\\
&E\bigg[\bigg(\int_0^T\delta g_{z^i}(t,\Delta^1,\Delta^2)(\zeta_t^{i,1,\epsilon}-\Delta^i(t)\mathbbm{1}_{[\bar{t},\bar{t}+\epsilon]})\mathbbm{1}_{[\bar{t},\bar{t}+\epsilon]}dt\bigg)^2\bigg]\\
&\leq C E\bigg[\bigg(\int_{\bar{t}}^{\bar{t}+\epsilon}|\delta g_{z^i}(t,\Delta^1,\Delta^2)||(\zeta_t^{i,2,\epsilon}+k^i_1(t)x_t^1)|dt\bigg)^2\bigg]
\leq C E\bigg[\bigg(\int_{\bar{t}}^{\bar{t}+\epsilon}|\zeta_t^{i,2,\epsilon}+k^i_1(t)x_t^1|dt\bigg)^2\bigg]\\
&\leq C\epsilon E\bigg[\bigg(\int_{\bar{t}}^{\bar{t}+\epsilon}|\zeta_t^{i,2,\epsilon}|^2dt\bigg)\bigg]
 +C\epsilon E\bigg[\sup_{0\leq t\leq T}|x_t^1|^2\bigg(\int_{\bar{t}}^{\bar{t}+\epsilon}|k^i_1(t)|^2dt\bigg)\bigg]\\
&\leq C\epsilon E\bigg[\bigg(\int_0^T|\zeta_t^{i,2,\epsilon}|^2dt\bigg)\bigg]
 +C\epsilon\bigg\{E\bigg[\sup_{0\leq t\leq T}|x_t^1|^4\bigg]\bigg\}^{\frac{1}{2}}\bigg\{E\bigg[\bigg(\int_{\bar{t}}^{\bar{t}+\epsilon}|k^i_1(t)|^2dt\bigg)^2\bigg]\bigg\}^{\frac{1}{2}}\leq o(\epsilon^2),
\end{aligned}
\end{equation*}
and similarly,
\begin{equation*}
\begin{aligned}
&E\bigg[\bigg(\int_0^T\delta g_y(t,\Delta^1,\Delta^2)\eta_t^{1,\epsilon}\mathbbm{1}_{[\bar{t},\bar{t}+\epsilon]}dt\bigg)^2\bigg]\leq o(\epsilon^2)\\
&E\bigg[\bigg(\int_0^T\delta g_{\tilde{z}^i}(t,\Delta^1,\Delta^2)\int_{\mathcal{E}_i}\lambda_{(t,e)}^{i,1,\epsilon}\nu_i(de)\mathbbm{1}_{[\bar{t},\bar{t}+\epsilon]}dt\bigg)^2\bigg]\leq o(\epsilon^2),\ i=1,2.
\end{aligned}
\end{equation*}
Then we consider the last part of \eqref{A2} as follows. In which, we have
\begin{equation*}
\begin{aligned}
&E\bigg[\bigg(\int_0^T\widetilde{g}_{z^1z^1}^\epsilon(t)[\zeta_t^{1,1,\epsilon}-\Delta^1(t)\mathbbm{1}_{[\bar{t},\bar{t}+\epsilon]}]^2
 -g_{z^1z^1}(t)[z_t^{1,1}-\Delta^1(t)\mathbbm{1}_{[\bar{t},\bar{t}+\epsilon]}]^2dt\bigg)^2\bigg]\\
&\leq E\bigg[\bigg(\int_0^T\widetilde{g}_{z^1z^1}^\epsilon(t)\big[(\zeta_t^{1,1,\epsilon}-\Delta^1(t)\mathbbm{1}_{[\bar{t},\bar{t}+\epsilon]})^2
 -(z_t^{1,1}-\Delta^1(t)\mathbbm{1}_{[\bar{t},\bar{t}+\epsilon]})^2\big]\\
&\qquad +(\widetilde{g}_{z^1z^1}^\epsilon(t)-g_{z^1z^1}(t))\big[z_t^{1,1}-\Delta^1(t)\mathbbm{1}_{[\bar{t},\bar{t}+\epsilon]}\big]^2dt\bigg)^2\bigg]\\
&\leq E\bigg[\bigg(\int_0^T\widetilde{g}_{z^1z^1}^\epsilon(t)\zeta_t^{1,2,\epsilon}\big[\zeta_t^{1,2,\epsilon}+2k^1_1(t)x_t^1\big]
 +(\widetilde{g}_{z^1z^1}^\epsilon(t)-g_{z^1z^1}(t))\big[k^1_1(t)x_t^1\big]^2dt\bigg)^2\bigg]\\
&\leq C E\bigg[\bigg(\int_0^T\widetilde{g}_{z^1z^1}^\epsilon(t)\zeta_t^{1,2,\epsilon}\big[\zeta_t^{1,2,\epsilon}+2k^1_1(t)x_t^1\big]dt\bigg)^2
 +\bigg(\int_0^T(\widetilde{g}_{z^1z^1}^\epsilon(t)-g_{z^1z^1}(t))\big[k^1_1(t)x_t^1\big]^2dt\bigg)^2\bigg]\\
&\leq C E\bigg[\bigg(\int_0^T\zeta_t^{1,2,\epsilon}\big[\zeta_t^{1,2,\epsilon}+2k^1_1(t)x_t^1\big]dt\bigg)^2\bigg]\\
&\qquad +E\bigg[\sup_{0\leq t\leq T}|x_t^1|^4\bigg(\int_0^T(\widetilde{g}_{z^1z^1}^\epsilon(t)-g_{z^1z^1}(t))|k^1_1(t)|^2dt\bigg)^2\bigg]\\
&\leq C E\bigg[\bigg(\int_0^T|\zeta_t^{1,2,\epsilon}|^2dt\bigg)^2\bigg]+C E\bigg[\bigg(\int_0^T\zeta_t^{1,2,\epsilon}k^1_1(t)x_t^1dt\bigg)^2\bigg]\\
&\qquad +E\bigg[\sup_{0\leq t\leq T}|x_t^1|^4\bigg(\int_0^T(\widetilde{g}_{z^1z^1}^\epsilon(t)-g_{z^1z^1}(t))|k^1_1(t)|^2dt\bigg)^2\bigg]\\
\end{aligned}
\end{equation*}
\begin{equation*}
\begin{aligned}
&\leq o(\epsilon^2)+C E\bigg[\sup_{0\leq t\leq T}|x_t^1|^2\bigg(\int_0^T\zeta_t^{1,2,\epsilon}k^1_1(t)dt\bigg)^2\bigg]\\
&\quad +E\bigg[\sup_{0\leq t\leq T}|x_t^1|^4\bigg(\int_0^T(\widetilde{g}_{z^1z^1}^\epsilon(t)-g_{z^1z^1}(t))|k^1_1(t)|^2dt\bigg)^2\bigg]\\
&\leq o(\epsilon^2)+C E\bigg[\sup_{0\leq t\leq T}|x_t^1|^2\bigg(\int_0^T|\zeta_t^{1,2,\epsilon}|^2dt\bigg)\bigg(\int_0^T|k^1_1(t)|^2dt\bigg)\bigg]\\
&\quad +\bigg\{E\bigg[\sup_{0\leq t\leq T}|x_t^1|^8\bigg]\bigg\}^{\frac{1}{2}}\bigg\{E\bigg[\bigg(\int_0^T(\widetilde{g}_{z^1z^1}^\epsilon(t)-g_{z^1z^1}(t))|k^1_1(t)|^2dt\bigg)^4\bigg]\bigg\}^{\frac{1}{2}}\\
&\leq o(\epsilon^2)+\bigg\{E\bigg[\sup_{0\leq t\leq T}|x_t^1|^6\bigg]\bigg\}^{\frac{1}{3}}\bigg\{E\bigg[\bigg(\int_0^T|\zeta_t^{1,2,\epsilon}|^2dt\bigg)^2\bigg]\bigg\}^{\frac{1}{2}}
 \bigg\{E\bigg[\bigg(\int_0^T|k^1_1(t)|^2dt\bigg)^6\bigg]\bigg\}^{\frac{1}{6}}\\
&\quad +\epsilon^2\bigg\{E\bigg[\bigg(\int_0^T(\widetilde{g}_{z^1z^1}^\epsilon(t)-g_{z^1z^1}(t))|k^1_1(t)|^2dt\bigg)^4\bigg]\bigg\}^{\frac{1}{2}}\leq o(\epsilon^2),
\end{aligned}
\end{equation*}
and similarly, we have
\begin{equation*}
\begin{aligned}
&E\bigg[\bigg(\int_0^T\widetilde{g}_{xx}^\epsilon(t)|\xi_t^{1,\epsilon}|^2-g_{xx}(t)|x_t^1|^2dt\bigg)^2\bigg]\leq o(\epsilon^2)\\
&E\bigg[\bigg(\int_0^T\widetilde{g}_{yy}^\epsilon(t)|\eta_t^{1,\epsilon}|^2-g_{yy}(t)|y_t^1|^2dt\bigg)^2\bigg]\leq o(\epsilon^2)\\
&E\bigg[\bigg(\int_0^T\widetilde{g}_{\tilde{z}^i\tilde{z}^i}^\epsilon(t)\bigg|\int_{\mathcal{E}_i}\lambda_{(t,e)}^{i,1,\epsilon}\nu_i(de)\bigg|^2
-g_{\tilde{z}^i\tilde{z}^i}(t)\bigg|\int_{\mathcal{E}_i}\tilde{z}_{(t,e)}^{i,1}\nu_i(de)\bigg|^2dt\bigg)^2\bigg]\leq o(\epsilon^2),\ i=1,2,
\end{aligned}
\end{equation*}
and the other cross terms have the same estimate.

Then, we give the estimates of $E\big[\big(\int_0^TB_1^\epsilon(t)dt\big)^2\big],E\big[\big(\int_0^T|C_{i1}^\epsilon(t)|^2dt\big)\big]$ and \\
$E\big(\int_0^T\int_{\mathcal{E}_i}|D_{i1}^\epsilon(t,e)|^2N_i(de,dt)\big)$, respectively. In fact, we have
\begin{equation*}
\begin{aligned}
&E\bigg[\bigg(\int_0^TB_1^\epsilon(t)dt\bigg)^2\bigg]
=E\bigg[\bigg(\int_0^T\delta \tilde{b}_{1x}(t)\xi^{1,\epsilon}_t\mathbbm{1}_{[\bar{t},\bar{t}+\epsilon]}
+\frac{1}{2}\tilde{b}^{\epsilon}_{1xx}(t)|\xi^{1,\epsilon}_t|^2-\frac{1}{2}\tilde{b}_{1xx}(t)|x^1_t|^2dt\bigg)^2\bigg]\\
&\leq C E\bigg[\bigg(\int_0^T\delta \tilde{b}_{1x}(t)\xi^{1,\epsilon}_t\mathbbm{1}_{[\bar{t},\bar{t}+\epsilon]}dt\bigg)^2\bigg]\\
&\quad +CE\bigg[\bigg(\int_0^T\frac{1}{2}\tilde{b}^{\epsilon}_{1xx}(t)(|\xi^{1,\epsilon}_t|^2-|x^1_t|^2)+\frac{1}{2}(\tilde{b}^{\epsilon}_{1xx}(t)-\tilde{b}_{1xx}(t))|x^1_t|^2dt\bigg)^2\bigg]\\
&\leq C E\bigg[\sup_{0\leq t\leq T}|\xi^{1,\epsilon}_t|^2\bigg(\int_{\bar{t}}^{\bar{t}+\epsilon}\delta \tilde{b}_{1x}(t)dt\bigg)^2\bigg]
 +C E\bigg[\bigg(\int_0^T\frac{1}{2}\tilde{b}^{\epsilon}_{1xx}(t)\xi^{2,\epsilon}_t(\xi^{1,\epsilon}_t+x^1_t)dt\bigg)^2\bigg]\\
&\quad +C E\bigg[\bigg(\int_0^T\frac{1}{2}(\tilde{b}^{\epsilon}_{1xx}(t)-\tilde{b}_{1xx}(t))|x^1_t|^2dt\bigg)^2\bigg]\\
&\leq C\epsilon^2E\bigg[\sup_{0\leq t\leq T}|\xi^{1,\epsilon}_t|^2\bigg]+C E\bigg[\sup_{0\leq t\leq T}|\xi^{2,\epsilon}_t|^2\sup_{0\leq t\leq T}(\xi^{1,\epsilon}_t+x^1_t)^2\bigg]\\
&\quad +C E\bigg[\sup_{0\leq t\leq T}|x^1_t|^4\bigg(\int_0^T\frac{1}{2}(\tilde{b}^{\epsilon}_{1xx}(t)-\tilde{b}_{1xx}(t))dt\bigg)^2\bigg]\\
\end{aligned}
\end{equation*}
\begin{equation*}
\begin{aligned}
&\leq o(\epsilon^2)+C\bigg\{E\bigg[\sup_{0\leq t\leq T}|\xi^{2,\epsilon}_t|^4\bigg]\bigg\}^{\frac{1}{2}}\bigg\{E\bigg[\sup_{0\leq t\leq T}(\xi^{1,\epsilon}_t+x^1_t)^4\bigg]\bigg\}^{\frac{1}{2}}\\
&\quad +\bigg\{E\bigg[\sup_{0\leq t\leq T}|x^1_t|^8\bigg]\bigg\}^{\frac{1}{2}}\bigg\{E\bigg[\bigg(\int_0^T\frac{1}{2}(\tilde{b}^{\epsilon}_{1xx}(t)-\tilde{b}_{1xx}(t))dt\bigg)^4\bigg]\bigg\}^{\frac{1}{2}}
\leq o(\epsilon^2),\\
&E\bigg[\bigg(\int_0^T|C_{i1}^\epsilon(t)|^2dt\bigg)\bigg]
=E\bigg[\bigg(\int_0^T|\delta \sigma_{ix}(t)\xi_t^{2,\epsilon}\mathbbm{1}_{[\bar{t},\bar{t}+\epsilon]}
 +\frac{1}{2}\tilde{\sigma}^{\epsilon}_{ixx}(t)|\xi_t^{1,\epsilon}|^2-\frac{1}{2}\sigma_{ixx}(t)(x^1_t)^2|^2dt\bigg)\bigg]\\
&\leq C E\bigg[\bigg(\int_0^T|\delta \sigma_{ix}(t)|^2|\xi_t^{2,\epsilon}|^2\mathbbm{1}_{[\bar{t},\bar{t}+\epsilon]}+|\frac{1}{2}\tilde{\sigma}^{\epsilon}_{ixx}(t)(|\xi_t^{1,\epsilon}|^2-|x^1_t|^2)
 +\frac{1}{2}(\tilde{\sigma}^{\epsilon}_{ixx}(t)-\sigma_{ixx}(t))(x^1_t)^2|^2dt\bigg)\bigg]\\
&\leq C E\bigg[\int_{\bar{t}}^{\bar{t}+\epsilon}|\delta \sigma_{ix}(t)|^2|\xi_t^{2,\epsilon}|^2dt\bigg]\\
&\quad +C E\bigg[\bigg(\int_0^T|\frac{1}{2}\tilde{\sigma}^{\epsilon}_{ixx}(t)(|\xi_t^{1,\epsilon}|^2-|x^1_t|^2)|^2
 +|\frac{1}{2}(\tilde{\sigma}^{\epsilon}_{ixx}(t)-\sigma_{ixx}(t))(x^1_t)^2|^2dt\bigg)\bigg]\\
&\leq C\epsilon E\bigg[\sup_{0\leq t\leq T}|\xi_t^{2,\epsilon}|^2\bigg]+CE\bigg[\bigg(\int_0^T|\xi_t^{2,\epsilon}|^2(\xi_t^{1,\epsilon}+x^1_t)^2dt\bigg)\bigg]\\
&\quad +C E\bigg[\sup_{0\leq t\leq T}|x^1_t|^4\int_0^T(\tilde{\sigma}^{\epsilon}_{ixx}(t)-\sigma_{ixx}(t))^2dt\bigg]\leq o(\epsilon^2),
\end{aligned}
\end{equation*}
and
\begin{equation*}
\begin{aligned}
&E\bigg[\bigg(\int_0^T\int_{\mathcal{E}_i}|D_{i1}^\epsilon(t,e)|^2N_i(de,dt)\bigg)\bigg]\\
&=E\bigg[\bigg(\int_0^T\int_{\mathcal{E}_i}|\delta f_{ix}(t,e)\xi_{t-}^{1,\epsilon}\mathbbm{1}_{\mathcal{O}}+\frac{1}{2}\tilde{f}^{\epsilon}_{ixx}(t,e)|\xi_{t-}^{1,\epsilon}|^2
 -\frac{1}{2}f_{ixx}(t,e)(x^1_{t-})^2+\delta f_i(t,e)\mathbbm{1}_{\mathcal{O}}|^2N_i(de,dt)\bigg)\bigg]\\
&\leq E\bigg[\bigg(\int_0^T\int_{\mathcal{E}_i}|\delta f_{ix}(t,e)\xi_{t-}^{1,\epsilon}\mathbbm{1}_{\mathcal{O}}+\frac{1}{2}\tilde{f}^{\epsilon}_{ixx}(t,e)(|\xi_{t-}^{1,\epsilon}|^2-|x^1_{t-}|^2)\\
&\qquad\qquad +\frac{1}{2}(\tilde{f}^\epsilon_{ixx}(t,e)-f_{ixx}(t,e))(x^1_{t-})^2+\delta f_i(t,e)\mathbbm{1}_{\mathcal{O}}|^2N_i(de,dt)\bigg)\bigg]\\
&\leq E\bigg[\bigg(\int_0^T\int_{\mathcal{E}_i}|\delta f_{ix}(t,e)|^2|\xi_{t-}^{1,\epsilon}|^2\mathbbm{1}_{\mathcal{O}}+\Big|\frac{1}{2}\tilde{f}^{\epsilon}_{ixx}(t,e)(|\xi_{t-}^{1,\epsilon}|^2-|x^1_{t-}|^2)\Big|^2\\
&\qquad\qquad +\Big|\frac{1}{2}(\tilde{f}^{\epsilon}_{ixx}(t,e)-f_{ixx}(t,e))(x^1_{t-})^2\Big|^2+|\delta f_i(t,e)|^2\mathbbm{1}_{\mathcal{O}}N_i(de,dt)\bigg)\bigg]\\
&\leq E\bigg[\int_0^T\int_{\mathcal{E}_i}\mathbb{E}\bigg[|\xi_{t-}^{2,\epsilon}|^2(\xi_{t-}^{1,\epsilon}+x^1_{t-})^2
 +(\tilde{f}^{\epsilon}_{ixx}(t,e)-f_{ixx}(t,e))^2(x^1_{t-})^4\bigg|\mathcal{P}\otimes\mathcal{B}(\mathcal{E}_i)\bigg]\nu_i(de)dt\bigg]\\
&\leq E\bigg[\sup_{0\leq t\leq T}|\xi_t^{2,\epsilon}|^2\sup_{0\leq t\leq T}|\xi_t^{1,\epsilon}+x^1_t|^2\bigg]\\
&\quad +E\bigg[\sup_{0\leq t\leq T}|x^1_t|^4\int_0^T\int_{\mathcal{E}_i}\mathbb{E}\bigg[(\tilde{f}^{\epsilon}_{ixx}-f_{ixx})^2\bigg|\mathcal{P}\otimes\mathcal{B}(\mathcal{E}_i)\bigg]\nu_i(de)dt\bigg]\leq o(\epsilon^2).
\end{aligned}
\end{equation*}

Thus, from \eqref{estimate3}, the third estimate in \eqref{estimate4} holds. The proof is complete.
\end{proof}

Then we give the variation equation for $\tilde{\Gamma}$ and its estimates. For simplicity, we rewrite reduced \eqref{RN2} as follows:
\begin{equation}\label{Gamma000}
\left\{
\begin{aligned}
d\tilde{\Gamma}_t^u&=\tilde{b}_2(t,x_t^u,\tilde{\Gamma}_t^u,u_t)dW^2_t+\int_{\mathcal{E}_2}f_4(t,x_{t-}^u,\tilde{\Gamma}_{t-}^u,e)\tilde{N}_2(de,dt),\quad t\in[0,T],\\
\tilde{\Gamma}_0^u&=1,
\end{aligned}
\right.
\end{equation}
where
\begin{equation*}
\begin{aligned}
\tilde{b}_2(t,x_t^u,\tilde{\Gamma}_t^u,u_t)&:=\tilde{\Gamma}_t^u\sigma_3^{-1}(t)b_2(t,x_t^u,u_t),\\
f_4(t,x_{t-}^u,\tilde{\Gamma}_{t-}^u,e)&:=\tilde{\Gamma}_{t-}^u(\lambda(t,x_{t-}^u,e)-1).
\end{aligned}
\end{equation*}

Similarly, we first have the following results.
\begin{lemma}\label{Lemma41}
For the solution $\tilde{\Gamma}$ to \eqref{Gamma000}, we have
\begin{equation*}
E\bigg[\sup_{0\leq t\leq T}|\tilde{\Gamma}_t^u|^\beta\bigg]<\infty,\ \ \forall \beta\in \mathbb{R},\ \ u\in\mathcal{U}_{ad}[0,T],
\end{equation*}
and for any $\beta\geq2$,
\begin{equation*}
\begin{aligned}
&E\bigg[\sup_{0\leq t\leq T}|x_t^u|^\beta\bigg]\leq C\Big(1+\sup_{0\leq t\leq T}E\big[|u_t|^\beta\big]\Big),\quad
 E\bigg[\sup_{0\leq t\leq T}|y_t^u|^\beta\bigg]\leq C\Big(1+\sup_{0\leq t\leq T}E\big[|u_t|^\beta\big]\Big),\\
&E\bigg[\sum_{i=1}^2\bigg(\int_0^T|z^{i,u}_t|^2dt\bigg)^{\frac{\beta}{2}}+\sum_{i=1}^2\bigg(\int_0^T\int_{\mathcal{E}_i}|\tilde{z}^{i,u}_t|^2\nu_i(de)dt\bigg)^{\frac{\beta}{2}}\bigg]\leq C\Big(1+\sup_{0\leq t\leq T}E\big[|u_t|^\beta\big]\Big).
\end{aligned}
\end{equation*}
\end{lemma}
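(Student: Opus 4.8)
The plan is to estimate the exponential weight $\tilde\Gamma^u$ first, then the forward state $x^u$, and finally the backward triple $(y^u,z^{1,u},z^{2,u},\tilde z^{1,u},\tilde z^{2,u})$, exploiting that in \eqref{Gamma000} the coefficients $\tilde b_2,f_4$ are linear in $\tilde\Gamma$ with bounded structure and that in the reduced setting the equation for $x^u$ is decoupled from the backward part.

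\textbf{Step 1 (moments of $\tilde\Gamma^u$).} First I would note that along any jump $\Delta\tilde\Gamma_t^u/\tilde\Gamma_{t-}^u=\lambda(t,x_{t-}^u,e)-1\in[l-1,0)$ with $l-1>-1$, so $\tilde\Gamma^u$ stays strictly positive and $|\tilde\Gamma_t^u|^\beta=(\tilde\Gamma_t^u)^\beta$. Applying It\^o's formula to $(\tilde\Gamma_t^u)^\beta$ and using $\tilde b_2=\tilde\Gamma^u\sigma_3^{-1}b_2$ with $|\sigma_3^{-1}b_2|$ bounded by {\bf (A1)}, together with $\tilde\Gamma_{t-}^u+f_4=\tilde\Gamma_{t-}^u\lambda$ and the elementary bound $\sup_{\lambda\in[l,1]}|\lambda^\beta-1-\beta(\lambda-1)|=:C_\beta<\infty$ (finite for every real $\beta$, since $\lambda\mapsto\lambda^\beta$ is continuous on the compact set $[l,1]\subset(0,\infty)$), one gets $d(\tilde\Gamma_t^u)^\beta\le C_\beta(\tilde\Gamma_t^u)^\beta\,dt+d(\text{local martingale})$. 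A routine localization by stopping times, Gr\"onwall's inequality and Fatou's lemma then yield $\sup_{0\le t\le T}E[(\tilde\Gamma_t^u)^\beta]\le e^{C_\beta T}$ for every $\beta\in\mathbb R$; running the same argument with exponent $2\beta$ and invoking the Burkholder--Davis--Gundy and Doob inequalities on the martingale parts upgrades this to $E[\sup_{0\le t\le T}(\tilde\Gamma_t^u)^\beta]<\infty$. Only boundedness of $\sigma_3^{-1}b_2$ and $\lambda\in[l,1)$ enter here; {\bf (A3)} is used elsewhere, for the legitimacy of the measure change.

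\textbf{Step 2 (estimate of $x^u$).} Since, as in Section 3, $b_2$ and hence $\tilde b_1$ depends only on $(t,x,u)$, and by {\bf (A1)} $\tilde b_1,\sigma_i,f_i$ grow at most linearly in $(x,u)$, the forward equation for $x^u$ is a genuinely decoupled SDEP, solvable on $[0,T]$. I would apply It\^o's formula to $|x_t^u|^\beta$, the BDG and H\"older inequalities, and the pure-jump Gr\"onwall trick already used in the proof of Theorem \ref{the24} (to replace $N_i(de,dt)$ by $\nu_i(de)dt$ inside the $L^{\beta/2}$ norm), obtaining a bound of $E[\sup_t|x_t^u|^\beta]$ by $C\bigl(1+E\int_0^T|u_t|^\beta dt+E(\int_0^T|u_t|^2dt)^{\beta/2}+\sum_iE(\int_0^T\!\!\int_{\mathcal E_i}|u_t|^2N_i(de,dt))^{\beta/2}\bigr)$. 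By H\"older, $E\int_0^T|u_t|^\beta dt\le T\sup_tE|u_t|^\beta$ and $E(\int_0^T|u_t|^2dt)^{\beta/2}\le T^{\beta/2}\sup_tE|u_t|^\beta$; the last term is finite (and, after one more use of the jump trick, also $\le C(1+\sup_tE|u_t|^\beta)$) by the definition \eqref{ad_control_set} of $\mathcal U_{ad}[0,T]$. Hence $E[\sup_{0\le t\le T}|x_t^u|^\beta]\le C(1+\sup_{0\le t\le T}E[|u_t|^\beta])$.

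\textbf{Step 3 (estimate of $y^u,z^u,\tilde z^u$ and conclusion).} With $x^u$ in hand, the backward equation is a standard BSDEP with Lipschitz generator $g$ of linear growth and terminal value $\phi(x_T^u)$. Treating the control as part of the randomness, i.e.\ writing $g(t,\omega,\cdot):=g(t,\cdot,u_t(\omega))$, one has $|g(t,\omega,0,0,0,0,0)|\le C(1+|u_t|)$, so the $L^\beta$-estimate for BSDEPs (the backward half of Theorem \ref{the24}, whose hypotheses hold here since $f_i$ depends on $x$ only) gives
\[
E\Big[\sup_{0\le t\le T}|y_t^u|^\beta+\sum_{i=1}^2\Big(\big(\textstyle\int_0^T|z_t^{i,u}|^2dt\big)^{\beta/2}+\big(\textstyle\int_0^T\!\!\int_{\mathcal E_i}|\tilde z_{(t,e)}^{i,u}|^2\nu_i(de)dt\big)^{\beta/2}\Big)\Big]\le C\Big(1+E\big[\sup_t|x_t^u|^\beta\big]+E\big(\textstyle\int_0^T(1+|u_t|)\,dt\big)^\beta\Big),
\]
whose right-hand side is $\le C(1+\sup_tE|u_t|^\beta)$ by Step 2 and the H\"older bounds there. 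Since Theorem \ref{the24} is stated on a small interval, for an arbitrary terminal time $T$ I would iterate over finitely many subintervals of length $\le\tilde{\tilde T}$ exactly as in the induction in the proof of Theorem \ref{MYextension}: because the forward system is decoupled, on the $j$-th subinterval one uses the above bound with initial datum $x^u_{(j-1)\delta}$, whose $\beta$-th moment is already under control, and the finitely many accumulated constants preserve the affine dependence on $\sup_tE|u_t|^\beta$. The main obstacle I anticipate is bookkeeping rather than principle: making the jump-integral passages (from $N_i$ to $\nu_i$ in $L^{\beta/2}$) and the subinterval iteration mesh so that the final constant depends only on the data in {\bf (A1)} and on $\sup_{0\le t\le T}E|u_t|^\beta$, not on $u$ itself.
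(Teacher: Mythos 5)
The paper states Lemma \ref{Lemma41} without any proof (it is introduced only by the phrase ``we first have the following results''), so there is no argument of the authors' to compare yours against; what you have written is a correct and complete way to fill that gap, and it uses exactly the toolkit the paper deploys elsewhere. Step 1 is sound: since $\lambda(t,x,e)\in[l,1)$ with $l>0$, the relative jumps of $\tilde\Gamma^u$ lie in $[l-1,0)\subset(-1,0)$, so $\tilde\Gamma^u>0$ and $\sup_{\lambda\in[l,1]}|\lambda^\beta-1-\beta(\lambda-1)|$ is finite for every real $\beta$; together with the boundedness of $\sigma_3^{-1}b_2$ from {\bf (A1)}, the It\^o--Gr\"onwall--BDG chain gives all moments of $\tilde\Gamma^u$, positive and negative. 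Steps 2 and 3 correctly exploit that in the reduced Section 3 setting the forward SDEP is decoupled, and your use of the pure-jump Gr\"onwall device to convert $E[(\int_0^T\int_{\mathcal E_i}|u_t|^2N_i(de,dt))^{\beta/2}]$ into $C\,\nu_i(\mathcal E_i)T\sup_tE|u_t|^\beta$ is precisely the $J_{iT}^{\beta/2}$ computation the paper performs in Section 3.1, while the subinterval iteration to pass from the small-time estimate of Theorem \ref{the24} to arbitrary $T$ mirrors the induction in Theorem \ref{MYextension} (and Lemma \ref{lemma24} handles the passage between the $N_i$- and $\nu_i$-norms). The only point I would make explicit is the measure under which $\sup_{0\le t\le T}E[|u_t|^\beta]$ is finite: the admissibility condition \eqref{ad_control_set} is stated under $\bar E$ while the estimates are under $E$, so one should insert a H\"older step using the moments of $\Gamma_T^u$ (available by your Step 1 applied to $\Gamma^u$) to transfer integrability; this is a convention the paper itself glosses over, and it costs nothing since \eqref{ad_control_set} requires moments of every order $\beta>1$.
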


From this, the expansions for $\tilde{b}_2, f_4$ are given as follows:
\begin{equation*}
\begin{aligned}
\tilde{b}_2(t,x^\epsilon_t,\tilde{\Gamma}_t^\epsilon,u^\epsilon_t)-\tilde{b}_2(t)
&=\tilde{b}_{2x}(t)(x_t^1+x_t^2)+\tilde{b}_{2\tilde{\Gamma}}(t)(\tilde{\Gamma}_t^1+\tilde{\Gamma}_t^2)+\frac{1}{2}\big[x_t^1,\tilde{\Gamma}_t^1\big]D^2\tilde{b}_2(t)\big[x_t^1,\tilde{\Gamma}_t^1\big]^\top\\
&\quad +\delta\tilde{b}_2(t)\mathbbm{1}_{[\bar{t},\bar{t}+\epsilon]}+\delta\tilde{b}_{2x}(t)x^1_t\mathbbm{1}_{[\bar{t},\bar{t}+\epsilon]}
 +\delta\tilde{b}_{2\tilde{\Gamma}}(t)\tilde{\Gamma}^1_t\mathbbm{1}_{[\bar{t},\bar{t}+\epsilon]}+o(\epsilon),\\
f_4(t,x^\epsilon_t,\tilde{\Gamma}_t^\epsilon,e)-f_4(t,e)
&=f_{4x}(t,e)(x_t^1+x_t^2)+f_{4\tilde{\Gamma}}(t,e)(\tilde{\Gamma}_t^1+\tilde{\Gamma}_t^2)\\
&\quad+\frac{1}{2}\big[x_t^1,\tilde{\Gamma}_t^1\big]D^2f_4(t,e)\big[x_t^1,\tilde{\Gamma}_t^1\big]^\top+o(\epsilon).
\end{aligned}
\end{equation*}
Then we have
\begin{equation*}
\left\{
\begin{aligned}
d\tilde{\Gamma}^1_t&=\Big[\tilde{b}_{2x}(t)x_t^1+\tilde{b}_{2\tilde{\Gamma}}(t)\tilde{\Gamma}_t^1+\delta \tilde{b}_2(t)\mathbbm{1}_{[\bar{t},\bar{t}+\epsilon]}\Big]dW^2_t+\int_{\mathcal{E}_2}\Big[f_{4x}(t,e)x_{t-}^1+f_{4\tilde{\Gamma}}(t,e)\tilde{\Gamma}_{t-}^1\Big]\tilde{N}_2(de,dt),\\
d\tilde{\Gamma}^2_t&=\Big[\tilde{b}_{2x}(t)x_t^2+\tilde{b}_{2\tilde{\Gamma}}(t)\tilde{\Gamma}_t^2
                    +\frac{1}{2}\big[x_t^1,\tilde{\Gamma}_t^1\big]D^2\tilde{b}_2(t)\big[x_t^1,\tilde{\Gamma}_t^1\big]^\top
                    +\delta\tilde{b}_{2x}(t)x^1_t\mathbbm{1}_{[\bar{t},\bar{t}+\epsilon]}\\
                   &\qquad +\delta\tilde{b}_{2\tilde{\Gamma}}(t)\tilde{\Gamma}^1_t\mathbbm{1}_{[\bar{t},\bar{t}+\epsilon]}\Big]dW^2_t
                    +\int_{\mathcal{E}_2}\Big[f_{4x}(t,e)x_{t-}^2+f_{4\tilde{\Gamma}}(t,e)\tilde{\Gamma}_{t-}^2\\
                   &\qquad +\frac{1}{2}[x_{t-}^1,\tilde{\Gamma}_{t-}^1]D^2f_4(t,e)[x_{t-}^1,\tilde{\Gamma}_{t-}^1]^\top\Big]\tilde{N}_2(de,dt),\quad t\in[0,T],\\
 \tilde{\Gamma}_0^1&=0,\ \tilde{\Gamma}_0^2=0,
\end{aligned}
\right.
\end{equation*}
and the following result holds.
\begin{lemma}\label{lemma42}
For $\beta\geq2$, by {\bf (A1)}, we have
\begin{equation*}
\begin{aligned}
&E\bigg[\sup_{0\leq t\leq T}|\tilde{\Gamma}^1_t|^\beta\bigg]=O(\epsilon^{\frac{\beta}{2}}),\quad
 E\bigg[\sup_{0\leq t\leq T}|\tilde{\Gamma}^2_t|^\beta\bigg]=O(\epsilon^{\beta}),\\
&E\bigg[\sup_{0\leq t\leq T}|\tilde{\Gamma}^{\epsilon}_t-\bar{\tilde{\Gamma}}_t-\tilde{\Gamma}^1_t|^2\bigg]=O(\epsilon^2),\quad
 E\bigg[\sup_{0\leq t\leq T}|\tilde{\Gamma}^{\epsilon}_t-\bar{\tilde{\Gamma}}_t-\tilde{\Gamma}^1_t-\tilde{\Gamma}^2_t|^2\bigg]=o(\epsilon^2).
\end{aligned}
\end{equation*}
\end{lemma}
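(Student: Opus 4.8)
The plan is to mirror the proofs of Lemma \ref{lemma32} and Lemma \ref{lemma33}, the essential simplification being that \eqref{Gamma000} is a forward equation with no backward component, so no adjoint processes enter and none of the troublesome ``$\int_0^Tq^i_t\delta\sigma_i(t)\mathbbm{1}_{[\bar t,\bar t+\epsilon]}dt$''--type terms appear. First I would record the structural consequences of $\tilde b_2(t,x,\tilde\Gamma,u)=\tilde\Gamma\sigma_3^{-1}(t)b_2(t,x,u)$ and $f_4(t,x,\tilde\Gamma,e)=\tilde\Gamma(\lambda(t,x,e)-1)$: the derivatives in the $\tilde\Gamma$--slot, $\tilde b_{2\tilde\Gamma}=\sigma_3^{-1}b_2$, $\tilde b_{2\tilde\Gamma\tilde\Gamma}=0$, $f_{4\tilde\Gamma}=\lambda-1$, $f_{4\tilde\Gamma\tilde\Gamma}=0$, are \emph{bounded} (by {\bf (A1)}: $b_2$ bounded and $\lambda\in[l,1)$), whereas the $x$--derivatives $\tilde b_{2x}=\tilde\Gamma\sigma_3^{-1}b_{2x}$, $\tilde b_{2xx}=\tilde\Gamma\sigma_3^{-1}b_{2xx}$, $f_{4x}=\tilde\Gamma\lambda_x$, $f_{4xx}=\tilde\Gamma\lambda_{xx}$ are bounded only up to the multiplicative factor $\tilde\Gamma$, hence lie in every $L^p$ (uniformly in $\epsilon$) once evaluated along $\bar{\tilde\Gamma}$, $\tilde\Gamma^\epsilon$ or an intermediate density, by Lemma \ref{Lemma41} and the exponential form \eqref{RN2}. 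I would also use the pointwise bounds $|\delta\tilde b_2(t)|\le C|\bar{\tilde\Gamma}_t|$, $|\delta\tilde b_{2x}(t)|\le C|\bar{\tilde\Gamma}_t|$ and $|\delta\tilde b_{2\tilde\Gamma}(t)|\le C$.

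For the first two estimates I would apply B--D--G together with a Gronwall argument to the linear SDEPs for $\tilde\Gamma^1$ and $\tilde\Gamma^2$, interposing H\"older's inequality to separate the $L^p$ coefficients $\tilde b_{2x},f_{4x}$ from the rest. The only inhomogeneous datum in the $\tilde\Gamma^1$--equation is the spike $\delta\tilde b_2(t)\mathbbm{1}_{[\bar t,\bar t+\epsilon]}$ inside the $dW^2$--integral (plus the linear-in-$x^1$ jump term), and using $E[\sup_{0\le t\le T}|x^1_t|^\beta]=O(\epsilon^{\beta/2})$ from Lemma \ref{lemma32} one gets $E[\sup_{0\le t\le T}|\tilde\Gamma^1_t|^\beta]\le C\epsilon^{\beta/2}E[\sup_{0\le t\le T}|\bar{\tilde\Gamma}_t|^\beta]+O(\epsilon^{\beta/2})=O(\epsilon^{\beta/2})$. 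For $\tilde\Gamma^2$ the inhomogeneous data are the quadratic forms $\frac12[x^1_t,\tilde\Gamma^1_t]D^2\tilde b_2(t)[x^1_t,\tilde\Gamma^1_t]^\top$, $\frac12[x^1_{t-},\tilde\Gamma^1_{t-}]D^2f_4(t,e)[x^1_{t-},\tilde\Gamma^1_{t-}]^\top$ (each $O(\epsilon^\beta)$ after a H\"older split, $x^1$ and $\tilde\Gamma^1$ being $O(\sqrt\epsilon)$ in every $L^\beta$ and the unbounded $xx$--entries carrying only the $L^p$ factor $\bar{\tilde\Gamma}$), together with $\delta\tilde b_{2x}(t)x^1_t\mathbbm{1}_{[\bar t,\bar t+\epsilon]}$ and $\delta\tilde b_{2\tilde\Gamma}(t)\tilde\Gamma^1_t\mathbbm{1}_{[\bar t,\bar t+\epsilon]}$ inside the $dW^2$--integral, which contribute $C\epsilon^{\beta/2}\cdot O(\epsilon^{\beta/2})=O(\epsilon^\beta)$; hence $E[\sup_{0\le t\le T}|\tilde\Gamma^2_t|^\beta]=O(\epsilon^\beta)$.

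For the remainders I set $\Pi^{2,\epsilon}_t:=\tilde\Gamma^\epsilon_t-\bar{\tilde\Gamma}_t-\tilde\Gamma^1_t$ and $\Pi^{3,\epsilon}_t:=\tilde\Gamma^\epsilon_t-\bar{\tilde\Gamma}_t-\tilde\Gamma^1_t-\tilde\Gamma^2_t$, and I would derive the linear SDEPs they satisfy by the first-- resp. second--order Taylor expansion of $\tilde b_2,f_4$ around $(\bar x_t,\bar{\tilde\Gamma}_t,\bar u_t)$, using the difference--quotient notation of Lemma \ref{lemma33}. In the $\Pi^{3,\epsilon}$--equation the homogeneous coefficient of $\Pi^{3,\epsilon}$ is $\tilde b_{2\tilde\Gamma}$ in the $dW^2$--term and $f_{4\tilde\Gamma}$ in the $\tilde N_2$--term, both bounded, so the linear $L^2$ a priori estimate of Section 2 applies; after the rearrangements $\tilde b_{2x}(x^\epsilon-\bar x)-\tilde b_{2x}(x^1+x^2)=\tilde b_{2x}\xi^{3,\epsilon}$ and its analogues, the inhomogeneous data reduce to $\tilde b_{2x}\xi^{3,\epsilon}$, $f_{4x}\xi^{3,\epsilon}$ (an $L^p$ coefficient times the state second--order remainder), to the spike--modulated terms $\delta\tilde b_{2x}\mathbbm{1}_{[\bar t,\bar t+\epsilon]}\xi^{2,\epsilon}$, $\delta\tilde b_{2\tilde\Gamma}\mathbbm{1}_{[\bar t,\bar t+\epsilon]}\Pi^{2,\epsilon}$ and their analogues, and to second--order Taylor remainders controlled by $|\xi^{2,\epsilon}|$ and by Hessian differences, each of which is $o(\epsilon^2)$ by the same H\"older splits as in Lemma \ref{lemma33} combined with the higher--moment versions of the estimates for $x^1,x^2,\xi^{2,\epsilon},\xi^{3,\epsilon}$ (available by running the arguments of Lemma \ref{lemma32}--\ref{lemma33} at a larger $\beta$) and with Lemma \ref{Lemma41}; this gives $E[\sup_{0\le t\le T}|\Pi^{3,\epsilon}_t|^2]=o(\epsilon^2)$. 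The third estimate then follows at once from the triangle inequality $E[\sup_{0\le t\le T}|\Pi^{2,\epsilon}_t|^2]\le 2E[\sup_{0\le t\le T}|\tilde\Gamma^2_t|^2]+2E[\sup_{0\le t\le T}|\Pi^{3,\epsilon}_t|^2]=O(\epsilon^2)+o(\epsilon^2)$.

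The hard part will be the unboundedness of the $x$--derivatives $\tilde b_{2x}=\tilde\Gamma\sigma_3^{-1}b_{2x}$, $\tilde b_{2xx}$, $f_{4x}$, $f_{4xx}$: each time one of them multiplies a remainder term --- most notably $\tilde b_{2x}\xi^{3,\epsilon}$ in the $\Pi^{3,\epsilon}$--equation --- the naive $L^2$ bound fails, and one must interpose H\"older's inequality to pair the all--order moment bound $\tilde\Gamma\in\bigcap_{\beta\ge1}S^\beta[0,T]$ of Lemma \ref{Lemma41} against a \emph{strictly higher} moment of the corresponding state remainder; checking that the moment orders one invokes are precisely those furnished by (the higher--$\beta$ refinements of) Lemma \ref{lemma32}--\ref{lemma33} --- where, thanks to the STW20 spike--variation device, the adjoint--$q$ contributions are $o(\epsilon^{\beta/2})$ by dominated convergence and therefore harmless at the orders needed --- is the only genuinely delicate bookkeeping. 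Everything else is a routine transcription of the moment computations already done for the state variational system, and it is precisely the boundedness of $\tilde b_{2\tilde\Gamma}$ and $f_{4\tilde\Gamma}$ (from {\bf (A1)}) that keeps the linear a priori estimates legitimate throughout.
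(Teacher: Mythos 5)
The paper states Lemma \ref{lemma42} without giving a proof, so there is nothing to compare against directly; judged on its own terms, your argument has one concrete gap. In listing the inhomogeneous data of the $\tilde{\Gamma}^2$--equation you omit the forcing terms $\tilde{b}_{2x}(t)x^2_t$ (in the $dW^2$--coefficient) and $f_{4x}(t,e)x^2_{t-}$ (in the $\tilde{N}_2$--coefficient). These cannot be dropped: $x^2$ enters the $\tilde{\Gamma}^2$--equation as an exogenous input, and the only bound available for it is the one from Lemma \ref{lemma33}, namely $E[\sup_{0\le t\le T}|x^2_t|^\beta]=o(\epsilon^{\beta/2})$ --- the paper explicitly remarks that this cannot be pushed to $O(\epsilon^\beta)$ because of the $E\big[\big(\int_0^Tq^i_t\delta\sigma_i(t)\mathbbm{1}_{[\bar{t},\bar{t}+\epsilon]}dt\big)^2\big]$ term. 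After the H\"older split against the $L^p$--factor $\bar{\tilde{\Gamma}}$, these terms therefore contribute only $o(\epsilon^{\beta/2})$, so your argument yields $E[\sup_{0\le t\le T}|\tilde{\Gamma}^2_t|^\beta]=o(\epsilon^{\beta/2})$ rather than the asserted $O(\epsilon^\beta)$. This propagates: your triangle--inequality derivation of the third identity then gives, with your $\Pi^{2,\epsilon}_t:=\tilde{\Gamma}^\epsilon_t-\bar{\tilde{\Gamma}}_t-\tilde{\Gamma}^1_t$ and $\Pi^{3,\epsilon}_t:=\Pi^{2,\epsilon}_t-\tilde{\Gamma}^2_t$, only $E[\sup_t|\Pi^{2,\epsilon}_t|^2]\le 2E[\sup_t|\tilde{\Gamma}^2_t|^2]+2E[\sup_t|\Pi^{3,\epsilon}_t|^2]=o(\epsilon)+o(\epsilon^2)=o(\epsilon)$, not $O(\epsilon^2)$; a direct estimate of $\Pi^{2,\epsilon}$ fares no better because its equation contains $\tilde{b}^\epsilon_{2x}\xi^{2,\epsilon}$ and Lemma \ref{lemma32} only provides $E[\sup_t|\xi^{2,\epsilon}_t|^4]=o(\epsilon^2)$. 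So as written you do not reach the orders $O(\epsilon^\beta)$ and $O(\epsilon^2)$ claimed in the second and third identities; what you do reach, $o(\epsilon^{\beta/2})$ and $o(\epsilon)$, is all that the subsequent variational inequality actually uses, but it is a weaker statement than the one you set out to prove.

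A secondary soft spot, which you flag yourself: handling $\tilde{b}_{2x}\xi^{3,\epsilon}$ and $f_{4x}\xi^{3,\epsilon}$ in the $\Pi^{3,\epsilon}$--equation requires pairing the all--moment bound on $\bar{\tilde{\Gamma}}$ against a strictly--higher--than--second moment of $\xi^{3,\epsilon}$, whereas Lemma \ref{lemma33} records only $E[\sup_t|\xi^{3,\epsilon}_t|^2]=o(\epsilon^2)$; you would need to actually establish the $L^{2+\delta}$ refinement before invoking it. The remainder of the plan --- exploiting the boundedness of $\tilde{b}_{2\tilde{\Gamma}}=\sigma_3^{-1}b_2$ and $f_{4\tilde{\Gamma}}=\lambda-1$ so that the homogeneous linear estimates apply, the $O(\epsilon^{\beta/2})$ bound for $\tilde{\Gamma}^1$, and the H\"older treatment of the quadratic forms and spike terms --- is sound.
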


\subsection{Adjoint equations and the maximum principle}

Let $\tilde{l}(t,\Theta^u(t),\tilde{\Gamma}^u_t,u_t):=\tilde{\Gamma}^u_tl(t,\Theta^u(t),u_t),\
\tilde{\Phi}(x_T^u,\tilde{\Gamma}^u_T):=\tilde{\Gamma}^u_T\Phi(x_T^u)$,
and the cost functional \eqref{CCFF} can be rewritten as
\begin{equation}\label{costf1}
J(u)=E\bigg\{\int_0^T\tilde{l}\big(t,\Theta^{u}(t),\tilde{\Gamma}^u_t,u_t\big)dt
+\tilde{\Phi}(x_T^u,\tilde{\Gamma}^u_T)+\Gamma(y_0^u)\bigg\}.
\end{equation}
Therefore, the expansion of $\tilde{l},\tilde{\Phi}$ and $\Gamma$ can be given as:
\begin{equation*}
\begin{aligned}
&\tilde{l}\big(t,x^\epsilon_t,y^\epsilon_t,z^{1,\epsilon}_t,z^{2,\epsilon}_t,\tilde{z}^{1,\epsilon}_{(t,e)},\tilde{z}^{2,\epsilon}_{(t,e)},\tilde{\Gamma}_t^\epsilon,u^\epsilon_t\big)
-\tilde{l}(t)\\
&=\tilde{l}_x(t)(x_t^1+x_t^2)+\tilde{l}_y(t)(y_t^1+y_t^2)+\sum_{i=1}^2\bigg[\tilde{l}_{z^i}(t)(z_t^{i,1'}+z_t^{i,2})+\tilde{l}_{\tilde{z}^i}(t)\int_{\mathcal{E}_i}(\tilde{z}_{(t,e)}^{i,1}+\tilde{z}_{(t,e)}^{i,2})\nu_i(de)\bigg]\\
&\quad +\frac{1}{2}\chi(t)D^2\tilde{l}(t)\chi(t)^\top+\tilde{l}_{\tilde{\Gamma}}(t)(\tilde{\Gamma}_t^1+\tilde{\Gamma}_t^2)+\delta \tilde{l}(t,\Delta^1,\Delta^2)\mathbbm{1}_{[\bar{t},\bar{t}+\epsilon]}+o(\epsilon),\\
&\tilde{\Phi}(x_T^\epsilon,\tilde{\Gamma}^\epsilon_T)-\tilde{\Phi}(\bar{x}_T,\bar{\tilde{\Gamma}}_T)\\
&=\tilde{\Phi}_x(\bar{x}_T,\bar{\tilde{\Gamma}}_T)(x^1_T+x^2_T)+\tilde{\Phi}_{\tilde{\Gamma}}(\bar{x}_T,\bar{\tilde{\Gamma}}_T)(\tilde{\Gamma}_T^1+\tilde{\Gamma}_T^2)
 +\frac{1}{2}\big[x^1_T,\tilde{\Gamma}_T^1\big]D^2\tilde{\Phi}(T)\big[x^1_T,\tilde{\Gamma}_T^1\big]^\top+o(\epsilon),\\
&\Gamma(y^\epsilon_0)-\Gamma(\bar{y}_0)=\Gamma_y(\bar{y}_0)(y^1_0+y^2_0)+\frac{1}{2}\Gamma_{yy}(\bar{y}_0)(y^1_0)^2+o(\epsilon),
\end{aligned}
\end{equation*}
where $\chi(t)=\big[x_t^1,y_t^1,z_t^{1,1'},z_t^{2,1'},\int_{\mathcal{E}_1}\tilde{z}_{(t,e)}^{1,1}\nu_1(de),\int_{\mathcal{E}_2}\tilde{z}_{(t,e)}^{2,1}\nu_2(de),\tilde{\Gamma}_t^1\big]$. Then the expansion of cost functional \eqref{costf1} is given by
\begin{equation}\label{costf2}
\begin{aligned}
&J(u^\epsilon)-J(\bar{u})=E\bigg\{\int_0^T\bigg[\tilde{l}_x(t)(x_t^1+x_t^2)+\tilde{l}_{\tilde{\Gamma}}(t)(\tilde{\Gamma}_t^1+\tilde{\Gamma}_t^2)+\tilde{l}_y(t)(y_t^1+y_t^2)\\
&\qquad+\sum^2_{i=1}\tilde{l}_{z^i}(t)(z_t^{i,1'}+z_t^{i,2})+\sum^2_{i=1}\tilde{l}_{\tilde{z}^i}(t)\int_{\mathcal{E}_i}(\tilde{z}_{(t,e)}^{i,1}+\tilde{z}_{(t,e)}^{i,2})\nu_i(de)
 +\frac{1}{2}\chi(t)D^2\tilde{l}(t)\chi(t)^\top\\
 &\qquad+\delta \tilde{l}(t,\Delta^1,\Delta^2)\mathbbm{1}_{[\bar{t},\bar{t}+\epsilon]}\bigg]dt+\tilde{\Phi}_x(\bar{x}_T,\bar{\tilde{\Gamma}}_T)(x^1_T+x^2_T)
 +\tilde{\Phi}_{\tilde{\Gamma}}(\bar{x}_T,\bar{\tilde{\Gamma}}_T)(\tilde{\Gamma}_T^1+\tilde{\Gamma}_T^2)\\
 &\qquad+\frac{1}{2}\big[x^1_T,\tilde{\Gamma}_T^1\big]D^2\tilde{\Phi}(T)\big[x^1_T,\tilde{\Gamma}_T^1\big]^\top
 +\Gamma_y(\bar{y}_0)(y^1_0+y^2_0)+\frac{1}{2}\Gamma_{yy}(\bar{y}_0)(y^1_0)^2\bigg\}+o(\epsilon).
\end{aligned}
\end{equation}

Then we introduce the new adjoint equations for $x^1+x^2,y^1+y^2,z^{1,1}+z^{1,2},z^{2,1}+z^{2,2},\tilde{z}^{1,1}+\tilde{z}^{1,2},\tilde{z}^{2,1}+\tilde{z}^{2,2}$ and $\tilde{\Gamma}^1+\tilde{\Gamma}^2$:
\begin{equation}\label{hr}
\left\{
\begin{aligned}
 dh_t&=\big[\tilde{l}_y(t)+h_tg_y(t)\big]dt+\sum^2_{i=1}\big[\tilde{l}_{z^i}(t)+h_tg_{z^i}(t)\big]dW^i_t\\
     &\quad+\sum^2_{i=1}\int_{\mathcal{E}_i}\big[\tilde{l}_{\tilde{z}^i}(t)+h_{t-}g_{\tilde{z}^i}(t)\big]\tilde{N}_i(de,dt),\\
-dr_t&=\bigg[\tilde{l}_{\tilde{\Gamma}}(t)+s^2_t\tilde{b}_{2\tilde{\Gamma}}+\int_{\mathcal{E}_2}\mathbb{E}_2\big[f_{4\tilde{\Gamma}}(t,e)|\mathcal{P}\otimes\mathcal{B}({\mathcal{E}_2})\big]\tilde{s}^2_{(t,e)}\nu_2(de)\bigg]dt\\
     &\quad -\sum^2_{i=1}s^i_tdW^i_t-\sum^2_{i=1}\int_{\mathcal{E}_i}\tilde{s}^i_{(t,e)}\tilde{N}_i(de,dt),\\
  h_0&=\Gamma_y(\bar{y}_0),\quad r_T=\tilde{\Phi}_{\tilde{\Gamma}}(\bar{x}_T,\bar{\tilde{\Gamma}}_T),
\end{aligned}
\right.
\end{equation}
\begin{equation}\label{m}
\left\{
\begin{aligned}
-dm_t&=\bigg[\tilde{l}_x(t)+h_tg_x(t)+m_t\tilde{b}_{1x}(t)+\sum^2_{i=1}n^i_t\sigma_{ix}(t)+s^2_t\tilde{b}_{2x}(t)\\
     &\qquad +\sum^2_{i=1}\int_{\mathcal{E}_i}\mathbb{E}_i\big[f_{ix}(t,e)|\mathcal{P}\otimes\mathcal{B}({\mathcal{E}_i})\big]\tilde{n}^i_{(t,e)}\nu_i(de)\\
     &\qquad +\int_{\mathcal{E}_2}\mathbb{E}_2\big[f_{4x}(t,e)|\mathcal{P}\otimes\mathcal{B}({\mathcal{E}_2})\big]\tilde{s}^2_{(t,e)}\nu_2(de)\bigg]dt\\
     &\quad -\sum^2_{i=1}n^i_tdW^i_t-\sum^2_{i=1}\int_{\mathcal{E}_i}\tilde{n}^i_{(t,e)}\tilde{N}_i(de,dt),\quad t\in[0,T]\\
 m_T&=\tilde{\Phi}_x(\bar{x}_T,\bar{\tilde{\Gamma}}_T)+h_T\phi_x(\bar{x}_T),
\end{aligned}
\right.
\end{equation}
with unique solution $(h,m,n^1,n^2,\tilde{n}^1,\tilde{n}^2,r,s^1,s^2,\tilde{s}^1,\tilde{s}^2)\in \mathcal{M}^\beta[0,T]\times \mathcal{N}^\beta[0,T]$. Applying It\^{o}'s formula to $m_t(x^1_t+x^2_t)-h_t(y_t^1+y_t^2)+r_t(\tilde{\Gamma}_t^1+\tilde{\Gamma}_t^2)$, and by \eqref{costf2}, we have
\begin{equation}\label{costf3}
\begin{aligned}
&J(u^\epsilon)-J(\bar{u})=E\bigg\{\int_0^T\bigg[\Big(-\sum_{i=1}^2\bigg[\tilde{l}_{z^i}p_t\delta\sigma_i(t)
 +h_tg_{z^i}p_t\delta\sigma_i(t)-n^i_t\delta \sigma_i(t)\bigg]\\
&+h_t\delta g(t,\Delta^1,\Delta^2)+\delta \tilde{l}(t,\Delta^1,\Delta^2)+m_t\delta \tilde{b}_1+s^2_t\delta\tilde{b}_2\Big)\mathbbm{1}_{[\bar{t},\bar{t}+\epsilon]}+\frac{1}{2}m_t\tilde{b}_{1xx}(x^1_t)^2\\
&+\sum_{i=1}^2\bigg[\frac{1}{2}n^i_t\sigma_{ixx}(x^1_t)^2+n^i_t\delta\sigma_{ix}x^1_t\mathbbm{1}_{[\bar{t},\bar{t}+\epsilon]}\bigg]+\frac{1}{2}s^2_t[x^1_t,\tilde{\Gamma}^1_t]D^2\tilde{b}_2(t)[x^1_t,\tilde{\Gamma}^1_t]^\top\\
&+s^2_t\delta\tilde{b}_{2x}x^1_t\mathbbm{1}_{[\bar{t},\bar{t}+\epsilon]}
+s^2_t\delta\tilde{b}_{2\tilde{\Gamma}}\tilde{\Gamma}^1_t\mathbbm{1}_{[\bar{t},\bar{t}+\epsilon]}+\frac{1}{2}\chi(t) D^2\tilde{l}(t)\chi(t)^\top+\frac{1}{2}h_t\Xi(t) D^2g(t)\Xi(t)^\top\\
&+\frac{1}{2}\sum_{i=1}^2\int_{\mathcal{E}_i}\mathbb{E}_i\big[f_{ixx}(t,e)\big|\mathcal{P}\otimes\mathcal{B}(\mathcal{E}_i)\big]\tilde{n}^i_{(t,e)}\nu_i(de)(x^1_t)^2\\
&+\frac{1}{2}\int_{\mathcal{E}_2}[x^1_t,\tilde{\Gamma}^1_t]\mathbb{E}_2\big[D^2f_4(t,e)\big|\mathcal{P}\otimes\mathcal{B}(\mathcal{E}_2)\big]
[x^1_t,\tilde{\Gamma}^1_t]^\top\tilde{s}^2_{(t,e)}\nu_2(de)\bigg]dt\\
&+\frac{1}{2}h_T\phi_{xx}(\bar{x}_T)(x_T^1)^2+\frac{1}{2}\tilde{\Phi}_{xx}(\bar{x}_T,\bar{\tilde{\Gamma}}_T)(x^1_T)^2
+\tilde{\Phi}_{x\tilde{\Gamma}}(\bar{x}_T,\bar{\tilde{\Gamma}}_T)x^1_T\tilde{\Gamma}_T^1\bigg\}+o(\epsilon).
\end{aligned}
\end{equation}
Then we apply It\^{o}'s formula to $x^1_t\tilde{\Gamma}^1_t$ to obtain
\begin{equation*}
\begin{aligned}
d(x^1_t\tilde{\Gamma}^1_t)&=\bigg[\tilde{b}_{1x}(t)x^1_t\tilde{\Gamma}^1_t+\sigma_{2x}(t)\tilde{b}_{2x}(t)(x^1_t)^2+\sigma_{2x}(t)\tilde{b}_{2\tilde{\Gamma}}x^1_t\tilde{\Gamma}^1_t
 +\sigma_{2x}(t)\delta \tilde{b}_2(t)x^1_t\mathbbm{1}_{[\bar{t},\bar{t}+\epsilon]}\\
&\qquad +\tilde{b}_{2x}(t)\delta\sigma_2(t)x^1_t\mathbbm{1}_{[\bar{t},\bar{t}+\epsilon]}+\tilde{b}_{2\tilde{\Gamma}}(t)\delta\sigma_2(t)\tilde{\Gamma}^1_t\mathbbm{1}_{[\bar{t},\bar{t}+\epsilon]}
 +\delta\sigma_2(t)\delta\tilde{b}_2(t)\mathbbm{1}_{[\bar{t},\bar{t}+\epsilon]}\\
&\qquad +\int_{\mathcal{E}_2}\mathbb{E}_2\big[f_{2x}(t,e)f_{4x}(t,e)|\mathcal{P}\otimes\mathcal{B}(\mathcal{E}_2)\big]\nu_2(de)(x^1_t)^2\\
&\qquad +\int_{\mathcal{E}_2}\mathbb{E}_2\big[f_{2x}(t,e)f_{4\tilde{\Gamma}}(t,e)|\mathcal{P}\otimes\mathcal{B}(\mathcal{E}_2)\big]\nu_2(de)x^1_t\tilde{\Gamma}^1_t\bigg]dt\\
&\quad +\big[\sigma_{1x}x^1_t\tilde{\Gamma}^1_t+\delta\sigma_1(t)\tilde{\Gamma}^1_t\mathbbm{1}_{[\bar{t},\bar{t}+\epsilon]}\big]dW^1_t
 +\big[\tilde{b}_{2x}(x^1_t)^2+\tilde{b}_{2\tilde{\Gamma}}x^1_t\tilde{\Gamma}^1_t+\delta\tilde{b}_2(t)x^1_t\mathbbm{1}_{[\bar{t},\bar{t}+\epsilon]}\\
 &\qquad +\sigma_{2x}x^1_t\tilde{\Gamma}^1_t+\delta\sigma_2(t)\tilde{\Gamma}^1_t\mathbbm{1}_{[\bar{t},\bar{t}+\epsilon]}\big]dW^2_t+\int_{\mathcal{E}_1}f_{1x}(t,e)x^1_{t-}\tilde{\Gamma}^1_{t-}\tilde{N}_1(de,dt)\\
&\quad +\int_{\mathcal{E}_2}\Big[f_{4x}(t,e)(x^1_{t-})^2+f_{4\tilde{\Gamma}}(t,e)x^1_{t-}\tilde{\Gamma}^1_{t-}+f_{2x}(t,e)x^1_{t-}\tilde{\Gamma}^1_{t-}\\
&\qquad +f_{2x}(t,e)f_{4x}(t,e)(x^1_{t-})^2+f_{2x}(t,e)f_{4\tilde{\Gamma}}(t,e)x^1_{t-}\tilde{\Gamma}^1_{t-}\Big]\tilde{N}_2(de,dt).
\end{aligned}
\end{equation*}
We continue to give its adjoint equation as follows:
\begin{equation}\label{alpha}
\left\{
\begin{aligned}
-d\alpha_t&=\bigg[l_x(t)+l_y(t)p_t+\sum^2_{i=1}\Big[l_{z^i}k_1^i(t)+l_{\tilde{z}^i}\int_{\mathcal{E}_i}k_2^i(t,e)\nu_i(de)+\sigma_{ix}\beta^i_t\Big]\\
          &\qquad +s^2_t\tilde{b}_{2x\tilde{\Gamma}}(t)+\tilde{b}_{1x}\alpha_t+\sigma_{2x}(t)\tilde{b}_{2\tilde{\Gamma}}(t)\alpha_t+\tilde{b}_{2\tilde{\Gamma}}(t)\beta^2_t\\
          &\qquad +\int_{\mathcal{E}_1}\mathbb{E}_1\big[f_{1x}(t,e)|\mathcal{P}\otimes\mathcal{B}(\mathcal{E}_1)\big]\nu_1(de)
           +\int_{\mathcal{E}_2}\Big[\mathbb{E}_2\big[f_{4x\tilde{\Gamma}}(t,e)\big|\mathcal{P}\otimes\mathcal{B}(\mathcal{E}_2)\big]\tilde{s}^2_{(t,e)}\\
          &\qquad +\mathbb{E}_2\big[f_{4\tilde{\Gamma}}(t,e)+f_{2x}(t,e)f_{4\tilde{\Gamma}}(t,e)+f_{2x}(t,e)\big|\mathcal{P}\otimes\mathcal{B}(\mathcal{E}_2)\big]\tilde{\beta}^2_{(t,e)}\Big]\nu_2(de)\\
          &\qquad +\int_{\mathcal{E}_2}\mathbb{E}_2\big[f_{2x}(t,e)f_{4\tilde{\Gamma}}(t,e)\big|\mathcal{P}\otimes\mathcal{B}(\mathcal{E}_2)\big]\nu_2(de)\alpha_t\bigg]dt\\
          &\quad -\sum^2_{i=1}\beta^i_tdW^i_t-\sum^2_{i=1}\int_{\mathcal{E}_i}\tilde{\beta}^i_{(t,e)}\tilde{N}_i(de,dt),\quad t\in[0,T],\\
  \alpha_T&=\tilde{\Phi}_{x\tilde{\Gamma}}(\bar{x}_T,\bar{\tilde{\Gamma}}_T),
\end{aligned}
\right.
\end{equation}
which admits a unique solution $(\alpha,\beta^1,\beta^2,\tilde{\beta}^1,\tilde{\beta}^2)\in \mathcal{N}^\beta[0,T]$.
Then applying It\^{o}'s formula to $\alpha_tx^1_t\tilde{\Gamma}^1_t$, and combining with \eqref{costf3}, we get
\begin{equation*}
\begin{aligned}
&J(u^\epsilon)-J(\bar{u})=E\bigg\{\int_0^T\bigg[\Big(-\sum_{i=1}^2\bigg[\tilde{l}_{z^i}p_t\delta\sigma_i(t)+h_tg_{z^i}p_t\delta\sigma_i(t)-n^i_t\delta \sigma_i(t)\bigg]\\
&\quad +h_t\delta g(t,\Delta^1,\Delta^2)+\delta \tilde{l}(t,\Delta^1,\Delta^2)+m_t\delta \tilde{b}_1+s^2_t\delta\tilde{b}_2\Big)\mathbbm{1}_{[\bar{t},\bar{t}+\epsilon]}+\frac{1}{2}m_t\tilde{b}_{1xx}(x^1_t)^2\\
&\quad +\sum_{i=1}^2\bigg[\frac{1}{2}n^i_t\sigma_{ixx}(x^1_t)^2+n^i_t\delta\sigma_{ix}x^1_t\mathbbm{1}_{[\bar{t},\bar{t}+\epsilon]}\bigg]
+\frac{1}{2}s^2_t\tilde{b}_{2xx}(t)(x^1_t)^2+s^2_t\delta\tilde{b}_{2x}x^1_t\mathbbm{1}_{[\bar{t},\bar{t}+\epsilon]}\\
\end{aligned}
\end{equation*}
\begin{equation}\label{costf4}
\begin{aligned}
&+\frac{1}{2}\sum_{i=1}^2\int_{\mathcal{E}_i}\mathbb{E}_i\big[f_{ixx}(t,e)\big|\mathcal{P}\otimes\mathcal{B}(\mathcal{E}_i)\big]\tilde{n}^i_{(t,e)}\nu_1(de)(x^1_t)^2
 +s^2_t\delta\tilde{b}_{2\tilde{\Gamma}}\tilde{\Gamma}^1_t\mathbbm{1}_{[\bar{t},\bar{t}+\epsilon]}\\
&+\frac{1}{2}\int_{\mathcal{E}_2}\mathbb{E}_2\Big[f_{4xx}(t,e)\Big|\mathcal{P}\otimes\mathcal{B}(\mathcal{E}_2)\Big]\tilde{s}^2_{(t,e)}\nu_2(de)(x^1_t)^2+\frac{1}{2}(x^1_t)^2\Psi^0(t) D^2\tilde{l}(t)\Psi^0(t)^\top\\
&+\frac{1}{2}(x^1_t)^2h_t\Psi(t) D^2g(t)\Psi(t)^\top+\sigma_{2x}(t)\tilde{b}_{2x}(t)\alpha_t(x^1_t)^2+\sigma_{2x}(t)\alpha_tx^1_t\delta\tilde{b}_2\mathbbm{1}_{[\bar{t},\bar{t}+\epsilon]}\\
&+\tilde{b}_{2x}\alpha_tx^1_t\delta\sigma_2\mathbbm{1}_{[\bar{t},\bar{t}+\epsilon]}
 +\tilde{b}_{2\tilde{\Gamma}}\alpha_t\tilde{\Gamma}^1_t\delta\sigma_2\mathbbm{1}_{[\bar{t},\bar{t}+\epsilon]}+\delta\sigma_2(t)\delta\tilde{b}_2\alpha_t\mathbbm{1}_{[\bar{t},\bar{t}+\epsilon]}\\
&+\int_{\mathcal{E}_2}\mathbb{E}_2\big[f_{2x}(t,e)f_{4x}(t,e)|\mathcal{P}\otimes\mathcal{B}(\mathcal{E}_2)\big]\nu_2(de)(x^1_t)^2+\delta\sigma_1(t)\beta^1_t\tilde{\Gamma}^1_t\mathbbm{1}_{[\bar{t},\bar{t}+\epsilon]}
 +\tilde{b}_{2x}\beta^2_t(x^1_t)^2\\
&+\delta\tilde{b}_2\beta^2_tx^1_t\mathbbm{1}_{[\bar{t},\bar{t}+\epsilon]}+\delta\sigma_2(t)\beta^2_t\tilde{\Gamma}^1_t\mathbbm{1}_{[\bar{t},\bar{t}+\epsilon]}
 +\int_{\mathcal{E}_2}\mathbb{E}_2\big[f_{4x}(t,e)\tilde{\beta}^2_{(t,e)}+f_{2x}(t,e)f_{4x}(t,e)\\
& \times\tilde{\beta}^2_{(t,e)}\big|\mathcal{P}\otimes\mathcal{B}(\mathcal{E}_2)\big]\nu_2(de)(x^1_t)^2\bigg]dt
 +\frac{1}{2}\big[\tilde{\Phi}_{xx}(\bar{x}_T,\bar{\tilde{\Gamma}}_T)+h_T\phi_{xx}(\bar{x}_T)\big](x^1_T)^2\bigg\}+o(\epsilon),
\end{aligned}
\end{equation}
where in \eqref{costf4} $\Psi(t):=\big[1,p,k_1^1,k_1^2,\int_{\mathcal{E}_1}k_2^1(t,e)\nu_1(de),\int_{\mathcal{E}_2}k_2^2(t,e)\nu_2(de)\big]$ and $\Psi^0(t):=\big[1,p,k_1^1,k_1^2,\\\int_{\mathcal{E}_1}k_2^1(t,e)\nu_1(de),\int_{\mathcal{E}_2}k_2^2(t,e)\nu_2(de),0\big]$.

Observing the above equation, we apply It\^{o}'s formula to $(x^1_t)^2$ to get
\begin{equation*}
\begin{aligned}
d(x^1_t)^2&=\bigg[2\tilde{b}_{1x}(t)(x^1_t)^2+\sum_{i=1}^2\big[\sigma^2_{ix}(x^1_t)^2+(\delta\sigma_i(t))^2\mathbbm{1}_{[\bar{t},\bar{t}+\epsilon]}+2\sigma_{ix}x^1_t\delta\sigma_i\mathbbm{1}_{[\bar{t},\bar{t}+\epsilon]}\big]\\
          &\qquad+\sum_{i=1}^2\int_{\mathcal{E}_i}\mathbb{E}_i\big[f_{ix}^2(t,e)\big|\mathcal{P}\otimes\mathcal{B}(\mathcal{E}_i)\big]\nu_i(de)(x^1_t)^2\bigg]dt+\sum_{i=1}^2\Big\{\big[2\sigma_{ix}(t)(x^1_t)^2\\
          &\qquad +2\delta\sigma_ix^1_t\mathbbm{1}_{[\bar{t},\bar{t}+\epsilon]}\big]dW^i_t+\int_{\mathcal{E}_i}\big[2f_{ix}(t,e)+f_{ix}^2(t,e)\big](x^1_{t-})^2\tilde{N}_i(de,dt)\Big\},\\
\end{aligned}
\end{equation*}
then we introduce the adjoint equation for $(x^1_t)^2$ as follows
\begin{equation}\label{P3}
\left\{
\begin{aligned}
&-dP_t=\bigg\{\Psi^0(t)D^2\tilde{l}(t)\Psi^0(t)^\top+s^2_t\tilde{b}_{2xx}(t)+2\sigma_{2x}(t)\tilde{b}_{2x}(t)\alpha_t+h_t\Psi(t)D^2g(t)\Psi(t)^\top\\
&\quad +m_t\tilde{b}_{1xx}(t)+2\tilde{b}_{2x}(t)\beta^2_t+2\tilde{b}_{1x}(t)P_t+\sum^2_{i=1}\big[n^i_t\sigma_{ixx}(t)+2\sigma_{ix}(t)Q^i_t+\sigma^2_{ix}(t)P_t\big]\\
&\quad +\sum^2_{i=1}\int_{\mathcal{E}_i}\Big[\mathbb{E}_i\big[f_{ixx}(t,e)\big|\mathcal{P}\otimes\mathcal{B}(\mathcal{E}_i)\big]\tilde{n}^i_{(t,e)}+\mathbb{E}_i\big[2f_{ix}(t,e)+f^2_{ix}(t,e)\big|\mathcal{P}\otimes\mathcal{B}(\mathcal{E}_i)\big]\tilde{Q}^i_{(t,e)}\\
&\quad +\mathbb{E}_i\big[f^2_{ix}(t,e)\big|\mathcal{P}\otimes\mathcal{B}(\mathcal{E}_i)\big]P_t\Big]\nu_i(de)+\int_{\mathcal{E}_2}\Big[\mathbb{E}_2\big[f_{4xx}(t,e)\big|\mathcal{P}\otimes\mathcal{B}(\mathcal{E}_2)\big]\tilde{s}^2_{(t,e)}\\
&\quad +\mathbb{E}_2\big[2f_{4x}(t,e)+2f_{2x}(t,e)f_{4x}(t,e)\big|\mathcal{P}\otimes\mathcal{B}(\mathcal{E}_2)\big]\tilde{\beta}^2_{(t,e)}\\
&\quad +\mathbb{E}_2\big[2f_{2x}(t,e)f_{4x}(t,e)\big|\mathcal{P}\otimes\mathcal{B}(\mathcal{E}_2)\big]\alpha_t\Big]\nu_2(de)\bigg\}dt\\
&\quad -\sum^2_{i=1}Q^i_tdW^i_t-\sum^2_{i=1}\int_{\mathcal{E}_i}\tilde{Q}^i_{(t,e)}\tilde{N}_i(de,dt),\quad t\in[0,T],\\
& P_T=h_T\phi_{xx}(\bar{x}_T)+\tilde{\Phi}_{xx}(\bar{x}_T,\bar{\tilde{\Gamma}}_T),
\end{aligned}
\right.
\end{equation}
which admits a unique solution $(P,Q^1,Q^2,\tilde{Q}^1,\tilde{Q}^2)\in \mathcal{N}^\beta[0,T]$.
Applying It\^{o}'s formula to $P_t(x^1_t)^2$ and combining \eqref{costf4}, we have
\begin{equation}\label{costf5}
\begin{aligned}
&J(u^\epsilon)-J(\bar{u})=E\bigg\{\int_0^T\bigg[\Big(-\sum_{i=1}^2\big[\tilde{l}_{z^i}p_t\delta\sigma_i(t)+h_tg_{z^i}p_t\delta\sigma_i(t)-n^i_t\delta \sigma_i(t)\big]\\
&\quad +h_t\delta g(t,\Delta^1,\Delta^2)+\delta \tilde{l}(t,\Delta^1,\Delta^2)+m_t\delta \tilde{b}_1+s^2_t\delta\tilde{b}_2+\delta\sigma_2(t)\delta\tilde{b}_2\alpha_t\\
&\quad +\frac{1}{2}(\delta\sigma_1(t))^2P_t+\frac{1}{2}(\delta\sigma_2(t))^2P_t\Big)\mathbbm{1}_{[\bar{t},\bar{t}+\epsilon]}+\Big(n^1_t\delta\sigma_{1x}x^1_t+n^2_t\delta\sigma_{2x}x^1_t+s^2_t\delta\tilde{b}_{2x}x^1_t\\
&\quad +s^2_t\delta\tilde{b}_{2\tilde{\Gamma}}\tilde{\Gamma}^1_t+\sigma_{2x}(t)\alpha_tx^1_t\delta\tilde{b}_2
 +\tilde{b}_{2x}\alpha_tx^1_t\delta\sigma_2+\tilde{b}_{2\tilde{\Gamma}}\alpha_t\tilde{\Gamma}^1_t\delta\sigma_2+\delta\sigma_1(t)\beta^1_t\tilde{\Gamma}^1_t+\delta\tilde{b}_2\beta^2_tx^1_t\\
&\quad +\delta\sigma_2(t)\beta^2_t\tilde{\Gamma}^1_t+\sigma_{1x}(t)x^1_tP_t\delta\sigma_1(t)+\sigma_{2x}(t)x^1_tP_t\delta\sigma_2(t)+\delta\sigma_2(t)Q^2_tx^1_t\\
&\quad +\delta\sigma_1(t)Q^1_tx^1_t\Big)\mathbbm{1}_{[\bar{t},\bar{t}+\epsilon]}\bigg]dt\bigg\}+o(\epsilon).
\end{aligned}
\end{equation}
We can check that
\begin{equation*}
\begin{aligned}
E\int_0^T&\Big(n^1_t\delta\sigma_{1x}x^1_t+n^2_t\delta\sigma_{2x}x^1_t+s^2_t\delta\tilde{b}_{2x}x^1_t+s^2_t\delta\tilde{b}_{2\tilde{\Gamma}}\tilde{\Gamma}^1_t+\sigma_{2x}\alpha_tx^1_t\delta\tilde{b}_2+\tilde{b}_{2x}\alpha_tx^1_t\delta\sigma_2\\
&+\tilde{b}_{2\tilde{\Gamma}}\alpha_t\tilde{\Gamma}^1_t\delta\sigma_2+\delta\sigma_1\beta^1_t\tilde{\Gamma}^1_t+\delta\tilde{b}_2\beta^2_tx^1_t+\delta\sigma_2\beta^2_t\tilde{\Gamma}^1_t+\sigma_{1x}x^1_tP_t\delta\sigma_1\\
&+\sigma_{2x}x^1_tP_t\delta\sigma_2+\delta\sigma_2Q^2_tx^1_t+\delta\sigma_1Q^1_tx^1_t\Big)\mathbbm{1}_{[\bar{t},\bar{t}+\epsilon]}dt=o(\epsilon),
\end{aligned}
\end{equation*}
therefore, we have
\begin{equation}\label{costf6}
\begin{aligned}
&J(u^\epsilon)-J(\bar{u})=E\bigg\{\int_0^T\Big(-\sum^2_{i=1}\Big[\tilde{l}_{z^i}p_t\delta\sigma_i(t)+h_tg_{z^i}p_t\delta\sigma_i(t)-n^i_t\delta \sigma_i(t)-\frac{1}{2}(\delta\sigma_i(t))^2P_t\Big]
\\
&+h_t\delta g(t,\Delta^1,\Delta^2)+m_t\delta \tilde{b}_1+\delta\tilde{l}(t,\Delta^1,\Delta^2)+s^2_t\delta\tilde{b}_2+\delta\sigma_2(t)\delta\tilde{b}_2\alpha_t\Big)\mathbbm{1}_{[\bar{t},\bar{t}+\epsilon]}dt\bigg\}+o(\epsilon)\geq0,
\end{aligned}
\end{equation}
i.e.,
\begin{equation}\label{mp}
\begin{aligned}
&E\Big[\sum^2_{i=1}\big[-(\tilde{l}_{z^i}+h_tg_{z^i})p_t+n^i_t\big]\delta\sigma_i(t)+h_t\delta g(t,\Delta^1,\Delta^2)+\delta\tilde{l}(t,\Delta^1,\Delta^2)+m_t\delta \tilde{b}_1+s^2_t\delta\tilde{b}_2\\
&+\delta\sigma_2(t)\delta\tilde{b}_2\alpha_t
+\sum^2_{i=1}\frac{1}{2}(\delta\sigma_i(t))^2P_t\Big|\mathcal{F}^Y_t\Big]\geq0,\ a.e.\ t\in[0,T],\ P\mbox{-}a.s.,
\end{aligned}
\end{equation}
Define the Hamiltonian function
\begin{equation}\label{HF1}
\begin{aligned}
&\mathcal{H}(t,x,y,z^1,z^2,\tilde{z}^1,\tilde{z}^2,\tilde{\Gamma};p,h,m,n^1,n^2,s^2,\alpha,P,u)\\
&:=\sum^2_{i=1}\big[-(\tilde{l}_{z^i}+h_tg_{z^i})p_t+n^i_t\big]\sigma_i(t,x,u)+\sum^2_{i=1}\Big[\frac{1}{2}\sigma^2_i(t,x,u)P_t-\sigma_i(t,x,u)\sigma_i(t)P_t\Big]\\
&\quad +h_tg\big(t,x,y,z^1+p(\sigma_1(t,x,u)-\sigma_1(t)),z^2+p(\sigma_2(t,x,u)-\sigma_2(t)),\tilde{z}^1,\tilde{z}^2,u\big)\\
&\quad +\tilde{l}\big(t,x,y,z^1+p(\sigma_1(t,x,u)-\sigma_1(t)),z^2+p(\sigma_2(t,x,u)-\sigma_2(t)),\tilde{z}^1,\tilde{z}^2,\tilde{\Gamma},u\big)\\
&\quad +m_t\tilde{b}_1(t,x,u)+s^2_t\tilde{b}_2(t,x,\tilde{\Gamma},u)+(\sigma_2(t,x,u)-\sigma_2(t))(\tilde{b}_2(t,x,\tilde{\Gamma},u)-\tilde{b}_2(t))\alpha_t,\\
\end{aligned}
\end{equation}
the maximum conditions \eqref{mp} is equivalent to
\begin{equation}\label{mp2}
\begin{aligned}
&E\big[\mathcal{H}(t,\bar{x},\bar{y},\bar{z}^1,\bar{z}^2,\bar{\tilde{z}}^1,\bar{\tilde{z}}^2,\bar{\tilde{\Gamma}};p,h,m,n^1,n^2,s^2,\alpha,P,u)-\mathcal{H}(t)\big|\mathcal{F}^Y_t\big]\geq0,\\
&\hspace{7cm} a.e.\ t\in[0,T],\ P\mbox{-}a.s.,
\end{aligned}
\end{equation}
where $\mathcal{H}(t)=\mathcal{H}(t,\bar{x},\bar{y},\bar{z}^1,\bar{z}^2,\bar{\tilde{z}}^1,\bar{\tilde{z}}^2,\bar{\tilde{\Gamma}};p,h,m,n^1,n^2,s^2,\alpha,P,\bar{u})$.

The main result in this paper is the following theorem.

\begin{theorem}\label{the31}
Let {\bf (A1)-(A3)} hold, and $\bar{u}$ be an optimal control, $(\bar{x},\bar{y},\bar{z}^1,\bar{z}^2,\bar{\tilde{z}}^1,\bar{\tilde{z}}^2,\bar{\tilde{\Gamma}})$ be the corresponding solution to the FBSDEP \eqref{stateeq1} and \eqref{Gamma000}. Then the maximum condition \eqref{mp2} holds, where $(p,q^1,q^2,\tilde{q}^1,\tilde{q}^2)$ satisfies (\ref{p}), $(h,m,n^1,n^2,\tilde{n}^1,\tilde{n}^2,r,s^1,s^2,\tilde{s}^1,\tilde{s}^2)$ satisfies (\ref{hr}),(\ref{m}), $(\alpha,\beta^1,\beta^2,\tilde{\beta}^1,\tilde{\beta}^2)$ satisfies (\ref{alpha}) and $(P,Q^1,Q^2,\tilde{Q}^1,\tilde{Q}^2)$ satisfies (\ref{P3}).
\end{theorem}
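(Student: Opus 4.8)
The plan is to assemble the ingredients established in Sections 3.1--3.4 and pass to the limit in the variational inequality. Since $\bar u$ is optimal, for every $\bar t\in[0,T]$ and every bounded $\mathcal F^Y_{\bar t}$-measurable $U$-valued $u$ the spike control $u^\epsilon$ lies in $\mathcal U_{ad}[0,T]$ and $J(u^\epsilon)-J(\bar u)\geq0$. First I would insert the first- and second-order expansions of the state trajectory, of $\tilde\Gamma$, and of the cost functional (the expansion \eqref{costf2}); these are legitimate by the $L^\beta$-estimates of Lemma \ref{lemma31}--Lemma \ref{lemma33} and Lemma \ref{lemma42}, which themselves rest on the unique solvability and $L^\beta$-estimates for the relevant (special) fully coupled FBSDEPs from Section 2.1 (Theorem \ref{the22}, Theorem \ref{the24}). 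This reduces the inequality to a linear-quadratic functional of $(x^1+x^2,y^1+y^2,z^{i,1}+z^{i,2},\tilde z^{i,1}+\tilde z^{i,2},\tilde\Gamma^1+\tilde\Gamma^2)$ together with the explicit spike terms, modulo $o(\epsilon)$.

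Next I would successively apply It\^o's formula to the duality products $m_t(x^1_t+x^2_t)-h_t(y^1_t+y^2_t)+r_t(\tilde\Gamma^1_t+\tilde\Gamma^2_t)$, then to $\alpha_t x^1_t\tilde\Gamma^1_t$, and finally to $P_t(x^1_t)^2$, invoking the adjoint equations \eqref{p}, \eqref{hr}, \eqref{m}, \eqref{alpha}, \eqref{P3} and the relations $y^1_t=p_tx^1_t$, $z^{i,1}_t=k^i_1(t)x^1_t+p_t\delta\sigma_i(t)\mathbbm{1}_{[\bar t,\bar t+\epsilon]}$, $\tilde z^{i,1}_{(t,e)}=k^i_2(t,e)x^1_{t-}$ of Lemma \ref{lemma31}. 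The adjoint generators are chosen precisely so that the unknown linear and quadratic contributions cancel, leaving only terms supported on $[\bar t,\bar t+\epsilon]$ plus ``cross'' terms of the form $(\text{spike coefficient})\times x^1_t$ or $\times\tilde\Gamma^1_t$. I would then show these cross terms are $o(\epsilon)$ by Cauchy--Schwarz and the estimates $\sup_t E|x^1_t|^\beta=O(\epsilon^{\beta/2})$, $\sup_t E|\tilde\Gamma^1_t|^\beta=O(\epsilon^{\beta/2})$, exactly as in the interior computations of the proof of Lemma \ref{lemma32}. This produces the clean inequality \eqref{costf6}.

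At this point I would divide \eqref{costf6} by $\epsilon$, let $\epsilon\downarrow0$, and use the arbitrariness of $\bar t$ together with a Lebesgue-point argument (the spike integrand being integrable in $t$) to obtain the pointwise-in-$t$ inequality; because $u$ is taken $\mathcal F^Y_{\bar t}$-measurable and the spike variation is frozen on the graphs $\llbracket T_n\rrbracket$ of the observation jump times, the limit is naturally a conditional expectation given $\mathcal F^Y_t$, which is \eqref{mp}. Rewriting \eqref{mp} through the Hamiltonian \eqref{HF1}, and noting that $\mathcal H(t,\bar x,\dots,u)-\mathcal H(t)$ reproduces the spike integrand in \eqref{costf6} once $\delta\sigma_i$ is absorbed into the shifted $z$-arguments (consistent with $\Delta^i(t)=p_t\delta\sigma_i(t)$, $\tilde\Delta^i(t)=0$), gives the maximum condition \eqref{mp2}.

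The main obstacle is the bookkeeping of the jump contributions: one must keep the spike on the \emph{punctured} interval $\mathcal O=\rrbracket\bar t,\bar t+\epsilon\rrbracket\setminus\bigcup_n\llbracket T_n\rrbracket$ so that $E\big[\big(\int_0^T\int_{\mathcal E_i}|\delta f_i|^2\mathbbm{1}_{\mathcal O}N_i(de,dt)\big)^{\beta/2}\big]$ is $o(\epsilon)$ rather than merely $O(\epsilon)$ --- this is the point of Song et al. \cite{STW20} and the reason $\tilde\Delta^i\equiv0$ --- and one must carry the predictable-projection operators $\mathbb E_i[\,\cdot\,|\mathcal P\otimes\mathcal B(\mathcal E_i)]$ through every It\^o expansion, the coefficients being only $\mathcal E_i$-progressively measurable. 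A secondary difficulty is checking that the five adjoint BSDEPs \eqref{p}, \eqref{hr}--\eqref{m}, \eqref{alpha}, \eqref{P3} are well posed in $\mathcal M^\beta[0,T]\times\mathcal N^\beta[0,T]$ with enough integrability for all the dualities and Cauchy--Schwarz splittings above; this follows from the Section 2 estimates applied to their linear/quadratic generators, but must be verified term by term.
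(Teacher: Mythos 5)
Your proposal is correct and follows essentially the same route as the paper: the spike variation on the punctured set $\mathcal{O}$, the first- and second-order expansions with their $L^\beta$-estimates, the successive It\^o dualities with $m_t(x^1_t+x^2_t)-h_t(y^1_t+y^2_t)+r_t(\tilde{\Gamma}^1_t+\tilde{\Gamma}^2_t)$, $\alpha_t x^1_t\tilde{\Gamma}^1_t$ and $P_t(x^1_t)^2$ against the adjoint equations \eqref{p}, \eqref{hr}--\eqref{m}, \eqref{alpha}, \eqref{P3}, the $o(\epsilon)$ treatment of the cross terms, and the passage from \eqref{costf6} to \eqref{mp} and \eqref{mp2}. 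The technical caveats you flag (the punctured spike killing the jump contribution so that $\tilde{\Delta}^i\equiv0$, and the predictable projections $\mathbb{E}_i[\,\cdot\,|\mathcal{P}\otimes\mathcal{B}(\mathcal{E}_i)]$ in the adjoint generators) are exactly the points the paper's derivation turns on.
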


\begin{remark}\label{rem43}
Theorem \ref{the31} is a new partially observed maximum principle which, in fact, can not be covered by any other results in the existing literature but can cover many existing results in some special cases. In fact, in (\ref{stateeq1}), (\ref{observation}) and (\ref{cf1}) of our problem,

(1)\quad considering the LQ case, if $\sigma_2=f_1=f_2=b_2=\sigma_3=f_3\equiv0$ with $z^2=\tilde{z}^1=\tilde{z}^2\equiv0$, Theorem \ref{the31} reduces to Theorem 4.1 of \cite{HJX18} if $b,\sigma$ are independent of backward components $(Y,Z)$.

Interestingly, we could find some new and important relationships when making comparisons with the existing literature.

(2)\quad If $\sigma_2=f_1=f_2=b_2=\sigma_3=f_3=l=\Phi\equiv0$, $\Gamma(y)=y$, $g$ is independent of $(z^2,\tilde{z}^1,\tilde{z}^2)$ with $z^2=\tilde{z}^1=\tilde{z}^2\equiv0$, and the Hamiltonian function \eqref{HF1} reduces to
\begin{equation}
\begin{aligned}
&\tilde{\mathcal{H}}(t,x,y,z^1;\tilde{p},h,m,n^1,\tilde{P},u)=(-h_tg_{z^1}\tilde{p}_t+n^1_t)\sigma_1(t,x,u)+m_tb_1(t,x,u)\\
&+h_tg(t,x,y,z^1+\tilde{p}(\sigma_1(t,x,u)-\sigma_1(t)),u)+\frac{1}{2}\sigma_1^2(t,x,u)\tilde{P}_t-\sigma_1(t,x,u)\sigma_1(t)\tilde{P}_t.
\end{aligned}
\end{equation}
For avoiding confusion, we reset the solutions to \eqref{p} and \eqref{P3} as $(\tilde{p},\tilde{q}^1)$ and $(\tilde{P},\tilde{Q}^1)$.
Then Theorem \ref{the31} reduces to Theorem 2 of Hu \cite{Hu17}, the global maximum principle of completely observed case for decoupled FBSDEs with the following relations hold:
\begin{equation}
\begin{aligned}
&p_t=\tilde{p}_t,\ q_t=\tilde{q}^1_t,\ p_th_t=m_t,\ h_tq_t+f_zp_th_t=n^1_t,\\
&P_th_t=\tilde{P}_t,\ h_tQ_t+f_zP_th_t=\tilde{Q}^1_t,
\end{aligned}
\end{equation}
and maximum condition have the relation
\begin{equation}
\begin{aligned}
&\frac{1}{h_t}\Big[\tilde{\mathcal{H}}(t,\bar{x},\bar{y},\bar{z}^1;\tilde{p},h,m,n^1,\tilde{P},u)-\tilde{\mathcal{H}}(t,\bar{x},\bar{y},\bar{z}^1;\tilde{p},h,m,n^1,\tilde{P},\bar{u})\Big]\\
&\quad=\mathcal{H}(t,\bar{x},\bar{y},\bar{z},u,p,q,P)-\mathcal{H}(t,\bar{x},\bar{y},\bar{z},\bar{u},p,q,P)\geq0
\end{aligned}
\end{equation}
where the generator $g=f$, $(\tilde{p}_t,\tilde{q}^1_t), h_t, (m_t,n^1_t), (\tilde{P}_t,\tilde{Q}^1_t)$ satisfy \eqref{p},\eqref{hr},\eqref{m} and \eqref{P3}, respectively. And $(p_t,q_t),(P_t,Q_t)$ satisfy (15), (16) and $\mathcal{H}$ is defined in (44) in \cite{Hu17}.

(3)\quad If $\sigma_2=f_1=f_2=b_2=\sigma_3=f_3\equiv0$, $\phi(x)=y_T$(deterministic), $l,g$ are independent of $(z^2,\tilde{z}^1,\tilde{z}^2)$ with $z^2=\tilde{z}^1=\tilde{z}^2\equiv0$, and the corresponding Hamiltonian function (with convex control domain) reduces to:
\begin{equation}
\begin{aligned}
&\mathcal{H}(t,x,y,z^1,p,h,m,n^1,u)=[-(l_{z^1}+h_tg_{z^1})p_t+n^1_t]\sigma_1(t,x,u)+m_tb_1(t,x,u)\\
&\quad+h_tg(t,x,y,z^1+p(\sigma_1(t,x,u)-\sigma_1(t)),u)+l(t,x,y,z^1+p(\sigma_1(t,x,u)-\sigma_1(t)),u).
\end{aligned}
\end{equation}
In our paper, adjoint equation $(p,q)$  is deduced by the relation between $y^1$ and $x^1$ due to its coupling at $T$ with $y_T=\phi(x_T)$. However, when we degenerate to the case $y_T=y_T$ being deterministic, it is natural that $p$ disappears.
Then Theorem \ref{the31} reduces to Theorem 4.4 of Peng \cite{Peng93}, the local maximum principle of completely observed case for decoupled FBSDEs with the following relations:
\begin{equation}
\begin{aligned}
&h_t=q_t,\ m_t=p_t,\ n^1_t=k_t,\\
&\mathcal{H}_u(t,\bar{x},\bar{y},\bar{z}^1,h,m,n^1,\bar{u})=H_u(\bar{x},\bar{y},\bar{z},\bar{u},p,k,q,t),
\end{aligned}
\end{equation}
where $h_t,m_t,n^1_t$ satisfy \eqref{hr},\eqref{m}, and $q_t,p_t,k_t$ satisfy (4.12) and $H$ is defined in the \cite{Peng93}. Noting the backward state equation, we can ensure that the relation can be built equivalently when the generators $g$ between our setting and \cite{Peng93} exist a minus sign.

(4)\quad If $\sigma_2=f_2=b_2=\sigma_3=f_3=g=\phi=\Gamma\equiv0$, $l$ is independent of $(y,z^1,z^2,\tilde{z}^1,\tilde{z}^2)$ with $y=z^1=z^2=\tilde{z}^1=\tilde{z}^2\equiv0$, and the corresponding Hamiltonian function $\mathcal{H}$ reduces to:
\begin{equation}
\begin{aligned}
&\mathcal{H}(t,x,m,n^1,P,u)\\
&\quad=n^1_t\sigma_1(t,x,u)+l(t,x,u)+m_tb_1(t,x,u)+\frac{1}{2}\sigma_1^2(t,x,u)P_t-\sigma_1(t,x,u)\sigma_1(t)P_t.
\end{aligned}
\end{equation}
Then Theorem \ref{the31} reduces to Theorem 5.2 of Song et al. \cite{STW20}, the global maximum principle of completely observed case for SDEPs with the following relation
\begin{equation}
(m,n^1,\tilde{n}^1)=(p,q,k),\quad (\tilde{P},Q^1,\tilde{Q}^1)=(P,Q,K),
\end{equation}
where, for avoiding confusion, we reset $(\tilde{P},Q^1,\tilde{Q}^1)$ as the solution to the reduced second adjoint equation \eqref{P3}, $(m,n^1,\tilde{n}^1)$ satisfies the redecued equation \eqref{m}, and $(p,q,k),(P,Q,K)$ satisfy (5.1),(5.2) in \cite{STW20}, respectively.

(5)\quad If the control domain is convex, $\sigma_2=f_2=b_2=\sigma_3=f_3\equiv0$, $g,l$ are independent of $(z^2,\tilde{z}^2)$ with $z^2=\tilde{z}^2\equiv0$, Theorem \ref{the31} reduces to Theorem 2.1 of Shi and Wu \cite{SW10}, the local maximum principle of completely observed case for FBSDEPs with $h=-p,(m,n^1,\tilde{n}^1)=(q,k,R)$, where $h,(m,n^1,\tilde{n}^1)$ satisfy \eqref{hr},\eqref{m}, and $p_t,(q_t,k_t,R_{(t,e)})$ satisfy (7) in \cite{SW10}.
\end{remark}

\begin{remark}\label{rem44}
As mentioned in Remark \ref{rem22}, for the partially observed case, we can also degenerate and cover the following cases and obtain its equivalence under two different kinds of frameworks. In fact, in (\ref{stateeq1}), (\ref{observation}) and (\ref{cf1}) of our problem,

(1)\quad if the control domain is convex, $f_1=f_2=f_3\equiv0$, $\sigma_3\equiv1$, $b_2$ is independent of $(y,z^1,z^2,\tilde{z}^1,\tilde{z}^2,u)$, $l,g$ are independent of $(\tilde{z}^1,\tilde{z}^2)$ with $\tilde{z}^1=\tilde{z}^2\equiv0$, Theorem \ref{the31} reduces to Theorem 2.1 of Wang et al. \cite{WWX13}, the local maximum principle of partially observed case for FBSDEs with correlated Brownian noises with the following important relations:
\begin{equation}
\begin{aligned}
&\tilde{\Gamma}_t=Z_t,\quad b_2(\cdot,\cdot)=h(\cdot,\cdot),\quad h_t=-p_tZ_t,\quad (r_t,s^1_t,s^2_t)=(P_t,Q_t,\tilde{Q}_t),\\
&m_t=q_tZ_t,\quad n^1_t=k_tZ_t,\quad n^2_t=h(t,\bar{x})q_tZ_t+\tilde{k}_tZ_t,
\end{aligned}
\end{equation}
and the reduced maximum conditions also have the following equivalent relations
\begin{equation}
\begin{aligned}
&E\big[\mathcal{H}_u(t,\bar{x},\bar{y},\bar{z}^1,\bar{z}^2,\bar{\tilde{\Gamma}};h,m,n^1,n^2,s^2,\bar{u})\big|\mathcal{F}^Y_t\big]\\
&=\bar{E}\big[H_u(t,\bar{x},\bar{y},\bar{z},\bar{\tilde{z}},\bar{u};p,q,k,\tilde{k},\tilde{Q})\big|\mathcal{Y}_t\big]=0,\quad\mbox{with}\quad\bar{E}\equiv E^{\bar{u}},
\end{aligned}
\end{equation}
where $\tilde{\Gamma}$ is the Radon-Nikodym derivative in the second framework (for our setting), and $Z$ is the Radon-Nikodym derivative in the first framework (\cite{WWX13}). $h,(r,s^1,s^2),(m,n^1,n^2)$ satisfy \eqref{hr}, \eqref{m}, and $(P,Q,\tilde{Q}),p,(q,k,\tilde{k})$ satisfy (2.3), (2.4) in \cite{WWX13} in which the Hamiltonian function $H$ is defined. Therefore, in the non-Poisson case, the equivalence between the two kinds of frameworks can be proved with the relations above.

(2)\quad If $f_1=f_2=f_3=g=\phi=\Gamma\equiv0$, $\sigma_3\equiv1$, $l,b_2$ are independent of $(y,z^1,z^2,\tilde{z}^1,\tilde{z}^2)$ with $y=z^1=z^2=\tilde{z}^1=\tilde{z}^2\equiv0$, Theorem \ref{the31} reduces to Theorem 2.1 of Tang \cite{Tang98}, the global maximum principle of partially observed case for SDEs with correlated Brownian noises with the following relations:
\begin{equation}
\begin{aligned}
&m_t=\rho_tq_t,\ n^1_t=\rho_tk_t,\ n^2_t=\rho_tq_th(t,\bar{x},\bar{u})+\rho_t\tilde{k}_t,\\
&r_t=r_t,\ s^1_t=R_t,\ s^2_t=\tilde{R}_t,\\
&\alpha_t=q_t,\ \beta^1_t=k_t,\ \beta^2_t=\tilde{k}_t,\\
&P_t=\rho_tQ_t, Q^1_t=\rho_tK_t,\ Q^2_t=\rho_tQ_th(t,\bar{x},\bar{u})+\rho_t\tilde{K}_t,
\end{aligned}
\end{equation}
where $(r,s^1,s^2),(m,n^1,n^2),(\alpha,\beta^1,\beta^2),(P,Q^1,Q^2)$ satisfy \eqref{hr},\eqref{m}, \eqref{alpha} and \eqref{P3}, respectively and $(r,R,\tilde{R}),(q,k,\tilde{k}),(Q,K,\tilde{K})$ satisfy (2.23), (2.24) and (2.25) in \cite{Tang98}, respectively.

Similarly, due to the equivalent transformation between these two kinds of frameworks, we can direct give the following reduced cases:

(3)\quad if $\sigma_1$ is independent of $u$, $\sigma_2=f_1=f_2=f_3\equiv0$, $\sigma_3\equiv1$, $l,b_2$ are independent of $(y,z^1,z^2,\tilde{z}^1,\tilde{z}^2)$, $g$ is independent of $(z^2,\tilde{z}^1,\tilde{z}^2)$ with $z^2=\tilde{z}^1=\tilde{z}^2\equiv0$, Theorem \ref{the31} reduces to Theorem 2.1 of Wang and Wu \cite{WW09}, the global maximum principle of partially observed case for FBSDEs;

(4)\quad if the control domain is convex, $\sigma_2=f_1=f_2=f_3\equiv0$, $\sigma_3\equiv1$, $b_2$ is independent of $(y,z^1,z^2,\tilde{z}^1,\tilde{z}^2)$, $l,g$ are independent of $(z^2,\tilde{z}^1,\tilde{z}^2)$ with $z^2=\tilde{z}^1=\tilde{z}^2\equiv0$, Theorem \ref{the31} reduces to Theorem 2.3 of Wu \cite{Wu10}, the local maximum principle of partially observed case for FBSDEs;

(5)\quad if the control domain is convex, $\sigma_2=f_2=f_3\equiv0$, $\sigma_3\equiv1$, $l,g$ are independent of $(z^2,\tilde{z}^2)$, $b_2$ is independent of $y,z^1,z^2,\tilde{z}^1,\tilde{z}^2$ with $z^2=\tilde{z}^2\equiv0$, Theorem \ref{the31} reduces to Theorem 1 of Xiao \cite{Xiao11} with $h$ independent of $(y,z,r)$, the local maximum principle for FBSDEPs with Brownian-noised observation;

(6)\quad if the control domain is convex, $f_2=f_3=g=\phi=\Gamma\equiv0$, $\sigma_3\equiv1$, $l,b_2$ are independent of $(y,z^1,z^2,\tilde{z}^1,\tilde{z}^2)$ with $y=z^1=z^2=\tilde{z}^1=\tilde{z}^2\equiv0$, Theorem \ref{the31} reduces to Theorem 2 of Xiao \cite{Xiao13}, the local maximum principle for SDEPs with correlated Brownian noise;

(7)\quad if $\sigma_2=f_1=f_2=f_3=g=\phi=\Gamma\equiv0$, $\sigma_3\equiv1$, $l,b_2$ are independent of $(y,z^1,z^2,\tilde{z}^1,\tilde{z}^2)$ with $y=z^1=z^2=\tilde{z}^1=\tilde{z}^2\equiv0$, Theorem \ref{the31} reduces to Theorem 2.1 of Li and Tang \cite{LT95}, the global maximum principle of partially observed case for SDEs;

(8)\quad if $\sigma_1=\sigma_2=f_2=f_3=g=\phi=\Gamma\equiv0$, $\sigma_3\equiv1$, $l,b_2$ are independent of $(y,z^1,z^2,\tilde{z}^1,\tilde{z}^2)$ and $y=z^1=z^2=\tilde{z}^1=\tilde{z}^2\equiv0$, Theorem \ref{the31} reduces to Theorem 2.1 of Tang and Hou \cite{TH02}, the global maximum principle for point processes with Brownian-noised observation.
\end{remark}

\begin{remark}\label{rem45}
However, Theorem \ref{the31} could not cover results in Shi and Wu \cite{SW10} or Hu et al. \cite{HJX18}, since their controlled systems are fully coupled FBSDEs. Moreover, results in Wu \cite{Wu13} and Yong \cite{Yong10} could not be covered either, since unknown parameters have to be involved.
\end{remark}

\section{Partially Observed Linear Quadratic Optimal Control Problem of FBSDEPs}

In this section, as applications, we study a partially observed LQ optimal control problem of FBSDEPs, and obtain the state estimate feedback form of the optimal control with the filtering technique with random jumps.

\subsection{Problem Formulation and Verification Theorem}

Consider the following linear state equation
\begin{equation}\label{stateeq11}
\left\{
\begin{aligned}
  dx^u_t&=\big[b_{11}(t)x^u_t+b_{12}(t)u_t+b_{13}(t)\big]dt+\big[\sigma_{11}(t)x^u_t+\sigma_{12}(t)u_t+\sigma_{13}(t)\big]dW^1_t\\
        &\quad +\big[\sigma_{21}(t)x^u_t+\sigma_{22}(t)u_t+\sigma_{23}(t)\big]d\tilde{W}^2_t+\int_{\mathcal{E}_1}\big[f_{11}(t,e)x^u_{t-}+f_{12}(t,e)\big]\tilde{N}_1(de,dt)\\
        &\quad +\int_{\mathcal{E}_2}\big[f_{21}(t,e)x^u_{t-}+f_{22}(t,e)\big]\tilde{N}^\prime_2(de,dt),\\
 -dy_t^u&=\bigg[g_{11}(t)x^u_t+g_{12}(t)y^u_t+g_{13}(t)z^{1,u}_t+g_{14}(t)z^{2,u}_t+\int_{\mathcal{E}_1}g_{15}(t,e)\tilde{z}^{1,u}_{(t,e)}\nu_1(de)\\
        &\qquad +\int_{\mathcal{E}_2}g_{16}(t,e)\tilde{z}^{2,u}_{(t,e)}\nu_2(de)+g_{17}(t)u_t+g_{18}(t)\bigg]dt-z^{1,u}_tdW^1_t-z^{2,u}_tdW^2_t\\
        &\quad -\int_{\mathcal{E}_1}\tilde{z}^{1,u}_{(t,e)}\tilde{N}_1(de,dt)-\int_{\mathcal{E}_2}\tilde{z}^{2,u}_{(t,e)}\tilde{N}_2(de,dt),\quad t\in[0,T],\\
   x_0^u&=x_0,\ y_T^u=\phi_{11}x^u_T+\phi_{12},
\end{aligned}
\right.
\end{equation}
and observation equation
\begin{equation}\label{observation11}
\left\{
\begin{aligned}
dY_t&=b_{22}(t)dt+\sigma_3(t)d\tilde{W}^2_t+\int_{\mathcal{E}_2}f_3(t,e)\tilde{N}^\prime_2(de,dt),\quad t\in[0,T],\\
 Y_0&=0,
\end{aligned}
\right.
\end{equation}
where we set $b_2(t,x,u)=b_{22}(t)$, which is independent of $x$ and $u$.
The cost functional is given as a quadratic form:
\begin{equation}\label{cf11}
J(u)=\bar{E}\bigg[\int_0^Tl_{11}(t)(u_t)^2dt+(y_0^u)^2\bigg].
\end{equation}
Here, $l_{11}(t)>0$ and we set $U=(-\infty,-1]\cup[1,+\infty)$. We assume that all the coefficients are deterministic functions, $|\sigma_3|^{-1},|f_3(t,e)|^{-1}$ exist and are bounded.
As Section 3, under the new probability measure $P$, the state equation \eqref{stateeq11} becomes
\begin{equation}\label{LQsystem}
\left\{
\begin{aligned}
 dx^u_t&=\bigg[\Big(b_{11}(t)-\sigma_3^{-1}(t)b_{22}(t)\sigma_{21}(t)-\int_{\mathcal{E}_2}(\lambda_{11}(t,e)-1)f_{21}(t,e)\nu_2(de)\Big)x^u_t\\
       &\qquad+\Big(b_{12}(t)-\sigma_3^{-1}(t)b_{22}(t)\sigma_{22}(t)\Big)u_t+b_{13}(t)-\sigma_3^{-1}(t)b_{22}(t)\sigma_{23}(t)\\
       &\qquad-\int_{\mathcal{E}_2}(\lambda_{11}(t,e)-1)f_{22}(t,e)\nu_2(de)\bigg]dt\\
       &\quad +\sum_{i=1}^2\big[\sigma_{i1}(t)x^u_t+\sigma_{i2}(t)u_t+\sigma_{i3}(t)\big]dW^i_t\\
       &\quad +\sum_{i=1}^2\int_{\mathcal{E}_i}\big[f_{i1}(t,e)x^u_{t-}+f_{i2}(t,e)\big]\tilde{N}_i(de,dt)\\
-dy_t^u&=\bigg[g_{11}(t)x^u_t+g_{12}(t)y^u_t+g_{13}(t)z^{1,u}_t+g_{14}(t)z^{2,u}_t+\int_{\mathcal{E}_1}g_{15}(t,e)\tilde{z}^{1,u}_{(t,e)}\nu_1(de)\\
       &\qquad +\int_{\mathcal{E}_2}g_{16}(t,e)\tilde{z}^{2,u}_{(t,e)}\nu_2(de)+g_{17}(t)u_t+g_{18}(t)\bigg]dt-\sum_{i=1}^2z^{i,u}_tdW^i_t\\
       &\quad -\sum_{i=1}^2\int_{\mathcal{E}_i}\tilde{z}^{i,u}_{(t,e)}\tilde{N}_i(de,dt),\quad t\in[0,T],\\
  x_0^u&=x_0,\ y_T^u=\phi_{11}x^u_T+\phi_{12}.
\end{aligned}
\right.
\end{equation}

Applying Theorem \ref{the31} in the previous section, the maximum condition for the optimal control is as follows:
\begin{equation}
\begin{aligned}
&E\Big[[n^1_t\sigma_{12}(t)+n^2_t\sigma_{22}(t)+2l_{11}(t)\bar{\tilde{\Gamma}}_t\bar{u}_t+g_{17}(t)h_t+(b_{12}(t)-\sigma_3^{-1}(t)b_{22}(t)\sigma_{22}(t))m_t](u-\bar{u}_t)\\
&\quad +[l_{11}(t)\bar{\tilde{\Gamma}}_t+\frac{1}{2}(\sigma^2_{12}(t)+\sigma^2_{22}(t))P_t](u-\bar{u}_t)^2\Big|\mathcal{F}^Y_t\Big]\geq0,\ a.e.\ t\in[0,T],\ P\mbox{-}a.s.
\end{aligned}
\end{equation}
where we only consider that $\lambda(t,x,e)=\lambda_{11}(t,e)$, which is independent of $x$. Moreover, we know that the candidate optimal control $\bar{u}$ should satisfy
\begin{equation}
\begin{aligned}
&E\Big[l_{11}(t)\bar{\tilde{\Gamma}}_tu_t^2+\big[\sigma_{12}(t)n^1_t+\sigma_{22}(t)n^2_t+g_{17}(t)h_t+(b_{12}(t)-\sigma_3^{-1}(t)b_{22}(t)\sigma_{22}(t))m_t\big]u_t\\
&\quad +\frac{1}{2}\sigma^2_{12}(t)P_tu_t^2-\sigma^2_{12}(t)P_t\bar{u}_tu_t+\frac{1}{2}\sigma^2_{22}(t)P_tu_t^2-\sigma^2_{22}(t)P_t\bar{u}_tu_t\Big|\mathcal{F}^Y_t\Big]\\
&\geq E\bigg[l_{11}(t)\bar{\tilde{\Gamma}}_t\bar{u}_t^2+\big[\sigma_{12}(t)n^1_t+\sigma_{22}(t)n^2_t+g_{17}(t)h_t+(b_{12}(t)-\sigma_3^{-1}(t)b_{22}(t)\sigma_{22}(t))m_t\big]\bar{u}_t\\
&\qquad -\frac{1}{2}\sigma^2_{12}(t)P_t\bar{u}_t^2-\frac{1}{2}\sigma^2_{22}(t)P_t\bar{u}_t^2\Big|\mathcal{F}^Y_t\Big],\quad \forall u\in U,\ a.e.\ t\in[0,T],\ P\mbox{-}a.s.
\end{aligned}
\end{equation}
Then we have
\begin{equation}\label{baru}
\bar{u}_t=
\begin{cases}
\mu_t,\ &\mu_t\in(-\infty,-1]\cup[1,+\infty),\\
1,&0\leq\mu_t<1,\\
-1,&-1<\mu_t<0,
\end{cases}
\end{equation}
with
\begin{equation}\label{barmu}
\mu_t:=-\frac{1}{2}(l_{11}(t)\bar{\tilde{\Gamma}}_t)^{-1}\big[\sigma_{12}(t)\hat{n}^1_t+\sigma_{22}(t)\hat{n}^2_t+g_{17}(t)\hat{h}_t+(b_{12}(t)-\sigma_3^{-1}(t)b_{22}(t)\sigma_{22}(t))\hat{m}_t\big],
\end{equation}
where $\hat{\kappa}_t:=E[\kappa_t|\mathcal{F}_t^Y]$ are optimal estimates for $\kappa=h,m,n^1,n^2$, and $(h,m,n^1,n^2,\tilde{n}^1,\tilde{n}^2)$ satisfies the following linear FBSDEP:
\begin{equation}\label{hm}
\left\{
\begin{aligned}
 dh_t&=g_{12}(t)h_tdt+g_{13}(t)h_tdW^1_t+g_{14}(t)h_tdW^2_t+\int_{\mathcal{E}_1}g_{15}(t,e)h_{t-}\tilde{N}_1(de,dt)\\
     &\quad +\int_{\mathcal{E}_2}g_{16}(t,e)h_{t-}\tilde{N}_2(de,dt)\\
-dm_t&=\bigg\{g_{11}(t)h_t+\Big(b_{11}(t)-\sigma_3^{-1}(t)b_{22}(t)\sigma_{21}(t)-\int_{\mathcal{E}_2}(\lambda_{11}(t,e)-1)f_{21}(t,e)\nu_2(de)\Big)m_t\\
     &\quad +\sum_{i=1}^2\Big[\sigma_{i1}(t)n^i_t+\int_{\mathcal{E}_i}f_{i1}(t,e)\tilde{n}^i_{(t,e)}\nu_i(de)\Big]\bigg\}dt\\
     &\quad -\sum_{i=1}^2n^i_tdW^i_t-\sum_{i=1}^2\int_{\mathcal{E}_i}\tilde{n}^i_{(t,e)}\tilde{N}_i(de,dt),\quad t\in[0,T],\\
  h_0&=2\bar{y}_0,\quad m_T=\phi_{11}h_T.
\end{aligned}
\right.
\end{equation}

Before we represent the optimal control $\bar{u}$ as a form of state estimate feedback based on the known information filtration $\mathcal{F}_t^Y$, we should verify that the candidate control is indeed optimal.

\begin{proposition}\label{pro51}
The $\bar{u}_t$ defined by \eqref{baru} is an optimal control.
\end{proposition}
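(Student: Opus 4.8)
The plan is to prove optimality of $\bar u$ by a direct verification (sufficiency) argument: for an arbitrary admissible control $u\in\mathcal U_{ad}[0,T]$ I will show $J(u)-J(\bar u)\ge 0$ by expanding the cost difference exactly and then invoking the maximum condition that $\bar u$ satisfies by construction. Two structural features of the present LQ model make this work cleanly. First, since $b_2(t,x,u)=b_{22}(t)$ and $\lambda(t,x,e)=\lambda_{11}(t,e)$ do not depend on $(x,u)$, the Radon--Nikodym density $\tilde\Gamma^u$ solving \eqref{Gamma000} is the same process $\bar{\tilde\Gamma}$ for every control, so $J(u)=E[\int_0^T l_{11}(t)\bar{\tilde\Gamma}_t u_t^2\,dt+(y_0^u)^2]$ is a genuinely convex functional of $u$ on the whole linear space of controls (although $\mathcal U_{ad}[0,T]$ itself is \emph{not} convex, because $U$ is not). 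Second, the forward--backward system \eqref{LQsystem} is affine in $(x,u)$ with a linear terminal map $\phi(x)=\phi_{11}x+\phi_{12}$, so the difference processes introduced below solve a \emph{homogeneous} linear FBSDEP and the second-order Taylor expansion of $J$ around $\bar u$ is \emph{exact}.

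First I check that $\bar u$ given by \eqref{baru} lies in $\mathcal U_{ad}[0,T]$: it is $\mathcal F_t^Y$-progressive, $U$-valued, and the required moment bounds follow from $l_{11}>0$, $\bar{\tilde\Gamma}_t>0$ together with the $L^\beta$-estimates of Section 2 applied to the adjoint FBSDEPs \eqref{hm} and \eqref{P3}. Then, for $u\in\mathcal U_{ad}[0,T]$, set $\xi_t:=x_t^u-\bar x_t$, $\eta_t:=y_t^u-\bar y_t$, $\zeta_t^i:=z_t^{i,u}-\bar z_t^i$, $\lambda_{(t,e)}^i:=\tilde z_{(t,e)}^{i,u}-\bar{\tilde z}_{(t,e)}^i$; by linearity $(\xi,\eta,\zeta^1,\zeta^2,\lambda^1,\lambda^2)$ solves the FBSDEP whose coefficients are the linear-in-$\xi$ parts of \eqref{LQsystem} plus the sources $(b_{12}(t)-\sigma_3^{-1}(t)b_{22}(t)\sigma_{22}(t))(u_t-\bar u_t)$ in the drift and $\sigma_{i2}(t)(u_t-\bar u_t)$ in the $W^i$-integrands, with $\xi_0=0$ and $\eta_T=\phi_{11}\xi_T$. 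Writing $u_t^2-\bar u_t^2=(u_t-\bar u_t)^2+2\bar u_t(u_t-\bar u_t)$ and $(y_0^u)^2-\bar y_0^2=\eta_0^2+2\bar y_0\eta_0$, and using $h_0=2\bar y_0$, I apply It\^o's formula to $h_t\eta_t$, $m_t\xi_t$ and $P_t\xi_t^2$ — the $\tilde\Gamma$-adjoints $\alpha,\beta^i,r,s^i$ do not enter because $\tilde\Gamma$ is control-independent, and several further terms collapse since $\phi_{xx}=0$, $\Phi\equiv0$ and $l$ is $x$-independent, whence $P_T=0$ — together with the boundary data $m_0\xi_0=0$ and $m_T\xi_T=\phi_{11}h_T\xi_T=h_T\eta_T$. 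The homogeneous-in-$(\xi,\eta,\zeta,\lambda)$ terms cancel by the very construction of the adjoint equations \eqref{hm}, \eqref{P3}, and collecting the remainder (as in the passage from \eqref{costf3} to \eqref{costf6}) yields the exact identity
\[
J(u)-J(\bar u)=E\int_0^T\big[\mathcal H(t,\bar x_t,\bar y_t,\bar z_t^1,\bar z_t^2,\bar{\tilde z}_t^1,\bar{\tilde z}_t^2,\bar{\tilde\Gamma}_t;p,h,m,n^1,n^2,s^2,\alpha,P,u_t)-\mathcal H(t)\big]\,dt+E[\eta_0^2],
\]
with $\mathcal H$ the Hamiltonian \eqref{HF1}.

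To conclude, note that in the present model $u\mapsto\mathcal H(t,\bar x_t,\ldots,u)$ is a quadratic polynomial with leading coefficient $l_{11}(t)\bar{\tilde\Gamma}_t+\tfrac12(\sigma_{12}^2(t)+\sigma_{22}^2(t))P_t$, and $\bar u_t$ in \eqref{baru} is precisely the minimizer over $U$ of its $\mathcal F_t^Y$-conditional expectation, which is exactly the maximum condition \eqref{mp2} established in the derivation preceding the Proposition. Hence $E[\mathcal H(t,\bar x_t,\ldots,u_t)-\mathcal H(t)\mid\mathcal F_t^Y]\ge 0$ for a.e. $t$ (using $\mathcal F_t^Y$-measurability of $u_t,\bar u_t$), so by the tower property $E\int_0^T[\mathcal H(t,\bar x_t,\ldots,u_t)-\mathcal H(t)]\,dt\ge 0$, and therefore $J(u)-J(\bar u)\ge E[\eta_0^2]\ge 0$.

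The main obstacle is twofold. First, the legitimacy of the last step — that \eqref{baru} is the genuine \emph{minimizer} of the quadratic Hamiltonian rather than merely a stationary point — requires the leading coefficient $l_{11}(t)\bar{\tilde\Gamma}_t+\tfrac12(\sigma_{12}^2(t)+\sigma_{22}^2(t))P_t$ to be nonnegative; this is what replaces the usual "convexity of the cost on a convex control set" and must be secured from the standing data ($l_{11}>0$, $\bar{\tilde\Gamma}_t>0$, and the sign of $P_t$ propagated from $P_T=0$ through the reduced equation \eqref{P3}), so confirming it is the conceptually delicate point. Second, the exact It\^o bookkeeping is lengthy: one must track the full family of reduced adjoint processes $h,m,n^i,\tilde n^i,r,s^i,\tilde s^i,\alpha,\beta^i,\tilde\beta^i,P,Q^i,\tilde Q^i$ and verify that every cross term assembles into precisely the $P_t$-dependent second-order pieces of \eqref{HF1} with nothing left over — routine, mirroring Section 3, but error-prone.
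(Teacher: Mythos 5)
Your verification argument is correct, and it is precisely the ``common technique'' that the paper invokes while omitting all details in its one-line proof of Proposition \ref{pro51}; since the published proof is empty, your write-up is in effect the missing proof rather than an alternative to one. Your exact identity is right: because $b_2=b_{22}(t)$ and $\lambda=\lambda_{11}(t,e)$ are state- and control-free, $\tilde\Gamma^u\equiv\bar{\tilde\Gamma}$ for every admissible $u$, the difference process solves a linear FBSDEP with sources $\big(b_{12}-\sigma_3^{-1}b_{22}\sigma_{22}\big)(u-\bar u)$ and $\sigma_{i2}(u-\bar u)$, and It\^o's formula applied to $h_t\eta_t$ and $m_t\xi_t$ yields
\begin{equation*}
J(u)-J(\bar u)=E\big[\eta_0^2\big]+E\int_0^T\Big\{l_{11}\bar{\tilde\Gamma}_t(u_t-\bar u_t)^2+\big[2l_{11}\bar{\tilde\Gamma}_t\bar u_t+\big(b_{12}-\sigma_3^{-1}b_{22}\sigma_{22}\big)m_t+\sigma_{12}n^1_t+\sigma_{22}n^2_t+g_{17}h_t\big](u_t-\bar u_t)\Big\}dt,
\end{equation*}
which coincides with your Hamiltonian form. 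The one point you flag as delicate --- nonnegativity of the leading coefficient $l_{11}\bar{\tilde\Gamma}_t+\tfrac12(\sigma_{12}^2+\sigma_{22}^2)P_t$ --- closes immediately: in this LQ specialization $\phi$ is affine, $\Phi\equiv0$, $l$ is state-free and all coefficients of \eqref{LQsystem} are affine, so every inhomogeneous term in the driver of \eqref{P3} vanishes and $P_T=0$, whence $(P,Q^1,Q^2,\tilde Q^1,\tilde Q^2)\equiv0$ by uniqueness; the coefficient is therefore exactly $l_{11}(t)\bar{\tilde\Gamma}_t>0$, \eqref{baru} is the genuine minimizer over $U=(-\infty,-1]\cup[1,+\infty)$ of the conditional quadratic, and the conclusion $J(u)-J(\bar u)\ge E[\eta_0^2]\ge0$ follows by the tower property as you state.
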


\begin{proof}
It is easy to check the optimality of \eqref{baru} by common technique, and we omit the details.
\end{proof}

\subsection{State Filtering Estimate}

In this section, we aim to obtain the state estimate feedback of the optimal control $\bar{u}$ in the filtering form. Moreover, we wish to get a more explicit form by introducing some ODEs.

First, assuming that $m_t=\Pi_t^1h_t$, $t\in[0,T]$, with $\Pi_T^1=\phi_{11}$, and applying It\^{o}'s formula to $\Pi_t^1h_t$, where $\Pi$ is deterministic function, we achieve
\begin{equation}
\begin{aligned}
dm_t&=\big[\Pi_t^1g_{12}(t)h_t+\dot{\Pi}_t^1h_t\big]dt+\Pi_t^1g_{13}(t)h_tdW^1_t+\Pi_t^1g_{14}(t)h_tdW^2_t\\
    &\quad +\int_{\mathcal{E}_1}\Pi_t^1g_{15}(t,e)h_{t-}\tilde{N}_1(de,dt)+\int_{\mathcal{E}_2}\Pi_t^1g_{16}(t,e)h_{t-}\tilde{N}_2(de,dt).
\end{aligned}
\end{equation}
Comparing this with the coefficients in the second equation of \eqref{hm}, we have
\begin{equation}\label{n1234}
n_t^1=\Pi_t^1g_{13}(t)h_t,\ n_t^2=\Pi_t^1g_{14}(t)h_t,\ \tilde{n}^1_{(t,e)}=\Pi_t^1g_{15}(t,e)h_{t-},\ \tilde{n}^2_{(t,e)}=\Pi_t^1g_{16}(t,e)h_{t-},
\end{equation}
and
\begin{equation}\label{dtterm}
\begin{aligned}
&\Pi_t^1g_{12}(t)h_t+\dot{\Pi}_t^1h_t+g_{11}(t)h_t+\Big(b_{11}(t)-\sigma_3^{-1}(t)b_{22}(t)\sigma_{21}(t)\\
&-\int_{\mathcal{E}_2}(\lambda_{11}(t,e)-1)f_{21}(t,e)\nu_2(de)\Big)m_t+\sigma_{11}(t)n^1_t+\sigma_{21}(t)n^2_t\\
&+\int_{\mathcal{E}_1}f_{11}(t,e)\tilde{n}^1_{(t,e)}\nu_1(de)+\int_{\mathcal{E}_2}f_{21}(t,e)\tilde{n}^2_{(t,e)}\nu_2(de)=0,\quad t\in[0,T].
\end{aligned}
\end{equation}
Substituting \eqref{n1234} into \eqref{dtterm}, we have
\begin{equation}
\begin{aligned}
&\bigg[\Pi_t^1g_{12}(t)+\dot{\Pi}_t^1+g_{11}(t)+\Big(b_{11}(t)-\sigma_3^{-1}(t)b_{22}(t)\sigma_{21}(t)\\
&-\int_{\mathcal{E}_2}(\lambda_{11}(t,e)-1)f_{21}(t,e)\nu_2(de)\Big)\Pi_t^1+\sigma_{11}(t)\Pi_t^1g_{13}(t)+\sigma_{21}(t)\Pi_t^1g_{14}(t)\\
&+\int_{\mathcal{E}_1}f_{11}(t,e)\Pi_t^1g_{15}(t,e)\nu_1(de)+\int_{\mathcal{E}_2}f_{21}(t,e)\Pi_t^1g_{16}(t,e)\nu_2(de)\bigg]h_t=0,\quad t\in[0,T].
\end{aligned}
\end{equation}
If we introduce the following ODE:
\begin{equation}\label{pi1}
\left\{
\begin{aligned}
&\dot{\Pi}_t^1+\bigg[g_{12}(t)+b_{11}(t)-\sigma_3^{-1}(t)b_{22}(t)\sigma_{21}(t)+\int_{\mathcal{E}_2}(g_{16}(t,e)-\lambda_{11}(t,e)+1)f_{21}(t,e)\nu_2(de)\\
&+\sigma_{11}(t)g_{13}(t)+\sigma_{21}(t)g_{14}(t)+\int_{\mathcal{E}_1}f_{11}(t,e)g_{15}(t,e)\nu_1(de)\bigg]\Pi_t^1+g_{11}(t)=0,\quad t\in[0,T],\\
&\Pi_T^1=\phi_{11},
\end{aligned}
\right.
\end{equation}
noting \eqref{baru}, \eqref{barmu}, then the optimal control in $U=(-\infty,-1]\cup[1,+\infty)$ becomes
\begin{equation}\label{baru2}
\begin{aligned}
\bar{u}_t&=-\frac{1}{2}L(\Pi_t^1)\hat{h}_t,\quad a.e.\ t\in[0,T],\ P\mbox{-}a.s.,
\end{aligned}
\end{equation}
where we set
\begin{equation*}
\begin{aligned}
L(\Pi_t^1)&:=(l_{11}(t)\tilde{\Gamma}_t)^{-1}\big[\sigma_{12}(t)\Pi_t^1g_{13}(t)+\sigma_{22}(t)\Pi_t^1g_{14}(t)+g_{17}(t)+(b_{12}(t)-\sigma_3^{-1}(t)b_{22}(t)\sigma_{22}(t))\Pi_t^1\big].
\end{aligned}
\end{equation*}
Putting \eqref{baru2} into \eqref{LQsystem}, we get
\begin{equation}\label{LQsystem21}
\left\{
\begin{aligned}
 d\bar{x}_t&=\bigg\{\Big[b_{11}(t)-\sigma_3^{-1}(t)b_{22}(t)\sigma_{21}(t)-\int_{\mathcal{E}_2}(\lambda_{11}(t,e)-1)f_{21}(t,e)\nu_2(de)\Big]\bar{x}_t\\
           &\qquad -\frac{1}{2}\big[b_{12}(t)-\sigma_3^{-1}(t)b_{22}(t)\sigma_{22}(t)\big]L(\Pi_t^1)\hat{h}_t+b_{13}(t)-\sigma_3^{-1}(t)b_{22}(t)\sigma_{23}(t)\\
           &\qquad -\int_{\mathcal{E}_2}(\lambda_{11}(t,e)-1)f_{22}(t,e)\nu_2(de)\bigg\}dt+\sum_{i=1}^2\bigg[\sigma_{i1}(t)\bar{x}_t-\frac{1}{2}\sigma_{i2}(t)L(\Pi_t^1)\hat{h}_t\\
           &\qquad+\sigma_{i3}(t)\bigg]dW^i_t+\sum_{i=1}^2\int_{\mathcal{E}_i}\big[f_{i1}(t,e)\bar{x}_{t-}+f_{i2}(t,e)\big]\tilde{N}_i(de,dt),\\
            \bar{x}_0&=x_0,
\end{aligned}
\right.
\end{equation}
\begin{equation}\label{LQsystem22}
\left\{
\begin{aligned}
-d\bar{y}_t&=\bigg[g_{11}(t)\bar{x}_t+g_{12}(t)\bar{y}_t+g_{13}(t)\bar{z}^1_t+g_{14}(t)\bar{z}^2_t+\int_{\mathcal{E}_1}g_{15}(t,e)\bar{\tilde{z}}^1_{(t,e)}\nu_1(de)\\
           &\qquad +\int_{\mathcal{E}_2}g_{16}(t,e)\bar{\tilde{z}}^2_{(t,e)}\nu_2(de)-\frac{1}{2}g_{17}(t)L(\Pi_t^1)\hat{h}_t+g_{18}(t)\bigg]dt\\
           &\quad -\sum_{i=1}^2\bar{z}^i_tdW^i_t-\sum_{i=1}^2\int_{\mathcal{E}_i}\bar{\tilde{z}}^i_{(t,e)}\tilde{N}_i(de,dt),\quad t\in[0,T],\\
\bar{y}_T&=\phi_{11}\bar{x}_T+\phi_{12},
\end{aligned}
\right.
\end{equation}
and the observation equation becomes
\begin{equation}
\left\{
\begin{aligned}
&dY_t=\sigma_3(t)dW_t^2+\int_{\mathcal{E}_2}f_3(t,e)\tilde{N}_2(de,dt),\quad t\in[0,T],\\
&Y_0=0.
\end{aligned}
\right.
\end{equation}

We conjecture that
\begin{equation}\label{bary}
\bar{y}_t=\Pi_t^2\bar{x}_t+\Pi_t^3h_t+\eta_t,
\end{equation}
with $\Pi_T^2=\phi_{11},\Pi_T^3=0,\eta_T=\phi_{12}$. Applying It\^{o}'s formula, we have
\begin{equation*}
\begin{aligned}
d\bar{y}_t&=\bigg\{\bigg[b_{11}(t)-\sigma_3^{-1}(t)b_{22}(t)\sigma_{21}(t)-\int_{\mathcal{E}_2}(\lambda_{11}(t,e)-1)f_{21}(t,e)\nu_2(de)\bigg]\Pi_t^2\bar{x}_t\\
          &\qquad -\frac{1}{2}\big[b_{12}(t)-\sigma_3^{-1}(t)b_{22}(t)\sigma_{22}(t)\big]\Pi_t^2L(\Pi_t^1)\hat{h}_t+\bigg[b_{13}(t)-\sigma_3^{-1}(t)b_{22}(t)\sigma_{23}(t)\\
          &\qquad -\int_{\mathcal{E}_2}(\lambda_{11}(t,e)-1)f_{22}(t,e)\nu_2(de)\bigg]\Pi_t^2+\dot{\Pi}_t^2\bar{x}_t+\Pi_t^3g_{12}(t)h_t+\dot{\Pi}_t^3h_t+\dot{\eta}_t\bigg\}dt\\
          &\quad +\bigg[\sigma_{11}(t)\Pi_t^2\bar{x}_t-\frac{1}{2}\Pi_t^2\sigma_{12}(t)L(\Pi_t^1)\hat{h}_t+\sigma_{13}(t)\Pi_t^2+\Pi_t^3g_{13}(t)h_t\bigg]dW_t^1\\
          &\quad +\bigg[\sigma_{21}(t)\Pi_t^2\bar{x}_t-\frac{1}{2}\Pi_t^2\sigma_{22}(t)L(\Pi_t^1)\hat{h}_t+\sigma_{23}(t)\Pi_t^2+\Pi_t^3g_{14}(t)h_t\bigg]dW_t^2\\
          &\quad +\int_{\mathcal{E}_1}\big[\Pi_t^2f_{11}(t,e)\bar{x}_{t-}+f_{12}(t,e)\Pi_t^2+g_{15}(t,e)\Pi_t^3h_{t-}\big]\tilde{N}_1(de,dt)\\
\end{aligned}
\end{equation*}
\begin{equation}
\begin{aligned}
          &\quad +\int_{\mathcal{E}_2}\big[\Pi_t^2f_{21}(t,e)\bar{x}_{t-}+f_{22}(t,e)\Pi_t^2+g_{16}(t,e)\Pi_t^3h_{t-}\big]\tilde{N}_2(de,dt).
\end{aligned}
\end{equation}
Comparing this with the second equation in \eqref{LQsystem22}, we get
\begin{equation}\label{z1234}
\begin{aligned}
&\bar{z}^1_t=\sigma_{11}(t)\Pi_t^2\bar{x}_t-\frac{1}{2}\Pi_t^2\sigma_{12}(t)L(\Pi_t^1)\hat{h}_t+\sigma_{13}(t)\Pi_t^2+\Pi_t^3g_{13}(t)h_t,\\
&\bar{z}^2_t=\sigma_{21}(t)\Pi_t^2\bar{x}_t-\frac{1}{2}\Pi_t^2\sigma_{22}(t)L(\Pi_t^1)\hat{h}_t+\sigma_{23}(t)\Pi_t^2+\Pi_t^3g_{14}(t)h_t,\\
&\bar{\tilde{z}}^1_{(t,e)}=\Pi_t^2f_{11}(t,e)\bar{x}_{t-}+f_{12}(t,e)\Pi_t^2+g_{15}(t,e)\Pi_t^3h_{t-},\\
&\bar{\tilde{z}}^2_{(t,e)}=\Pi_t^2f_{21}(t,e)\bar{x}_{t-}+f_{22}(t,e)\Pi_t^2+g_{16}(t,e)\Pi_t^3h_{t-},
\end{aligned}
\end{equation}
and
\begin{equation}\label{dtterm2}
\begin{aligned}
&\bigg[b_{11}(t)-\sigma_3^{-1}(t)b_{22}(t)\sigma_{21}(t)-\int_{\mathcal{E}_2}(\lambda_{11}(t,e)-1)f_{21}(t,e)\nu_2(de)\bigg]\Pi_t^2\bar{x}_t\\
&-\frac{1}{2}\big[b_{12}(t)-\sigma_3^{-1}(t)b_{22}(t)\sigma_{22}(t)\big]\Pi_t^2L(\Pi_t^1)\hat{h}_t+\bigg[b_{13}(t)-\sigma_3^{-1}(t)b_{22}(t)\sigma_{23}(t)\\
&-\int_{\mathcal{E}_2}(\lambda_{11}(t,e)-1)f_{22}(t,e)\nu_2(de)\bigg]\Pi_t^2+\dot{\Pi}_t^2\bar{x}_t+\Pi_t^3g_{12}(t)h_t+\dot{\Pi}_t^3h_t+\dot{\eta}_t+g_{11}(t)\bar{x}_t\\
&+g_{12}(t)\big[\Pi_t^2\bar{x}_t+\Pi_t^3h_t+\eta_t\big]+g_{13}(t)\big[\sigma_{11}(t)\Pi_t^2\bar{x}_t-\frac{1}{2}\Pi_t^2\sigma_{12}(t)L(\Pi_t^1)\hat{h}_t+\sigma_{13}(t)\Pi_t^2\\
&+\Pi_t^3g_{13}(t)h_t\big]+g_{14}(t)\big[\sigma_{21}(t)\Pi_t^2\bar{x}_t-\frac{1}{2}\Pi_t^2\sigma_{22}(t)L(\Pi_t^1)\hat{h}_t+\sigma_{23}(t)\Pi_t^2+\Pi_t^3g_{14}(t)h_t\big]\\
&+\int_{\mathcal{E}_1}g_{15}(t,e)\big[\Pi_t^2f_{11}(t,e)\bar{x}_t+f_{12}(t,e)\Pi_t^2+g_{15}(t,e)\Pi_t^3h_t\big]\nu_1(de)+\int_{\mathcal{E}_2}g_{16}(t,e)\big[\Pi_t^2f_{21}(t,e)\bar{x}_t\\
&+f_{22}(t,e)\Pi_t^2+g_{16}(t,e)\Pi_t^3h_t\big]\nu_2(de)-\frac{1}{2}g_{17}(t)L(\Pi_t^1)\hat{h}_t+g_{18}(t)=0,\quad t\in[0,T].
\end{aligned}
\end{equation}
Taking $E[\cdot|\mathcal{F}_t^Y]$ on both sides of \eqref{bary}, \eqref{z1234} and \eqref{dtterm2}, we have
\begin{equation}
\begin{aligned}
&\hat{\bar{y}}_t=\Pi_t^2\hat{\bar{x}}_t+\Pi_t^3\hat{h}_t+\eta_t,\\
&\hat{\bar{z}}^1_t=\sigma_{11}(t)\Pi_t^2\hat{\bar{x}}_t+(\Pi_t^3g_{13}(t)-\frac{1}{2}\Pi_t^2\sigma_{12}(t)L(\Pi_t^1))\hat{h}_t+\sigma_{13}(t)\Pi_t^2,\\
&\hat{\bar{z}}^2_t=\sigma_{21}(t)\Pi_t^2\hat{\bar{x}}_t+(\Pi_t^3g_{14}(t)-\frac{1}{2}\Pi_t^2\sigma_{22}(t)L(\Pi_t^1))\hat{h}_t+\sigma_{23}(t)\Pi_t^2,\\
&\hat{\bar{\tilde{z}}}^1_{(t,e)}=\Pi_t^2f_{11}(t,e)\hat{\bar{x}}_{t-}+f_{12}(t,e)\Pi_t^2+g_{15}(t,e)\Pi_t^3\hat{h}_{t-},\\
&\hat{\bar{\tilde{z}}}^2_{(t,e)}=\Pi_t^2f_{21}(t,e)\hat{\bar{x}}_{t-}+f_{22}(t,e)\Pi_t^2+g_{16}(t,e)\Pi_t^3\hat{h}_{t-},
\end{aligned}
\end{equation}
and
\begin{equation*}
\begin{aligned}
&\bigg\{\bigg[b_{11}(t)-\sigma_3^{-1}(t)b_{22}(t)\sigma_{21}(t)-\int_{\mathcal{E}_2}(\lambda_{11}(t,e)-1)f_{21}(t,e)\nu_2(de)\bigg]\Pi_t^2+\dot{\Pi}_t^2+g_{11}(t)\\
&\quad +g_{13}(t)\sigma_{11}(t)\Pi_t^2+g_{14}(t)\sigma_{21}(t)\Pi_t^2+\int_{\mathcal{E}_1}g_{15}(t,e)f_{11}(t,e)\nu_1(de)\Pi_t^2\\
&\quad +\int_{\mathcal{E}_2}g_{16}(t,e)f_{21}(t,e)\nu_2(de)\Pi_t^2+g_{12}(t)\Pi_t^2\bigg]\hat{\bar{x}}_t\\
\end{aligned}
\end{equation*}
\begin{equation}
\begin{aligned}
&+\bigg[\dot{\Pi}_t^3+\Pi_t^3g_{12}(t)-\frac{1}{2}(b_{12}(t)-\sigma_3^{-1}(t)b_{22}(t)\sigma_{22}(t))\Pi_t^2L(\Pi_t^1)-\frac{1}{2}g_{13}(t)\Pi_t^2\sigma_{12}(t)L(\Pi_t^1)\\
&\quad -\frac{1}{2}g_{14}(t)\Pi_t^2\sigma_{22}(t)L(\Pi_t^1)+g^2_{13}(t)\Pi_t^3+g^2_{14}(t)\Pi_t^3+\int_{\mathcal{E}_1}g^2_{15}(t,e)\nu_1(de)\Pi_t^3\\
&\quad +\int_{\mathcal{E}_2}g^2_{16}(t,e)\nu_2(de)\Pi_t^3-\frac{1}{2}g_{17}(t)L(\Pi_t^1)+g_{12}(t)\Pi_t^3\bigg]\hat{h}_t\\
&+\bigg[b_{13}(t)-\sigma_3^{-1}(t)b_{22}(t)\sigma_{23}(t)-\int_{\mathcal{E}_2}(\lambda_{11}(t,e)-1)f_{22}(t,e)\nu_2(de)\bigg]\Pi_t^2\\
&+\dot{\eta}_t+g_{12}(t)\eta_t+g_{13}(t)\sigma_{13}(t)\Pi_t^2+g_{14}(t)\sigma_{23}(t)\Pi_t^2+\int_{\mathcal{E}_1}g_{15}(t,e)f_{12}(t,e)\nu_1(de)\Pi_t^2\\
&+\int_{\mathcal{E}_2}g_{16}(t,e)f_{22}(t,e)\nu_2(de)\Pi_t^2+g_{18}(t)=0,\quad t\in[0,T].
\end{aligned}
\end{equation}

Applying Theorem \ref{theA2} in the Appendix to \eqref{LQsystem21},\eqref{LQsystem22} and \eqref{hm}, we get the {\it forward-backward stochastic differential filtering equations} (FBSDFEs for short) of $(\hat{\bar{x}},\hat{\bar{y}},\hat{\bar{z}}^2,\hat{\bar{\tilde{z}}}^2,\hat{h})$:
\begin{equation}
\left\{
\begin{aligned}
 d\hat{\bar{x}}_t&=\bigg\{\bigg[b_{11}(t)-\sigma_3^{-1}(t)b_{22}(t)\sigma_{21}(t)-\int_{\mathcal{E}_2}(\lambda_{11}(t,e)-1)f_{21}(t,e)\nu_2(de)\bigg]\hat{\bar{x}}_t\\
                 &\qquad -\frac{1}{2}\big[b_{12}(t)-\sigma_3^{-1}(t)b_{22}(t)\sigma_{22}(t)\big]L(\Pi_t^1)\hat{h}_t+b_{13}(t)-\sigma_3^{-1}(t)b_{22}(t)\sigma_{23}(t)\\
                 &\qquad -\int_{\mathcal{E}_2}(\lambda_{11}(t,e)-1)f_{22}(t,e)\nu_2(de)\bigg\}dt\\
                 &\quad +\Big[\sigma_{21}(t)\hat{\bar{x}}_t-\frac{1}{2}\sigma_{22}(t)L(\Pi_t^1)\hat{h}_t+\sigma_{23}(t)\Big]dW^2_t\\
                 &\quad +\int_{\mathcal{E}_2}\big[f_{21}(t,e)\hat{\bar{x}}_{t-}+f_{22}(t,e)\big]\tilde{N}_2(de,dt),\\
       d\hat{h}_t&=g_{12}(t)\hat{h}_tdt+g_{14}(t)\hat{h}_tdW^2_t+\int_{\mathcal{E}_2}g_{16}(t,e)\hat{h}_{t-}\tilde{N}_2(de,dt),\\
-d\hat{\bar{y}}_t&=\bigg\{\Big[g_{11}(t)+g_{13}(t)\sigma_{11}(t)\Pi_t^2+\int_{\mathcal{E}_1}g_{15}(t,e)f_{11}(t,e)\nu_1(de)\Pi_t^2\Big]\hat{\bar{x}}_t+g_{12}(t)\hat{\bar{y}}_t\\
                 &\qquad +\Big[g_{13}(t)\sigma_{13}(t)+\int_{\mathcal{E}_1}g_{15}(t,e)f_{12}(t,e)\nu_1(de)\Big]\Pi_t^2+g_{14}(t)\hat{\bar{z}}^2_t\\
                 &\qquad +\int_{\mathcal{E}_2}g_{16}(t,e)\hat{\bar{\tilde{z}}}^2_{(t,e)}\nu_2(de)+\Big[\Big(g^2_{13}(t)+\int_{\mathcal{E}_1}g^2_{15}(t,e)\nu_1(de)\Big)\Pi_t^3\\
                 &\qquad -\frac{1}{2}g_{13}(t)\sigma_{12}(t)L(\Pi_t^1)\Pi_t^2-\frac{1}{2}g_{17}(t)L(\Pi_t^1)\Big]\hat{h}_t+g_{18}(t)\bigg\}dt\\
                 &\quad -\hat{\bar{z}}^2_tdW^2_t-\int_{\mathcal{E}_2}\hat{\bar{\tilde{z}}}^2_{(t,e)}\tilde{N}_2(de,dt),\quad t\in[0,T],\\
  \hat{\bar{x}}_0&=x_0,\ \hat{h}_0=2\hat{\bar{y}}_0,\ \hat{\bar{y}}_T=\phi_{11}\hat{\bar{x}}_T+\phi_{12}.
\end{aligned}
\right.
\end{equation}
Note that the innovation process $\bar{W}^2\equiv W^2$.
Moreover, $\Pi^2,\Pi^3$ and $\eta$ are the solutions to the following ODEs, respectively:
\begin{equation}\label{pi2}
\left\{
\begin{aligned}
&\dot{\Pi}_t^2+\bigg[b_{11}(t)-\sigma_3^{-1}(t)b_{22}(t)\sigma_{21}(t)-\int_{\mathcal{E}_2}(\lambda_{11}(t,e)-1)f_{21}(t,e)\nu_2(de)+g_{13}(t)\sigma_{11}(t)\\
&+g_{14}(t)\sigma_{21}(t)+\int_{\mathcal{E}_1}g_{15}(t,e)f_{11}(t,e)\nu_1(de)+\int_{\mathcal{E}_2}g_{16}(t,e)f_{21}(t,e)\nu_2(de)+g_{12}(t)\bigg]\Pi_t^2\\
&+g_{11}(t)=0,\quad t\in[0,T],\\
&\Pi_T^2=\phi_{11},
\end{aligned}
\right.
\end{equation}
\begin{equation}\label{pi3}
\left\{
\begin{aligned}
&\dot{\Pi}_t^3+\bigg[2g_{12}(t)+g^2_{13}(t)+g^2_{14}(t)+\int_{\mathcal{E}_1}g^2_{15}(t,e)\nu_1(de)+\int_{\mathcal{E}_2}g^2_{16}(t,e)\nu_2(de)\bigg]\Pi_t^3\\
&-\frac{1}{2}\bigg[b_{12}(t)-\sigma_3^{-1}(t)b_{22}(t)\sigma_{22}(t)+g_{13}(t)\sigma_{12}(t)+g_{14}(t)\sigma_{22}(t)\bigg]L(\Pi_t^1)\Pi_t^2\\
&-\frac{1}{2}g_{17}(t)L(\Pi_t^1)=0,\quad t\in[0,T],\\
&\Pi_T^3=0,
\end{aligned}
\right.
\end{equation}
\begin{equation}\label{eta}
\left\{
\begin{aligned}
&\dot{\eta}_t+g_{12}(t)\eta_t+\bigg[b_{13}(t)-\sigma_3^{-1}(t)b_{22}(t)\sigma_{23}(t)-\int_{\mathcal{E}_2}(\lambda_{11}(t,e)-1)f_{22}(t,e)\nu_2(de)\bigg]\Pi_t^2\\
&+g_{13}(t)\sigma_{13}(t)\Pi_t^2+g_{14}(t)\sigma_{23}(t)\Pi_t^2+\int_{\mathcal{E}_1}g_{15}(t,e)f_{12}(t,e)\nu_1(de)\Pi_t^2\\
&+\int_{\mathcal{E}_2}g_{16}(t,e)f_{22}(t,e)\nu_2(de)\Pi_t^2+g_{18}(t)=0,\quad t\in[0,T],\\
&\eta_T=\phi_{12}.
\end{aligned}
\right.
\end{equation}
From \eqref{pi1} and \eqref{pi2}, we find that $\Pi_t^1=\Pi_t^2$, for $t\in[0,T]$. Therefore, we have
\begin{equation}
\hat{\bar{y}}_0=\Pi_0^1x_0+\Pi_0^3\hat{h}_0+\eta_0=\Pi_0^1x_0+2\Pi_0^3\hat{\bar{y}}_0+\eta_0,
\end{equation}
i.e.,
\begin{equation}
\hat{\bar{y}}_0=(1-2\Pi_0^3)^{-1}(\Pi_0^1x_0+\eta_0),\quad \hat{h}_0=2(1-2\Pi_0^3)^{-1}(\Pi_0^1x_0+\eta_0).
\end{equation}
And it is not hard to get the explicit form of $\hat{h}_t$:
\begin{equation}
\begin{aligned}
\hat{h}_t&=\hat{h}_0\exp\bigg\{\int_0^t\bigg[g_{12}(s)-\frac{1}{2}g^2_{14}(s)+\int_{\mathcal{E}_2}(\ln(1+g_{16}(s,e))-g_{16}(s,e))\nu_2(de)\bigg]ds\\
         &\qquad\qquad +\int_0^tg_{14}(s)dW^2_s+\int_0^t\int_{\mathcal{E}_2}\ln(1+g_{16}(s,e))\tilde{N}_2(de,ds)\bigg\}.
\end{aligned}
\end{equation}
Then, the optimal control \eqref{baru2} in $U=(-\infty,-1]\cup[1,+\infty)$ has an explicit form:
\begin{equation}
\begin{aligned}
\bar{u}_t&=-L(\Pi_t^1)(1-2\Pi_0^3)^{-1}(\Pi_0^1x_0+\eta_0)\exp\bigg\{\int_0^t\bigg[g_{12}(s)-\frac{1}{2}g^2_{14}(s)+\int_{\mathcal{E}_2}(\ln(1+g_{16}(s,e))\\
         &\qquad -g_{16}(s,e))\nu_2(de)\bigg]ds+\int_0^tg_{14}(s)dW^2_s+\int_0^t\int_{\mathcal{E}_2}\ln(1+g_{16}(s,e))\tilde{N}_2(de,ds)\bigg\},\\
         &\hspace{7cm}\quad a.e.\ t\in[0,T],\ P\mbox{-}a.s.,
\end{aligned}
\end{equation}
where $\Pi^1,\Pi^3,\eta$ are the solutions to \eqref{pi1}, \eqref{pi3}, \eqref{eta}, respectively, and $\Pi_0^1,\Pi_0^3,\eta_0$ are the corresponding values at initial time $t=0$.

\section{Concluding remarks}

In this paper, we have derived a global maximum principle for progressive optimal control of partially observed forward-backward stochastic systems with random jumps. Compared to \cite{Hu17}, we extend it to the case with partial information and random jumps. Compared to \cite{STW20}, we extend it to the partially observed forward-backward stochastic system. Compared to \cite{LT95}, we extend it to the forward-backward system with random jumps. Compared to \cite{TH02}, we extend it to the forward-backward system with both Brownian motion and Poisson martingale measure. We should notice that, different from \cite{LT95} and \cite{TH02}, we extend the continuous observation equation into discontinuous case, in which Brownian motion and Poisson random measure are both considered. Meanwhile, by using the technique introduced by \cite{MY21arxiv}, and extending it into the case with jumps, then combining with the random field method, the $L^\beta$-solution to fully coupled FBSDEPs and its $L^\beta(\beta\geq2)$-estimate under some assumptions are given to prepare for the maximum principle, and the non-linear filtering equation for partially observed stochastic system with random jumps, Theorem \ref{theA2}, is derived for obtaining the optimal control's state estimate feedback for the LQ case. Also, different from the work in \cite{XW10}, we extend it to the general non-linear forward-backward system and observation equation with Poisson random measure, where the control domain is non-convex, instead of the LQ stochastic system with Poisson process. Moreover, we have given an explicit form of the optimal control by introducing some ODEs.

Global maximum principles for optimal control of fully coupled FBSDEPs and of mean-field's type FBSDEs are very challenging and interesting to research. We will work on these topics in the near future.

\section*{Appendix: Non-linear Filtering Equation of SDEPs}
\setcounter{equation}{0}
\renewcommand\theequation{A.\arabic{equation}}
\setcounter{theorem}{0}
\renewcommand\thetheorem{A.\arabic{theorem}}
\setcounter{lemma}{0}
\renewcommand\thelemma{A.\arabic{lemma}}

Consider the following signal-observation system:
\begin{equation}\label{so}
\left\{
\begin{aligned}
h_t&=h_0+\int_0^tH_sds+y_t,\\
Y_t&=Y_0+\int_0^tA_s(\omega)ds+\int_0^tB_s(Y)dW^2_s+\int_0^t\int_{\mathcal{E}_2}C_{s-}(Y,e)\tilde{N}_2(de,ds),
\end{aligned}
\right.
\end{equation}
where
$$y_t\equiv\sum_{i=1}^2\int_0^tb_{i,s}(\omega)dW^i_s+\sum_{i=1}^2\int_0^t\int_{\mathcal{E}_i}c_{i,s}(\omega,e)\tilde{N}_i(de,ds)$$
is a one-dimensional RCLL square integrable martingale with $y_0=0$ and $b_{1,s}(\omega)(b_{2,s}(\omega)):[0,T]\times\Omega\rightarrow \mathbb{R}$ is $\mathcal{G}/\mathcal{B}(\mathbb{R})$ measurable, $c_{1,s}(\omega,e)(c_{2,s}(\omega,e)):[0,T]\times\Omega\times\mathcal{E}_1(\mathcal{E}_2)\rightarrow \mathbb{R}$ is $\mathcal{G}\otimes\mathcal{B}(\mathcal{E}_1)(\mathcal{B}(\mathcal{E}_2))/\mathcal{B}(\mathbb{R})$ measurable.

On a given filtered probability space $(\Omega,\mathcal{F},(\mathcal{F}_t),P)$, $(W^1_t,W^2_t)$ is a two-dimensional standard $\mathcal{F}_t$-Brownian motion and $(\tilde{N}_1(de,dt),\tilde{N}_2(de,dt))$ is a standard Poisson martingale measure with $\tilde{N}_i(de,dt)=N_i(de,dt)-\nu_i(de)dt$, $\nu_i(A_i)<\infty, \forall A_i\in\mathcal{E}_i$, where $N_i(de,dt)$ is Poisson random measure with $EN_i(de,dt)=\nu_i(de)dt$, i=1,2.

We need the following assumption.

{\bf (H1)}\quad $A_t(\omega):[0,T]\times\Omega\rightarrow \mathbb{R}$ is $\mathcal{G}/\mathcal{B}(\mathcal{\mathbb{R}})$ measurable, $B_t(Y):=B(t,Y_r,r\leq t):[0,T]\times D([0,T];\mathbb{R})\rightarrow \mathbb{R}$ is $\mathcal{G}\otimes\mathcal{B}(D([0,T];\mathbb{R}))/\mathcal{B}(R)$-measurable and for each $t\geq0$, $B_t(Y)$ is $\mathcal{B}(D([0,T];\\\mathbb{R}))/\mathcal{B}(\mathbb{R})$-measurable. $C_t(Y,e):=C(t,Y_r,e,r\leq t):[0,T]\times D([0,T];\mathbb{R})\times\mathcal{E}_2\rightarrow \mathbb{R}$ is $\mathcal{G}\otimes\mathcal{B}(D([0,T];\mathbb{R}))\otimes\mathcal{B}(\mathcal{E}_2)/\mathcal{B}(\mathbb{R})$-measurable. We set $C_{t-}(Y,e):=C(t,Y_r,e,r<t)$, where $Y$ is an $\mathcal{F}_t$-adapted RCLL process, then $C_{t-}(Y,e)$ is $\mathcal{F}_t$-$\mathcal{E}_2$-predictable, where we define $D([0,T];\mathbb{R})$ as the totality of RCLL maps from $[0,T]$ to $\mathbb{R}$.

The following result belongs to Situ \cite{Situ05}.

\begin{theorem}\label{theA1}
Assume that $(h,Y)$ is an $\mathcal{F}_t$-adapted solution to \eqref{so} and for all $T>0$,
\begin{equation}
E\int_0^T|A_s(\omega)|ds<\infty,\ E\int_0^T|B_s(Y)|^2ds<\infty,\ E\int_0^T\int_{\mathcal{E}_2}|C_s(Y,e)|^2\nu_2(de)ds<\infty,
\end{equation}
and $B^{-1}$ exists and is bounded. Then

\noindent (1)\ $\bar{M}_t:=\int_0^tB^{-1}_{s-}(Y)\big[dY_s-\pi_s(A)ds\big]$ is a locally square integrable martingale on the probability space $(\Omega,\mathcal{F},(\mathcal{F}_t^Y),P)$ and the observation equation of \eqref{so} can be rewritten as
\begin{equation}
Y_t=Y_0+\int_0^t\pi_s(A)ds+\int_0^tB_{s-}(Y)d\bar{M}_s.
\end{equation}

\noindent (2)\ Furthermore, $\bar{M}_t=\bar{W}^2_t+\int_0^t\int_{\mathcal{E}_2}B_{s-}^{-1}(Y)C_{s-}(Y,e)\tilde{N}_2(de,ds)$, where $\pi_s(A):=E\big[A_s|\mathcal{F}_s^Y\big]$, $\bar{W}^2_t=\int_0^tB_s^{-1}(Y)(A_s-\pi_s(A))ds+W^2_t$ is a Brownian motion on $(\Omega,\mathcal{F},(\mathcal{F}_t^Y),P)$, and $\tilde{N}_2(de,dt)$ is an $\mathcal{F}_t^Y$-adapted standard Poisson martingale measure.

Usually, we call $\bar{W}^2_t$ and $\tilde{N}_2(de,ds)$ the innovation processes for $W^2_t$ and $\tilde{N}_2(de,ds)$. Therefore, the observation equation becomes
\begin{equation}
Y_t=Y_0+\int_0^t\pi_s(A)ds+\int_0^tB_s(Y)d\bar{W}^2_s+\int_0^t\int_{\mathcal{E}_2}C_{s-}(Y,e)\tilde{N}_2(de,ds).
\end{equation}
\end{theorem}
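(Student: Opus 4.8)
The plan is to follow the classical innovations approach of Fujisaki--Kallianpur--Kunita, adapted to the jump setting as in Situ \cite{Situ05}. First I would introduce the process $\nu_t:=Y_t-Y_0-\int_0^t\pi_s(A)ds$ and show it is an $(\mathcal{F}_t^Y)$-martingale. Writing $Y_t-Y_s=\int_s^tA_rdr+M^Y_{s,t}$ with $M^Y_{s,t}=\int_s^tB_r(Y)dW^2_r+\int_s^t\int_{\mathcal{E}_2}C_{r-}(Y,e)\tilde N_2(de,dr)$ the $(\mathcal{F}_t)$-martingale part of $Y$, one has $\nu_t-\nu_s=\int_s^t(A_r-\pi_r(A))dr+M^Y_{s,t}$. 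By the tower property $E[M^Y_{s,t}\mid\mathcal{F}_s^Y]=E\big[E[M^Y_{s,t}\mid\mathcal{F}_s]\mid\mathcal{F}_s^Y\big]=0$, and, using Fubini (justified by $E\int_0^T|A_s|ds<\infty$) together with $E[A_r-\pi_r(A)\mid\mathcal{F}_s^Y]=0$ for $s\le r$ (since $\mathcal{F}_s^Y\subseteq\mathcal{F}_r^Y$ and $\pi_r(A)=E[A_r\mid\mathcal{F}_r^Y]$), one gets $E[\nu_t-\nu_s\mid\mathcal{F}_s^Y]=0$. This makes $\nu$ an $(\mathcal{F}_t^Y)$-local martingale, square integrable thanks to $E\int_0^T|B_s(Y)|^2ds<\infty$ and $E\int_0^T\int_{\mathcal{E}_2}|C_s(Y,e)|^2\nu_2(de)ds<\infty$.

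Next I would analyse the fine structure of $\nu$ under $(\mathcal{F}_t^Y)$. Its jumps coincide with those of $Y$, $\Delta\nu_s=\Delta Y_s=\int_{\mathcal{E}_2}C_{s-}(Y,e)N_2(de,\{s\})$, which are observable; and its continuous $(\mathcal{F}_t^Y)$-martingale part $\nu^c$ has to be identified. The key point — and the technical heart — is that the $(\mathcal{F}_t)$-compensator $\nu_2(de)dt$ of $N_2$ is \emph{deterministic}, hence $(\mathcal{F}_t^Y)$-predictable; a dual predictable projection argument then shows that $\int_0^t\int_{\mathcal{E}_2}C_{s-}(Y,e)\tilde N_2(de,ds)$ remains an $(\mathcal{F}_t^Y)$-martingale with the same compensator structure, i.e.\ $\tilde N_2$ keeps the law of a standard Poisson martingale measure relative to $(\mathcal{F}_t^Y)$. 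Subtracting this purely discontinuous part from $\nu$ leaves a continuous $(\mathcal{F}_t^Y)$-local martingale with predictable bracket $\int_0^tB_s^2(Y)ds$ (again observable, as $B$ is a path functional of $Y$ by {\bf (H1)}). Since $B^{-1}_{s-}(Y)$ is bounded and $(\mathcal{F}_t^Y)$-predictable, $\bar M_t:=\int_0^tB^{-1}_{s-}(Y)d\nu_s$ is a locally square integrable $(\mathcal{F}_t^Y)$-martingale, and rearranging $dY_s=\pi_s(A)ds+B_{s-}(Y)d\bar M_s$ establishes part (1).

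For part (2) I would decompose $\bar M=\bar M^c+\bar M^d$ into its continuous and purely discontinuous $(\mathcal{F}_t^Y)$-martingale parts. From $\Delta\bar M_s=B^{-1}_{s-}(Y)\Delta Y_s$ and the jump representation of $Y$ one obtains $\bar M^d_t=\int_0^t\int_{\mathcal{E}_2}B^{-1}_{s-}(Y)C_{s-}(Y,e)\tilde N_2(de,ds)$, an $(\mathcal{F}_t^Y)$-martingale by the compensator step above. Setting $\bar W^2:=\bar M^c$, its bracket is $\langle\bar W^2,\bar W^2\rangle_t=\int_0^tB^{-2}_s(Y)\,d\langle\nu,\nu\rangle^c_s=\int_0^tB^{-2}_s(Y)B_s^2(Y)ds=t$, so Lévy's characterization gives that $\bar W^2$ is an $(\mathcal{F}_t^Y)$-Brownian motion, and $\langle\bar W^2,\bar M^d\rangle\equiv0$ yields the required independence. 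Finally, comparing $d\nu_s=dY_s-\pi_s(A)ds$ with the original observation SDE $dY_s=A_sds+B_s(Y)dW^2_s+\int_{\mathcal{E}_2}C_{s-}(Y,e)\tilde N_2(de,ds)$ and with $d\nu_s=B_{s-}(Y)\big(d\bar W^2_s+\int_{\mathcal{E}_2}B^{-1}_{s-}(Y)C_{s-}(Y,e)\tilde N_2(de,ds)\big)$ produces $\bar W^2_t=W^2_t+\int_0^tB^{-1}_s(Y)(A_s-\pi_s(A))ds$, and the observation equation rewrites in the asserted innovations form. The main obstacle I anticipate is exactly the transfer of the compensator of $N_2$ to the smaller filtration $(\mathcal{F}_t^Y)$, done consistently with the simultaneous verification that $\bar W^2$ is Brownian and $\tilde N_2$ Poisson under $(\mathcal{F}_t^Y)$; this is where the path-functional measurability of $B$ and $C$ in {\bf (H1)} and the deterministic-Lévy-measure assumption are essential, after which the remainder is bracket bookkeeping plus Lévy's theorem.
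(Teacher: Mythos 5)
The paper does not actually prove Theorem \ref{theA1}: it is quoted verbatim from Situ \cite{Situ05} ("The following result belongs to Situ..."), and the appendix only supplies proofs for the subsequent Lemmas \ref{lemmaA1}--\ref{lemmaA3} and Theorem \ref{theA2}. So there is no in-paper argument to compare against; what you have written is the standard Fujisaki--Kallianpur--Kunita innovations proof adapted to the jump setting, and it is essentially correct: the martingale property of $\nu_t=Y_t-Y_0-\int_0^t\pi_s(A)ds$ via the tower property, the transfer of the compensator of $N_2$ to $(\mathcal{F}_t^Y)$ using the fact that $\nu_2(de)dt$ is deterministic (hence equal to its own projection, so Watanabe's characterization keeps $N_2$ Poisson in the smaller filtration), and L\'evy's characterization for the continuous part with $\langle\bar W^2\rangle_t=\int_0^tB_s^{-2}(Y)B_s^2(Y)ds=t$. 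This is exactly the route taken in Situ's book.

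One point you should make explicit rather than leave implicit: for $\tilde N_2(de,ds)$ to be $\mathcal{F}_t^Y$-\emph{adapted} (as asserted in part (2)) you must be able to recover the marks $e$ from the observed jumps $\Delta Y_s=C_{s-}(Y,e)$ on $N_2(de,\{s\})\neq 0$, which requires invertibility of $C$ in the mark variable. Theorem \ref{theA1} as stated does not list this hypothesis, but Theorem \ref{theA2} assumes ``$C^{-1}$ exists'' precisely for this reason; your dual-predictable-projection step gives the correct compensator only once adaptedness of $N_2$ to $(\mathcal{F}_t^Y)$ is secured. Also, the phrase that $\langle\bar W^2,\bar M^d\rangle\equiv 0$ ``yields the required independence'' is a slight overstatement: orthogonality of the continuous and purely discontinuous parts is automatic and is not by itself independence; the independence of an $(\mathcal{F}_t^Y)$-Brownian motion and an $(\mathcal{F}_t^Y)$-Poisson random measure with deterministic compensator is a separate (standard) fact, and in any case the theorem does not assert it. Neither remark affects the validity of your overall argument.
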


Before giving our main theorem, we need the following lemmas, similar to \cite{Situ05}.
\begin{lemma}\label{lemmaA1}
$\big\{E\big[y_t\big|\mathcal{F}_t^Y]\big\}_{t\geq0}$ is an $\mathcal{F}_t^Y$-adapted square integrable martingale.
\end{lemma}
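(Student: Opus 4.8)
The plan is to verify directly the three defining properties of a square integrable martingale for the process $M_t:=E\big[y_t\,\big|\,\mathcal{F}_t^Y\big]$, relying only on the tower property of conditional expectation and the two filtration inclusions $\mathcal{F}_t^Y\subseteq\mathcal{F}_t$ — which holds because $Y$ is $\mathcal{F}_t$-adapted by the very form of the observation equation in \eqref{so} — and $\mathcal{F}_s^Y\subseteq\mathcal{F}_t^Y$ for $s\leq t$. First I would note that $M_t$ is $\mathcal{F}_t^Y$-measurable by the definition of conditional expectation, so the process is automatically $(\mathcal{F}_t^Y)$-adapted. For square integrability, conditional Jensen's inequality gives $|M_t|^2\leq E\big[|y_t|^2\,\big|\,\mathcal{F}_t^Y\big]$, hence $E|M_t|^2\leq E|y_t|^2<\infty$, the finiteness of $E|y_t|^2=\sum_{i=1}^2 E\int_0^t|b_{i,s}|^2ds+\sum_{i=1}^2 E\int_0^t\int_{\mathcal{E}_i}|c_{i,s}(\omega,e)|^2\nu_i(de)ds$ being guaranteed by the standing assumption that $y$ is an RCLL square integrable martingale.

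Next I would establish the martingale identity. Fix $0\leq s\leq t$. Using $\mathcal{F}_s^Y\subseteq\mathcal{F}_t^Y$ and the tower property,
$E\big[M_t\,\big|\,\mathcal{F}_s^Y\big]=E\big[\,E[y_t\,|\,\mathcal{F}_t^Y]\,\big|\,\mathcal{F}_s^Y\,\big]=E\big[y_t\,\big|\,\mathcal{F}_s^Y\big]$.
Then, inserting the inclusion $\mathcal{F}_s^Y\subseteq\mathcal{F}_s$ and invoking the $(\mathcal{F}_t)$-martingale property $E[y_t\,|\,\mathcal{F}_s]=y_s$, one gets
$E\big[y_t\,\big|\,\mathcal{F}_s^Y\big]=E\big[\,E[y_t\,|\,\mathcal{F}_s]\,\big|\,\mathcal{F}_s^Y\,\big]=E\big[y_s\,\big|\,\mathcal{F}_s^Y\big]=M_s$,
which is precisely $E[M_t\,|\,\mathcal{F}_s^Y]=M_s$. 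Finally, since $(\mathcal{F}_t^Y)$ may be taken to satisfy the usual conditions after the standard augmentation, a square integrable martingale with respect to it admits an RCLL modification, and I would replace $M$ by such a version so that it is consistent with the RCLL framework used in the filtering computations that follow.

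The argument is essentially routine; there is no serious obstacle. The only point demanding a little care is the bookkeeping of which $\sigma$-field each conditional expectation is taken over — in particular the ``smoothing'' step $E[y_t\,|\,\mathcal{F}_s^Y]=E[y_s\,|\,\mathcal{F}_s^Y]$ is legitimate only because of the inclusion $\mathcal{F}_s^Y\subseteq\mathcal{F}_s$ coming from the $\mathcal{F}_t$-adaptedness of the observation $Y$ — together with checking that the integrability hypotheses on $b_i,c_i$ indeed place $y$ in $L^2$ uniformly on compact time intervals. This lemma then feeds into the derivation of the filtering equation (Theorem~\ref{theA2}) exactly as in \cite{Situ05}.
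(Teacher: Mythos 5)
Your proof is correct: the paper itself gives no argument for this lemma (it defers to \cite{Situ05}), and your direct verification via the tower property, the inclusions $\mathcal{F}_s^Y\subseteq\mathcal{F}_t^Y$ and $\mathcal{F}_s^Y\subseteq\mathcal{F}_s$, and conditional Jensen for square integrability is exactly the standard argument one would expect there. The one step that genuinely needs the hypothesis — smoothing $E[y_t\,|\,\mathcal{F}_s^Y]=E[y_s\,|\,\mathcal{F}_s^Y]$ through the larger $\sigma$-field $\mathcal{F}_s$ using that $y$ is an $\mathcal{F}_t$-martingale — is correctly identified and justified.
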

\begin{lemma}\label{lemmaA2}
$\big\{E\big[\int_0^tH_sds\big|\mathcal{F}_t^Y\big]-\int_0^t\pi_s(H)ds\big\}_{t\geq0}$ is an $\mathcal{F}_t^Y$-adapted square integrable martingale.
\end{lemma}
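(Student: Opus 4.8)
The plan is to introduce $M_t:=E\big[\int_0^tH_s\,ds\big|\mathcal{F}_t^Y\big]-\int_0^t\pi_s(H)\,ds$, where $\pi_s(H):=E[H_s|\mathcal{F}_s^Y]$, and to verify in turn the three defining properties: $\mathcal{F}_t^Y$-adaptedness, $M_t\in L^2$ for each $t$, and the martingale identity $E[M_t|\mathcal{F}_s^Y]=M_s$ for $0\le s\le t$. The computation mirrors the treatment of $E[y_t|\mathcal{F}_t^Y]$ in Lemma \ref{lemmaA1}, the only new feature being the compensator term $\int_0^t\pi_s(H)\,ds$.

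First I would fix, once and for all, a jointly (hence progressively) measurable version of the optional projection $s\mapsto\pi_s(H)$; then $t\mapsto\int_0^t\pi_s(H)\,ds$ is continuous and $\mathcal{F}_t^Y$-adapted, and since $E\big[\int_0^tH_s\,ds\big|\mathcal{F}_t^Y\big]$ is $\mathcal{F}_t^Y$-measurable by construction, $M$ is $\mathcal{F}_t^Y$-adapted. For integrability, the signal equation $h_t=h_0+\int_0^tH_s\,ds+y_t$ together with square integrability of $h$ and of the martingale $y$ yields $E\big(\int_0^TH_s\,ds\big)^2<\infty$, and, under the standing hypotheses on the coefficients (as in Theorem \ref{theA1}), $E\int_0^T|H_s|^2\,ds<\infty$. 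Conditional Jensen then gives $E\big|E[\int_0^tH_s\,ds|\mathcal{F}_t^Y]\big|^2\le E\big(\int_0^tH_s\,ds\big)^2<\infty$, while Cauchy--Schwarz and conditional Jensen give $E\big(\int_0^t\pi_s(H)\,ds\big)^2\le T\int_0^TE[\pi_s(H)^2]\,ds\le T\int_0^TE[H_s^2]\,ds<\infty$; hence $M_t\in L^2$.

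The heart of the argument is the martingale identity. Fix $0\le s\le t$. The tower property gives $E\big[E[\int_0^tH_r\,dr|\mathcal{F}_t^Y]\big|\mathcal{F}_s^Y\big]=E[\int_0^tH_r\,dr|\mathcal{F}_s^Y]$, and, splitting this integral at $s$ and using that $\int_0^s\pi_r(H)\,dr$ is $\mathcal{F}_s^Y$-measurable, the claim $E[M_t|\mathcal{F}_s^Y]=M_s$ reduces to
\begin{equation*}
E\Big[\int_s^tH_r\,dr\Big|\mathcal{F}_s^Y\Big]=E\Big[\int_s^t\pi_r(H)\,dr\Big|\mathcal{F}_s^Y\Big].
\end{equation*}
To prove this I would apply a conditional Fubini theorem to move the conditional expectation inside the time integral on the right, then use the tower property with $\mathcal{F}_s^Y\subseteq\mathcal{F}_r^Y$ for $r\ge s$, namely $E[\pi_r(H)|\mathcal{F}_s^Y]=E\big[E[H_r|\mathcal{F}_r^Y]\big|\mathcal{F}_s^Y\big]=E[H_r|\mathcal{F}_s^Y]$, and finally pull the conditional expectation back out of the integral on the left. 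The main obstacle is purely technical: securing the jointly measurable version of $r\mapsto\pi_r(H)$ and checking the integrability bounds ($E\int_s^t|H_r|\,dr<\infty$) needed to justify the conditional Fubini interchange and the commutation of conditional expectation with the Lebesgue integral. Once that bookkeeping is in place the identity above is immediate, and combining it with the adaptedness and $L^2$ bounds established above shows that $\{M_t\}_{t\ge0}$ is an $\mathcal{F}_t^Y$-adapted square integrable martingale.
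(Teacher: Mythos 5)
Your proposal is correct: the tower property plus a conditional Fubini interchange reduces the martingale identity to $E[\int_s^t H_r\,dr|\mathcal{F}_s^Y]=\int_s^tE[H_r|\mathcal{F}_s^Y]\,dr=E[\int_s^t\pi_r(H)\,dr|\mathcal{F}_s^Y]$, and the square integrability follows from the standing hypothesis $E\int_0^T|H_s|^2ds<\infty$ exactly as you say. The paper omits the proof and defers to Situ's book, whose argument is precisely this one, so your route coincides with the intended proof.
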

\begin{lemma}\label{lemmaA3}
There exists an $f(s,\omega)\in M^2_{\mathcal{F}_t^Y}[0,t]$ and a $g(s,\omega,e)\in F^2_{\mathcal{F}_t^Y}[0,t]$ such that
\begin{equation}
\begin{aligned}
&E\big[h_0\big|\mathcal{F}_t^Y\big]+E\bigg[\int_0^tH_sds\bigg|\mathcal{F}_t^Y\bigg]+E\big[y_t\big|\mathcal{F}_t^Y\big]-\int_0^t\pi_s(H)ds\\
&=\pi_0(h)+\int_0^tf(s,\omega)d\bar{W}^2_s+\int_0^t\int_{\mathcal{E}_2}g(s,\omega,e)\tilde{N}_2(de,ds).
\end{aligned}
\end{equation}
Hence
\begin{equation}
\begin{aligned}
\pi_t(h)&=E\big[h_0\big|\mathcal{F}_t^Y\big]+E\bigg[\int_0^tH_sds\bigg|\mathcal{F}_t^Y\bigg]+E\big[y_t\big|\mathcal{F}_t^Y\big]\\
&=\pi_0(h)+\int_0^t\pi_s(H)ds+\int_0^tf(s,\omega)d\bar{W}^2_s+\int_0^t\int_{\mathcal{E}_2}g(s,\omega,e)\tilde{N}_2(de,ds),
\end{aligned}
\end{equation}
where $M^2_{\mathcal{F}_t^Y}[0,t]$ is the space of $\mathcal{F}_t^Y$-predictable stochastic processes satisfying $E\int_0^t|f(s,\omega)|^2ds\\<\infty$, and $F^2_{\mathcal{F}_t^Y}[0,t]$ is that of $\mathcal{F}_t^Y$-$\mathcal{E}_2$-predictable stochastic processes satisfying $E\int_0^t\int_{\mathcal{E}_2}|g(s,\omega,e)|^2\\\nu_2(de)ds<\infty$.
\end{lemma}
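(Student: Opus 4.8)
The plan is to recognise the left-hand side of the first displayed identity as a square-integrable $(\mathcal{F}_t^Y)$-martingale whose value at $t=0$ is $\pi_0(h)$, and then to apply the martingale representation property of the observation filtration with respect to the innovation processes $\bar{W}^2$ and $\tilde{N}_2$ supplied by Theorem \ref{theA1}.

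First I would set
\[
M_t:=E\big[h_0\big|\mathcal{F}_t^Y\big]+E\Big[\int_0^tH_sds\,\Big|\,\mathcal{F}_t^Y\Big]+E\big[y_t\big|\mathcal{F}_t^Y\big]-\int_0^t\pi_s(H)ds .
\]
The term $E[h_0|\mathcal{F}_t^Y]$ is a Doob martingale (square integrable, since $E|h_0|^2<\infty$); the term $E[y_t|\mathcal{F}_t^Y]$ is a square-integrable $(\mathcal{F}_t^Y)$-martingale by Lemma \ref{lemmaA1}; and the combination of the remaining two terms is a square-integrable $(\mathcal{F}_t^Y)$-martingale by Lemma \ref{lemmaA2}. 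Hence $M$ is a square-integrable $(\mathcal{F}_t^Y)$-martingale, and evaluating at $t=0$, using $y_0=0$, gives $M_0=E[h_0|\mathcal{F}_0^Y]=\pi_0(h)$.

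Next, by Theorem \ref{theA1} the observation process, written on $(\Omega,\mathcal{F},(\mathcal{F}_t^Y),P)$, is driven by the innovation Brownian motion $\bar{W}^2$ and the innovation Poisson martingale measure $\tilde{N}_2$, both $(\mathcal{F}_t^Y)$-adapted; since $B^{-1}$ exists and is bounded, $(\mathcal{F}_t^Y)$ is generated by $(\bar{W}^2,\tilde{N}_2)$. Consequently the martingale representation theorem for the filtration of a Brownian motion together with an independent Poisson martingale measure applies to $M$, producing $f\in M^2_{\mathcal{F}_t^Y}[0,t]$ and $g\in F^2_{\mathcal{F}_t^Y}[0,t]$ with
\[
M_t=M_0+\int_0^tf(s,\omega)d\bar{W}^2_s+\int_0^t\int_{\mathcal{E}_2}g(s,\omega,e)\tilde{N}_2(de,ds).
\]
Substituting $M_0=\pi_0(h)$ gives the first identity of the lemma; the second then follows from $\pi_t(h)=E[h_t|\mathcal{F}_t^Y]=E[h_0+\int_0^tH_sds+y_t\,|\,\mathcal{F}_t^Y]=M_t+\int_0^t\pi_s(H)ds$ and inserting the representation of $M_t$.

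The main obstacle is the justification of the martingale representation property on $(\mathcal{F}_t^Y)$: although $\bar{W}^2$ and $\tilde{N}_2$ are $(\mathcal{F}_t^Y)$-adapted by Theorem \ref{theA1}, the innovation Brownian motion is itself defined through $\pi_s(A)$, so verifying that $(\mathcal{F}_t^Y)$ is genuinely generated by $(\bar{W}^2,\tilde{N}_2)$ — and hence that every square-integrable $(\mathcal{F}_t^Y)$-martingale admits the stochastic-integral representation above — requires the careful innovations argument for jump-diffusion observation systems; I would invoke the corresponding result in Situ \cite{Situ05} rather than reprove it, and then only check the integrability estimates ensuring that $f$ and $g$ lie in the stated spaces.
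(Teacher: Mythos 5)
Your proposal is essentially the intended argument: the paper itself states Lemmas \ref{lemmaA1}--\ref{lemmaA3} without proof, attributing them to Situ \cite{Situ05}, and those two preceding lemmas exist precisely so that the left-hand side $M_t$ is recognized as a square-integrable $(\mathcal{F}_t^Y)$-martingale with $M_0=\pi_0(h)$, after which one invokes the martingale representation theorem on the observation filtration with respect to the innovation processes $\bar{W}^2$ and $\tilde{N}_2$ from Theorem \ref{theA1}. The one point to correct is your claim that boundedness of $B^{-1}$ implies $(\mathcal{F}_t^Y)$ is \emph{generated} by $(\bar{W}^2,\tilde{N}_2)$: invertibility of $B$ only yields that the innovations are $\mathcal{F}_t^Y$-adapted, while the reverse inclusion is the classical (and in general unresolved) innovations problem; the correct tool is the Fujisaki--Kallianpur--Kunita type representation theorem, which asserts directly that every square-integrable $(\mathcal{F}_t^Y)$-martingale is a stochastic integral against the innovation processes without requiring equality of the filtrations. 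Since you defer to \cite{Situ05} for exactly this step, the argument goes through, but the justification should be phrased in terms of that representation result rather than a generation claim.
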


Next, we give the non-linear filtering equation of partially observed SDEP system as follows.

\begin{theorem}\label{theA2}
Assume that (1)\ $|B_t(x)|^2+||C_t(x,e)||^2\leq c_0(1+|x|^2),\forall x\in D([0,T];\mathbb{R})$ with some constant $c_0$, and $C^{-1}$ exists. For any $T<\infty$, $E\int_0^T|A_s(\omega)|^2ds<\infty$.\\
(2)\ $B^{-1}$ exists and is bounded. $|B_t(x_1)-B_t(x_2)|^2+||C_t(x_1,e)-C_t(x_2,e)||^2\leq k|x_1-x_2|^2$ for any $x_1,x_2\in D([0,T];\mathbb{R})$ and $t\geq0$.\\
(3)\ $E|h_0|^2<\infty$, $E|Y_0|^2<\infty$, $E\int_0^T|H_s|^2ds<\infty$, $E\big[\sup_{0\leq t\leq T}|h_t|^2\big]<\infty$, for all $T<\infty$.
Then the optimal filter $\pi_t(h):=E\big[h_t\big|\mathcal{F}_t^Y\big]$ will satisfy the following non-linear filtering equation:
\begin{equation}\label{filter}
\begin{aligned}
\pi_t(h)&=\pi_0(h)+\int_0^t\pi_s(H)ds+\int_0^t\big[\pi_s(b_2)+B_s^{-1}(Y)(\pi_s(Ah)-\pi_s(A)\pi_s(h))\big]d\bar{W}^2_s\\
        &\quad +\int_0^t\int_{\mathcal{E}_2}\pi_{s-}(c_2)\tilde{N}_2(de,ds),\quad P\mbox{-}a.s.
\end{aligned}
\end{equation}
\end{theorem}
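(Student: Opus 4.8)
The plan is to start from the semimartingale representation of the optimal filter already obtained in Lemma \ref{lemmaA3},
\[
\pi_t(h)=\pi_0(h)+\int_0^t\pi_s(H)\,ds+\int_0^t f(s,\omega)\,d\bar{W}^2_s+\int_0^t\int_{\mathcal{E}_2}g(s,\omega,e)\,\tilde{N}_2(de,ds),
\]
with $f\in M^2_{\mathcal{F}_t^Y}[0,t]$, $g\in F^2_{\mathcal{F}_t^Y}[0,t]$, and to \emph{identify} the two integrands as $f(s,\omega)=\pi_s(b_2)+B_s^{-1}(Y)\big(\pi_s(Ah)-\pi_s(A)\pi_s(h)\big)$ and $g(s,\omega,e)=\pi_{s-}(c_2)$. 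Once this is done, substituting $f$ and $g$ into the displayed formula yields the filtering equation \eqref{filter} immediately, so the whole proof reduces to these two identifications.

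To pin down $f$, I would test against the family of $\mathcal{F}_t^Y$-martingales $N_t:=\int_0^t\theta_s\,d\bar{W}^2_s$, where $\theta$ runs over bounded $\mathcal{F}_t^Y$-predictable processes. Since $N_t$ is $\mathcal{F}_t^Y$-measurable, the tower property gives $E[\pi_t(h)N_t]=E[h_tN_t]$. The left-hand side is computed from the Lemma \ref{lemmaA3} representation by the It\^o product rule: after taking expectations all stochastic integrals vanish (they are genuine martingales under assumptions (1)--(3), with a stopping-time localization using $E[\sup_{0\le t\le T}|h_t|^2]<\infty$ if needed), leaving $E\big[\int_0^tN_s\pi_s(H)\,ds\big]+E\big[\int_0^t\theta_sf(s)\,ds\big]$, where the last term comes from $\langle\pi(h)^c,N\rangle_t=\int_0^t f(s)\theta_s\,ds$. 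The right-hand side is computed directly from $h_t=h_0+\int_0^tH_s\,ds+y_t$, using Theorem \ref{theA1}(2) to express $\bar{W}^2_t=W^2_t+\int_0^tB_s^{-1}(Y)(A_s-\pi_s(A))\,ds$ in the original filtration $(\mathcal{F}_t)$; this makes $N$ a semimartingale in $(\mathcal{F}_t)$ contributing a drift $\int_0^t h_s\theta_sB_s^{-1}(A_s-\pi_s(A))\,ds$, and a cross bracket $[h,N]_t=\int_0^t b_{2,s}\theta_s\,ds$ coming solely from the common $W^2$-channels (the $W^1$- and $\tilde{N}_i$-parts of $y$ do not interact with $N$). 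Taking expectations and then conditioning on $\mathcal{F}_s^Y$ (legitimate since $\theta_s,B_s^{-1}$ are $\mathcal{F}_s^Y$-measurable) replaces $h_s,\ h_sA_s,\ b_{2,s}$ by $\pi_s(h),\ \pi_s(Ah),\ \pi_s(b_2)$, so that $E[h_tN_t]=E\big[\int_0^tN_s\pi_s(H)\,ds\big]+E\big[\int_0^t\theta_s\big(\pi_s(b_2)+B_s^{-1}(\pi_s(Ah)-\pi_s(h)\pi_s(A))\big)\,ds\big]$. Comparing the two expressions and letting $\theta$ be arbitrary forces the claimed formula for $f$, a.e. $(t,\omega)$.

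The identification of $g$ is entirely parallel, testing against $M_t:=\int_0^t\int_{\mathcal{E}_2}\psi(s,e)\,\tilde{N}_2(de,ds)$ for bounded $\mathcal{F}_t^Y$-$\mathcal{E}_2$-predictable $\psi$; by Theorem \ref{theA1}(2) this $\tilde{N}_2$ is the same compensated measure, so $M$ is simultaneously a martingale in $(\mathcal{F}_t)$ and in $(\mathcal{F}_t^Y)$ and needs no drift correction. From the representation one gets $E[\pi_t(h)M_t]=E\big[\int_0^tM_s\pi_s(H)\,ds\big]+E\big[\int_0^t\int_{\mathcal{E}_2}g(s,e)\psi(s,e)\,\nu_2(de)\,ds\big]$, while directly the cross bracket $[h,M]_t$ picks up only the $\tilde{N}_2$-part of $y$, with compensator $\int_0^t\int_{\mathcal{E}_2}c_{2,s}(e)\psi(s,e)\,\nu_2(de)\,ds$, whose expectation equals $E\big[\int_0^t\int_{\mathcal{E}_2}\pi_{s-}(c_2)\psi(s,e)\,\nu_2(de)\,ds\big]$ (the $\tilde{N}_1$-part of $y$ does not interact with $M$ since $N_1,N_2$ are independent). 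Equating and using the arbitrariness of $\psi$ gives $g(s,\omega,e)=\pi_{s-}(c_2)$, and inserting $f$ and $g$ back into Lemma \ref{lemmaA3} completes the proof.

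\textbf{The main obstacle} I expect is the technical bookkeeping required to make these heuristic computations rigorous under the merely $L^2$-type hypotheses: (i) verifying that every stochastic integral arising in the product-rule expansions is a true (not just local) martingale, which may need a localization argument combined with the a priori bound on $E[\sup_{0\le t\le T}|h_t|^2]$ and uniform integrability; (ii) handling the semimartingale decompositions of the innovation processes $\bar{W}^2$ and $\tilde{N}_2$ consistently in $(\mathcal{F}_t)$ and in $(\mathcal{F}_t^Y)$ without circularity, for which one leans squarely on Theorem \ref{theA1}; and (iii) the careful treatment of the quadratic-covariation terms involving the jump part, in particular that $W^1,W^2,\tilde{N}_1,\tilde{N}_2$ have pairwise vanishing brackets except $W^2$ with itself and $\tilde{N}_2$ with itself, so that only the $b_2$- and $c_2$-channels of $h$ contribute to the filter's martingale part.
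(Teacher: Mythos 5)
Your proposal is correct and follows essentially the same route as the paper: the paper also identifies the integrands $f$ and $g$ from Lemma \ref{lemmaA3} by a duality argument, testing against a generic $\mathcal{F}_t^Y$-martingale $z_t=\int_0^t\lambda^1_s(Y)\,d\bar{W}^2_s+\int_0^t\int_{\mathcal{E}_2}\lambda^2_s(Y,e)\,\tilde{N}_2(de,ds)$ (your $N$ and $M$ combined into one), computing $E[\tilde{y}_tz_t]$ two ways via the It\^o product rule, the innovation decomposition $\bar{W}^2=W^2+\int B^{-1}(A-\pi(A))\,ds$, and the cross brackets with the $b_2$- and $c_2$-channels of $y$. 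Splitting the test martingale into its continuous and jump parts is only a cosmetic difference, and your identified obstacles (true-martingale verification, consistent use of Theorem \ref{theA1}, vanishing of the mixed brackets) are exactly the points the paper's computation handles.
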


\begin{proof}
Let $\tilde{y}_t:=\int_0^tf(s,\omega)d\bar{W}^2_s+\int_0^t\int_{\mathcal{E}_2}g(s,\omega,e)\tilde{N}_2(de,ds)$, and let
\begin{equation*}
z_t:=\int_0^t\lambda^1_s(Y)d\bar{W}^2_s\\+\int_0^t\int_{\mathcal{E}_2}\lambda^2_s(Y,e)\tilde{N}_2(de,ds),
\end{equation*}
where $\lambda^1_t(Y)$ is a scalar-valued bounded $\mathcal{F}_t^Y$-adapted stochastic process and $\lambda^2_s(Y,e)$ is any given one-dimensional $\mathcal{F}_t^Y$-$\mathcal{E}_2$-predictable stochastic process. Then $z$ is an $\mathcal{F}_t^Y$-predictable square integrable martingale.

Applying It\^{o}'s formula to $\tilde{y}_tz_t$, we have
\begin{equation}\label{tildeyz0}
E[\tilde{y}_tz_t]=E\int_0^t\lambda_s^1(Y)f(s,\omega)ds+E\int_0^t\int_{\mathcal{E}_2}\lambda_s^2(Y,e)g(s,\omega,e)\nu_2(de)ds.
\end{equation}
On the other hand, since $\tilde{y}_t=\pi_t(h)-\pi_0(h)-\int_0^t\pi_s(H)ds\in\mathcal{F}_t^Y$, then we have
\begin{equation}\label{tildeyz}
\begin{aligned}
E[\tilde{y}_tz_t]&=E[\pi_t(h)z_t]-E[\pi_0(h)z_t]-E\bigg[\int_0^t\pi_s(H)dsz_t\bigg]\\
&=E\bigg[h_tz_t-\int_0^tH_sz_sds\bigg].
\end{aligned}
\end{equation}
Since $\bar{W}^2_t=\int_0^tB_{s-}^{-1}(Y)(A_s-\pi_s(A))ds+W^2_t$, we get
\begin{equation}
\begin{aligned}
z_t=\tilde{z}_t+\int_0^t\lambda^1_s(Y)B_{s-}^{-1}(Y)(A_s-\pi_s(A))ds,
\end{aligned}
\end{equation}
where $\tilde{z}_t=\int_0^t\lambda^1_s(Y)dW^2_s+\int_0^t\int_{\mathcal{E}_2}\lambda^2_s(Y,e)\tilde{N}_2(de,ds)$. Then \eqref{tildeyz} becomes
\begin{equation}\label{tildeyz2}
\begin{aligned}
E[\tilde{y}_tz_t]&=E\bigg[h_t\tilde{z}_t-\int_0^tH_s\tilde{z}_sds\bigg]+E\bigg[h_t\int_0^t\lambda^1_s(Y)B^{-1}_{s-}(Y)(A_s-\pi_s(A))ds\\
&\qquad -\int_0^tH_s\int_0^s\lambda^1_u(Y)B^{-1}_{u-}(Y)(A_u-\pi_u(A))duds\bigg].
\end{aligned}
\end{equation}
Here, we should note that $\tilde{z}$ is an $\mathcal{F}_t$-adapted square integrable martingale w.r.t. $P$.
Note that
\begin{equation}
E[\tilde{z}_th_0]=E\big[E[\tilde{z}_th_0|\mathcal{F}_0]\big]=E[\tilde{z}_0h_0]=0,
\end{equation}
and
\begin{equation}
\begin{aligned}
E\bigg[\int_0^t\tilde{z}_sH_sds\bigg]&=E\bigg[\int_0^tE[\tilde{z}_t|\mathcal{F}_s]H_sds\bigg]=E\bigg[\int_0^tE[\tilde{z}_tH_s|\mathcal{F}_s]ds\bigg]\\
&=E\bigg[\int_0^t\tilde{z}_tH_sds\bigg]=E\bigg[\tilde{z}_t\int_0^tH_sds\bigg].
\end{aligned}
\end{equation}
Therefore, we have
\begin{equation}\label{part1}
\begin{aligned}
&E\bigg[h_t\tilde{z}_t-\int_0^tH_s\tilde{z}_sds\bigg]=E\bigg[\tilde{z}_t(h_t-h_0)-\tilde{z}_t\int_0^tH_sds\bigg]\\
&=E\bigg[\tilde{z}_t(h_t-h_0-\int_0^tH_sds)\bigg]=E[\tilde{z}_ty_t]\\
&=E\bigg[\tilde{z}_t\sum_{i=1}^2\bigg(\int_0^tb_{i,s}(\omega)dW^i_s+\int_0^t\int_{\mathcal{E}_i}c_{i,s}(\omega,e)\tilde{N}_i(de,ds)\bigg)\bigg]\\
&=E\bigg\langle\int_0^t\lambda^1_s(Y)dW^2_s,\int_0^tb_{2,s}(\omega)dW^2_s\bigg\rangle\\
&\qquad +E\bigg\langle\int_0^t\int_{\mathcal{E}_2}\lambda^2_s(Y,e)\tilde{N}_2(de,ds),\int_0^t\int_{\mathcal{E}_2}c_{2,s}(\omega,e)\tilde{N}_2(de,ds)\bigg\rangle\\
&=E\int_0^t\lambda^1_s(Y)b_{2,s}(\omega)ds+E\bigg[\int_0^t\int_{\mathcal{E}_2}\lambda^2_s(Y,e)\mathbb{E}_2\big[c_{2,s}(\omega,e)\big|\mathcal{P}\otimes\mathcal{B}(\mathcal{E}_2)\big]\nu_2(de)ds\bigg],
\end{aligned}
\end{equation}
and
\begin{equation*}
\begin{aligned}
&E\bigg[h_t\int_0^t\lambda^1_s(Y)B^{-1}_s(Y)(A_s-\pi_s(A))ds\bigg]\\
&=E\bigg[\int_0^t\lambda^1_s(Y)B^{-1}_s(Y)(A_s-\pi_s(A))(h_t-h_s)ds\bigg]\\
&\quad +E\bigg[\int_0^t\lambda^1_s(Y)B^{-1}_s(Y)(A_sh_s-\pi_s(A)h_s)ds\bigg]\\
\end{aligned}
\end{equation*}
\begin{equation}\label{hint}
\begin{aligned}
&=E\bigg[\int_0^t\lambda^1_s(Y)B^{-1}_s(Y)(A_s-\pi_s(A))(h_t-h_s)ds\bigg]\\
&\quad +E\bigg[\int_0^t\lambda^1_s(Y)B^{-1}_s(Y)(\pi_s(Ah)-\pi_s(A)\pi_s(h))ds\bigg].
\end{aligned}
\end{equation}
Notice that $h_t-h_s=\int_s^tH_udu+y_t-y_s$, then we have
\begin{equation*}
\begin{aligned}
&E\bigg[\int_0^t\lambda^1_s(Y)B^{-1}_s(Y)(A_s-\pi_s(A))(h_t-h_s)ds\bigg]\\
&=E\bigg[\int_0^t\lambda^1_s(Y)B^{-1}_s(Y)(A_s-\pi_s(A))(y_t-y_s)ds\bigg]\\
\end{aligned}
\end{equation*}
\begin{equation}\label{hint2}
\begin{aligned}
&\quad +E\bigg[\int_0^t\lambda^1_s(Y)B^{-1}_s(Y)(A_s-\pi_s(A))\int_s^tH_ududs\bigg]\\
&=E\bigg[\int_0^t\lambda^1_s(Y)B^{-1}_s(Y)(A_s-\pi_s(A))E\big[(y_t-y_s)|\mathcal{F}_s\big]ds\bigg]\\
&\quad +E\bigg[\int_0^t\int_0^u\lambda^1_s(Y)B^{-1}_s(Y)(A_s-\pi_s(A))dsH_udu\bigg]\\
&=E\bigg[\int_0^t\int_0^u\lambda^1_s(Y)B^{-1}_s(Y)(A_s-\pi_s(A))dsH_udu\bigg].
\end{aligned}
\end{equation}
Combining \eqref{hint} with \eqref{hint2}, we get
\begin{equation}\label{part2}
\begin{aligned}
&E\bigg[h_t\int_0^t\lambda^1_s(Y)B^{-1}_s(Y)(A_s-\pi_s(A))ds\bigg]-E\bigg[\int_0^t\int_0^u\lambda^1_s(Y)B^{-1}_s(Y)(A_s-\pi_s(A))dsH_udu\bigg]\\
&=E\bigg[\int_0^t\lambda^1_s(Y)B^{-1}_s(Y)(\pi_s(Ah)-\pi_s(A)\pi_s(h))ds\bigg].
\end{aligned}
\end{equation}
Substituting \eqref{part1} and \eqref{part2} into \eqref{tildeyz2}, we have
\begin{equation}
\begin{aligned}
E[\tilde{y}_tz_t]&=E\bigg[\int_0^t\lambda^1_s(Y)\bigg(b_{2,s}(\omega)+B^{-1}_s(Y)(\pi_s(Ah)-\pi_s(A)\pi_s(h))\bigg)ds\bigg]\\
&\quad +E\bigg[\int_0^t\int_{\mathcal{E}_2}\lambda^2_s(Y,e)\mathbb{E}_2[c_{2,s}(\omega,e)|\mathcal{P}\otimes\mathcal{B}(\mathcal{E}_2)]\nu_2(de)ds\bigg]\\
&=E\bigg[\int_0^t\lambda^1_s(Y)\bigg(\pi_s(b_2)+B^{-1}_s(Y)(\pi_s(Ah)-\pi_s(A)\pi_s(h))\bigg)ds\bigg]\\
&\quad +E\bigg[\int_0^t\int_{\mathcal{E}_2}\lambda^2_s(Y,e)\mathbb{E}_2[\pi_s(c_2)|\mathcal{P}\otimes\mathcal{B}(\mathcal{E}_2)]\nu_2(de)ds\bigg].
\end{aligned}
\end{equation}
Compared this to \eqref{tildeyz0}, we get
\begin{equation}
\begin{aligned}
f(s,Y)&=\pi_s(b_2)+B^{-1}_s(Y)\big(\pi_s(Ah)-\pi_s(A)\pi_s(h)\big),\\
g(s,Y,e)&=\mathbb{E}_2\big[\pi_{s-}(c_2)\big|\mathcal{P}\otimes\mathcal{B}(\mathcal{E}_2)\big]=\pi_{s-}(c_2).
\end{aligned}
\end{equation}
By Lemma \ref{lemmaA3}, we can derive \eqref{filter}. The proof is complete.
\end{proof}

\section*{Acknowledgement}

Many thanks for discussion and suggestions with Prof. Jie Xiong at Southern University of Science and Technology, Postdoc. Yuanzhuo Song at Chinese Academy of Sciences and Dr. Weijun Meng at Shandong University.


\begin{thebibliography}{0}

\bibitem{BBP97}G. Barles, R. Buckdahn, and E. Pardoux, Backward stochastic differential equations and integral-partial differential equations. \emph{Stoch. Stoch. Rep.}, {\bf 60}(1-2), 57-83, 1997.

\bibitem{Ben92}A. Bensoussan, \emph{Stochastic Control of Partially Observable Systems}, Cambridge University Press, 1992.

\bibitem{Confortola19}F. Confortola, $L^p$ solution of backward stochastic differential equations driven by a marked point process. \emph{Math. Control Signal Syst.}, {\bf 31}, Article No. 1, 2019.

\bibitem{CZ12}J. Cvitani\'c, J. F. Zhang, \emph{Contract Theory in Continuous-Time Models}, Springer-Verlag, Berlin, 2012.

\bibitem{DE92}D. Duffie, L. Epstein, Stochastic differential utility. \emph{Econometrica}, {\bf 60}(2), 353-394, 1992.

\bibitem{GS18}C. Geiss, A. Steinicke, Existence, uniqueness and comparison results for BSDEs with l\'{e}vy jumps in an extended monotonic generator setting. \emph{Probab. Uncertain. Quant. Risk}, {\bf 3}, Article No. 9, 2018.

\bibitem{HM20}T. Hao, Q. X. Meng, A global maximum principle for optimal control of general mean-field forward-backward stochastic systems with jumps. \emph{ESAIM: Control, Optim. Calcu. Varia.}, {\bf 26}, Article No. 87, 2020.

\bibitem{Hu17}M. S. Hu, Stochastic global maximum principle for optimization with recursive utilities. \emph{Probab. Uncertain. Quant. Risk}, {\bf 2}, Article No. 1, 2017.

\bibitem{HJX18}M. S. Hu, S. L. Ji, and X. L. Xue, A global stochastic maximum principle for full coupled forward-backward stochastic systems. \emph{SIAM J. Control Optim.}, {\bf 56}(6), 4309-4335, 2018.

\bibitem{HNZ14}Y. Z. Hu, D. Nualart, and Q. Zhou, On optimal mean-field type control problems of stochastic systems with jump processes under partial information. \emph{arXiv:1403.4377v1}, 18 Mar 2014.

\bibitem{HWW16}J. H. Huang, S. J. Wang, and Z. Wu, Backward mean-field linear-quadratic-gaussian (LQG) games: Full and partial information. \emph{IEEE Trans. Autom. Control}, {\bf 61}(12), 3784-3796, 2016.

\bibitem{IW81}N. Ikeda, S. Watanabe, \emph{Stochastic Differential Equations and Diffusion Processes}, North-Holland, New York, 1981.

\bibitem{KPQ97}N. El Karoui, S. G. Peng, M. C. Quenez, Backward stochastic differential equations in finance. \emph{Math. Financ.}, {\bf 7}(1), 1-71, 1997.

\bibitem{KP16}T. Kruse, A. Popier, BSDEs with monotone generator driven by Brownian and Poisson noises in a general filtration. \emph{Stochastics}, {\bf 88}(4), 491-539, 2016.

\bibitem{KP17}T. Kruse, A. Popier, $L^p$-solution for BSDE with jumps in the case $p<2$: Corrections to the paper ``BSDEs with monotone generator driven by Brownian and Poisson noises in a general filtration". \emph{Stochastics}, {\bf 89}(8), 1201-1227, 2017.

\bibitem{LW14}J. Li, Q. M. Wei, $L^p$ estimates for fully coupled FBSDEs with jumps. \emph{Stoch. Proc. Appl.}, {\bf 124}(4), 1582-1611, 2014.

\bibitem{LWW20}N. Li, G. C. Wang, and Z. Wu, Linear-quadratic optimal control fort time-delay stochastic system with recursive utility under full and partial information. \emph{Automatica}, {\bf 121}, Article No. 109169, 2020.

\bibitem{LT95}X. J. Li, S. J. Tang, General necessary conditions for partially observed optimal stochastic controls. \emph{J. Appl. Probab.}, {\bf 32}, 1118-1137, 1995.

\bibitem{LS77}R. S. Liptser, A. N. Shiryayev, \emph{Statistics of Random Processes}, Springer-Verlag, New York, 1977.

\bibitem{ML16}H. P. Ma, B. Liu, Linear-quadratic optimal control problem for partially observed forward-backward stochastic differential equations of mean-field type. \emph{Asian J. Control}, {\bf 18}(6), 2146-2157, 2016.

\bibitem{MY21arxiv}Q. X. Meng, S. Z. Yang, $L^p$ estimations of fully coupled FBSDEs. \emph{arXiv:2110.10910v3}, 30 Oct 2021.

\bibitem{OS09}B. \O ksendal, A. Sulem, Maximum principles for optimal control of forward-backward stochastic differential equations with jumps. \emph{SIAM J. Control Optim.}, {\bf 48}(5), 2945-2976, 2009.

\bibitem{PP90}E. Pardoux, S. G. Peng, Adapted solution of a backward stochastic differential equation. \emph{Syst. Control Lett.}, {\bf 14}(1), 55-61, 1990.

\bibitem{Peng90}S. G. Peng, A general stochastic maximum principle for optimal control problems. \emph{SIAM J. Control Optim.}, {\bf 28}(4), 966-979, 1990.

\bibitem{Peng93}S. G. Peng, Backward stochastic differential equations and applications to optimal control. \emph{Appl. Math. Optim.}, {\bf 27}(2), 125-144, 1993.

\bibitem{Peng1997}S. G. Peng, Backward SDE and related $g$-expectation. In: N. EI Karoui, L. Mazliak (Eds.), \emph{Backward Stochastic Differential Equations}, in: Pitman Res. Notes Math. Ser., {\bf 364}, 141-159, Longman, Harlow, 1997.

\bibitem{Peng04}S. G. Peng, Nonlinear expectations, nonlinear evaluations and risk measures. In: Stochastic Methods in Finance. In: \emph{Lecture Notes in Math.}, {\bf 1856}, 165-253, Springer, Berlin, 2004.

\bibitem{Peng98}S. G. Peng, Open problems on backward stochastic differential equations. In: S. Chen, X. Li, J. Yong, X. Zhou (eds.), \emph{Control of Distributed Parameter and Stochastic Systems}, 265-273, Kluwer Acad. Pub., Boston, 1998.

\bibitem{PS08}P. Protter, K. Shimbo, No arbitrage and general semimartingales. In: \emph{Markov Process and Related Topics}, 267-283, Inst. Math. Statist., Beachwood, OH, 2008.

\bibitem{QS13}M. C. Quenez, A. Sulem, BSDEs with jumps, optimization and applications to dynamic risk measures. \emph{Stoch. Proc. Appl.}, {\bf 123}(8), 3328-3357, 2013.

\bibitem{Royer06}M. Royer, Backward stochastic differential equations with jumps and related non-linear expectations. \emph{Stoch. Proc. Appl.}, {\bf 116}(10), 1358-1376, 2006.

\bibitem{SP20}E. Shamarova, R. S. Pereira, Forward-backward SDEs with jumps and classical solutions to nonlocal quasilinear parabolic PDEs. \emph{Stoch. Proc. Appl.}, {\bf 130}(7), 3865-3894, 2020.

\bibitem{SW10}J. T. Shi, Z. Wu, Maximum principle for forward-backward stochastic control system with random jumps and applications to finance. \emph{J. Syst. Sci. Complex.}, {\bf 23}(2), 219-231, 2010.

\bibitem{SW101}J. T. Shi, Z. Wu, The maximum principle for partially observed optimal control of fully coupled forward-backward stochastic system. \emph{J. Optim. Theory Appl.}, {\bf 145}(3), 543-578, 2010.

\bibitem{Situ91}R. Situ, A maximum principle for optimal controls of stochastic systems with random jumps. In: \emph{Proc. National Conference on Control Theory and Applications}, Qingdao, October 1991.

\bibitem{S97}R. Situ, On solutions of backward stochastic differential equations with jumps and applications. \emph{Stoch. Proc. Appl.}, {\bf 66}(2), 209-236, 1997.

\bibitem{Situ05}R. Situ, \emph{Theory of Stochastic Differential Equations with Jumps and Applications}, Springer-Verlag, New York, 2005.

\bibitem{STW20}Y. Z. Song, S. J. Tang, and Z. Wu, The maximum principle for progressive optimal stochastic control problems with random jumps. \emph{SIAM J. Control Optim.}, {\bf 58}(4), 2171-2187, 2020.

\bibitem{Tang98}S. J. Tang, The maximum principle for partially observed optimal control of stochastic differential equations. \emph{SIAM J. Control Optim.} {\bf 36}(5), 1596-1617, 1998.

\bibitem{TH02}S. J. Tang, S. H. Hou, Optimal control of point process with noisy observation: The maximum principle. \emph{Appl. Math. Optim.} {\bf 45}(2), 185-212, 2002.

\bibitem{TL94}S. J. Tang, X. J. Li, Necessary conditions for optimal control of stochastic systems with random jumps. \emph{SIAM J. Control Optim.} {\bf 32}(5), 1447-1475, 1994.

\bibitem{WW08}G. C. Wang, Z. Wu, Kalman-Bucy filtering equations of forward and backward stochastic systems and applications to recursive optimal control problems. \emph{J. Math. Anal. Appl.}, {\bf 342}(2), 1280-1296, 2008.

\bibitem{WW09}G. C. Wang, Z. Wu, The maximum principle for stochastic recursive optimal control problems under partial information. \emph{IEEE Trans. Autom. Control}, {\bf 54}(6), 1230-1242, 2009.

\bibitem{WWX13}G. C. Wang, Z. Wu, and J. Xiong, Maximum principles for forward-backward stochastic control systems with correlated state and observation noises. \emph{SIAM J. Control Optim.}, {\bf 51}(1), 491-524, 2013.

\bibitem{WWX15}G. C. Wang, Z. Wu, and J. Xiong, A linear-quadratic optimal control problem of forward-backward stochastic differential equations with partial information. \emph{IEEE Trans. Autom. Control}, {\bf 60}(11), 2904-2916, 2015.

\bibitem{WWX18}G. C. Wang, Z. Wu, and J. Xiong, \emph{An Introduction to Optimal Control of FBSDE with Incomplete Information}, Springer Briefs in Mathematics, Springer, Switzerland, 2018.

\bibitem{WXX17}G. C. Wang, H. Xiao, and G. J. Xing, An optimal control problem for mean-field forward-backward stochastic differential equation with noisy observation. \emph{Automatica}, {\bf 86}, 104-109, 2017.

\bibitem{WY12}G. C. Wang, Z. Y. Yu, A partial information non-zero sum differential game of backward stochastic differential equations with applications. \emph{Automatica}, {\bf 48}, 342-352, 2012.

\bibitem{Wu13}Z. Wu, A general maximum principle for optimal control problems of forward-backward stochastic control systems. \emph{Automatica}, {\bf 49}(5), 1473-1480, 2013.

\bibitem{Wu10}Z. Wu, A maximum principle for partially observed optimal control of forward-backward stochastic control systems. \emph{Sci. China Infor. Sci.}, {\bf 53}(11), 2205-2214, 2010.

\bibitem{Wu99}Z. Wu, Forward-backward stochastic differential equations with Brownian motion and Poisson process. \emph{Acta Math. Appl. Sin.}, {\bf 15}(4), 433-443, 1999.

\bibitem{Wu03}Z. Wu, Fully coupled FBSDE with Brownian motion and Poisson process in stopping time duration. \emph{J. Aust. Math. Soc.}, {\bf 74}(2), 249-266, 2003.

\bibitem{Wu98}Z. Wu, Maximum principle for optimal control problem of fully coupled forward-backward stochastic systems. \emph{Syst. Sci. Math. Sci.}, {\bf 11}(3), 249-259, 1998.

\bibitem{WZ18}Z. Wu, Y. Zhuang, Linear-quadratic partially observed forward-backward stochastic differential games and its application in finance. \emph{Appl. Math. Comput.}, {\bf 321}, 577-592, 2018.

\bibitem{Xiao11}H. Xiao, The maximum principle for partially observed optimal control of forward-backward stochastic systems with random jumps. \emph{J. Syst. Sci. Complex}, {\bf 24}(6), 1083-1099, 2011.

\bibitem{Xiao13}H. Xiao, Optimality conditions for optimal control of jump-diffusion sdes with correlated observations noises. \emph{Math. Probl. Eng.}, {\bf 2013}, Article ID 613159, 2013.

\bibitem{XW10}H. Xiao, G. C. Wang, The filtering equations of forward-backward stochastic systems with random jumps and applications to partial information stochastic optimal control. \emph{Stoch. Anal. Appl.}, {\bf 28}(6), 1003-1019, 2010.

\bibitem{XY20}B. Xie, Z. Y. Yu, An exploration of $L^p$-theory for forward-backward stochastic differential equations with random coefficients on small durations. \emph{J. Math. Anal. Appl.}, {\bf 483}, 123642, 2020.

\bibitem{Xiong08}J. Xiong, \emph{An Introduction to Stochastic Filtering Theory}, Oxford University Press, London, 2008.

\bibitem{XZZ20}J. Xiong, Y. Zeng, and S. Q. Zhang, Mean-variance portfolio selection for partially observed point processes. \emph{SIAM J. Control Optim.}, {\bf 58}(6), 3041-3061, 2020.

\bibitem{XZZ19}J. Xiong, S. Q. Zhang, and Y. Zhuang, A partially observed non-zero sum differential game of forward-backward stochastic differential equations and its application in finance. \emph{Math. Control Relat. Fields}, {\bf 9}(2), 257-276, 2019.

\bibitem{XZ07}J. Xiong, X. Y. Zhou, Mean-variance portfolio selection under partial information. \emph{SIAM J. Control Optim.}, {\bf 46}(1), 156-175, 2007.

\bibitem{Yao17}S. Yao, $L^p$ solutions of backward stochastic differential equations with jumps. \emph{Stoch. Proc. Appl.}, {\bf 127}(11), 3465-3511, 2017.

\bibitem{YM08}J. L. Yin, X. R. Mao, The adapted solution and comparison theorem for backward stochastic differential equations with Poisson jumps and applications. \emph{J. Math. Anal. Appl.}, {\bf 346}(2), 345-358, 2008.

\bibitem{Yong20}J. M. Yong, $L^p$-theory of forward-backward stochastic differential equations. \emph{Stochastic Modeling and Control, Banach Center Publications, Institute of Mathematics, Polish Academy of Sciences, Warszawa}, {\bf 122}, 255-286, 2020.

\bibitem{Yong10}J. M. Yong, Optimality variational principle for controlled forward-backward stochastic differential equations with mixed initial-terminal conditions. \emph{SIAM J. Control Optim.}, {\bf 48}(6), 4119-4156, 2010.

\bibitem{ZXL18}S. Q. Zhang, J. Xiong, and X. D. Liu, Stochastic maximum principle for partially observed forward-backward stochastic differential equations with jumps and regime switching. \emph{Sci. China Inf. Sci.}, {\bf 61}(7), 070211:1-070211:13, 2018.

\bibitem{ZS20}Y. Y. Zheng, J. T. Shi, Stackelberg stochastic differential game with asymmetric noisy observations. Published online by \emph{Int. J. Control}, 2021. DOI: 10.1080/00207179.2021.1916078

\end{thebibliography}
\end{document}